\newtheorem{theorem}{Theorem}[section]
\newtheorem{corollary}[theorem]{Corollary}
\newtheorem{lemma}[theorem]{Lemma}
\newtheorem{assumption}[theorem]{Assumption}
\theoremstyle{definition}
\newtheorem{definition}[theorem]{Definition}
\theoremstyle{remark}
\newtheorem{remark}[theorem]{Remark}
\newtheorem{example}[theorem]{Example}
\numberwithin{equation}{section}
\def\Pro{{\mathbb{P}}}
\def\E{{\mathbb{E}}}
\def\e{{\varepsilon}}
\def\dist{{\mathnormal{dist}}}
\def\T{{\mathcal{T}}}
\def\Tr{{\textnormal{Tr}}}
\newcommand{\clf}{{\mathcal{F}}}
\newcommand{\cli}{{\mathcal{I}}}
\newcommand{\RR}{{\mathbb{R}}}
\newcommand{\NN}{{\mathbb{N}}}
\begin{document}

\title{Uniform large deviation principles for Banach space valued stochastic
differential equations}
\author{A. Budhiraja \thanks{%
Research supported in part by the National Science
Foundation (DMS-1305120) and the Army Research Office (W911NF-14-1-0331) University of North Carolina, Chapel Hill, Department of Statistics and
Operations Research},
P. Dupuis
\thanks{Research supported in part by the National Science Foundation (DMS-1317199). Division of Applied Mathematics, Brown University, Providence, USA},
 M. Salins \thanks{
Boston University, Department of Mathematics and Statistics}}
\maketitle

\begin{abstract}
We prove a large deviation principle (LDP) for a general class of Banach
space valued stochastic differential equations (SDE) that is uniform with
respect to initial conditions in bounded subsets of the Banach space. A key step in the proof is
showing that a uniform large deviation principle over compact sets is
implied by a uniform over compact sets Laplace principle. Because bounded
subsets of infinite dimensional Banach spaces are in general not relatively
compact in the norm topology, we embed the Banach space into its double dual
and utilize the weak-$\star $ compactness of closed bounded sets in the
double dual space. We prove that a modified version of our stochastic
differential equation satisfies a uniform Laplace principle over weak-$\star
$ compact sets and consequently a uniform over bounded sets large deviation
principle. We then transfer this result back to the original equation using a
contraction principle. The main
motivation for this uniform LDP is to generalize results of Freidlin and
Wentzell concerning the behavior of finite dimensional SDEs. Here we apply
the uniform LDP to study the asymptotics of exit times from bounded sets of
Banach space valued small noise SDE, including reaction diffusion equations
with multiplicative noise and $2$-dimensional stochastic Navier-Stokes
equations with multiplicative noise.\newline\ \newline

\noindent

\noindent\textbf{AMS 2010 subject classifications:} 60F10, 60H15, 35R60.\newline\ \newline

\noindent\textbf{Keywords:} Uniform large deviations, variational representations, uniform Laplace principle,
stochastic partial differential equations,  small noise asymptotics, exit-time asymptotics, stochastic reaction-diffusion equations, stochasic Navier-Stokes equations.
\end{abstract}




\section{Introduction}

The goal of this work is to develop a general approach for establishing
large deviation principles (LDP) for separable Banach space valued small
noise stochastic differential equations (SDE) that are uniform for initial
conditions in bounded sets. The analogous large deviation results in finite
dimensions have been studied in the works of Freidlin and Wentzell \cite{F-W-book}
and play a central role in the study of asymptotics of exit times
from bounded domains and of invariant measures.

In infinite dimensions,
proving even finite time large deviation estimates using the classical methods of
Freidlin and Wentzell can be technically daunting. For some examples of
LDPs for infinite dimensional systems using these methods see \cite{b-1991,
c-1999,cr-2004,cm-1997,fjl-1982,f-1988,g-2005,kx-1996,mos-1989,s-1992} and
the citations therein.

An alternative approach,
which uses the equivalence between a large deviation principle and a Laplace
principle together with variational representations for infinite
dimensional Brownian motions, is significantly more tractable \cite{bd-2000,bdm-2008,de-2011}.
This approach (sometimes referred to as the `weak convergence approach') has been used to prove LDPs for a wide variety of infinite
dimensional systems \cite{bm-2009,bm-2012,bdm-2008,bdm-2010,cm-2010,dm-2009,l-2010,os-2011,rzz-2010}.
   Another benefit of this approach is that proving a Laplace principle that
is uniform over initial conditions in a compact set (cf. Definition \ref{def:unilappri}) is typically not much harder than proving the Laplace
principle for a fixed initial condition. One of the results in the current
work (Theorem \ref{thm:ULP-implies-LDP}) shows that a `uniform over compact
sets' Laplace principle implies a corresponding `uniform over compact sets'
LDP. 

The uniform LDP over compact sets, for finite dimensional diffusions, has been used by Freidlin and Wentzell \cite{F-W-book} to study the exit time and
exit place asymptotics for bounded subsets of $\mathbb{R}^d$.
Unfortunately, a uniform LDP over compact sets in infinite
dimensional Banach spaces is not very useful for studying escape time problems for bounded sets because such sets
are generally not relatively compact in
the norm topology.
In particular, open sets in infinite dimensional spaces are not relatively compact and every
infinite dimensional compact set has an empty interior.
Chenal and Millet \cite{cm-1997}  deal with these  issues
in studying exit time problems for a class of reaction diffusion equations in H\"{o}%
lder spaces by compactly embedding the H\"{o}lder space of
interest into a larger H\"{o}lder space, but there are limitations to this
approach due to the degeneracy of compact sets.

To study asymptotics of exit times from general bounded sets (e.g.,
a ball) in an infinite dimensional Banach space, a different approach is
needed. The starting point of this work is the observation that while closed
bounded sets of infinite dimensional Banach spaces are not compact, they
\emph{are} (relatively) compact when viewed under a different topology.
Specifically, if $E$ is a reflexive Banach space, then closed bounded balls
are compact in the weak topology by Alaoglu's Theorem (see Theorem \ref%
{thm:alaoglu}). If $E$ is not reflexive, then closed balls are not
necessarily compact in the weak topology, but in this case $E$ can be
canonically embedded into its double dual $E^{\star \star }$, which, once
more by Alaoglu's Theorem, has the property that closed bounded subsets of $%
E^{\star \star }$ (under norm topology on $E^{\star \star }$) are compact in
the weak-$\star $ topology (induced by the norm topology on $E^{\star }$).

In order to say why this fact is useful for the large deviation analysis we
now need to be a bit more specific about the form of the models that will be
considered in this work. Let $E$ be a separable Banach space and consider
the $E$-valued small noise SDE
\begin{equation}
\begin{cases}
dX_{x}^{\varepsilon }(t)=[AX_{x}^{\varepsilon }(t)+B(t,X_{x}^{\varepsilon
}(t))]dt+\sqrt{{\varepsilon }}G(t,X_{x}^{\varepsilon }(t))dw(t), \\
X_{x}^{\varepsilon }(0)=x\in E.%
\end{cases}%
 \label{eq:intro-abstract}
\end{equation}%
In (\ref{eq:intro-abstract}), $A$ is the infinitesimal generator of a
compact $C_{0}$-semigroup $S(t)$ (see Definition \ref{def:c0semigroup}).
The noise $w(t)$ is a cylindrical Wiener process on some Hilbert space $H$
(see Definition \ref{def:cylbm}) and the nonlinear operators $B$ and $G$ are
regular enough for there to be a unique mild solution (see Definition \ref%
{def:mildsoln}) of this equation (see Theorem \ref{thm:uniqmilsoln}). The
exact assumptions to be used are given in Section \ref{S:notes-assums}. In
typical examples, the Banach space $E$ may be the space $L^{p}(\mathcal{O})$
of functions on some domain $\mathcal{O}$ with finite $p$-th norm, the space
of continuous functions $C(\mathcal{O})$, a H\"{o}lder space $C^{\theta }(%
\mathcal{O})$, or a Sobolev space $W^{k,p}(\mathcal{O})$.

{\ A mild solution to
\eqref{eq:intro-abstract} can be interpreted as the solution to the integral equation
\begin{equation} \label{eq:intro-X-mild}
  X^\e_x(t) = S(t)x + \int_0^t S(t-s)B(s,X^\e_x(s))ds + \sqrt{\e}\int_0^t S(t-s)G(s,X^\e_x(s))dw(s).
\end{equation}
We will be particularly interested in the  stochastic convolution term on the right-hand side, namely,
 \begin{equation}
Y_{x}^{\varepsilon }(t) \doteq \sqrt{{\varepsilon }}\int_{0}^{t}S(t-s)G(s,X_{x}^{%
\varepsilon }(s))dw(s).  \label{eq:intro-Y-mild}
\end{equation}%
A precise definition of an $E$-valued stochastic integral of the
form that appears in \eqref{eq:intro-Y-mild} is given in Section %
\ref{S:notes-assums}.

Let $C([0,T]:E)$ be the space of continuous functions from $[0,T]$ to $E$ endowed with the supremum norm. Assume that for any $\Psi \in C([0,T]:E)$, there is a unique $v_\Psi \in C([0,T]:E)$ solving
\begin{equation}
  v_\Psi(t) = \int_0^t S(t-s)B(s, v_\Psi(s) + \Psi(s))ds.
\end{equation}
Then we define $\mathcal{M}$ to be the mapping that sends $\Psi \mapsto v_{\Psi
}+\Psi $. If we set $\Psi (t)=S(t)x+Y_{x}^{\varepsilon }(t)$, then the
solution $X_{x}^{\varepsilon }$ to \eqref{eq:intro-X-mild} is an $E$-valued
continuous process satisfying $X_{x}^{\varepsilon }=\mathcal{M}(S(\cdot
)x+Y_{x}^{\varepsilon })$ and \eqref{eq:intro-Y-mild}.

From this point of view, to describe a solution of \eqref{eq:intro-abstract}
 we need three ingredients: the semigroup $S(t)$, the mapping $\mathcal{M}$%
, and the stochastic convolution $\int_0^t S(t-s)G(s, X^\e_x(s))dw(s)$. Our
assumptions on these three objects are given as Assumptions \ref%
{assum:semigroup}, \ref{assum:mathcal-M}, and \ref{assum:G-new}.%

We can now explain the role weak-$\star $ compactness plays in our analysis.
Our main result establishes an LDP for $\{X_{x}^{{\varepsilon }}\}$ that is
uniform over $x$ in any closed bounded set $E_{0}$ in $E$ (Theorem \ref%
{thm:LDP}). For this we identify a family of rate functions $%
I_{x}:C([0,T]:E)\rightarrow \lbrack 0,\infty ]$, indexed by $x\in E$, and
their associated level sets $\Phi _{x}(s)=\{\varphi \in
C([0,T]:E):I_{x}(\varphi )\leq s\}$, such that for any bounded set $E_{0}$,
the following uniform lower and upper bounds hold.

\begin{enumerate}
\item For any $\delta>0$, $0< s_0 <\infty$,
\begin{equation}  \label{eq:intro-LDP-lower-unif}
\liminf_{{\varepsilon} \to 0}\inf_{x \in E_0} \inf_{\varphi \in \Phi(s_0)}
\left\{{\varepsilon} \log \left({\mathbb{P}} \left( |X^{\varepsilon}_x -
\varphi|_{C([0,T]:E)}<\delta \right)\right)+ I_{x}(\varphi)\right\} \geq 0.
\end{equation}

\item For any $\delta>0$, $0<s_0< \infty$,
\begin{equation}  \label{eq:intro-LDP-upper-unif}
\limsup_{{\varepsilon} \to 0}\sup_{x \in E_0} \sup_{s \leq s_0} \left\{ {%
\varepsilon} \log\left( {\mathbb{P}} \left( \mathnormal{dist}%
_{C([0,T]:E)}\left(X^{\varepsilon}_x, \Phi_{x}(s) \right) \geq \delta
\right)\right)+s\right\} \leq 0.
\end{equation}
\end{enumerate}

The main idea in the proof is to reduce the problem to establishing a
uniform Laplace principle, since, as was observed earlier, establishing a
uniform Laplace principle is not much harder than proving a pointwise
Laplace principle. A family satisfies a Laplace principle uniformly
 over $x \in E_0$
if for any bounded and continuous $h: C([0,T]:E) \to \mathbb{R}$,
\begin{equation} \label{eq:intro-Laplace}
\lim_{\e \to 0} \sup_{x \in E_0} \left|\e \log \E \exp \left(-\frac{h(X^\e_x)}{\e} \right)
+ \inf_{\varphi \in C([0,T]:E)} \{h(\varphi) + I_x(\varphi) \}\right| = 0.
\end{equation}

We prove in Theorem \ref{thm:ULP-implies-LDP} that if $%
E_0 $ is compact, then the uniform large deviations principle is implied
by the uniform Laplace principle. But since the Banach space $E$ is infinite
dimensional, the same is not true for general closed bounded sets $E_0$ and
thus Theorem \ref{thm:ULP-implies-LDP} is not directly useful. Nevertheless,
this theorem does provide a key ingredient to the proof of Theorem \ref%
{thm:LDP}. Roughly speaking, this is due to the regularizing property of the
semigroup, specifically the fact that since $\{S(t)\}$ is a compact
semigroup, for any bounded set $E_0$, the image of $E_0$ under $S(t)$ is a
compact set for every $t>0$.

From the compactness of the semigroup $\{S(t)\}$ it follows that $S(t)$ can
be uniquely extended to a compact linear operator from $E^{\star\star} \to E$
and the extension $S(t)$ is continuous from $E^{\star\star}$, equipped with
the weak-$\star$ topology, to $E$ (with the norm topology) for every $t>0$
(see Theorem \ref{thm:doubledualextend} and Lemma \ref{lem:U-compact}(2)).
The assumption that the semigroup is compact is a very natural assumption, an example being the heat equation semigroup.

The regularizing property of $\{S(t)\}_{t>0}$ just described suggests
viewing a subset $E_0 \subset E$ as equipped with the
weak-$\star $ topology (by regarding it as a subset of $E^{\star \star }$)
under which it is relatively compact.
However, the fact that $S(0)=I$, the
identity operator, does not map a ball with weak- $\star $ topology
continuously into $E$ implies that the map $x\mapsto X_{x}^{\varepsilon }$
cannot be a continuous map from a ball (with the weak-$\star $ topology) to $%
C([0,T]:E)$ (the space of continuous functions from $[0,T]$ to $E$ equipped
with the uniform metric). Thus the approach for establishing a uniform
Laplace principle for $\{X_{x}^{{\varepsilon }}\}$ over $x$ in closed
bounded sets in $E$, by considering the weak-$\star $ topology on this set,
is not directly applicable.

The key insight comes from the work of Sowers \cite{s-1992} which suggests
first proving the LDP for the stochastic convolution $Y_{x}^{\varepsilon }$
defined in \eqref{eq:intro-Y-mild}. 
It will be seen that
under Assumptions \ref{assum:semigroup}, \ref{assum:mathcal-M}, and \ref%
{assum:G-new}, that \eqref{eq:intro-Y-mild} has a unique solution (see Theorem \ref%
{thm:Y-well-defined}). In particular, Assumption \ref{assum:mathcal-M} says
that for every $\Psi \in L^{\infty }([0,T]:E)$ there is a unique $\mathcal{M}%
(\Psi )$ in $L^{\infty }([0,T]:E)$ as defined above and the map $\mathcal{M}$
is Lipschitz in an appropriate sense. From the unique solvability of %
\eqref{eq:intro-abstract} and the definition of $\mathcal{M}$ it will then
follow that $X_{x}^{\varepsilon }=\mathcal{M}(S(\cdot )x+Y_{x}^{{\varepsilon
}})$ and that $Y_{x}^{\varepsilon }$ can be characterized as the unique
continuous $\{{\mathcal{F}}_{t}\}$-progressively measurable solution of
\begin{equation}
Y_{x}^{\varepsilon }(t)=\sqrt{{\varepsilon }}\int_{0}^{t}S(t-s)G(s,\mathcal{M%
}(Y_{x}^{\varepsilon }+S(\cdot )x)(s))dw(s),\;t\in \lbrack 0,T].
\label{eq:yeps2}
\end{equation}%
In proving a uniform large deviation principle, it will be more convenient to
work with $Y_{x}^{\varepsilon }$ rather than $X_{x}^{\varepsilon }$ since
the $Y_{x}^{\varepsilon }$ processes for all $x\in E$ have the same initial
value, namely $0$, and because, under our conditions, \eqref{eq:yeps2} is in
fact well defined as a $C([0,T]:E)$-valued random variable  for all
$x\in E^{\star \star }$ (see Theorem \ref{thm:Y-well-defined}).

Using the variational representations  of \cite%
{bd-1998,bd-2000,bdm-2008}, it follows that for any $\e>0$, $x\in E^{\star\star}$,
and any bounded continuous function $h: C([0,T]:E) \to \mathbb{R}$
\begin{equation}
  -\e \log \E \exp \left(- \frac{h(Y^\e_x)}{\e} \right) =
  \inf_{u \in \mathcal{P}_2} \left\{\frac{1}{2}\int_0^T |u(s)|_H^2 ds  + h(Y^{\e,u}_x) \right\}
\end{equation}
where $H$ is a separable Hilbert space related to the infinite dimensional Wiener process $w(t)$ and $\mathcal{P}_2$ is a collection of $L^2([0,T]:H)$ controls
that are progressively measurable with respect to the natural filtration of $w(t)$. The controlled
process $Y^{\e,u}_x$ on the right-hand side solves the integral equation
\begin{align} \label{eq:intro-mild-control}
  Y^{\e,u}_x(t) = &\sqrt{\e}\int_0^t S(t-s) G(s,\mathcal{M}(S(\cdot)x + Y^{\e,u}_x)(s))dw(s) \nonumber \\
  &+ \int_0^t S(t-s) G(s,\mathcal{M}(S(\cdot)x + Y^{\e,u}_x)(s))u(s)ds.
\end{align}

We prove in Theorem \ref{thm:unif-Laplace-Y}
that, under conditions, $\{Y_{x}^{\varepsilon }\}$ satisfies a uniform
Laplace principle with respect to $x$ in bounded subsets of $E^{\star \star
} $. This is proven by showing that the mapping
$(\e,x,u) \mapsto Y^{\e,u}_x$ is continuous in an appropriate sense as $\e \to 0$. This requires
some care because $E^{\star\star}$ is not generally metrizable when endowed with the weak-$\star$
topology. In particular, it is not true that every sequence in a weak-$\star$ compact
set has a convergent subsequence in the weak-$\star$ topology. The assumption that $S(t)$ is a compact
semigroup, however, does guarantee that a subsequence of the sequence of trajectories
$S(\cdot)x_n$ will converge to $S(\cdot)x$ in $L^p([0,T]:E)$  for any
$p \in [1,\infty)$ and for some $x \in E^{\star\star}$ (See Lemma \ref{lem:U-compact}(3)).

Using Theorem \ref{thm:ULP-implies-LDP} that was referred to earlier, it
then follows that $Y_{x}^{\varepsilon }$ also satisfies a corresponding
uniform large deviations principle. For both of these results, compactness
of closed bounded subsets of $E^{\star \star }$ in the weak-$\star $
topology is critical. The final step is to translate this uniform large
deviations principle, valid for $x$ in closed bounded sets of $E$, from $%
\{Y_{x}^{\varepsilon }\}$ to $\{X_{x}^{\varepsilon }\}$. However this
follows from an elementary application of the contraction principle, the
uniform continuity of the map $\mathcal{M}$ (Assumption \ref{assum:mathcal-M}%
) and the relation $X_{x}^{\varepsilon }=\mathcal{M}(Y_{x}^{\varepsilon
}+S(\cdot )x)$ (see the proof of Theorem \ref{thm:LDP} in Section \ref%
{sec:secthmldp}).

We remark that this procedure is significantly simpler in the case
 where $G(t,x) \equiv Q$ does not depend on $t$ or $x$,
called the additive noise case.
In the
additive noise case, \eqref{eq:yeps2} simplifies to $Y^\e_x(t) = Y^{\varepsilon }(t)=\sqrt{{\varepsilon }}%
\int_{0}^{t}S(t-s)Qdw(s)$, which is a stochastic convolution
that is Gaussian and independent of the initial condition $x$.
Because $Y^{\varepsilon }$ are Gaussian random
variables, the associated rate function is classical (see \cite[Chapter 12.2]%
{DaP-Z}). Then one checks that, under conditions, the mapping $\mathcal{M}$
introduced above is uniformly continuous and finally appeals to (a uniform
version of the) contraction principle and the relation $X_{x}^{\varepsilon }=%
\mathcal{M}(S(\cdot )x+Y^{\varepsilon })$ to obtain an LDP that is uniform
over all $x$ in bounded subsets of $E$ (see, for example \cite[Theorem 12.17]{DaP-Z}, \cite[Theorem 5.3]{bcf-2015}).
Of course, proving an LDP for general $Y_{x}^{{\varepsilon }}$ is less straightforward because there
is feedback in the stochastic term. 

Several authors have proven large deviations principles that are uniform
with respect to initial conditions in bounded subsets of a Banach space (e.g., \cite{cr-2004,s-1992}) even in the case of multiplicative noise. However
the proofs in these papers are based on more traditional methods and tend to be quite technical. The main goal of this paper is to establish a general
method, based on the weak convergence approach, that can be used to
establish a uniform large deviations principle with respect to initial
conditions in bounded sets for a broad class of models. As is shown in
Section \ref{S:examples}, the uniform large deviation results of \cite%
{cr-2004,s-1992} follow as a consequence of Theorem \ref{thm:LDP} of the
current work. This section also illustrates the applicability of the theorem
through other examples. In particular, we consider a two dimensional
stochastic Navier-Stokes equation with multiplicative noise for which a
uniform large deviation principle has not been previously studied.

The paper is organized as follows. In Section \ref{S:notes-assums} we give
some background definitions and formulate our main assumptions. Section \ref%
{S:Laplace} presents the main results, in particular the uniform large
deviation principle for $X_{x}^{\varepsilon }$ is given in Theorem \ref%
{thm:LDP}. In Section \ref{sec:secthmldp} we present the proof of Theorem %
\ref{thm:LDP}. Sections \ref{sec:uniqmildsolns}-\ref{sec:fin} then give the
proofs of the key results used in Section  \ref{sec:secthmldp}. In Section \ref%
{sec:uniqmildsolns} we prove the well-posedness of $X_{x}^{\varepsilon }$, $%
Y_{x}^{\varepsilon }$, of the associated controlled stochastic processes $%
X_{x}^{{\varepsilon },u}$ and $Y_{x}^{{\varepsilon },u}$, and of their
vanishing noise limits. In Section \ref{sec:pfuniflap} we prove Theorem \ref%
{thm:compact-level-sets-tilde}, which states that the processes $Y_{x}^{{%
\varepsilon }}$ satisfy a uniform Laplace principle over $x$ in bounded
subsets of $E^{\star \star }$. A central result in our work is Theorem \ref%
{thm:ULP-implies-LDP} which says that a uniform Laplace principle implies a
uniform LDP, where the uniformity is over compact sets. This result is
proved in Section \ref{sec:fin}. In Section \ref{S:exit}, we use the uniform
large deviations principle (Theorem \ref{thm:LDP}) to prove certain exit
time and exit location results in a general setting. In Section \ref%
{sec:suff-conds}, we present a simpler set of sufficient
conditions under which the assumptions of Section \ref{S:notes-assums} hold.
Section \ref{S:examples} gives two applications of the theory, the first to
a stochastic reaction diffusion with multiplicative noise and the second to
a two-dimensional Navier-Stokes equation with multiplicative noise. Finally,
in Appendix \ref%
{S:doubledual} we recall some basic facts about dual spaces, the weak-$\star
$ topology, and compact semigroups.

The following notation will be used. The norm on a Banach space $E$ will be
denoted by $|\cdot |_{E}$. Similarly, the inner product on a Hilbert space $%
H $ will be denoted by $\langle \cdot ,\cdot \rangle _{H}$. We will
frequently use various spaces of functions from subsets of $\mathbb{R}$ to a
Banach space $E$. For any $p\in \lbrack 1,\infty )$ and $T\in \lbrack
0,\infty )$, define the Banach spaces
\begin{equation*}
L^{p}([0,T]:E)\doteq \left\{ f:[0,T]\rightarrow
E:\int_{0}^{T}|f(s)|_{E}^{p}ds<\infty \right\}
\end{equation*}%
with associated norm
\begin{equation*}
|f|_{L^{p}([0,T]:E)}\doteq \left( \int_{0}^{T}|f(s)|_{E}^{p}ds\right) ^{1/p}.
\end{equation*}%
The space $L^{\infty }([0,T]:E)$ is defined in a similar manner. We also
define $C([0,T]:E)$ to be the Banach space of continuous functions from $%
[0,T]\rightarrow E$ endowed with the norm
\begin{equation*}
|f|_{C([0,T]:E)}=|f|_{L^{\infty }([0,T]:E)}\doteq \sup_{t\in \lbrack
0,T]}|f(t)|_{E}.
\end{equation*}%
For Banach spaces $E_{1}$ and $E_{2}$ and a bounded linear operator $%
A:E_{1}\rightarrow E_{2}$, we define the operator norm by
\begin{equation*}
|A|_{\mathscr{L}(E_{1},E_{2})}\doteq \sup_{|x|_{E_{1}}\leq 1}|Ax|_{E_{2}}.
\end{equation*}%
If $E_{1}=E_{2}$ we use the notation $|A|_{\mathscr{L}(E_{1})}=|A|_{%
\mathscr{L}(E_{1},E_{1})}$. We also will make use of the distance function.
 Given a probability space $(\Omega ,%
\mathcal{F},{\mathbb{P}})$, we define the Banach spaces of Banach space
valued random variables,
\begin{equation*}
L^{p}(\Omega :E)\doteq \left\{ X:\Omega \rightarrow E:{\mathbb{E}}\left\vert
X\right\vert _{E}^{p}<\infty \right\} .
\end{equation*}%
For $N\in {\mathbb{N}}$ and a separable Hilbert space $H$, let
\begin{equation}
\mathcal{S}^{N}\doteq \left\{ u\in L^{2}([0,T]:H):\int_{0}^{T}|u(s)|_{H}^{2}ds\leq N\right\}
.  \label{eq:SN-def}
\end{equation}%
By a filtered probability space $(\Omega ,{\mathcal{F}},{\mathbb{P}},\{{%
\mathcal{F}}_{t}\}_{0\leq t\leq T})$ we mean a complete probability space $%
(\Omega ,{\mathcal{F}},{\mathbb{P}})$ equipped with a filtration $\{{%
\mathcal{F}}_{t}\}$ satisfying the usual conditions. We will denote by $%
\mathcal{P}_{2}$ the collection of $H$-valued $\{\mathcal{F}_{t}\}$%
-progressively measurable processes such that
\begin{equation*}
{\mathbb{P}}\left( \int_{0}^{T}|u(s)|_{H}^{2}ds<\infty \right) =1
\end{equation*}%
and for any $N\in {\mathbb{N}}$, $\mathcal{P}_{2}^{N}$ is the set
\begin{equation}
\mathcal{P}_{2}^{N}\doteq \left\{ u\in \mathcal{P}_{2}:%
\int_{0}^{T}|u(s)|_{H}^{2}ds\leq N\text{ a.s.}\right\} .  \label{eq:PN2}
\end{equation}%
For any Banach space $E$, we define the dual space $E^{\star }$ to be the
set of bounded linear functionals from $E$ to $\mathbb{R}$. For any $%
x^{\star }\in E^{\star }$ and $x\in E$ we denote the duality by $\langle
x,x^{\star }\rangle _{E,E^{\star }}=x^{\star }(x)$. $E^{\star }$ is a Banach
space endowed with the norm $|x^{\star }|_{E^{\star }}=\sup_{|x|_{E}\leq
1}\left\langle x,x^{\star }\right\rangle _{E,E^{\star }}$. We will make
extensive use of the weak-$\star $ topology on dual spaces. A net $%
\{x_{i}^{\star }\}_{i\in {\mathcal{I}}}\subset E^{\star }$ for some directed
set ${\mathcal{I}}$, converges in the weak-$\star $ topology to $x^{\star }$
if $\lim \left\langle x,x_{i}^{\star }\right\rangle _{E,E^{\star
}}=\left\langle x,x^{\star }\right\rangle _{E,E^{\star }}$ for every $x\in E$%
. Note that we make use of nets instead of sequences because the weak-$\star
$ topology on $E^{\star }$ is generally not metrizable. For a Polish space $\mathcal{E}$ with metric $\rho$,  $K\subset \mathcal{E}$ and  $x\in \mathcal{E}$, define ${\mathnormal{dist}}_{\mathcal{E}}(x,K)=\inf_{y\in K}\rho(x,y)$. We will suppress $\mathcal{E}$ from notation when clear from the context. We will usually
denote by $\kappa ,\kappa _{1},\kappa _{2},\dotsc $, the constants that
appear in various estimates within a proof. The value of these constants may
change from one proof to another.

\section{Assumptions}

\label{S:notes-assums} In this section we introduce the assumptions that
will be needed for our main large deviations result in Section \ref%
{S:Laplace} (Theorem \ref{thm:LDP}). We begin by introducing the notion of a
cylindrical Brownian motion. Let $H$ be a separable Hilbert space and fix a
filtered probability space $(\Omega, {\mathcal{F}}, {\mathbb{P}}, \{{%
\mathcal{F}}_t\}_{0\le t\le T})$.

\begin{definition}
\label{def:cylbm} A collection of continuous real valued stochastic
processes $w \doteq \{\{w_h(t)\}_{0\le t \le T}: h \in H\}$ on the filtered
probability space $(\Omega, {\mathcal{F}}, {\mathbb{P}}, \{{\mathcal{F}}%
_t\}_{0\le t\le T})$ is said to be a $H$-cylindrical Brownian motion (or $H$%
-cBm) if for every $h\in H$, $w_h$ is a ${\mathcal{F}}_t$-Brownian motion
with variance parameter $\|h\|^2_H$ and for any $t\in [0,T]$, $h_1, h_2 \in
H $ and $\alpha_1, \alpha_2 \in {\mathbb{R}}$, $w_{\alpha_1h_1 + \alpha_2
h_2}(t) = \alpha_1 w_{h_1}(t) + \alpha_2 w_{h_2}(t)$ a.s.
\end{definition}

With an abuse of notation, for $h\in H$ we write $\left\langle
w(t),h\right\rangle _{H}\doteq w_{h}(t)$. If $w$ is a $H$-cBm as in
Definition \ref{def:cylbm} then for any complete orthonormal system (CONS) $%
\{e_{k}\}_{k\in {\mathbb{N}}}$ in $H$ the collection $\beta \doteq \{\beta
_{i}\doteq w_{e_{i}}\}_{i=1}^{\infty }$ is a collection of iid standard ${%
\mathcal{F}}_{t}$-Brownian motions. The collection $\beta $ can be viewed as
a random variable with values in the Polish space $C([0,T]:{\mathbb{R}}%
^{\infty })$, where ${\mathbb{R}}^{\infty }$ is the usual sequence space of
all maps from ${\mathbb{N}}\rightarrow {\mathbb{R}}$ which is equipped with
any metric that is consistent with componentwise convergence.

Given an $\{{\mathcal{F}}_{t}\}$-progressively measurable $H$-valued process
$\{u(t)\}_{0\leq t\leq T}$ the stochastic integral $\int_{0}^{T}\left\langle
u(s),dw(s)\right\rangle _{H}$ is defined to be the $L^{2}(\Omega ,{\mathbb{P}%
})$ limit
\begin{equation}
\int_{0}^{T}\left\langle u(s),dw(s)\right\rangle _{H}\doteq
\sum_{k=1}^{\infty }\int_{0}^{T}\left\langle u(s),e_{k}\right\rangle
_{H}d\beta _{k}(s).  \label{eq:stoch-int-def}
\end{equation}%
We occasionally also write this stochastic integral as $\int_{0}^{T}\left%
\langle dw(s),u(s)\right\rangle _{H}$. The infinite sum converges in $%
L^{2}(\Omega )$ as long as ${\mathbb{E}}\int_{0}^{T}|u(s)|_{H}^{2}ds<\infty $
and we have the following isometry:
\begin{equation}
{\mathbb{E}}\left( \int_{0}^{T}\left\langle u(s),dw(s)\right\rangle
_{H}\right) ^{2}=\sum_{k=1}^{\infty }{\mathbb{E}}\int_{0}^{T}\left\langle
u(s),e_{k}\right\rangle _{H}^{2}={\mathbb{E}}\int_{0}^{T}|u(s)|_{H}^{2}ds.
\label{eq:stoch-int-L2-norm}
\end{equation}%
By a localization argument the definition of the stochastic integral is
extended to all progressively measurable $H$-valued processes $u$ that
satisfy $\int_{0}^{T}|u(s)|_{H}^{2}ds<\infty $ a.s.

In this work we will need to consider Banach space valued stochastic
integrals. We begin with an elementary lemma about random variables in
separable Banach spaces. The proof is omitted.

\begin{lemma}
\label{lem:separable-dual-rv} Assume $E$ is a separable Banach space. Let $%
Z_{1}$ and $Z_{2}$ be $E$-valued random variables with the property that ${%
\mathbb{P}}(\left\langle Z_{1},x^{\star }\right\rangle _{E,E^{\star
}}=\left\langle Z_{2},x^{\star }\right\rangle _{E,E\star })=1$ for all $%
x^{\star }\in E^{\star }$. Then ${\mathbb{P}}(Z_{1}=Z_{2})=1$.
\end{lemma}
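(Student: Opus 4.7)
The plan is to reduce the uncountable family of equalities indexed by $x^\star \in E^\star$ to a countable one, using separability of $E$ together with the Hahn--Banach theorem to produce a countable norming subset of $E^\star$.

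First, I would invoke the standard consequence of Hahn--Banach for separable Banach spaces: there exists a countable collection $\{x^\star_n\}_{n\in\mathbb{N}} \subset E^\star$ (with $|x^\star_n|_{E^\star} \le 1$) such that
\begin{equation*}
  |x|_E = \sup_{n\in\mathbb{N}} \langle x, x^\star_n\rangle_{E,E^\star} \quad \text{for every } x\in E.
\end{equation*}
The construction is routine: pick a countable dense subset $\{x_n\}$ of the unit sphere of $E$, and for each $n$ use Hahn--Banach to pick $x^\star_n \in E^\star$ with $|x^\star_n|_{E^\star}=1$ and $\langle x_n, x^\star_n\rangle_{E,E^\star} = |x_n|_E$; a density argument then yields the displayed norming property.

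Next, for each $n$ let $A_n \doteq \{\omega : \langle Z_1(\omega), x^\star_n\rangle_{E,E^\star} = \langle Z_2(\omega), x^\star_n\rangle_{E,E^\star}\}$. By hypothesis $\mathbb{P}(A_n) = 1$, and since the collection is countable, $A \doteq \bigcap_{n\in\mathbb{N}} A_n$ satisfies $\mathbb{P}(A) = 1$. On $A$ we have $\langle Z_1 - Z_2, x^\star_n\rangle_{E,E^\star} = 0$ for every $n$, and then the norming property yields
\begin{equation*}
  |Z_1(\omega) - Z_2(\omega)|_E = \sup_{n} \langle Z_1(\omega) - Z_2(\omega), x^\star_n\rangle_{E,E^\star} = 0
\end{equation*}
for all $\omega \in A$, i.e.\ $Z_1 = Z_2$ on $A$. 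Finally, separability of $E$ guarantees that $|Z_1 - Z_2|_E$ is a genuine real-valued random variable, so $\{Z_1 = Z_2\}$ is measurable and contains $A$, giving $\mathbb{P}(Z_1 = Z_2) = 1$.

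There is no real obstacle here; the only substantive point is the existence of a countable norming family, which is the standard application of Hahn--Banach in separable Banach spaces. Everything else is a countable-union null-set argument together with measurability of the norm on a separable Banach space.
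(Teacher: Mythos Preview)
Your argument is correct and is the standard one: produce a countable norming subset of $E^\star$ via Hahn--Banach and separability, intersect the countably many full-measure events, and conclude. The paper itself omits the proof entirely (``The proof is omitted''), so there is no alternative approach to compare against; your write-up fills the gap cleanly.
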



For the rest of this work $E$ will be a separable Banach space unless
specified otherwise. The following notion of a $E$-valued stochastic
integral was introduced in \cite{vN-2005}.

\begin{definition}
\label{def:stoch-int} Suppose that $w(t)$ is a cylindrical Brownian motion
on $H$ as in Definition \ref{def:cylbm}. Let $\Phi (s)$ be an $\mathcal{F}%
_{t}$-progressively measurable $\mathscr{L}(H,E)$-valued process. Suppose
that $\int_{0}^{T}|\Phi ^{\star }(s)x^{\star }|_{H}^{2}ds<\infty $ a.s. for
all $x^{\star }\in E^{\star }$. For $t\in \lbrack 0,T]$, the stochastic
integral $Z=\int_{0}^{t}\Phi (s)dw(s)$ is defined to be the $E$-valued
random variable such that for any $x^{\star }\in E^{\star }$,
\begin{equation}
\left\langle Z,x^{\star }\right\rangle _{E,E^{\star }}=\left\langle
\int_{0}^{t}\Phi (s)dw(s),x^{\star }\right\rangle _{E,E^{\star
}}=\int_{0}^{t}\left\langle dw(s),\Phi ^{\star }(s)x^{\star }\right\rangle
_{H}.  \label{eq:defnbspsi}
\end{equation}
\end{definition}

The right-hand side of (\ref{eq:defnbspsi}) is well-defined for every $%
x^{\star }$ as discussed previously. According to Lemma \ref%
{lem:separable-dual-rv}, if such a $Z\in E$ exists, then it is unique.

Along with Banach space valued stochastic integrals we will also need to
consider Banach space valued Lebesgue integrals. These integrals will be
understood in the weak or Pettis sense, namely for a measurable $f:[0,T] \to
E$ and $t \in [0,T]$
\begin{equation*}
Z = \int_0^t f(s)ds
\end{equation*}
will denote the unique element of $E$ (provided it exists) such that for all
$x^\star \in E^\star$
\begin{equation*}
\left<Z,x^\star\right>_{E,E^\star} = \int_0^t
\left<f(s),x^\star\right>_{E,E^\star}ds.
\end{equation*}
%

Our first assumption is on the semigroup $\{S(t)\}$. See Definition \ref%
{def:c0semigroup} for the definitions of various terms in the assumption.

\begin{assumption}
\label{assum:semigroup} The unbounded linear operator $A: D(A)\subset E \to
E $ from \eqref{eq:intro-abstract} is the infinitesimal generator of a
compact $C_0$-semigroup $\{S(t)\}_{t\ge 0}$.
\end{assumption}

Next we introduce our assumption on the function $B$ that appears in %
\eqref{eq:intro-abstract}. Let $B:[0,T]\times D(B) \to E$ be a measurable
map where $D(B)$ is a measurable subset of $E$. Consider for $\Psi \in
L^{\infty}([0,T]:E)$, the equation
\begin{equation}  \label{eq:v-abs}
dv(t) = [Av(t) + B(t,v(t) + \Psi(t))]dt, \ \ \ \ v(0) = 0,
\end{equation}

\begin{assumption}
\label{assum:mathcal-M} For every $\Psi \in L^{\infty }([0,T]:E)$, there is
a unique mild solution $v\in L^{\infty }([0,T]:E)$ of \eqref{eq:v-abs},
namely,
\begin{equation} \label{eq:v-mild}
v(t)=\int_{0}^{t}S(t-s)[B(s,v(s)+\Psi (s))]ds,\;t\in \lbrack 0,T].
\end{equation}%
Define the map $\mathcal{M}$ from $L^{\infty }([0,T]:E)$ to itself by $%
\mathcal{M}(\Psi )\doteq v+\Psi $ if $v$ solves \eqref{eq:v-mild} for $\Psi $%
. The map $\mathcal{M}$ has the following properties.

\begin{enumerate}[(a)]

\item If $\Psi \in C([0,T]:E)$ then $\mathcal{M}(\Psi) \in C([0,T]:E)$.

\item There exists a nondecreasing function $\gamma:[0,\infty)\to[0,\infty)$
such that $\liminf_{s\to \infty} \gamma(s)/s >0$ and
for any $t \in [0,T]$,
\begin{equation}  \label{eq:M-growth}
|\mathcal{M}(\Psi)|_{L^\infty([0,t]:E)} \leq
\gamma(|\Psi|_{L^\infty([0,t]:E)}).
\end{equation}

\item For any $R\in (0,\infty )$, there exists a $C=C(R,T)\in (1,\infty )$
such that whenever $\Phi ,\Psi \in L^{\infty }([0,T]:E)$ satisfy $|\Phi
|_{L^{\infty }([0,T]:E)}\leq R$ and $|\Psi |_{L^{\infty }([0,T]:E)}\leq R$,
for any $t\in \lbrack 0,T]$
\begin{equation}
|\mathcal{M}(\Phi )-\mathcal{M}(\Psi )|_{L^{\infty }([0,t]:E)}\leq C|\Phi
-\Psi |_{L^{\infty }([0,t]:E)}.  \label{eq:M-Lipschitz}
\end{equation}

\item For any $R\in (0,\infty )$ and $2\leq p<\infty $, whenever $\Phi
_{n},\Phi \in L^{\infty }([0,T]:E)$ satisfy $|\Phi _{n}|_{L^{\infty
}([0,T]:E)}\leq R$ for all $n\in \mathbb{N}$, $|\Phi |_{L^{\infty
}([0,T]:E)}\leq R$, and $\lim_{n\rightarrow \infty }|\Phi _{n}-\Phi
|_{L^{p}([0,T]:E)}=0$,
\begin{equation}
\lim_{n\rightarrow \infty }|\mathcal{M}(\Phi _{n})-\mathcal{M}(\Phi
)|_{L^{p}([0,T]:E)}=0.  \label{eq:M-continuous}
\end{equation}
\end{enumerate}
\end{assumption}

\begin{remark}
	\begin{enumerate}[(a)] $\;$
		\item The superlinearity property of $\gamma$ in Assumption \ref{assum:mathcal-M}(b)
		 guarantees
		that the functional inverse of $\gamma$ satisfies for some $C>0$ and all $\xi \in [0,\infty)$,
		\begin{equation} \label{eq:gamma-inv}
		  \gamma^{-1}(\xi) \leq C(1 + |\xi|).
		\end{equation}
The function $\gamma^{-1}$ will  appear in Assumption \ref{assum:G-new} which specifies our main condition on the diffusion coefficient $G$.
\item Assumption \ref{assum:mathcal-M}(d) is stated in terms of continuity in $%
L^p([0,T]:E)$ rather than in $C([0,T]:E)$ because later in this work we will
focus on functions like $\Psi(t) = S(t) x^{\star\star}$ where $%
x^{\star\star} \in E^{\star\star}$. As long as the semigroup satisfies
Assumption \ref{assum:semigroup}, $\Psi \in L^p([0,T]:E)$ for any $p\in
[1,\infty]$, but $\Psi$ may not be in $C([0,T]:E)$ (See Lemma \ref%
{lem:U-compact}(1)). Assumption \ref{assum:mathcal-M}(a) gives in particular
that if $\Psi$ is a $E$-valued continuous progressively measurable process on some filtered
probability space then so is $\mathcal{M}(\Psi)$.
\end{enumerate}
\end{remark}

Assumption \ref{assum:mathcal-M} is satisfied in many general situations.
The simplest such example is when $B$ is a Lipschitz map on $E$.

\begin{example}
Suppose $B:[0,T]\times E\rightarrow E$ is a measurable map that satisfies
\begin{equation*}
\sup_{0\leq s\leq T}|B(s,x)-B(s,y)|_{E}\leq L_{B}|x-y|_{E},\quad \sup_{0\leq
s\leq T}|B(s,x)|_{E}\leq C_{B}(1+|x|_{E})\mbox{ for all }x,y\in E
\end{equation*}
for $L_{B},C_{B}<\infty $, and that Assumption \ref{assum:semigroup} holds.
Then Assumption \ref{assum:mathcal-M} is satisfied. Indeed, the unique
solvability of \eqref{eq:v-mild} and part (a) in Assumption \ref%
{assum:mathcal-M} follow by standard estimates. Part (b) follows from
Gronwall's lemma using the linear growth of $B$ and the boundedness of the semigroup.
Finally parts (c) and (d) are also easily seen using the Lipschitz property
of $B$ and Gronwall's lemma.
\end{example}

Global Lipschitz continuity of $B$ is not required. We present two examples
in Section \ref{S:examples} where $B$ is not globally Lipschitz continuous.%
%
%
%
%
%
%
%
%
%
%
%
%
%
%
%
%
%
%
%

For any $\mathcal{F}_{t}$-progressively measurable $\varphi \in L^{\infty
}([0,T]:E)$ a.s., consider
\begin{equation}
Z(\varphi )(t)\doteq \int_{0}^{t}S(t-s)G(s,\varphi (s))dw(s),\;t\in \lbrack
0,T]  \label{eq:Z-varphi-def}
\end{equation}%
and for any $\varphi \in L^{\infty }([0,T]:E)$ and $u\in L^{2}([0,T]:H)$,
\begin{equation}
(\mathcal{L}(\varphi )u)(t)\doteq \int_{0}^{t}S(t-s)G(s,\varphi
(s))u(s)ds,\;t\in \lbrack 0,T].  \label{eq:mathcal-L-def}
\end{equation}%
We now introduce our assumption on $G$ that in particular ensures that these
integrals are well-defined.

\begin{assumption}
\label{assum:G-new} There exists some Banach space $E_2 \supset E$ such that
the embedding is continuous and for any $s \in [0,T]$ and $x \in E$, $G(s,x)
\in \mathscr{L}(H,E_2)$. For every $t>0$, $S(t)$ can be extended as a
bounded linear operator from $E_2$ to $E$. There exists $p>2$ and $C \in
(0,\infty)$ such that the following conditions hold.

\begin{enumerate}[(a)]

\item For any $\mathcal{F}_{t}$-progressively measurable $\varphi \in
L^{\infty }([0,T]:E)$ satisfying ${\mathbb{E}}(\gamma ^{-1}(|\varphi
|_{L^{\infty }([0,T]:E)}))^{p}<\infty $, where $\gamma ^{-1}$ is the
functional inverse of $\gamma $ from \eqref{eq:gamma-inv}, $Z(\varphi )$ is a
well-defined $L^{p}(\Omega :C([0,T]:E))$-valued random variable.
Furthermore, for any $t\in \lbrack 0,T]$,
\begin{equation}
{\mathbb{E}}\left\vert Z(\varphi )\right\vert _{C([0,t]:E)}^{p}\leq C{%
\mathbb{E}}\int_{0}^{t}\left( \gamma ^{-1}(|\varphi (s)|_{E})\right) ^{p}ds.
\end{equation}

\item For any $\mathcal{F}_t$-progressively measurable $\varphi, \psi \in
L^p(\Omega:L^\infty([0,T]:E))$, and $t \in [0,T]$,
\begin{equation}
{\mathbb{E}} \left|Z(\varphi) - Z(\psi) \right|_{C([0,t]:E)}^p \leq C {%
\mathbb{E}} \int_0^t |\varphi(s) - \psi(s)|_E^pds.
\end{equation}

\item For any $R\in \lbrack 0,\infty )$, the collection
\begin{equation*}
\left\{ Z(\varphi ):\varphi \text{ is }\mathcal{F}_{t}\text{-progressively
measurable and }{\mathbb{E}}(\gamma ^{-1}(|\varphi |_{L^{\infty
}([0,T]:E)}))^{p}\leq R\right\}
\end{equation*}%
is tight in $C([0,T]:E)$.

\item For any $\varphi \in L^\infty([0,T]:E)$ and $u \in L^2([0,T]:H)$, $%
\mathcal{L}(\varphi)u$ is well-defined and $C([0,T]:E)$-valued. Furthermore,
for any $t \in [0,T]$,
\begin{equation}
\left|\mathcal{L}(\varphi)u \right|^p_{C([0,t]:E)} \leq
C|u|^p_{L^2([0,t]:H)}\left( \int_0^t \left(\gamma^{-1}(|\varphi(s)|_E)
\right)^p ds\right).
\end{equation}

\item For any $\varphi, \psi \in L^\infty([0,T]:E)$ and $u \in L^2([0,T]:H)$%
,
\begin{equation}
\left| \mathcal{L}(\varphi)u - \mathcal{L}(\psi)u\right|^p_{C([0,t]:E)} \leq
C|u|^p_{L^2([0,T]:H)} \left( \int_0^t |\varphi(s) - \psi(s)|_E^pds\right).
\end{equation}

\item For any $R,N\in \lbrack 0,\infty )$, the collection
\begin{equation*}
\left\{ \mathcal{L}(\varphi )u:|\varphi |_{L^{\infty }([0,T]:E)}\leq R,u\in
\mathcal{S}^{N}\right\}
\end{equation*}%
is pre-compact in $C([0,T]:E)$.
\end{enumerate}
\end{assumption}

In Section \ref{sec:suff-conds}, we provide a set of sufficient
conditions that imply Assumption \ref{assum:G-new}.

\section{Main Results}

\label{S:Laplace}

Theorem \ref{thm:uniqmilsoln} below says that under Assumptions \ref%
{assum:semigroup}, \ref{assum:mathcal-M}, and \ref{assum:G-new}, the SDE in %
\eqref{eq:intro-abstract} is well posed in the mild sense. These assumptions
will be in force throughout in Sections \ref{S:Laplace}-\ref{S:exit}. In
particular, throughout these sections $p$ will be as introduced in
Assumption \ref{assum:G-new}. By a mild solution we mean the following.

\begin{definition}[Mild Solution]
\label{def:mildsoln} Fix $x\in E$ and let $(\Omega ,{\mathcal{F}},{\mathbb{P}%
},\{{\mathcal{F}}_{t}\}_{0\leq t\leq T})$ and $w$ be as in Section \ref%
{S:notes-assums}. An $\{{\mathcal{F}}_{t}\}$-progressively measurable $E$-valued continuous
stochastic process $X_{x}^{\varepsilon }$ is said to be a mild solution to %
\eqref{eq:intro-abstract} if ${\mathbb{E}}(\gamma ^{-1}(|X_{x}^{\varepsilon
}|_{L^{\infty }([0,T]:E)}))^{p}<\infty $ and
\begin{equation}
X_{x}^{\varepsilon }=\mathcal{M}\left( S(\cdot )x+\sqrt{{\varepsilon }}%
\int_{0}^{\cdot }S(\cdot -s)G(s,X_{x}^{\varepsilon }(s))dw(s)\right) =%
\mathcal{M}\left( S(\cdot )x+\sqrt{{\varepsilon }}Z(X_{x}^{\varepsilon
})\right),   \label{eq:eq1012}
\end{equation}
where $Z(X_x^\e)$ is given by \eqref{eq:Z-varphi-def}.
\end{definition}


\begin{theorem}
\label{thm:uniqmilsoln}
There is a unique mild solution $X^{\varepsilon}_x$ of the equation in %
\eqref{eq:intro-abstract} for every $x \in E$ and ${\varepsilon}>0$.
\end{theorem}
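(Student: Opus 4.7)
I plan to prove Theorem \ref{thm:uniqmilsoln} by a Picard iteration, with a stopping-time localization to handle the fact that $\mathcal{M}$ is only locally Lipschitz. Set $X_0^\e \equiv 0$ and inductively define $X_{n+1}^\e = \mathcal{M}(S(\cdot)x + \sqrt{\e}\, Z(X_n^\e))$. Assumption \ref{assum:G-new}(a) ensures that $Z(X_n^\e)$ is a well-defined element of $L^p(\Omega : C([0,T]:E))$ provided $\E(\gamma^{-1}(|X_n^\e|_{L^\infty([0,T]:E)}))^p<\infty$, and Assumption \ref{assum:mathcal-M}(a) then yields $X_{n+1}^\e \in C([0,T]:E)$ pathwise; progressive measurability is inherited from that of the stochastic convolution together with the causal structure of $\mathcal{M}$. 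A uniform a priori bound on $f_n(t) \doteq \E(\gamma^{-1}(|X_n^\e|_{L^\infty([0,t]:E)}))^p$ then follows from combining the pathwise inequality $\gamma^{-1}(|\mathcal{M}(\Psi)|_{L^\infty([0,t]:E)}) \leq |\Psi|_{L^\infty([0,t]:E)}$ (immediate from Assumption \ref{assum:mathcal-M}(b) and monotonicity of $\gamma$), the sublinear growth \eqref{eq:gamma-inv}, and the $L^p$ bound in Assumption \ref{assum:G-new}(a), yielding a Gronwall-type inequality $f_{n+1}(t) \leq C_0 + C_1 \int_0^t f_n(s)\,ds$; iterating gives $\sup_n f_n(T) < \infty$.

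To obtain convergence of the iterates, I localize: for $R > 0$ set
\[
\tau_R^n \doteq \inf\{t \in [0,T] : |X_n^\e(t)|_E \vee |X_{n-1}^\e(t)|_E > R\} \wedge T.
\]
The causal structure of $\mathcal{M}$---namely, $\mathcal{M}(\Psi)|_{[0,s]}$ is determined by $\Psi|_{[0,s]}$, as is apparent from the integral equation \eqref{eq:v-mild}---permits the local Lipschitz bound \eqref{eq:M-Lipschitz} to be applied to the stopped processes with an $R$-dependent constant. Combining this with Assumption \ref{assum:G-new}(b) yields
\[
\E|X_{n+1}^\e - X_n^\e|^p_{C([0,t \wedge \tau_R^n]:E)} \leq K(R,T)\, \e^{p/2} \int_0^t \E|X_n^\e - X_{n-1}^\e|^p_{C([0,s \wedge \tau_R^n]:E)}\,ds,
\]
and iterating this Gronwall-type bound shows the stopped iterates form a Cauchy sequence in $L^p(\Omega : C([0,T]:E))$ for each fixed $R$. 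The uniform a priori bound from the previous step, combined with a Markov-type estimate on $\gamma^{-1}(|X_n^\e|_{L^\infty([0,T]:E)})$, gives $\sup_n \PP(\tau_R^n < T) \to 0$ as $R \to \infty$, so $X_n^\e$ converges in probability in $C([0,T]:E)$ to some continuous adapted process $X_x^\e$.

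Passing to the limit in $X_{n+1}^\e = \mathcal{M}(S(\cdot)x + \sqrt{\e}\, Z(X_n^\e))$ using the continuity in Assumption \ref{assum:mathcal-M}(d) together with the estimate \ref{assum:G-new}(b) shows that $X_x^\e$ satisfies \eqref{eq:eq1012} and so is a mild solution in the sense of Definition \ref{def:mildsoln}; the required integrability $\E(\gamma^{-1}(|X_x^\e|_{L^\infty([0,T]:E)}))^p < \infty$ is inherited from the uniform bound on $f_n$ via Fatou. Uniqueness is obtained from an analogous localization-plus-Gronwall argument applied to the difference of any two putative mild solutions, stopped when either one first exits a ball of radius $R$, followed by sending $R \to \infty$.

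The main obstacle is the local-only Lipschitz property of $\mathcal{M}$: the constant $C(R,T)$ in \eqref{eq:M-Lipschitz} depends on the a.s. sup-norm bound $R$, ruling out a direct global contraction in the natural norm on $L^p(\Omega : C([0,T]:E))$. The two ingredients that overcome this are (i) the causality of $\mathcal{M}$, which makes the stopping-time localization meaningful, and (ii) the sublinearity of $\gamma^{-1}$ in \eqref{eq:gamma-inv}, which is precisely what is needed for the a priori bound via Assumption \ref{assum:G-new}(a) to close through Gronwall uniformly in $n$.
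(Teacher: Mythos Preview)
Your overall plan---Picard iteration with localization and a Gronwall-based a priori bound---is reasonable in spirit and does differ from the paper's route, which first solves for the stochastic convolution $Y_x^\e$ via a spatially truncated fixed-point problem (Lemma \ref{lem:a-priori-bounds} and Theorem \ref{thm:Yeux-uniqueness}) and then sets $X_x^\e=\mathcal{M}(S(\cdot)x+Y_x^\e)$. But the localization you propose has a genuine gap.

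The Lipschitz estimate \eqref{eq:M-Lipschitz} in Assumption \ref{assum:mathcal-M}(c) requires a bound on the \emph{inputs} of $\mathcal{M}$, namely $\Psi_n = S(\cdot)x + \sqrt{\e}\,Z(X_n^\e)$ and $\Psi_{n-1}$. Your stopping time $\tau_R^n$ controls the \emph{outputs} $X_n^\e=\mathcal{M}(\Psi_{n-1})$ and $X_{n-1}^\e$ instead. From $|X_n^\e|\le R$ you cannot recover a pathwise bound on $|\Psi_{n-1}|$: the growth bound \eqref{eq:M-growth} only gives $\gamma^{-1}(|X_n^\e|)\le |\Psi_{n-1}|$, which is the wrong direction, and $Z(X_{n-1}^\e)$ is a stochastic integral that is not $\omega$-wise bounded even when its integrand is. So the displayed contraction inequality does not follow from Assumption \ref{assum:mathcal-M}(c) as written. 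Moreover, even if you redefine $\tau_R^n$ in terms of $\Psi_n,\Psi_{n-1}$, the stopping times depend on $n$ and do not nest, so iterating the bound to get a Cauchy estimate is not straightforward: you cannot control $\E|X_{n+1}^\e-X_n^\e|^p$ on $[0,\tau_R^n]$ by the previous difference on $[0,\tau_R^{n-1}]$ without further argument.

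The paper sidesteps both issues by truncating $Y$ with $\mathcal{T}_R$ (see \eqref{eq:eqyeuxr}): since $|Y_{x,R}^{\e,u}|\le R$ deterministically, the argument $S(\cdot)x+Y_{x,R}^{\e,u}$ of $\mathcal{M}$ is bounded by $R+\kappa|x|_{E^{\star\star}}$ \emph{pathwise}, so the Lipschitz constant of $\mathcal{M}$ is fixed once and for all and a clean contraction on $L^p(\Omega:\mathcal{E}_R)$ follows without any stopping times. Consistency in $R$ plus the uniform moment bound then removes the cutoff. If you want to salvage a direct Picard iteration on $X$, the natural fix is to insert such a spatial cutoff (on $\Psi_n$, not on $X_n$) rather than a stopping time; but at that point you are essentially reproducing the paper's argument.
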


The proof of Theorem \ref{thm:uniqmilsoln} is given in Section \ref%
{sec:uniqmildsolns}.

We now recall the notion of a uniform large deviations principle from \cite%
{F-W-book}. Let $\mathcal{E}$ be a Polish space with metric $\rho $, and let
$\mathcal{E}_{0}$ be some topological space that will be used for indexing.
In applications, frequently $\mathcal{E}_{0}$ will correspond to the space
for the initial condition of the SDE. We say a function $I:{\mathcal{E}}%
\rightarrow \lbrack 0,\infty ]$ is a rate function if it has compact level
sets, i.e., for all $M<\infty $, $\{\varphi :I(\varphi )\leq M\}$ is
compact. Recall that for $\varphi \in \mathcal{E}$ and $K\subset \mathcal{E}$,
\begin{equation*}
{\mathnormal{dist}}(\varphi ,K)\doteq \inf_{\psi \in K}\rho (\varphi ,\psi ).
\end{equation*}

\begin{definition}[Freidlin-Wentzell Uniform Large Deviations Principle]
A family of $\mathcal{E}$-valued processes $\{Z_{x}^{\varepsilon }\}_{{%
\varepsilon }>0}$ indexed by $x\in \mathcal{E}_{0}$ is said to satisfy a
large deviations principle with respect to the rate function $I_{x}:\mathcal{%
E}\rightarrow \lbrack 0,\infty ]$, $x\in \mathcal{E}_{0}$, uniformly in a
class $\mathscr{A}$ of subsets of $\mathcal{E}_{0}$, if the following hold.

\begin{enumerate}
\item LDP lower bound: For any $A_0 \in \mathscr{A}$, $\delta>0$, and $s_0>0$%
,
\begin{equation}  \label{eq:ldpunilowbd}
\liminf_{{\varepsilon} \to 0}\inf_{x \in A_0} \inf_{\varphi \in \Phi_x(s_0)}
\left\{ {\varepsilon} \log\left( {\mathbb{P}}\left(\rho(Z^{\varepsilon}_x,%
\varphi)<\delta \right)\right) + I_x(\varphi) \right\} \geq 0.
\end{equation}

\item LDP upper bound: For any $A_0 \in \mathscr{A}$, $\delta>0$, $s_0>0$,
\begin{equation}  \label{eq:ldpuniuppbd}
\limsup_{{\varepsilon} \to 0} \sup_{x \in A_0} \sup_{s \leq s_0} \left\{{%
\varepsilon} \log\left( {\mathbb{P}}\left({\mathnormal{dist}}%
(Z^{\varepsilon}_x, \Phi_x(s) )\geq \delta\right) \right) + s \right\} \leq
0,
\end{equation}
where for $s <\infty$ and $x \in {\mathcal{E}}_0$, $\Phi_x(s) \doteq
\{\varphi \in {\mathcal{E}}: I_x(\varphi) \le s\}$.
\end{enumerate}
\end{definition}

The main result of this section gives a uniform large deviations principle
for $\{X^{\varepsilon}_x\}$ given by Theorem \ref{thm:uniqmilsoln}. We begin
with a well-posedness result for certain deterministic controlled equations.

\begin{theorem}
\label{thm:uniqdetcont}
For every $x\in E$ and $u \in L^2([0,T]:H)$ there is a unique mild solution $%
X^{0,u}_x$ of the deterministic equation
\begin{equation}  \label{eq:eq1214ab1}
dX^{0,u}_x(t) = [AX^{0,u}_x(t) + B(t,X^{0,u}_x(t)) + G(t,X^{0,u}_x(t))u(t)
]dt ,\; X^{0,u}_x(0)=x.
\end{equation}
Namely $X^{0,u}_x$ is the unique element of $C([0,T]:E)$ that satisfies
\begin{equation}  \label{eq:eq1214ab}
X^{0,u}_x = \mathcal{M}\left(S(\cdot)x + \int_0^\cdot S(\cdot-s)
G(s,X^{0,u}_x(s))u(s)ds\right) = \mathcal{M}\left(S(\cdot)x + \mathcal{L}%
(X^{0,u}_x)u\right)
\end{equation}
where $\mathcal{L}(X^{0,u}_x)u$ is given by \eqref{eq:mathcal-L-def}.
\end{theorem}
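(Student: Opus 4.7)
The plan is to construct the unique mild solution by a Banach fixed-point argument applied to the map $\mathcal{T}: C([0,T]:E)\to C([0,T]:E)$ defined by $\mathcal{T}(X) \doteq \mathcal{M}(S(\cdot)x + \mathcal{L}(X)u)$. A fixed point of $\mathcal{T}$ is precisely a mild solution of \eqref{eq:eq1214ab}. The map is well-defined because $S(\cdot)x \in C([0,T]:E)$ (by the $C_0$-semigroup property), $\mathcal{L}(X)u \in C([0,T]:E)$ by Assumption \ref{assum:G-new}(d), and $\mathcal{M}$ preserves continuity by Assumption \ref{assum:mathcal-M}(a).

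First I would derive an a priori bound. For any mild solution, Assumption \ref{assum:mathcal-M}(b) and monotonicity of $\gamma^{-1}$ give $\gamma^{-1}(|X(s)|_E) \leq M_S|x|_E + |\mathcal{L}(X)u|_{L^\infty([0,s]:E)}$, where $M_S = \sup_{s\in[0,T]}|S(s)|_{\mathscr{L}(E)}$. Combining Assumption \ref{assum:G-new}(d) with $\gamma^{-1}(\xi)\leq C(1+\xi)$ from \eqref{eq:gamma-inv}, the quantity $r(t)\doteq |\mathcal{L}(X)u|^p_{L^\infty([0,t]:E)}$ satisfies
\begin{equation*}
r(t) \leq K_1 + K_2\int_0^t r(s)\,ds,
\end{equation*}
with $K_1,K_2$ depending only on $|x|_E,\,|u|_{L^2([0,T]:H)},\,T,\,p$. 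Gronwall yields $|\mathcal{L}(X)u|_{L^\infty([0,T]:E)}\leq C^*$, and hence $|X|_{L^\infty([0,T]:E)}\leq \gamma(M_S|x|_E+C^*)\doteq R^*$, a quantity depending only on the same data.

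Next I would carry out a local contraction on a short interval $[0,T_0]$ in the closed ball $\bar B = \{X\in C([0,T_0]:E): |X|_{L^\infty([0,T_0]:E)}\leq R^*+1\}$. For $T_0$ small enough, Assumption \ref{assum:mathcal-M}(b) and Assumption \ref{assum:G-new}(d) give $\mathcal{T}(\bar B)\subset \bar B$; and Assumption \ref{assum:mathcal-M}(c) (with $R=R^*+1$) together with Assumption \ref{assum:G-new}(e) applied to $u\mathbbm{1}_{[0,T_0]}$ yields
\begin{equation*}
|\mathcal{T}(X_1)-\mathcal{T}(X_2)|_{L^\infty([0,T_0]:E)} \leq C_{R^*+1}\bigl(C|u|^p_{L^2([0,T_0]:H)}\,T_0\bigr)^{1/p}|X_1-X_2|_{L^\infty([0,T_0]:E)},
\end{equation*}
which is a strict contraction for $T_0$ small (depending only on $R^*$ and $|u|_{L^2([0,T]:H)}$). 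Banach's theorem gives existence and uniqueness on $[0,T_0]$ within $\bar B$; iterating on $[kT_0,(k+1)T_0]$ with $X(kT_0)$ as the new initial condition, which remains bounded by $R^*$ so the same $T_0$ suffices at each step, yields a solution on $[0,T]$ after finitely many iterations.

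Global uniqueness follows from applying the same Lipschitz estimates to two solutions $X_1,X_2$ (both bounded by $R^*$), producing $|X_1-X_2|^p_{L^\infty([0,t]:E)}\leq K\int_0^t |X_1-X_2|^p_{L^\infty([0,s]:E)}\,ds$ and hence $X_1=X_2$ by Gronwall. The principal obstacle is the potentially superlinear growth of $\gamma$ in Assumption \ref{assum:mathcal-M}(b), which rules out a direct global contraction on $C([0,T]:E)$. This is circumvented by exploiting the linear growth of $\gamma^{-1}$ from \eqref{eq:gamma-inv} to set up the Gronwall estimate on $\mathcal{L}(X)u$ rather than on $X$ itself, producing the a priori bound $R^*$ that anchors the iterative fixed-point construction.
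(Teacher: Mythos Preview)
Your argument is essentially correct and yields the result, but it differs from the paper's route. The paper does not work with $X$ directly. Instead it treats Theorem~\ref{thm:uniqdetcont} together with Theorem~\ref{thm:uniqmilsoln} as the special cases $\varepsilon=0$ and $u=0$ of a single wellposedness result for the controlled process $Y_x^{\varepsilon,u}$ in \eqref{eq:Y-control-def}. The key device is a radial cutoff $\mathcal{T}_R$ (Lemma~\ref{lem:cutoff}): one first solves the truncated equation \eqref{eq:eqyeuxr} for $Y_{x,R}^{\varepsilon,u}$ by contraction on a short interval, where the cutoff guarantees iterates stay in the ball $\mathcal{E}_R$ automatically so the iteration to cover $[0,T]$ is immediate. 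A Gronwall argument then gives a moment bound uniform in $R$, and the cutoff is removed by consistency of the $Y_{x,R}^{\varepsilon,u}$ as $R\to\infty$. Finally one sets $X_x^{\varepsilon,u}=\mathcal{M}(S(\cdot)x+Y_x^{\varepsilon,u})$. The advantage of the paper's approach is that it handles the stochastic and deterministic cases, and initial data $x\in E^{\star\star}$, in one stroke---all of which are needed later for the uniform Laplace principle. Your direct fixed-point argument on $X$ is cleaner for the deterministic statement alone, and your idea of running Gronwall on $\mathcal{L}(X)u$ via the linear growth of $\gamma^{-1}$ is exactly the mechanism behind the paper's uniform-in-$R$ bound.

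One point in your iteration deserves care. When you restart at $X(kT_0)$, the relevant ball radius for the contraction is not $R^*+1$ but the a~priori bound for the restarted problem with initial datum of norm at most $R^*$, namely $R^{**}=\gamma(M_S R^*+C^*(R^*))$, which is in general strictly larger than $R^*$. What saves the argument is that after concatenation the full solution on $[0,(k+1)T_0]$ starts at $x$ and hence is again bounded by $R^*$, so every restart has initial datum of norm at most $R^*$ and the ball radius $R^{**}+1$---and therefore $T_0$---is the same for all steps $k\ge 1$. Your sentence ``the same $T_0$ suffices'' is correct, but the justification runs through $R^{**}$ rather than $R^*+1$.
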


Theorem \ref{thm:uniqdetcont} will be proved in Section \ref%
{sec:uniqmildsolns}.

The rate function for the large deviations principle for $%
\{X_{x}^{\varepsilon }\}$ is given as follows. For $x\in E$ and $\varphi \in
C([0,T]:E)$ let
\begin{equation}
I_{x}(\varphi )\doteq \inf \left\{ \frac{1}{2}%
\int_{0}^{T}|u(s)|_{H}^{2}ds:X_{x}^{0,u}=\varphi \right\} ,
\label{eq:rate-fct-X}
\end{equation}%
where the infimum is taken over all $u\in L^{2}([0,T]:H)$ for which $%
X_{x}^{0,u}$ given as the mild solution of \eqref{eq:eq1214ab1} equals $%
\varphi $. We set $I_{x}(\varphi )=\infty $ if there does not exist a $u\in
L^{2}([0,T]:H)$ such that $\varphi =X_{x}^{0,u}$.

The following is the main result of this work which gives a large deviation
principle for $\{X^{\varepsilon}_x\}$ in $C([0,T]:E)$, uniformly in the
class $\mathscr{A}$ of all bounded subsets of $E$.

\begin{theorem}
\label{thm:LDP}
For $x\in E$ and $\varepsilon >0$ let $X_{x}^{\varepsilon }$ be the unique
mild solution of \eqref{eq:intro-abstract} in $C([0,T]:E)$ as given by
Theorem \ref{thm:uniqmilsoln}. Then $\{X_{x}^{\varepsilon }\}_{{\varepsilon }%
>0}$ satisfies a large deviation principle in $C([0,T]:E)$ with respect to
the rate function $I_{x}$ in \eqref{eq:rate-fct-X}, uniformly in the class $%
\mathscr{A}$ of all bounded subsets of $E$. That is, for any bounded subset $%
E_{0}$ of $E$, the following uniform lower and upper bounds hold.

\begin{enumerate}
\item For any $\delta>0$, $0< s_0 <\infty$,
\begin{equation}  \label{eq:LDP-lower-unif}
\liminf_{{\varepsilon} \to 0}\inf_{x \in E_0} \inf_{\varphi \in \Phi(s_0)}
\left\{{\varepsilon} \log \left({\mathbb{P}} \left( |X^{\varepsilon}_x -
\varphi|_{C([0,T]:E)}<\delta \right)\right)+ I_{x}(\varphi)\right\} \geq 0.
\end{equation}

\item For any $\delta>0$, $0<s_0< \infty$,
\begin{equation}  \label{eq:LDP-upper-unif}
\limsup_{{\varepsilon} \to 0}\sup_{x \in E_0} \sup_{s \leq s_0} \left\{ {%
\varepsilon} \log\left( {\mathbb{P}} \left( \mathnormal{dist}%
_{C([0,T]:E)}\left(X^{\varepsilon}_x, \Phi_{x}(s) \right) \geq \delta
\right)\right)+s\right\} \leq 0.
\end{equation}
\end{enumerate}
\end{theorem}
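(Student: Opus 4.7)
\smallskip

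\noindent\textbf{Proof plan.} The strategy is the three-step reduction sketched in the introduction. First, I would invoke Theorem~\ref{thm:unif-Laplace-Y} to obtain a uniform Laplace principle for $\{Y^\e_x\}$ in $C([0,T]:E)$ with some rate function $\tilde I_x$, where uniformity is over $x$ in weak-$\star$ closed bounded subsets of $E^{\star\star}$. By Alaoglu's theorem such subsets are weak-$\star$ compact, so Theorem~\ref{thm:ULP-implies-LDP} upgrades this Laplace principle into a uniform LDP for $\{Y^\e_x\}$ over weak-$\star$ compact subsets of $E^{\star\star}$. Given a bounded $E_0\subset E$, its weak-$\star$ closure in $E^{\star\star}$ is weak-$\star$ compact, so the uniform LDP for $Y^\e_x$ applies in particular with index set $E_0$.

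Second, I would transfer this uniform LDP to $\{X^\e_x\}$ via the contraction principle. By Theorem~\ref{thm:uniqmilsoln}, $X^\e_x=\mathcal{M}(S(\cdot)x+Y^\e_x)$; for $x\in E$ the trajectory $S(\cdot)x$ lies in $C([0,T]:E)$, so the map
\begin{equation*}
F_x:C([0,T]:E)\to C([0,T]:E),\qquad F_x(\Psi)\doteq \mathcal{M}(S(\cdot)x+\Psi),
\end{equation*}
is well defined by Assumption~\ref{assum:mathcal-M}(a) and sends $Y^\e_x$ to $X^\e_x$. Analogously, Theorem~\ref{thm:uniqdetcont} together with the corresponding characterization for $Y^{0,u}_x$ yields $X^{0,u}_x=F_x(Y^{0,u}_x)$, which identifies the rate function $I_x$ of \eqref{eq:rate-fct-X} with the pushforward $\inf\{\tilde I_x(\Psi):F_x(\Psi)=\varphi\}$ of the rate function for $Y$. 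The uniform local Lipschitz property of $\mathcal{M}$ from Assumption~\ref{assum:mathcal-M}(c), with constant $C=C(R,T)$ depending only on the bound $R$ on $|S(\cdot)x+\Psi|_{L^\infty([0,T]:E)}$, then gives a Lipschitz constant for $F_x$ that is uniform in $x\in E_0$ once a bounded region in $C([0,T]:E)$ is fixed. For the lower bound \eqref{eq:LDP-lower-unif}, a $\delta$-ball around $\varphi=F_x(\psi)$ contains the $F_x$-image of a $(\delta/C)$-ball around $\psi$, whose probability is controlled by the uniform lower bound for $Y^\e_x$. For the upper bound \eqref{eq:LDP-upper-unif}, if $X^\e_x$ is $\delta$-far from $\Phi_x(s)$, then by the Lipschitz estimate $Y^\e_x$ is at least $(\delta/C)$-far from the corresponding $\tilde I_x$-sublevel set, and the uniform upper bound for $Y^\e_x$ concludes.

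The main obstacle I anticipate is the bookkeeping needed to ensure the uniform-in-$x$ bounds in the contraction step are legitimate. Concretely, one must verify that as $x$ varies over $E_0$ and $\psi$ ranges in $\tilde\Phi_x(s_0)$ and a fixed $\delta$-neighborhood thereof, the functions $S(\cdot)x+\psi$ remain in a bounded ball of $C([0,T]:E)$ of radius $R$ that can be chosen independently of $x\in E_0$ and $s\le s_0$. This requires combining boundedness of $S(\cdot)x$ for $x\in E_0$ (Assumption~\ref{assum:semigroup}), the growth bound in Assumption~\ref{assum:mathcal-M}(b), and a bound on $|\psi|_{C([0,T]:E)}$ for $\psi$ in the sublevel set, uniform in $x\in E_0$, which in turn follows by inspection of the $Y^{0,u}_x$ equation and the estimate in Assumption~\ref{assum:G-new}(d) applied with $u\in\mathcal{S}^{2s_0}$. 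Once this uniform $R$ is in hand, a single Lipschitz constant $C(R,T)$ works for all $x\in E_0$, and the contraction-principle argument runs uniformly. Given the heavy lifting already done in Theorems~\ref{thm:unif-Laplace-Y} and \ref{thm:ULP-implies-LDP}, the remainder of the argument should be routine.
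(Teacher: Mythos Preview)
Your proposal is correct and follows essentially the same route as the paper: obtain the uniform LDP for $\{Y^\e_x\}$ over weak-$\star$ compact subsets of $E^{\star\star}$ from Theorems~\ref{thm:unif-Laplace-Y} and~\ref{thm:ULP-implies-LDP} (this is exactly Corollary~\ref{cor:fw}), then push it forward to $\{X^\e_x\}$ via $X^\e_x=\mathcal{M}(S(\cdot)x+Y^\e_x)$ using the uniform local Lipschitz property of $\mathcal{M}$ from Assumption~\ref{assum:mathcal-M}(c). The paper secures the uniform bound on $|\psi|_{C([0,T]:E)}$ for $\psi\in\tilde\Phi_x(s_0+1)$ by invoking Theorem~\ref{thm:compact-level-sets-tilde} (compactness of the union of sublevel sets) rather than re-deriving it from Assumption~\ref{assum:G-new}(d) as you suggest, but either route works and the remaining bookkeeping is exactly as you describe.
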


In Section \ref{S:exit} we show how Theorem \ref{thm:LDP} can be used to
study asymptotic problems associated with exit of $X_{x}^{\varepsilon }$
from a bounded domain. Section \ref{S:examples} presents two examples, a
reaction-diffusion equation with multiplicative noise and a two-dimensional
Navier Stokes equation with multiplicative noise, where Theorem \ref{thm:LDP}
holds. The Navier-Stokes equation requires a modification to Assumption \ref%
{assum:mathcal-M} because the mapping $\mathcal{M}$ for the Navier-Stokes
operator is not well defined as a map from $C([0,T]:H)\rightarrow C([0,T]:H)$%
. 
We now proceed with the proof of Theorem \ref{thm:LDP}.

\section{Proof of Theorem \protect\ref{thm:LDP}}

\label{sec:secthmldp} The main ingredient in the proof of Theorem \ref%
{thm:LDP} will be a reformulation of the uniform large deviations principle in
terms of the uniform Laplace principle. We begin with the following general
definitions. Recall ${\mathcal{E}}$ is a Polish space and ${\mathcal{E}}_0$
is some topological space.

\begin{definition}
A family of rate functions $I_{x}$ on $\mathcal{E}$ and parameterized by $x \in
\mathcal{E}_0$, has compact level sets on compact sets of $\mathcal{E}_0$ if
for all compact $K \subset \mathcal{E}_0$ and for all $M \in (0,\infty)$,
\begin{equation}  \label{eq:eqlakm}
\Lambda_{K,M} \doteq \bigcup_{x \in K}\{\varphi \in \mathcal{E}:I_{x}(\varphi)
\leq M\}
\end{equation}
is a compact subset of $\mathcal{E}$.
\end{definition}

\begin{definition}[Uniform Laplace Principle]
\label{def:unilappri} Let $I_{x}:\mathcal{E}\rightarrow \lbrack 0,\infty ]$
be a family of rate functions with compact level sets on compact subsets of $%
\mathcal{E}_{0}$. The family $\{Z_{x}^{\varepsilon }\}_{{\varepsilon }>0}$
indexed by $x\in \mathcal{E}_{0}$ of ${\mathcal{E}}$-valued random variables
satisfies a Laplace principle on $\mathcal{E}$ with rate function $I_{x}$, $%
x\in {\mathcal{E}}_{0}$, uniformly on compact sets if for all compact $%
K\subset \mathcal{E}_{0}$ and all bounded continuous $h:\mathcal{E}%
\rightarrow \mathbb{R}$,
\begin{equation*}
\lim_{{\varepsilon }\rightarrow 0}\sup_{x\in K}\left\vert {\varepsilon }\log
{\mathbb{E}}\left( \exp \left( -\frac{h(Z_{x}^{\varepsilon })}{{\varepsilon }%
}\right) \right) +\inf_{\varphi \in \mathcal{E}}\{h(\varphi )+I_{x}(\varphi
)\}\right\vert =0.
\end{equation*}
\end{definition}

The following result shows that a uniform Laplace principle implies the
corresponding uniform large deviation principle. The proof is given in
Section \ref{sec:fin}.

\begin{theorem}
\label{thm:ULP-implies-LDP} Let $I_{x}:\mathcal{E}\rightarrow \lbrack
0,\infty ]$ be a family of rate functions with compact level sets on compact
subsets of $\mathcal{E}_{0}$. Suppose that the family $\{Z_{x}^{\varepsilon
}\}_{{\varepsilon }>0}$ indexed by $x\in \mathcal{E}_{0}$ of ${\mathcal{E}}$%
-valued random variables satisfies a Laplace principle on $\mathcal{E}$ with
rate function $I_{x}$, $x\in {\mathcal{E}}_{0}$, uniformly on compact sets
of ${\mathcal{E}}_{0}$. Then $\{Z_{x}^{\varepsilon }\}_{{\varepsilon }>0}$
satisfies a large deviation principle with respect to rate function $I_{x}$,
uniformly in class $\mathscr{A}$ of all compact subsets of ${\mathcal{E}}%
_{0} $.
\end{theorem}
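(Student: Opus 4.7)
The plan is to adapt the classical argument that a Laplace principle implies a large deviations principle (see, e.g., \cite{de-2011}), upgrading it to a uniform statement by exploiting the compactness of the composite level set $\Lambda_{K,M}$. Fix a compact $K \subset \mathcal{E}_0$, parameters $\delta > 0$ and $s_0 > 0$, and a tolerance $\eta > 0$; throughout, $\Lambda := \Lambda_{K,s_0}$ is compact in $\mathcal{E}$ and serves as the carrier for the finite discretizations that import uniformity from the (fixed-$h$) Laplace principle.

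For the lower bound \eqref{eq:ldpunilowbd}, I would choose $L > s_0 + 4\eta$ and, for each $\varphi \in \Lambda$, use the bounded continuous test function $h_\varphi(\psi) = L\cdot (1 \wedge \rho(\psi,\varphi)/\delta)$. Since $\varphi \mapsto h_\varphi$ is $(L/\delta)$-Lipschitz in the sup norm, compactness of $\Lambda$ yields finitely many $\varphi_1,\dots,\varphi_m$ such that every $\varphi \in \Lambda$ has a nearest $\varphi_i$ with $\|h_\varphi - h_{\varphi_i}\|_\infty \le \eta$ and $h_{\varphi_i}(\varphi) \le \eta$. Applying the uniform Laplace principle separately to each of the finitely many $h_{\varphi_i}$ produces a single $\e_0 > 0$ such that, for $\e < \e_0$ and $x \in K$, $-\e\log\mathbb{E}\exp(-h_{\varphi_i}(Z^\e_x)/\e) \le \inf_\psi\{h_{\varphi_i}(\psi)+I_x(\psi)\}+\eta$. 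Evaluating this infimum at $\psi = \varphi \in \Phi_x(s_0)$ and converting from $h_{\varphi_i}$ to $h_\varphi$ via the sup-norm estimate yields $\mathbb{E}\exp(-h_\varphi(Z^\e_x)/\e) \ge \exp(-(I_x(\varphi)+3\eta)/\e)$. Because $h_\varphi \equiv L$ off $B(\varphi,\delta)$, the expectation is bounded above by $\mathbb{P}(\rho(Z^\e_x,\varphi) < \delta) + e^{-L/\e}$, and the choice $L > s_0 + 4\eta \ge I_x(\varphi) + 4\eta$ lets one absorb the negligible tail, giving the uniform lower bound.

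For the upper bound \eqref{eq:ldpuniuppbd} the same strategy is used, but now the relevant sets $\Phi_x(s)$ vary with $(x,s)$, so the test function cannot be fixed a priori. I would cover $\Lambda$ by balls $B(\varphi_1,\delta/4),\dots,B(\varphi_N,\delta/4)$, and for each of the $2^N$ subsets $J \subseteq \{1,\dots,N\}$ introduce the fixed test function $h_J(\psi) = L\,(1 - 2\rho(\psi,A_J)/\delta)_+$ with $A_J = \bigcup_{j \in J} B(\varphi_j,\delta/4)$ and $L > s_0$. Given $x \in K$ and $s \le s_0$, set $J(x,s) := \{j: B(\varphi_j,\delta/4) \cap \Phi_x(s) \neq \emptyset\}$; a triangle-inequality check gives $\Phi_x(s) \subseteq A_{J(x,s)} \subseteq \Phi_x(s)^{\delta/2}$, and consequently
\[
\{\mathnormal{dist}(\psi,\Phi_x(s)) \ge \delta\} \;\subseteq\; \{\rho(\psi,A_{J(x,s)}) \ge \delta/2\} \;=\; \{h_{J(x,s)}(\psi) = 0\},
\]
so $\mathbb{P}(\mathnormal{dist}(Z^\e_x,\Phi_x(s)) \ge \delta) \le \mathbb{E}\exp(-h_{J(x,s)}(Z^\e_x)/\e)$. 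The same inclusion $\Phi_x(s) \subseteq A_{J(x,s)}$ forces $\inf_\psi\{h_{J(x,s)}(\psi)+I_x(\psi)\} \ge s$: on $A_{J(x,s)}$ one picks up $h_{J(x,s)} = L \ge s$, while off $A_{J(x,s)}$ one has $\psi \notin \Phi_x(s)$ and hence $I_x(\psi) > s$. Applying the uniform Laplace principle to each of the (finitely many) $h_J$ and taking the minimum of the resulting $\e_0$'s then completes the uniform upper bound.

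The principal obstacle is the upper bound: the natural test function associated with $\Phi_x(s)$ varies with $(x,s)$ and cannot be plugged directly into the uniform Laplace principle, which produces uniformity in $x$ only for a single fixed $h$. The combinatorial cover-and-classify device above replaces each $\Phi_x(s)$ by one of finitely many fixed enlargements $A_{J(x,s)}$ and thus reduces the problem to a finite family of fixed test functions $\{h_J\}_{J \subseteq \{1,\dots,N\}}$; the crucial enabling hypothesis is precisely the compactness of $\Lambda_{K,s_0}$.
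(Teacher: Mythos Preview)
Your proof is correct, and the overall architecture---reduce to finitely many fixed test functions via compactness of $\Lambda_{K,s_0}$, then apply the uniform Laplace principle to each---is sound. The paper, however, organizes the reduction differently.

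For the lower bound the two arguments are close: the paper also uses the tent functions $h_{j,\delta,\varphi}(\psi)=j(\rho(\psi,\varphi)/\delta\wedge 1)$, but instead of a finite $\eta$-net in $\Lambda$ it argues by sequences. Given arbitrary $\varepsilon_n\downarrow 0$, $x_n\in K$, $\varphi_n\in\Phi_{x_n}(s_0)$, compactness of $\Lambda_{K,s_0}$ yields a subsequence with $\varphi_n\to\varphi$; the corresponding test functions $h_{j,\delta,\varphi_n}$ then converge \emph{uniformly} to the single fixed function $h_{j,\delta,\varphi}$, to which the uniform Laplace principle applies. Two elementary continuity estimates (\eqref{eq:log-continuity} and \eqref{eq:inf-continuous}) transfer the conclusion back.

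For the upper bound the difference is more substantial. Where you discretize by covering $\Lambda$ with $N$ balls and classifying each pair $(x,s)$ by the subset $J(x,s)\subseteq\{1,\dots,N\}$ of balls meeting $\Phi_x(s)$---thereby reducing to $2^N$ fixed test functions $h_J$---the paper instead views each $\Phi_{x}(s)$ as a point in the compact metric space of closed subsets of $\Lambda_{K,s_0}$ under the Hausdorff metric $\lambda$. Along any sequence $(\varepsilon_n,x_n,s_n)$ one extracts a Hausdorff-convergent subsequence $\Phi_{x_n}(s_n)\to B$, and then the variable test functions $h_{j,\delta,x_n,s_n}(\psi)=j-j(\mathnormal{dist}(\psi,\Phi_{x_n}(s_n))/\delta\wedge 1)$ converge uniformly to the fixed function $h_{j,\delta,B}$, to which the uniform Laplace principle is applied.

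Your combinatorial cover-and-classify device is more elementary (it avoids the Hausdorff metric entirely) and is explicitly quantitative, at the cost of the somewhat artificial $2^N$-fold case split. The paper's Hausdorff-metric argument is cleaner once one accepts that the hyperspace of closed subsets of a compact metric space is itself compact; it also makes transparent why the test function can be taken to depend on the \emph{set} $\Phi_x(s)$ rather than on $(x,s)$ separately. Both routes rest on the same essential input: compactness of $\Lambda_{K,s_0}$.
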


In proving Theorem \ref{thm:LDP} we are specifically interested in the case
where $\mathcal{E}_0 = E$ and $\mathcal{E} = C([0,T]:E)$. To use a uniform
Laplace principle as in Definition \ref{def:unilappri} in order to prove
Theorem \ref{thm:LDP} we will need to consider level sets of the form in %
\eqref{eq:eqlakm} where $K$ is replaced by an arbitrary bounded set $E_0$.
However, since for infinite dimensional spaces $E$ bounded sets are
typically not relatively compact, the pre-compactness of such level sets
will generally not hold simply because for $\varphi \in \Lambda_{E_0,M}$, $%
\varphi(0)\in E_0$ is in a (in general) non-compact set. The key observation
in this work is that since the semigroup $S(t):E \to E$ is compact, the
initial condition is actually the only problem with compactness. In
particular, if ${\mathcal{E}}$ is replaced by $C([t_1,T]:E)$ for any $t_1\in
(0,T)$, the corresponding level sets will be compact (see Theorem \ref%
{thm:X-contin-weak}). To deal with the problem of initial conditions, we
introduce an associated collection of processes, $\{Y^{\varepsilon}_x\}$
that all have the same initial condition, namely $0$.

While the $X_{x}^{\varepsilon }$ processes take values in $C([0,T]:E)$ for
initial conditions $x\in E$, the theorem below shows that $%
Y_{x}^{\varepsilon }$ are well defined $C([0,T]:E)$-valued random variables
for all $x\in E^{\star \star }$, the double dual space of $E$. For $x\in E$,
$Y_{x}^{\varepsilon }$ is defined through \eqref{eq:intro-Y-mild} where $%
X_{x}^{\varepsilon }$ is given as in Theorem \ref{thm:uniqmilsoln}. We note
from \eqref{eq:M-growth} that if $\psi \in L^{\infty }([0,T]:E)$ is a
progressively measurable process satisfying ${\mathbb{E}}(|\psi |_{L^{\infty
}([0,T]:E)})^{p}<\infty $, then $\varphi =\mathcal{M}(\psi )$ satisfies ${%
\mathbb{E}}(\gamma ^{-1}(|\varphi |_{L^{\infty }([0,T]:E)}))^{p}<\infty $.

\begin{theorem}
\label{thm:Y-well-defined}
For every $x\in E^{\star \star }$ there is an a.s. unique $\{{\mathcal{F}}%
_{t}\}$-progressively measurable $E$-valued continuous process $Y_{x}^{\varepsilon }$ in $%
L^{p}(\Omega :C([0,T]:E))$ that solves
\begin{equation}
Y_{x}^{\varepsilon }(t)=\sqrt{{\varepsilon }}\int_{0}^{t}S(t-s)G(s,\mathcal{M%
}(S(\cdot )x+Y_{x}^{\varepsilon })(s))dw(s).  \label{eq:eqyext}
\end{equation}%
For $x\in E$, this is the unique mild solution of \eqref{eq:intro-Y-mild}.%
\end{theorem}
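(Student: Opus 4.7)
My plan is to prove existence and uniqueness via a Banach fixed point argument for the map
\[
\mathcal{T}_x(Y)(t) \doteq \sqrt{\varepsilon} \int_0^t S(t-s) G(s, \mathcal{M}(S(\cdot)x + Y)(s))\,dw(s)
\]
acting on the space $\mathcal{H}_T$ of $\{\mathcal{F}_t\}$-progressively measurable $E$-valued continuous processes with $\mathbb{E}|Y|_{C([0,T]:E)}^p < \infty$. The first step is to check that $\mathcal{T}_x$ is well-defined on $\mathcal{H}_T$ when $x \in E^{\star\star}$. By the compactness of the semigroup (Assumption \ref{assum:semigroup}) together with Theorem \ref{thm:doubledualextend} and Lemma \ref{lem:U-compact}(1), $S(\cdot)x$ belongs to $L^\infty([0,T]:E)$ with norm controlled by $|x|_{E^{\star\star}}$, so that $S(\cdot)x + Y \in L^\infty([0,T]:E)$ a.s.\ and Assumption \ref{assum:mathcal-M}(a)--(b) guarantees $\mathcal{M}(S(\cdot)x + Y)$ is a continuous progressively measurable process bounded by $\gamma(|S(\cdot)x + Y|_{L^\infty([0,t]:E)})$. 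Assumption \ref{assum:G-new}(a) combined with the linear growth of $\gamma^{-1}$ (Remark following Assumption \ref{assum:mathcal-M}) then places $\mathcal{T}_x(Y)$ in $\mathcal{H}_T$.

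Next I would extract an a priori $L^p$ bound. For any candidate solution $Y$ of \eqref{eq:eqyext}, Assumption \ref{assum:G-new}(a) together with $\gamma^{-1}(|\mathcal{M}(\Psi)|_{L^\infty([0,s]:E)}) \lesssim 1 + |\Psi|_{L^\infty([0,s]:E)}$ yields
\[
\mathbb{E}|Y|_{C([0,t]:E)}^p \le \varepsilon^{p/2} \kappa \int_0^t \bigl(1 + |S(\cdot)x|_{L^\infty([0,T]:E)}^p + \mathbb{E}|Y|_{C([0,s]:E)}^p\bigr)\,ds,
\]
and Gronwall's lemma delivers a finite bound on $\mathbb{E}|Y|_{C([0,T]:E)}^p$ depending only on $\varepsilon$, $T$, and $|x|_{E^{\star\star}}$.

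The main obstacle is that the Lipschitz constant $C(R,T)$ in Assumption \ref{assum:mathcal-M}(c) is not uniform in the radius $R$, so $\mathcal{T}_x$ is not globally Lipschitz on $\mathcal{H}_T$. I would handle this via pathwise localization: for candidate solutions $Y, \widetilde Y \in \mathcal{H}_T$ set
\[
\tau_N \doteq \inf\bigl\{t \in [0,T] : |Y(t)|_E \vee |\widetilde Y(t)|_E \ge N\bigr\} \wedge T,
\]
so that on $[0,\tau_N]$ both $S(\cdot)x + Y$ and $S(\cdot)x + \widetilde Y$ are bounded in $L^\infty$ by $N + |S(\cdot)x|_{L^\infty([0,T]:E)}$. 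Assumption \ref{assum:mathcal-M}(c) and Assumption \ref{assum:G-new}(b) then yield
\[
\mathbb{E}|\mathcal{T}_x(Y) - \mathcal{T}_x(\widetilde Y)|_{C([0,t\wedge\tau_N]:E)}^p \le \kappa_N \int_0^t \mathbb{E}|Y - \widetilde Y|_{C([0,s\wedge\tau_N]:E)}^p\,ds,
\]
and a standard Picard iteration on short subintervals (or a weighted supremum norm with exponential decay in $t$) produces a unique fixed point on $[0,\tau_N]$ for each $N$. The a priori bound forces $\tau_N \uparrow T$ a.s.\ as $N \to \infty$, giving uniqueness in $\mathcal{H}_T$ and hence global existence of $Y_x^\varepsilon$.

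Finally, for $x \in E$, Theorem \ref{thm:uniqmilsoln} guarantees that the mild solution $X_x^\varepsilon$ satisfies $X_x^\varepsilon = \mathcal{M}(S(\cdot)x + \sqrt{\varepsilon} Z(X_x^\varepsilon))$, so $\widetilde Y \doteq \sqrt{\varepsilon} Z(X_x^\varepsilon)$ is readily verified to solve \eqref{eq:eqyext}; the uniqueness just established then identifies $\widetilde Y$ with $Y_x^\varepsilon$, which is exactly the mild solution of \eqref{eq:intro-Y-mild}. The key technical difficulty throughout is reconciling the pathwise, non-globally Lipschitz character of $\mathcal{M}$ with the $L^p(\Omega)$ framework imposed by Assumption \ref{assum:G-new}; the localization scheme above is the natural device for bridging the two.
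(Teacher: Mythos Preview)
Your uniqueness argument via the stopping times $\tau_N$ is fine and matches in spirit what the paper does. The gap is in existence. You write that ``a standard Picard iteration on short subintervals (or a weighted supremum norm) produces a unique fixed point on $[0,\tau_N]$ for each $N$,'' but $\tau_N$ is defined in terms of two processes $Y,\widetilde Y$ that are already presumed to exist. If you run Picard iteration $Y^{(n+1)}=\mathcal{T}_x(Y^{(n)})$, the analogous stopping time would have to involve all iterates, and there is no reason it stays bounded away from zero as $n\to\infty$; the uniform $L^p$ bound on the iterates does not give pathwise uniform bounds. Likewise, the weighted-norm trick requires a global Lipschitz constant, which you do not have. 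So the contraction you display is genuinely only useful for comparing two solutions, not for constructing one.

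The paper's device is to truncate the \emph{operator} rather than the paths: it composes the right-hand side with the radial retraction $\mathcal{T}_R:E\to\{|x|_E\le R\}$ (Lemma~\ref{lem:cutoff}), obtaining a map $\mathscr{K}^{\varepsilon}_{x,R}$ that is globally Lipschitz on $L^p(\Omega:\mathcal{E}_R)$ because its range is forced into the ball of radius $R$. This is a genuine contraction on short time intervals, giving a solution $Y^{\varepsilon}_{x,R}$ for each $R$ (Lemma~\ref{lem:a-priori-bounds}). The same a~priori Gronwall bound you derived then shows $\sup_R\mathbb{E}|Y^{\varepsilon}_{x,R}|^p_{C([0,T]:E)}<\infty$, and consistency of the $Y^{\varepsilon}_{x,R}$ across $R$ via stopping times (exactly your $\tau_N$ idea) lets one pass to the limit $R\to\infty$. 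The cutoff $\mathcal{T}_R$ is the missing ingredient that converts your local estimate into an honest fixed-point argument.

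Two minor slips: for $x\in E^{\star\star}\setminus E$, $S(\cdot)x$ is only in $L^\infty([0,T]:E)$ (Lemma~\ref{lem:U-compact}(1)), not in $C([0,T]:E)$, so Assumption~\ref{assum:mathcal-M}(a) does not apply and $\mathcal{M}(S(\cdot)x+Y)$ need not be continuous in $t$; this is harmless since Assumption~\ref{assum:G-new}(a) only needs progressive measurability and a moment bound. Also, invoking Theorem~\ref{thm:uniqmilsoln} for the final identification is circular in the paper's logical order, since that theorem is proved \emph{from} the $Y$-equation; better to observe directly that $X^\varepsilon_x=\mathcal{M}(S(\cdot)x+Y^\varepsilon_x)$ solves \eqref{eq:eq1012} and conversely.
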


The above extension of the definition of $Y_{x}^{\varepsilon }$ to all $x\in
E^{\star \star }$ is useful because bounded subsets of $E$ have compact
closure in $E^{\star \star }$ in the weak-$\star $ topology. This weak-$%
\star $ compactness will enable us to prove a uniform (over $x$ in bounded
subsets of $E^{\star \star }$) Laplace principle for $\{Y_{x}^{\varepsilon
}\}$. We next present another unique solvability result. The proof, once
more, is given in Section \ref{sec:uniqmildsolns}.

\begin{theorem}
\label{prop:propyoux}
For any $u \in L^2([0,T]:H)$ and $x \in E^{\star\star}$ there is a unique $%
Y^{0,u}_x \in C([0,T]:E)$ that solves
\begin{equation}  \label{eq:youxt}
Y^{0,u}_x(t) = \int_0^t S(t-s)G(s,\mathcal{M}( S(\cdot)x+Y^{0,u}_x
)(s))u(s)ds, \; t \in [0,T].
\end{equation}
\end{theorem}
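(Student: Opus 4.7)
The plan is to construct $Y^{0,u}_x$ as a fixed point of the map $\mathcal{T}:C([0,T]:E)\to C([0,T]:E)$ defined by $\mathcal{T}(Y)\doteq \mathcal{L}(\mathcal{M}(S(\cdot)x+Y))u$, with $\mathcal{L}$ as in \eqref{eq:mathcal-L-def}. For $x\in E^{\star\star}$, Lemma \ref{lem:U-compact}(1) gives $S(\cdot)x\in L^{\infty}([0,T]:E)$, so for any $Y\in C([0,T]:E)$ the sum $S(\cdot)x+Y$ lies in $L^{\infty}([0,T]:E)$ and Assumption \ref{assum:mathcal-M} gives $\mathcal{M}(S(\cdot)x+Y)\in L^{\infty}([0,T]:E)$; Assumption \ref{assum:G-new}(d) then guarantees $\mathcal{T}(Y)\in C([0,T]:E)$, so $\mathcal{T}$ is well-defined. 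I would then run a Picard iteration $Y^{(0)}\equiv 0$, $Y^{(n+1)}\doteq\mathcal{T}(Y^{(n)})$, and establish both a uniform bound on $\{Y^{(n)}\}$ and a Cauchy property.

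For the uniform a priori bound, note that Assumption \ref{assum:mathcal-M}(b) and monotonicity of the functional inverse $\gamma^{-1}$ yield $\gamma^{-1}(|\mathcal{M}(\Psi)(s)|_E)\leq |\Psi|_{L^{\infty}([0,s]:E)}$. Plugging this into Assumption \ref{assum:G-new}(d) with $\Psi = S(\cdot)x+Y^{(n)}$ and using $(a+b)^p\leq 2^{p-1}(a^p+b^p)$ gives
\begin{equation*}
|Y^{(n+1)}|_{C([0,t]:E)}^{p}\leq K_{1}+K_{2}\int_{0}^{t}|Y^{(n)}|_{C([0,s]:E)}^{p}\,ds,
\end{equation*}
where $K_{1},K_{2}$ depend only on $T$, $|u|_{L^{2}([0,T]:H)}$, $|S(\cdot)x|_{L^{\infty}([0,T]:E)}$, and constants from the assumptions. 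An induction then yields $|Y^{(n)}|_{C([0,T]:E)}^{p}\leq K_{1}e^{K_{2}T}\doteq R^{p}$ uniformly in $n$. With this bound in hand, set $R'\doteq R+|S(\cdot)x|_{L^{\infty}([0,T]:E)}$; Assumption \ref{assum:mathcal-M}(c) applied on a ball of radius $R'$ together with Assumption \ref{assum:G-new}(e) combine into
\begin{equation*}
|Y^{(n+1)}-Y^{(n)}|_{C([0,t]:E)}^{p}\leq K\int_{0}^{t}|Y^{(n)}-Y^{(n-1)}|_{C([0,s]:E)}^{p}\,ds,
\end{equation*}
with $K$ a constant depending on $|u|_{L^{2}([0,T]:H)}$ and $C(R',T)$. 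Iterating this bound gives $|Y^{(n+1)}-Y^{(n)}|_{C([0,T]:E)}^{p}\leq |Y^{(1)}|_{C([0,T]:E)}^{p}(KT)^{n}/n!$, so $\{Y^{(n)}\}$ is Cauchy in $C([0,T]:E)$, with limit $Y^{0,u}_{x}$. Continuity of $\mathcal{T}$ on bounded sets (the same Lipschitz estimate) then lets us pass to the limit in $Y^{(n+1)}=\mathcal{T}(Y^{(n)})$ and conclude that $Y^{0,u}_{x}$ solves \eqref{eq:youxt}.

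Uniqueness is immediate by the same mechanism: two solutions $Y_{1},Y_{2}\in C([0,T]:E)$ are each bounded, so the Lipschitz estimate yields $|Y_{1}-Y_{2}|_{C([0,t]:E)}^{p}\leq K'\int_{0}^{t}|Y_{1}-Y_{2}|_{C([0,s]:E)}^{p}\,ds$, and Gronwall forces $Y_{1}=Y_{2}$. The main technical point I would settle at the outset is that $S(\cdot)x\in L^{\infty}([0,T]:E)$ for every $x\in E^{\star\star}$ (even though $S(\cdot)x$ need not be continuous at $0$ when $x\notin E$), together with the elementary but crucial inequality $\gamma^{-1}(|\mathcal{M}(\Psi)(s)|_E)\leq|\Psi|_{L^{\infty}([0,s]:E)}$; once these are in hand, the rest is a routine Picard contraction driven by the structural Assumptions \ref{assum:mathcal-M} and \ref{assum:G-new}.
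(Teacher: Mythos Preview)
Your argument is correct. The key inequality $\gamma^{-1}(|\mathcal{M}(\Psi)(s)|_E)\leq |\Psi|_{L^\infty([0,s]:E)}$ follows immediately from Assumption~\ref{assum:mathcal-M}(b) and monotonicity of $\gamma^{-1}$, and together with Assumption~\ref{assum:G-new}(d) it gives exactly the linear-in-$|Y^{(n)}|^p$ integral inequality you need for the uniform bound on the iterates. Once all iterates lie in a fixed ball, Assumptions~\ref{assum:mathcal-M}(c) and~\ref{assum:G-new}(e) yield the contraction estimate, and the rest is standard.

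The paper takes a different route. Rather than running a direct Picard iteration, it treats Theorem~\ref{prop:propyoux} as the special case $\varepsilon=0$ (with deterministic $u$) of the general controlled equation \eqref{eq:Y-control-def}, whose wellposedness is established in Theorem~\ref{thm:Yeux-uniqueness} via a truncation argument (Lemma~\ref{lem:a-priori-bounds}): one introduces the cutoff $\mathcal{T}_R$, solves the truncated equation by a contraction on short time intervals, derives a moment bound uniform in $R$, and then lets $R\to\infty$. Your approach is more elementary for the purely deterministic statement, since the a~priori bound on iterates can be obtained pathwise from the growth estimate alone, without any cutoff. The paper's truncation scheme, on the other hand, handles the stochastic ($\varepsilon>0$) and deterministic cases simultaneously in $L^p(\Omega:C([0,T]:E))$, where a direct pathwise bound on Picard iterates is not available.
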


For $x\in E^{\star \star }$ and $\varphi \in C([0,T]:E)$ define%
\begin{equation}
\tilde{I}_{x}(\varphi )\doteq\inf \left\{ \frac{1}{2}%
\int_{0}^{T}|u(s)|_{H}^{2}ds:\varphi =Y_{x}^{0,u}\right\}
\label{eq:rate-fct-Y}
\end{equation}%
where $Y_{x}^{0,u}$ is the unique solution of \eqref{eq:youxt} and the
infimum is taken over all such $u\in L^{2}([0,T]:H)$. We set $\tilde{I}%
_{x}(\varphi )=\infty $ if there is no $u\in L^{2}([0,T]:H)$ such that $%
\varphi =Y_{x}^{0,u}$.

For any $x\in E^{\star \star }$, define the level sets of $\tilde{I}_{x}$ by
\begin{equation}
\tilde{\Phi}_{x}(M)\doteq \left\{ \varphi \in C([0,T]:E):\tilde{I}%
_{x}(\varphi )\leq M\right\} ,\;M\in (0,\infty ).  \label{eq:level-sets-Y}
\end{equation}

\begin{theorem}
\label{thm:compact-level-sets-tilde} For any $K \subset E^{\star\star}$
compact in the weak-$\star$ topology and $s \in (0,\infty)$, $\bigcup_{x \in
K} \tilde{\Phi}_x(s)$ is a compact subset of $C([0,T]:E)$.
\end{theorem}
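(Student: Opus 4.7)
The plan is to use the metrizability of $C([0,T]:E)$ to prove compactness sequentially. Fix $s>0$ and a weak-$\star$ compact $K\subset E^{\star\star}$, set $\Lambda\doteq \bigcup_{x\in K}\tilde{\Phi}_x(s)$, and take an arbitrary sequence $\varphi_n\in \Lambda$, writing $\varphi_n=Y^{0,u_n}_{x_n}$ with $x_n\in K$ and $u_n\in \mathcal{S}^{2s}$. By Lemma \ref{lem:U-compact}(3) together with the weak-$\star$ compactness of $K$, pass to a subsequence (not relabeled) so that $S(\cdot)x_n\to S(\cdot)x$ in $L^p([0,T]:E)$ for some $x\in K$. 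Weak compactness of $\mathcal{S}^{2s}$ in $L^2([0,T]:H)$ yields, after a further subsequence, $u_n\to u$ weakly in $L^2$, with $u\in \mathcal{S}^{2s}$ by weak lower semicontinuity of the $L^2$-norm.

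Next, derive uniform bounds and apply the compactness of $\mathcal{L}$. Let $\psi_n\doteq \mathcal{M}(S(\cdot)x_n+\varphi_n)$. Assumption \ref{assum:mathcal-M}(b) (applied on $[0,r]$) gives $\gamma^{-1}(|\psi_n(r)|_E)\le |S(\cdot)x_n|_{L^\infty([0,r]:E)}+|\varphi_n|_{L^\infty([0,r]:E)}$ for every $r\in[0,T]$. Plugging this into the estimate from Assumption \ref{assum:G-new}(d) for $\varphi_n=\mathcal{L}(\psi_n)u_n$, noting that $\sup_n |S(\cdot)x_n|_{L^\infty([0,T]:E)}<\infty$ by uniform boundedness of $K$ in $E^{\star\star}$ and boundedness of the semigroup, and applying Gronwall's inequality, we obtain $\sup_n |\varphi_n|_{C([0,T]:E)}<\infty$ and therefore $\sup_n |\psi_n|_{L^\infty([0,T]:E)}<\infty$. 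Assumption \ref{assum:G-new}(f) then implies that $\{\varphi_n\}=\{\mathcal{L}(\psi_n)u_n\}$ is pre-compact in $C([0,T]:E)$; extract a further subsequence so that $\varphi_n\to \varphi$ in $C([0,T]:E)$ for some $\varphi$.

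It remains to identify $\varphi=Y^{0,u}_x$. Since $S(\cdot)x_n+\varphi_n\to S(\cdot)x+\varphi$ in $L^p([0,T]:E)$ with uniformly bounded $L^\infty$-norms, Assumption \ref{assum:mathcal-M}(d) yields $\psi_n\to \psi\doteq \mathcal{M}(S(\cdot)x+\varphi)$ in $L^p([0,T]:E)$. Decompose
\begin{equation*}
  \mathcal{L}(\psi_n)u_n-\mathcal{L}(\psi)u = \bigl[\mathcal{L}(\psi_n)u_n-\mathcal{L}(\psi)u_n\bigr] + \bigl[\mathcal{L}(\psi)u_n-\mathcal{L}(\psi)u\bigr].
\end{equation*}
The first bracket vanishes in $C([0,T]:E)$ by Assumption \ref{assum:G-new}(e), the uniform $L^2$-bound on $u_n$, and the $L^p$-convergence $\psi_n\to\psi$. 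For the second, the linear map $v\mapsto \mathcal{L}(\psi)v$ from $L^2([0,T]:H)$ into $C([0,T]:E)$ is compact by Assumption \ref{assum:G-new}(f) (applied with $\psi$ fixed and $v$ ranging over $\mathcal{S}^N$); compact operators between Banach spaces are completely continuous, so the weak convergence $u_n\to u$ forces $\mathcal{L}(\psi)u_n\to \mathcal{L}(\psi)u$ strongly in $C([0,T]:E)$. Combining, $\varphi=\mathcal{L}(\psi)u=Y^{0,u}_x\in \tilde{\Phi}_x(s)\subset \Lambda$, completing the proof.

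The main obstacle is the joint passage to the limit in the last step, where only weak convergence of $u_n$ is available while the diffusion coefficient $\psi_n$ also varies; the splitting above is the standard device, and the crucial input making it work is the complete continuity of $\mathcal{L}(\psi)$ supplied by Assumption \ref{assum:G-new}(f). A secondary, but genuine, technical point is that the weak-$\star$ topology on $E^{\star\star}$ is not metrizable in general, so weak-$\star$ compactness of $K$ does not directly produce weak-$\star$ convergent subsequences from $\{x_n\}$; this is precisely what Lemma \ref{lem:U-compact}(3), via the compactness of the semigroup, is designed to bypass.
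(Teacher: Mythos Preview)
Your proof is correct and follows essentially the same approach as the paper: the paper simply packages your convergence argument (extraction via Lemma \ref{lem:U-compact}(3) and weak compactness of $\mathcal{S}^{2s}$, uniform bounds, pre-compactness from Assumption \ref{assum:G-new}(f), and identification of the limit via the splitting $\mathcal{L}(\psi_n)u_n-\mathcal{L}(\psi)u_n$ plus $\mathcal{L}(\psi)(u_n-u)$) as a separate result, Theorem \ref{thm:convlimcont} (the $\varepsilon=0$ case of Theorem \ref{thm:unif-conv}), and then cites it in a two-line proof, whereas you have reproduced that argument inline. One minor quibble: writing $\varphi_n=Y^{0,u_n}_{x_n}$ with $u_n\in\mathcal{S}^{2s}$ presupposes that the infimum defining $\tilde{I}_{x_n}$ is attained; to be safe you can take $u_n\in\mathcal{S}^{2s+2/n}$ and use weak lower semicontinuity of the $L^2$-norm at the end to land in $\tilde{\Phi}_x(s)$.
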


Theorem \ref{thm:compact-level-sets-tilde} is proved in Section \ref%
{sec:secratefunprop}.

The following result gives a uniform Laplace principle for $%
\{Y_{x}^{\varepsilon }\}$. The proof will be given in Section \ref%
{sec:pfuniflap}.

\begin{theorem}
\label{thm:unif-Laplace-Y} The collection $\{\tilde I_x\}_{x \in
E^{\star\star}}$ is a family of rate functions with compact level sets on
compact subsets of $E^{\star\star}$ (endowed with the weak-$\star$
topology). The family $\{Y^{\varepsilon}_x\}$ solving \eqref{eq:eqyext}
satisfies a Laplace principle on $C([0,T]:E)$ with rate function $\tilde I_x
$, uniformly on compact (with the weak-$\star$ topology) subsets of $%
E^{\star\star}$. That is, for any weak-$\star$ compact $K\subset
E^{\star\star}$ and bounded continuous $h: C([0,T]:E) \to \mathbb{R}$,
\begin{equation}  \label{eq:unif-Laplace-Y}
\lim_{{\varepsilon} \to 0} \sup_{x \in K} \left|{\varepsilon} \log {\mathbb{E%
}}\left(\exp(-{\varepsilon}^{-1}h(Y^{\varepsilon}_x)) \right) +
\inf_{\varphi \in C([0,T]:E)}\{h(\varphi) + \tilde{I}_{x}(\varphi)\} \right|
=0.
\end{equation}
\end{theorem}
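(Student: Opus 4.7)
The first assertion, that $\{\tilde I_x\}$ has compact level sets on weak-$\star$ compact subsets of $E^{\star\star}$, is precisely Theorem \ref{thm:compact-level-sets-tilde} (specialized to $K=\{x\}$ for the single-rate-function claim). Thus the plan focuses on the uniform Laplace bound \eqref{eq:unif-Laplace-Y}. The natural tool is the variational representation
\[
-\e\log\E\exp\!\left(-\tfrac{h(Y^\e_x)}{\e}\right) \;=\; \inf_{u\in\mathcal{P}_2}\!\left\{\tfrac12\int_0^T |u(s)|_H^2\,ds + h(Y^{\e,u}_x)\right\},
\]
which reduces both inequalities to a study of the controlled processes $Y^{\e,u}_x$ of \eqref{eq:intro-mild-control}. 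Because $h$ is bounded, only controls with $\E\int_0^T |u|_H^2\,ds$ bounded by some $N=N(h)$ need be considered, so all arguments may be confined to $\mathcal{P}_2^N$.

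The heart of the argument is a one-shot convergence lemma: if $x_n\in K$, $\e_n\downarrow 0$, and $u_n\in \mathcal{P}_2^N$ converges in distribution (viewed as $\mathcal{S}^N$-valued random variables with the weak $L^2$ topology) to some $u$, while $S(\cdot)x_n \to S(\cdot)x$ in $L^p([0,T]:E)$ for some $x\in K$, then $Y^{\e_n,u_n}_{x_n} \to Y^{0,u}_x$ in distribution in $C([0,T]:E)$. I would prove this by a Skorohod/Gronwall bootstrap: Assumption \ref{assum:G-new}(a) kills the stochastic convolution (the $\sqrt\e$-term) in $L^p(\Omega:C([0,T]:E))$; Assumption \ref{assum:G-new}(d)--(f) gives pre-compactness in $C([0,T]:E)$ of the controlled integrals $\mathcal{L}(\mathcal{M}(S(\cdot)x_n+Y^{\e_n,u_n}_{x_n}))u_n$ and identifies the weak $L^2$-limit in the control; Assumption \ref{assum:mathcal-M}(c)--(d) transfers the $L^p$-convergence through $\mathcal{M}$; and the resulting fixed-point/Gronwall estimate then identifies the limit with the unique solution $Y^{0,u}_x$ of \eqref{eq:youxt}. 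The needed extraction of the subsequence on which $S(\cdot)x_n$ converges is supplied by Lemma \ref{lem:U-compact}(3), which is the one place the compactness of $S(t)$ is genuinely used.

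With this lemma the Laplace upper bound is standard: for each $\eta>0$ and $x$, pick $u_x \in \mathcal{S}^N$ with $\tfrac12|u_x|_{L^2}^2 + h(Y^{0,u_x}_x) \le \inf_\varphi\{h(\varphi)+\tilde I_x(\varphi)\} + \eta$, plug $u_x$ into the variational representation, and use the continuity lemma to replace $Y^{\e,u_x}_x$ by $Y^{0,u_x}_x$. For the lower bound I would argue by contradiction: if uniformity failed, there would be $x_n\in K$, $\e_n\downarrow 0$, and $u^n \in \mathcal{P}_2^N$ with $\tfrac12\E|u^n|_{L^2}^2 + \E h(Y^{\e_n,u^n}_{x_n})$ falling more than a fixed $\eta>0$ below $\inf\{h+\tilde I_{x_n}\}$; a subsequence along which $S(\cdot)x_n\to S(\cdot)x$ in $L^p$ and $u^n \rightharpoonup u$ weakly in $L^2$ (obtained by Lemma \ref{lem:U-compact}(3), tightness on $\mathcal{S}^N$ under the weak topology, and the Skorohod representation) plus the continuity lemma and Fatou's lemma on the cost produce a $u$ with $\tfrac12|u|_{L^2}^2 + h(Y^{0,u}_x) < \inf\{h+\tilde I_x\}$, which contradicts the definition \eqref{eq:rate-fct-Y} of $\tilde I_x$.

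The main obstacle is the lack of metrizability of the weak-$\star$ topology on $E^{\star\star}$: one cannot simply extract sequential subsequences inside $K$ and one cannot directly invoke the classical weak convergence machinery of \cite{bd-2000,bdm-2008} on $E^{\star\star}$. The key observation making everything work is that, although $K$ itself is only net-compact, its image under the compact operator $S(\cdot)$ is sequentially compact in $L^p([0,T]:E)$ by Lemma \ref{lem:U-compact}(3); since the equation \eqref{eq:eqyext} depends on $x$ only through $S(\cdot)x$, this is all the compactness the proof ever needs.
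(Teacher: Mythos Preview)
Your outline is essentially the paper's proof: variational representation, restriction to $\mathcal{P}_2^N$, the convergence lemma for $Y^{\e_n,u_n}_{x_n}$ (this is exactly Theorem \ref{thm:unif-conv}), and the use of Lemma \ref{lem:U-compact}(3) to extract a subsequence along which $S(\cdot)x_n$ converges in $L^p([0,T]:E)$. Your diagnosis of the metrizability issue and its resolution is also the paper's.

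There is one small gap in your ``lower bound'' (the paper calls this the Laplace upper bound). Your contradiction hypothesis gives
\[
\tfrac12\E\!\int_0^T|u^n|_H^2 + \E h(Y^{\e_n,u^n}_{x_n}) \;<\; \inf\{h+\tilde I_{x_n}\} - \eta,
\]
and after passing to the limit via Fatou and the convergence lemma you obtain $\E[\tfrac12|u|_{L^2}^2 + h(Y^{0,u}_x)] \le \liminf \inf\{h+\tilde I_{x_n}\} - \eta$. To get the stated contradiction with $\inf\{h+\tilde I_x\}$ you still need $\limsup \inf\{h+\tilde I_{x_n}\} \le \inf\{h+\tilde I_x\}$, which requires an extra (easy) argument using Theorem \ref{thm:convlimcont} with a fixed near-optimal control for $x$. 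The paper sidesteps this by inserting the \emph{same} random control $u_n$ into $F(0,x_n)$ via $F(0,x_n)\le \E[\tfrac12|u_n|_{L^2}^2+h(Y^{0,u_n}_{x_n})]$, so that the cost term cancels and both remaining $h$-terms converge to the same limit $\E h(Y^{0,\tilde u}_x)$. This is cleaner and avoids Fatou altogether; you may want to adopt it.
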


The following corollary is an immediate consequence of the fact that the
uniform Laplace principle implies a uniform large deviations principle
(Theorem \ref{thm:ULP-implies-LDP}).

\begin{corollary}
\label{cor:fw} Let $K$ be a weak-$\star$ compact subset of $E^{\star\star}$.

\begin{enumerate}
\item For any $\delta>0$, $0< s_0 <\infty$,
\begin{equation}  \label{eq:LDP-lower-Y}
\liminf_{{\varepsilon} \to 0}\inf_{x \in K} \inf_{\varphi \in \tilde{\Phi}%
_{x}(s_0)} \left( {\varepsilon} \log \left({\mathbb{P}} \left(
|Y^{\varepsilon}_x - \varphi|_{C([0,T]:E)}<\delta \right)\right)+ \tilde{I}%
_{x}(\varphi)\right) \geq 0.
\end{equation}

\item For any $\delta>0$, $0<s_0< \infty$,
\begin{equation}  \label{eq:LDP-upper-Y}
\limsup_{{\varepsilon} \to 0}\sup_{x \in K} \sup_{s \leq s_0} \left( {%
\varepsilon} \log\left( {\mathbb{P}} \left( \mathnormal{dist}%
_{C([0,T]:E)}\left(Y^{\varepsilon}_x, \tilde{\Phi}_{x}(s) \right) \geq
\delta \right)\right)+s\right) \leq 0.
\end{equation}
\end{enumerate}
\end{corollary}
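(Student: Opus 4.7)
The plan is to observe that Corollary \ref{cor:fw} is a direct application of Theorem \ref{thm:ULP-implies-LDP} with the specific choices $\mathcal{E} = C([0,T]:E)$, $\mathcal{E}_0 = E^{\star\star}$ equipped with the weak-$\star$ topology, $Z^\e_x = Y^\e_x$, and $I_x = \tilde I_x$. Once the hypotheses of Theorem \ref{thm:ULP-implies-LDP} are verified, the conclusions \eqref{eq:LDP-lower-Y} and \eqref{eq:LDP-upper-Y} are precisely the statements \eqref{eq:ldpunilowbd} and \eqref{eq:ldpuniuppbd} of the uniform LDP, specialized to $A_0 = K$.

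First, I would check that $\{\tilde I_x\}_{x \in E^{\star\star}}$ is a family of rate functions with compact level sets on compact subsets of $E^{\star\star}$ (in the weak-$\star$ topology). The individual compactness $\tilde\Phi_x(s) = \{\varphi : \tilde I_x(\varphi) \le s\}$ is the case $K = \{x\}$ (which is trivially weak-$\star$ compact) of Theorem \ref{thm:compact-level-sets-tilde}, so each $\tilde I_x$ is a rate function. For the uniform statement, note that the union appearing in \eqref{eq:eqlakm},
\begin{equation*}
\Lambda_{K,M} = \bigcup_{x\in K} \tilde\Phi_x(M),
\end{equation*}
is compact in $C([0,T]:E)$ by Theorem \ref{thm:compact-level-sets-tilde} applied to the weak-$\star$ compact set $K$.

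Next, Theorem \ref{thm:unif-Laplace-Y} provides exactly the uniform Laplace principle required by Theorem \ref{thm:ULP-implies-LDP}: for any weak-$\star$ compact $K \subset E^{\star\star}$ and any bounded continuous $h : C([0,T]:E) \to \mathbb{R}$, the convergence in \eqref{eq:unif-Laplace-Y} holds. Thus all hypotheses of Theorem \ref{thm:ULP-implies-LDP} are in force, with the index space $\mathcal{E}_0 = E^{\star\star}$ taken under the weak-$\star$ topology and the class $\mathscr{A}$ being the collection of weak-$\star$ compact subsets.

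Applying Theorem \ref{thm:ULP-implies-LDP} then yields the uniform large deviation bounds \eqref{eq:ldpunilowbd} and \eqref{eq:ldpuniuppbd} for $\{Y^\e_x\}$ with rate functions $\tilde I_x$, uniformly over any weak-$\star$ compact $A_0 = K \subset E^{\star\star}$. Since the metric on $\mathcal{E} = C([0,T]:E)$ is $|\cdot-\cdot|_{C([0,T]:E)}$, and the level sets $\Phi_x(s)$ coincide with $\tilde\Phi_x(s)$ as defined in \eqref{eq:level-sets-Y}, the two resulting inequalities are precisely \eqref{eq:LDP-lower-Y} and \eqref{eq:LDP-upper-Y}. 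Since the proof is essentially a bookkeeping step that invokes Theorems \ref{thm:unif-Laplace-Y}, \ref{thm:compact-level-sets-tilde}, and \ref{thm:ULP-implies-LDP}, there is no substantive obstacle; the genuine work lies in the three invoked results, not here.
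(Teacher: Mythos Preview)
Your proposal is correct and matches the paper's approach: the paper states that the corollary is an immediate consequence of Theorem \ref{thm:ULP-implies-LDP}, and you have simply spelled out the identifications $\mathcal{E} = C([0,T]:E)$, $\mathcal{E}_0 = E^{\star\star}$ with the weak-$\star$ topology, $Z^\varepsilon_x = Y^\varepsilon_x$, $I_x = \tilde I_x$, and verified the hypotheses via Theorems \ref{thm:compact-level-sets-tilde} and \ref{thm:unif-Laplace-Y}.
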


We can now complete the proof of Theorem \ref{thm:LDP}.

\begin{proof}[Proof of Theorem \ref{thm:LDP}]
We begin by observing that if for $x\in E$, $Y^{\e}_x$ is the unique solution of \eqref{eq:eqyext}	then $X^{\e}_x = \mathcal{M}(S(\cdot)x + Y^{\e}_x)$ is the unique solution of \eqref{eq:eq1012}.
Similarly, if $\psi=Y^{0,u}_x$ solves \eqref{eq:youxt} for some $u\in L^2([0,T]:H)$ then $\varphi = \mathcal{M}(S(\cdot)x + \psi)=X^{0,u}_x$ solving \eqref{eq:eq1214ab} for the same $u$. Also,
if $\varphi =X^{0,u}_x$ solves (\ref{eq:eq1214ab}), then $\psi
(t)=\int_{0}^{t}S(t-s)G(s,\varphi (s))u(s)ds = Y^{0,u}_x$ solves (\ref{eq:youxt}) for
the same $u\in L^2([0,T]:H)$.

 Consider part 1 in Theorem \ref{thm:LDP}. Fix $\delta \in (0,1)$, $s_0\in (0,\infty)$ and a bounded set $E_0$ in $E$.
From Theorem \ref{thm:compact-level-sets-tilde}, $\bigcup_{x\in \tilde{E}_0} \tilde \Phi_x(s_0+1) \doteq K(E_0, s_0+1)$ is compact, where $\tilde{E}_0$ is the weak-$\star$ closure of $J_E(E_0)$ in $E^{\star\star}$, and $J_E$ is the canonical embedding of $E$ into $E^{\star\star}$ (see Appendix \ref{S:doubledual}).
Also since  $\{S(t)\}_{t\ge 0}$ is a $C_0$-semigroup, $\sup_{x\in \tilde{E}_0} \sup_{ t \in (0,T]}|S(t)x|_{E} <\infty$
(See Lemma \ref{lem:U-compact}).
Let
$$R \doteq \sup_{x\in \tilde{E}_0} \sup_{\psi \in \tilde \Phi_x(s_0+1)} (|\psi|_{C([0,T]:E)}+ |S(\cdot)x|_{L^\infty([0,T]:E)}) + 1.$$
With $C \doteq C(R,T)$ as in  Assumption \ref{assum:mathcal-M}(c), we have  that for any $\psi \in K(E_0, s_0+1)$,
and any $x \in E_0$,
$$\{|Y^\e_x - \psi|_{C([0,T]:E)} < \delta/C\} \subset \{|X^\e_x - \mathcal{M}(\psi + S(\cdot)x)|_{C([0,T]:E)} < \delta\}.$$
Fix $x \in E_0$ and $\varphi \in \Phi_x(s_0)$. Then for any $\kappa \in (0,1)$ there is a $\psi \in C([0,T]:E)$
such that
$$\tilde I_x(\psi) \le I_x(\varphi) + \kappa \le s_0+1, \mbox{ and } \varphi = \mathcal{M}(\psi + S(\cdot)x).$$
Thus
$$\e \log\Pro(|X^\e_x - \varphi|_{C([0,T]:E)} < \delta) + I_x(\varphi) \ge \e\log\Pro( |Y^\e_x - \psi|_{C([0,T]:E)} < \delta/C) +
\tilde I_x(\psi)-\kappa .$$

Noting that $x \in {E}_0$ and $\varphi \in  \Phi_x(s_0)$ were arbitrary, we have taking infimum over $x \in E_0$ and $\varphi \in \Phi_x(s_0)$
\begin{align*}
	&\inf_{x\in E_0}\inf_{\varphi \in \Phi_x(s_0)}
\left[\e\log\Pro(|X^\e_x - \varphi|_{C([0,T]:E)} < \delta) + I_x(\varphi)\right]\\
& \ge \inf_{x\in \tilde{E}_0}\inf_{\psi \in \tilde\Phi_x(s_0+1)}\left[\e \log\Pro( |Y^\e_x - \psi|_{C([0,T]:E)} < \delta/C) +
\tilde I_x(\psi)\right]-\kappa .
\end{align*}
The desired inequality in part 1 now follows on sending $\e \to 0$, using Corollary \ref{cor:fw}(1) and then sending $\kappa \to 0$.

Now we show the uniform upper bound in part 2 of the theorem. Fix $\delta \in (0,1)$, a bounded set $E_0$ in $E$, and $s_0 \in (0,\infty)$.
Let $C$ be as in the proof of part 1. Note that for any $s\le s_0$, $x\in E_0$ and
$\psi \in \tilde\Phi_x(s)$, if
$|Y^\e_x - \psi|_{C([0,T]:E)} < \delta/C$ then $\varphi \doteq \mathcal{M}(\psi + S(\cdot)x)$ satisfies
$\varphi \in \Phi_x(s)$ and by Assumption \ref{assum:mathcal-M}(c)
$|X^\e_x - \varphi|_{C([0,T]:E)} < \delta$. Thus
$\dist(Y^\e_x, \tilde{\Phi}_x(s)) < \delta/C$ implies
$\dist(X^\e_x, {\Phi}_x(s)) < \delta$. Thus
\begin{align*}
	&\sup_{x\in E_0}\sup_{s\le s_0}
	\left[\e \log\Pro(\dist(X^\e_x, \Phi_x(s)) \geq \delta)+s \right]\\
	&\le \sup_{x\in \tilde{E}_0}\sup_{s\le s_0}\left[\e \log\Pro(\dist(Y^\e_x, \tilde\Phi_x(s)) \geq \delta/C)+s \right].
\end{align*}
The result now follows on taking limit as $\e \to 0$ and applying Corollary \ref{cor:fw}(2).

 \end{proof}
{\ The rest of the paper is organized as follows. In Section \ref%
{sec:uniqmildsolns} we present the main wellposedness results that were used
in Sections \ref{S:Laplace} and \ref{sec:secthmldp}, namely Theorems \ref%
{thm:uniqmilsoln}, \ref{thm:uniqdetcont}, \ref{thm:Y-well-defined} and \ref%
{prop:propyoux}. 
In Section \ref{sec:pfuniflap} we give the proof of Theorem \ref%
{thm:unif-Laplace-Y}. Theorem \ref{thm:ULP-implies-LDP} is proved in Section %
\ref{sec:fin}. Next, as noted earlier, Section \ref{S:exit} is devoted to
the study of asymptotics of exit from a bounded domain. Section \ref%
{sec:suff-conds} presents a sufficient condition that is useful for
verifying Assumption \ref{assum:G-new}, and Section \ref{S:examples}
presents two examples of stochastic partial differential equations for which
Theorem \ref{thm:LDP} holds. Finally, Appendix A collects some auxiliary
results. }

\section{Wellposedness}

\label{sec:uniqmildsolns}

In this section we prove Theorems \ref{thm:uniqmilsoln}, \ref%
{thm:uniqdetcont}, \ref{thm:Y-well-defined} and \ref{prop:propyoux}.
Recall the collection $\mathcal{P}^N_2$ defined in \eqref{eq:PN2}. In order
to prove Theorems \ref{thm:Y-well-defined} and \ref{prop:propyoux} we will
consider the following controlled equation for ${\varepsilon}\ge 0$, $u \in
\mathcal{P}^N_2$ and $N \in \mathbb{N}$:
\begin{align}  \label{eq:Y-control-def}
Y^{{\varepsilon},u}_x(t) = & \sqrt{{\varepsilon}} \int_0^t S(t-s)G(s,
\mathcal{M}(Y^{{\varepsilon},u}_x + S(\cdot)x)(s))dw(s)  \notag \\
& + \int_0^t S(t-s)G(s,\mathcal{M}(Y^{{\varepsilon},u}_x +
S(\cdot)x)(s))u(s)ds.
\end{align}
We will show that \eqref{eq:Y-control-def} has a unique solution
for every $x \in E^{\star\star}$, ${\varepsilon}\ge 0$, and $u \in \mathcal{P}_2^N$.
That is, there is a unique continuous $\{\mathcal{F}_t\}$-progressively measurable process $%
Y^{{\varepsilon},u}_x \in L^p(\Omega :C([0,T]:E))$ that satisfies %
\eqref{eq:Y-control-def}. This result, taking either $u=0$ or $u$ to be a
deterministic function in $L^2([0,T]:H)$ with ${\varepsilon}=0$, will prove
Theorems \ref{thm:Y-well-defined} and \ref{prop:propyoux} respectively.



We begin by noting an elementary Lipschitz property of a certain cutoff
function defined on $E$. The proof is omitted.

\begin{lemma}
\label{lem:cutoff} For any $R \geq 0$, define the map $\mathcal{T}_R:E
\to E$ by
\begin{equation}  \label{eq:TR}
\mathcal{T}_R(x) \doteq
\begin{cases}
x & \text{ if } |x|_E \leq R \\
\frac{Rx}{|x|_E} & \text{ if } |x|_E>R.%
\end{cases}%
\end{equation}
Then for any $x \in E$, $|{\mathcal{T}}_R (x)|_E \leq R$ and ${\mathcal{T}}%
_R $ satisfies the following Lipschitz property.
\begin{equation*}
|{\mathcal{T}}_R(x) - {\mathcal{T}}_R(y)| \leq 3 |x-y|_E, \; \mbox{ for any }
x, y \in E.
\end{equation*}
\end{lemma}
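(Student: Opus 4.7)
The proof is a direct case analysis, so the plan is to verify each claim by unwinding the definition of $\mathcal{T}_R$ in the relevant regions and appealing to the triangle inequality.

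First I would dispose of the easy bound $|\mathcal{T}_R(x)|_E \le R$. If $|x|_E \le R$ then $\mathcal{T}_R(x)=x$ and the bound is immediate; if $|x|_E > R$ then $|\mathcal{T}_R(x)|_E = |Rx/|x|_E|_E = R$. So $|\mathcal{T}_R(x)|_E \le R$ in either case.

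For the Lipschitz estimate I would split into three cases according to whether $|x|_E$ and $|y|_E$ lie inside or outside the ball of radius $R$. Case (i): $|x|_E, |y|_E \le R$. Then $\mathcal{T}_R(x) = x$ and $\mathcal{T}_R(y) = y$, so the estimate is trivial with constant $1$. Case (ii): $|x|_E, |y|_E > R$. Then I would write
\begin{equation*}
\mathcal{T}_R(x) - \mathcal{T}_R(y) = \frac{R}{|x|_E}(x-y) + R y\left(\frac{1}{|x|_E} - \frac{1}{|y|_E}\right),
\end{equation*}
apply the triangle inequality, use $\bigl||x|_E - |y|_E\bigr| \le |x-y|_E$, and bound $R/|x|_E \le 1$. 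This gives $|\mathcal{T}_R(x) - \mathcal{T}_R(y)|_E \le 2|x-y|_E$. Case (iii): $|x|_E \le R < |y|_E$ (and the symmetric subcase). Here $\mathcal{T}_R(x)=x$ and $\mathcal{T}_R(y) = Ry/|y|_E$, and writing $\mathcal{T}_R(x) - \mathcal{T}_R(y) = (x-y) + (y - Ry/|y|_E)$, the second term has norm $|y|_E - R \le |y|_E - |x|_E \le |y-x|_E$, again yielding the factor $2$.

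Combining the three cases gives a Lipschitz constant of $2$, hence certainly $3$ as stated. There is no real obstacle; the only nuisance is keeping the case analysis organized and remembering that $\bigl||x|_E-|y|_E\bigr| \le |x-y|_E$ is the key ingredient that ties the norm comparison to the vector difference.
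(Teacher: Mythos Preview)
Your proof is correct. The paper itself omits the proof of this lemma, stating only ``The proof is omitted,'' so there is nothing to compare against; your case analysis is the standard argument, and as you note it actually yields Lipschitz constant $2$, which of course implies the stated constant $3$.
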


For $R\in (0,\infty )$ define the subset $\mathcal{E}_{R}=\{\varphi \in
C([0,T]:E):|\varphi |_{C([0,T]:E)}\leq R\}$. Notice that by Assumption \ref%
{assum:mathcal-M}(c), $\mathcal{M}$ is Lipschitz continuous on $\mathcal{E}_{R}$
for every $R$. For ${\varepsilon }\geq 0$, $R\in (0,\infty )$, $x\in
E^{\star \star }$, and $u\in \mathcal{P}_{2}^{N}$, define the mapping $%
\mathscr{K}_{x,R}^{{\varepsilon },u}:L^{p}(\Omega :\mathcal{E}%
_{R})\rightarrow L^{p}(\Omega :\mathcal{E}_{R})$ by
\begin{align}
\mathscr{K}_{x,R}^{{\varepsilon },u}(\varphi )(t)=& {\mathcal{T}}_{R}\Bigg[%
\sqrt{{\varepsilon }}\int_{0}^{t}S(t-s)G(s,\mathcal{M}(S(\cdot )x+\varphi
)(s))dw(s)  \notag  \label{eq:mathscr-K-def} \\
& \qquad +\int_{0}^{t}S(t-s)G(s,\mathcal{M}(S(\cdot )x+\varphi )(s))u(s)ds%
\Bigg].
\end{align}%
Consider the $\mathcal{E}_{R}$-valued continuous process $%
Y_{x,R}^{\varepsilon ,u}$ given by
\begin{align}
Y_{x,R}^{{\varepsilon },u}(t)& =\mathscr{K}_{x,R}^{{\varepsilon }%
,u}(Y_{x,R}^{{\varepsilon },u})(t)  \notag \\
& ={\mathcal{T}}_{R}\Bigg[\sqrt{{\varepsilon }}\int_{0}^{t}S(t-s)G(s,%
\mathcal{M}(S(\cdot )x+Y_{x,R}^{{\varepsilon },u})(s))dw(s)  \notag \\
& \quad +\int_{0}^{t}S(t-s)G(s,\mathcal{M}(S(\cdot )x+Y_{x,R}^{{\varepsilon }%
,u})(s))u(s)ds\Bigg],t\in \lbrack 0,T].  \label{eq:eqyeuxr}
\end{align}%
The following result shows that there is a unique solution to %
\eqref{eq:eqyeuxr} and gives a uniform in $R$ moment bound on the solution
process.

\begin{lemma}
\label{lem:a-priori-bounds} For every ${\varepsilon }\geq 0$, $R\in
(0,\infty )$, $N \in \mathbb{N}$, $u\in \mathcal{P}_{2}^{N}$ and $x\in E^{\star \star }$, there is a unique $\mathcal{F}_{t}$%
-progressively measurable continuous $\mathcal{E}_{R}$-valued process $Y_{x,R}^{{\varepsilon }%
,u}$ that solves \eqref{eq:eqyeuxr}. Furthermore, there is a $C\in (0,\infty
)$ such that for all $|x|_{E^{\star \star }}\leq R$, ${\varepsilon }\geq 0$, $%
u\in \mathcal{P}_{2}^{N}$, and $R\in (0,\infty )$,
\begin{equation*}
{\mathbb{E}}|Y_{x,R}^{{\varepsilon },u}|_{C([0,T]:E)}^{p}\leq C|x|_{E^{\star
\star }}^{p}e^{C({\varepsilon }^{\frac{p}{2}}+N^{\frac{p}{2}})T}.
\end{equation*}
\end{lemma}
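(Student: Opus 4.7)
The plan is to construct $Y_{x,R}^{\varepsilon,u}$ as a fixed point of $\mathscr{K}_{x,R}^{\varepsilon,u}$ and then derive the $R$-independent moment bound by applying Gronwall's inequality to the uncut integrand. First, I set up the constants. By Lemma \ref{lem:U-compact}(1), the extension of the semigroup satisfies $M \doteq \sup_{t \in [0,T]} |S(t)|_{\mathscr{L}(E^{\star\star},E)} < \infty$. Thus for $\varphi \in \mathcal{E}_R$ the function $S(\cdot)x + \varphi$ lies in $L^\infty([0,T]:E)$ with norm at most $M|x|_{E^{\star\star}} + R$, and Assumption \ref{assum:mathcal-M}(b) then gives $|\mathcal{M}(S(\cdot)x + \varphi)|_{L^\infty([0,T]:E)} \leq \gamma(M|x|_{E^{\star\star}} + R) \doteq R'$. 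Assumption \ref{assum:mathcal-M}(c) in turn supplies a Lipschitz constant $C' \doteq C(R',T)$ for $\mathcal{M}$ on inputs of size at most $M|x|_{E^{\star\star}} + R$.

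For existence and uniqueness, I take $\varphi,\psi \in L^p(\Omega:\mathcal{E}_R)$ and combine the $3$-Lipschitz bound for $\mathcal{T}_R$ (Lemma \ref{lem:cutoff}), Assumption \ref{assum:G-new}(b) for the stochastic term, and Assumption \ref{assum:G-new}(e) together with $|u|_{L^2([0,T]:H)}^p \leq N^{p/2}$ for the control term, together with the Lipschitz property of $\mathcal{M}$ just recorded. This yields
\begin{equation*}
\mathbb{E}|\mathscr{K}_{x,R}^{\varepsilon,u}(\varphi) - \mathscr{K}_{x,R}^{\varepsilon,u}(\psi)|_{C([0,t]:E)}^p \leq \kappa(\varepsilon^{p/2} + N^{p/2})(C')^p \int_0^t \mathbb{E}|\varphi - \psi|_{C([0,s]:E)}^p\,ds.
\end{equation*}
Iterating on finitely many short subintervals (equivalently, employing a weighted-norm contraction with weight $e^{-\lambda t}$ for sufficiently large $\lambda$) produces a unique fixed point $Y_{x,R}^{\varepsilon,u} \in L^p(\Omega:\mathcal{E}_R)$. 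Continuity of paths follows from parts (a) and (d) of Assumption \ref{assum:G-new} and from the continuity of $\mathcal{T}_R$, while progressive measurability is inherited from that of the integrands.

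For the a priori bound, I discard the cutoff using $|\mathcal{T}_R(y)|_E \leq |y|_E$ and apply Assumption \ref{assum:G-new}(a) and (d) directly to the defining identity for $Y_{x,R}^{\varepsilon,u}$, obtaining
\begin{equation*}
\mathbb{E}|Y_{x,R}^{\varepsilon,u}|_{C([0,t]:E)}^p \leq \kappa(\varepsilon^{p/2} + N^{p/2})\,\mathbb{E}\int_0^t \bigl(\gamma^{-1}(|\mathcal{M}(S(\cdot)x + Y_{x,R}^{\varepsilon,u})(s)|_E)\bigr)^p\,ds.
\end{equation*}
Since $\gamma$ is nondecreasing, Assumption \ref{assum:mathcal-M}(b) gives the pointwise estimate $\gamma^{-1}(|\mathcal{M}(\Psi)(s)|_E) \leq |\Psi|_{L^\infty([0,s]:E)}$, so the integrand above is dominated by $(M|x|_{E^{\star\star}} + |Y_{x,R}^{\varepsilon,u}|_{C([0,s]:E)})^p$. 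Gronwall's inequality then yields a bound of order $(\varepsilon^{p/2} + N^{p/2})|x|^p_{E^{\star\star}} T\,e^{CT(\varepsilon^{p/2}+N^{p/2})}$, which absorbs into the stated form $C|x|^p_{E^{\star\star}}e^{CT(\varepsilon^{p/2}+N^{p/2})}$ via the elementary inequality $\alpha e^{\alpha} \leq e^{2\alpha}$ for $\alpha \geq 0$.

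The main obstacle is that the Lipschitz constant $C'$ used in the contraction step grows with $R$ through $\gamma(M|x|_{E^{\star\star}}+R)$, forcing the fixed-point argument to be local in time and then iterated. Crucially, this $R$-dependence does not pollute the a priori bound, because Assumption \ref{assum:G-new}(a) and (d) are formulated in terms of $\gamma^{-1}(|\mathcal{M}(\cdot)|)$ rather than through a Lipschitz constant; the relation $\gamma^{-1}\circ\gamma(a)\leq a$ then converts back to the raw size of $S(\cdot)x + Y_{x,R}^{\varepsilon,u}$, yielding an estimate independent of $R$—which is precisely what is needed later to conclude that once $R$ is chosen large enough the cutoff $\mathcal{T}_R$ becomes inactive.
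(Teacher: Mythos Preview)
Your proof is correct and follows essentially the same route as the paper's: a short-time contraction for $\mathscr{K}_{x,R}^{\varepsilon,u}$ using Lemma~\ref{lem:cutoff}, Assumption~\ref{assum:G-new}(b),(e), and the local Lipschitz property of $\mathcal{M}$, followed by the $R$-independent a priori bound obtained by dropping the cutoff via $|\mathcal{T}_R(y)|_E\le |y|_E$, invoking Assumption~\ref{assum:G-new}(a),(d), converting $\gamma^{-1}(|\mathcal{M}(\Psi)(s)|_E)$ back to $|\Psi|_{L^\infty([0,s]:E)}$ via Assumption~\ref{assum:mathcal-M}(b), and applying Gronwall. Your closing remark on why the $R$-dependent Lipschitz constant does not enter the a priori bound, and your explicit absorption of the polynomial prefactor into the exponential, are helpful clarifications that the paper leaves implicit.
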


\begin{proof}
  Fix $\e \ge 0$, $N\geq 0$, $u \in  \mathcal{P}_2^N$, $R\in (0,\infty)$ and $x \in E^{\star\star}$.
	For $T_1 \in (0,T]$ to be chosen later and any progressively measurable processes $\varphi$ and $\psi \in L^p(\Omega: \mathcal{E}_R)$, by Lemma \ref{lem:cutoff} and Assumption \ref{assum:G-new} (b) and (e),
	\begin{align*}
	  &\E \left|\mathscr{K}^{\e,u}_{x,R}(\varphi) - \mathscr{K}^{\e,u}_{x,R}(\psi) \right|_{C([0,T_1]:E)}^p \\
	  &\leq  \kappa_1 (\e^{p/2} + N^{p/2}) \E\int_0^{T_1}\left|\mathcal{M}(S(\cdot)x + \varphi)(s) - \mathcal{M}(S(\cdot)x + \psi)(s) \right|_{E}^pds\\
	  &\leq \kappa_2 (\e^{p/2} + N^{p/2}) \E\int_0^{T_1}\left|\varphi-\psi \right|_{C([0,s]:E)}^pds\\
	  &\leq \kappa_2 (\e^{p/2} + N^{p/2})T_1 \E\left|\varphi - \psi \right|_{C([0,T_1]:E)}^p.
	\end{align*}
	The second to last line follows from Assumption \ref{assum:mathcal-M}(c) and the fact that $|\varphi + S(\cdot)x|_{L^\infty([0,T_1]:E)} \leq R + \kappa_3|x|_{E^{\star\star}}$ and the same is true for $\psi + S(\cdot)x$ .
	If we choose $T_1$ small enough so that $\kappa_2T_1(\e^{p/2} + N^{p/2})<1$, then $\mathscr{K}^\e_{x,R}$ is a contraction mapping on $L^p(\Omega:\mathcal{E}_R)$ (restricted to the interval $[0,T_1]$). The unique solvability of \eqref{eq:eqyeuxr} now follows by a standard iterative procedure.
	
	We now prove the second part of the lemma. By Assumption \ref{assum:G-new}(a) and (d), there is a $\kappa_1 \in (0,\infty)$ such that
for all $\mathcal{F}_t$-progressively measurable
 $\varphi \in L^\infty([0,T]:E)$ satisfying $\E(\gamma^{-1}(|\varphi|_{L^\infty([0,T]:E)}))^p<\infty$,
$u \in \mathcal{P}_2^N$, $t \in [0,T]$, and $\e\ge 0$,
  \begin{align}
     \E\sup_{t \in [0,T]}\left|\sqrt{\e}\int_0^t S(t-s)G(s,\varphi(s))dw(s) \right|_{E}^p
    &\leq \kappa_1 \e^{\frac{p}{2}}\E \int_0^t (\gamma^{-1}(|\varphi(s)|_E))^pds
\label{eq:eq856}
  \end{align}
  and
  \begin{align} 
    \E \sup_{t \in [0,T]} \left| \int_0^t S(t-s)G(s,\varphi(s))u(s)ds \right|_E^p
     &\leq \kappa_1 N^{\frac{p}{2}}\E \int_0^t (\gamma^{-1}(|\varphi(s)|_E))^pds.\label{eq:eq904}
  \end{align}
From Lemma \ref{lem:U-compact}(1) and the previous contraction mapping argument we see that, 	for every $R\in (0,\infty)$,
	$x \in E^{\star\star}$,  $S(\cdot)x + Y^{\e,u}_{x,R} \in L^p(\Omega: L^\infty([0,T]:E))$   and
by Assumption \ref{assum:mathcal-M}(b), with probability one
  \[|\mathcal{M}(S(\cdot)x + Y^{\e,u}_{x,R})|_{L^\infty([0,t]:E)} \leq \gamma\left(|S(\cdot)x + Y^{\e,u}_{x,R}|_{L^\infty([0,t]:E)}\right).\]
Taking $\varphi = \mathcal{M}(S(\cdot)x + Y^{\e,u}_{x,R})$ in \eqref{eq:eq856} and \eqref{eq:eq904} and using the above estimate
 we see that for any $ t \in [0,T]$, $Y^{\e,u}_{x,R}$ satisfies
  \begin{align*}
    \E|Y^{\e,u}_{x,R}|_{L^\infty([0,t]:E)}^p
    \leq \kappa_1(\e^{\frac{p}{2}} + N^{\frac{p}{2}})  \E \int_0^t |S(\cdot)x + Y^{\e,u}_{x,R}|_{L^\infty([0,s]:E)}^p ds .
  \end{align*}
  From Lemma \ref{lem:U-compact}(1), for some $\kappa_2 \in (0,\infty)$,
 $$\E | S(\cdot)x + Y^{\e,u}_{x,R}|_{L^\infty([0,t]:E)}^p \leq \kappa_2(|x|_{E^{\star\star}}^p +  \E |Y^{\e,u}_{x,R}|_{L^\infty([0,t]:E)}^p)$$ for all $t\in [0,T]$ and $R\in (0,\infty)$. Combining this estimate with the previous display we conclude the second statement in the lemma by an application of  Gr\"onwall's inequality.
\end{proof}


\begin{theorem}
\label{thm:Yeux-uniqueness} For any ${\varepsilon}\ge 0$, $N\geq 0$, $u \in
\mathcal{P}_2^N$, and $x \in E^{\star\star}$, there exists a unique solution
$Y^{{\varepsilon},u}_x \in L^p(\Omega:C([0,T]:E))$ for %
\eqref{eq:Y-control-def}.
\end{theorem}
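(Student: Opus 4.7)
The plan is to remove the cutoff $\mathcal{T}_R$ from the truncated solution $Y_R := Y^{\e,u}_{x,R}$ of Lemma \ref{lem:a-priori-bounds}, exploiting the crucial fact that the $L^p$ bound given there is independent of $R$ once $R \geq |x|_{E^{\star\star}}$. Write $\tilde Z_R$ for the bracketed expression in \eqref{eq:eqyeuxr}, so that $Y_R = \mathcal{T}_R(\tilde Z_R)$ and $|Y_R(t)|_E < R$ iff $|\tilde Z_R(t)|_E < R$.

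For existence I would define the stopping time $\tau_R := \inf\{t \in [0,T] : |Y_R(t)|_E \geq R\} \wedge T$. On $[0,\tau_R]$ the cutoff is inactive, so $Y_R$ itself satisfies the untruncated equation \eqref{eq:Y-control-def}. A key consistency step: for $R_1 < R_2$, both $Y_{R_1}$ and $Y_{R_2}$ satisfy \eqref{eq:Y-control-def} up to $\eta := \inf\{t : |Y_{R_1}(t)|_E \vee |Y_{R_2}(t)|_E \geq R_1\} \wedge T$, and on $[0,\eta]$ both lie in a ball of radius $R_1$. Combining the Lipschitz bound of Assumption \ref{assum:mathcal-M}(c) with Assumption \ref{assum:G-new}(b),(e) and applying Gronwall up to $\eta$ yields $Y_{R_1} \equiv Y_{R_2}$ on $[0,\eta]$; continuity at $\eta$ then forces $\eta = \tau_{R_1}$. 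Consequently, whenever $\tau_{R_1} = T$ one has $|Y_{R_2}|_{C([0,T]:E)} \leq R_1 < R_2$, so also $\tau_{R_2} = T$. Thus $\{\tau_R = T\}$ is increasing in $R$, and Chebyshev together with the uniform $L^p$ estimate of Lemma \ref{lem:a-priori-bounds} gives
\[ \Pro(\tau_R < T) \leq R^{-p} \E |Y_R|_{C([0,T]:E)}^p \leq C R^{-p} |x|_{E^{\star\star}}^p e^{CT(\e^{p/2}+N^{p/2})} \to 0, \]
whence $\Pro\bigl(\bigcup_{R \in \mathbb{N}} \{\tau_R = T\}\bigr) = 1$. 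On this event let $R_\omega$ be the smallest integer with $\tau_{R_\omega}(\omega) = T$ and set $Y^{\e,u}_x(\omega) := Y_{R_\omega}(\omega)$; consistency makes this well-defined and $\{\mathcal{F}_t\}$-progressively measurable, \eqref{eq:Y-control-def} holds a.s., and Fatou combined with the uniform bound gives $Y^{\e,u}_x \in L^p(\Omega:C([0,T]:E))$.

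For uniqueness, given two solutions $Y, Y'$ in $L^p(\Omega:C([0,T]:E))$, set $\sigma_R := \inf\{t : |Y(t)|_E \vee |Y'(t)|_E \geq R\} \wedge T$. On $[0, \sigma_R]$ both processes are bounded by $R$, so the same Lipschitz inputs (Assumption \ref{assum:mathcal-M}(c) and Assumption \ref{assum:G-new}(b),(e)) give
\[ \E |Y - Y'|_{C([0, t \wedge \sigma_R]:E)}^p \leq K \int_0^t \E |Y - Y'|_{C([0, s \wedge \sigma_R]:E)}^p \, ds, \]
with $K = K(R, \e, N, T)$, and Gronwall's inequality yields $Y = Y'$ on $[0, \sigma_R]$ a.s. Since $Y, Y' \in C([0,T]:E)$ a.s., $\sigma_R \uparrow T$ a.s., completing the proof.

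The main technical obstacle is the consistency step above: the $L^p$ estimates of Assumption \ref{assum:G-new}(b),(e) are stated at fixed times, whereas to compare $Y_{R_1}$ and $Y_{R_2}$ on $[0,\eta]$ one needs their stopped analogues. This can be handled either by verifying the stopped-time versions directly from the construction of the $E$-valued stochastic integral in Definition \ref{def:stoch-int}, or by replacing the integrands with their values on $\{t \le \eta\}$ (unaffected by the truncation there) and invoking the fixed-time inequalities. Modulo this, the argument is the standard pattern for extending a solution of a truncated SDE to a solution of the original equation.
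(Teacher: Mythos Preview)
Your proposal is correct and follows essentially the same localization strategy as the paper: build the truncated solutions $Y_R$, establish consistency across $R$, and use the uniform-in-$R$ moment bound from Lemma~\ref{lem:a-priori-bounds} together with Fatou's lemma to pass to the limit.

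The one noteworthy difference is in the consistency step. You prove $Y_{R_1}=Y_{R_2}$ on $[0,\eta]$ by running a fresh Gronwall argument on the difference, which is where the stopped-time technicality you flag arises. The paper instead observes that on $[0,\tau_{R_1,R_2}]$ (with $\tau_{R_1,R_2}=\inf\{t:|Y_{R_2}(t)|_E\ge R_1\}$) the process $Y_{R_2}$ already solves the $R_1$-truncated equation \eqref{eq:eqyeuxr}, and then simply invokes the uniqueness of that truncated equation, which was established in Lemma~\ref{lem:a-priori-bounds} by the contraction-mapping argument in $L^p(\Omega:\mathcal{E}_R)$. This route bypasses the need for stopped-time versions of Assumption~\ref{assum:G-new}(b),(e) altogether, since the contraction estimate was done globally on $[0,T_1]$ without reference to stopping times. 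Your workaround is fine, but the paper's shortcut is cleaner and worth noting.
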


\begin{proof}
Fix $\e\ge0$, $u \in {\mathcal{P}^N_2}$ and $x \in E^{\star\star}$. For $R\in (0,\infty)$, let $Y^{\e,u}_{x,R}$ be as in \eqref{eq:eqyeuxr}. Then the unique solvability of \eqref{eq:eqyeuxr} for every $R$ implies that for any $0<R_1<R_2$, $Y^{\e,u}_{x,R_1}(t) = Y^{\e,u}_{x,R_2}(t)$ for all $t\leq \tau_{R_1,R_2} \wedge T$, where
\[\tau_{R_1,R_2} \doteq \inf\{t>0: |Y^{\e,u}_{x,R_2}(t)|_E \geq R_1\}.\]
From the uniform moment bound in Lemma \ref{lem:a-priori-bounds},
$\lim_{R \to \infty} Y^{\e,u}_{x,R}(t) \doteq Y^{\e,u}_{x}(t)$ exists a.s. for every $t \in [0,T]$ and by an application of
Fatou's lemma we see that
\begin{equation}
	\E |Y^{\e,u}_{x}|_{C([0,T]:E)}^p \leq C|x|_{E^{\star\star}}^p   e^{C(\e^{\frac{p}{2}} + N^{\frac{p}{2}})T},
	\label{eq:Yeux-uniqueness}
\end{equation}
where $C$ is as in the statement of Lemma \ref{lem:a-priori-bounds}.
It is easy to check that this $Y^{\e,u}_{x}$ is the unique solution to \eqref{eq:Y-control-def}.
\end{proof}

\begin{proof}[Proof of Theorems \ref{thm:uniqmilsoln} and \ref{thm:uniqdetcont}]
  Fix $x \in E$, $\e \ge 0$ and $u \in \mathcal{P}^N_2$.
Let $Y^{\e,u}_x \in L^p(\Omega:C([0,T]:E))$ be the unique solution of \eqref{eq:Y-control-def}. Define
$X^{\e,u}_x \doteq \mathcal{M}(S(\cdot)x + Y^{\e,u}_x ).$
Then clearly, $X^{\e,u}_x$ solves
  \begin{align} \label{eq:Xeux}
    X^{\e,u}_x(t) = &\mathcal{M} \left(S(\cdot)x
    +\sqrt{\e} \int_0^\cdot S(\cdot-s)G(s,X^{\e,u}_x(s))dw(s)
    + \int_0^\cdot S(\cdot-s)G(s,X^{\e,u}_x(s))u(s)ds\right).
  \end{align}
Suppose $\tilde X^{\e,u}_x$ is another $E$-valued continuous $\mathcal{F}_t$-progressively measurable process
with \\$\E(\gamma^{-1}(|\tilde X^{\e,u}_x|_{L^\infty([0,T]:E)}))^p < \infty$ that satisfies
\eqref{eq:Xeux}. Then by Assumption \ref{assum:G-new}(a) and (d)
\begin{align*}
  \tilde Y^{\e,u}_x(t) \doteq  \sqrt{\e} \int_0^t S(t-s)G(s, \tilde X^{\e,u}_x(s))dw(s)
   + \int_0^t S(t-s)G(s,\tilde X^{\e,u}_x(s))u(s)ds
\end{align*}
is in $L^p(\Omega:C([0,T]:E))$ and is a solution of \eqref{eq:Y-control-def}. By Theorem \ref{thm:Yeux-uniqueness},
$\tilde Y^{\e,u}_x=  Y^{\e,u}_x$. The unique solvability of \eqref{eq:Xeux} now follows on observing that
$$\tilde X^{\e,u}_x = \mathcal{M}(S(\cdot)x + \tilde Y^{\e,u}_x ) = \mathcal{M}(S(\cdot)x +  Y^{\e,u}_x )
= X^{\e,u}_x.$$
Finally, Theorems \ref{thm:uniqmilsoln} and \ref{thm:uniqdetcont} follow on taking $u=0$ and $\e=0$ respectively.
\end{proof}

\section{Proof of Theorem \protect\ref{thm:unif-Laplace-Y}}

\label{sec:pfuniflap} In this section we provide the proof of Theorem \ref%
{thm:unif-Laplace-Y}. The proof is based on a variational formula \cite[%
Theorem 2]{bdm-2008}. Because bounded subsets of $E^{\star\star}$ endowed with
the weak topology are in general not metrizable, we need to make some
changes to the proof. For example, if $K$ is a weak-$\star$ compact subset
of $E^{\star\star}$, it is not generally true that every sequence $%
\{x_n\}_{n \in {\mathbb{N}}} \in K$ has a convergent subsequence. We
overcome this limitation by using Lemma \ref{lem:U-compact}(3), which says
that every $\{x_n\}_{n \in {\mathbb{N}}} \in K$ has a subsequence (relabled $%
x_n$) and $x^{\star\star} \in E^{\star\star}$ such that $S(\cdot)x_n \to
S(\cdot)x^{\star\star}$ in $L^p([0,T]:E)$.

From Theorem \ref{thm:Y-well-defined}, for every ${\varepsilon }\in (0,1)$
there is a measurable map ${\mathcal{G}}^{{\varepsilon }}:E^{\star \star
}\times C([0,T]:{\mathbb{R}}^{\infty })\rightarrow C([0,T]:E)$ such that for
every $x\in E^{\star \star }$, $Y_{x}^{{\varepsilon }}={\mathcal{G}}^{{%
\varepsilon }}(x,\sqrt{{\varepsilon }}\beta )$. Recall the CONS $\{e_{i}\}$
used to define $\beta $ from $w$ in Section \ref{S:notes-assums}. For $u\in {%
\mathcal{P}}_{2}$ and $t\in \lbrack 0,T]$, we define the ${\mathbb{R}}%
^{\infty }$-valued random variable $\mathbf{u}(t)\doteq (\langle
u(t),e_{i}\rangle _{H})_{i\geq 1}$. By a standard argument based on
Girsanov's theorem, for every $u\in {\mathcal{P}}_{2}^{N}$ and $x\in
E^{\star \star }$, $Y_{x}^{{\varepsilon },u}\doteq {\mathcal{G}}^{{%
\varepsilon }}(x,\sqrt{{\varepsilon }}\beta +\int_{0}^{\cdot }\mathbf{u}%
(s)ds)$ is the unique solution of \eqref{eq:Y-control-def}.
We will need to study convergence properties of $\{Y_{x}^{{\varepsilon }%
,u}\} $. 

\subsection{Convergence Properties of $Y^{{\protect\varepsilon},u}_x$}

We now give two convergence results for controlled processes that will be
needed in the proof of Theorem \ref{thm:unif-Laplace-Y}.

\begin{theorem}
\label{thm:unif-conv} Let $N\in (0,\infty)$. Assume that $\{x_n\}_{n \in {%
\mathbb{N}}}\subset E^{\star\star}$ is a bounded sequence and $x \in
E^{\star\star}$ is such that $S(\cdot)x_n \to S(\cdot)x$ in $L^p([0,T]:E)$.
Assume that $\{{\varepsilon}_n\}_{n \in {\mathbb{N}}}\subset [0,1]$ converges
to $0$, and $\{u_n\}_{n \in {\mathbb{N}}}\subset {\mathcal{P}}_2^N$
converges in distribution (with $\mathcal{S}^N$ equipped with the weak topology) to $u$%
. Then $Y_n \doteq Y^{{\varepsilon}_n, u_n}_{x_n}$ converges in distribution
in $C([0,T]:E)$ to $Y^{0,u}_x$.
\end{theorem}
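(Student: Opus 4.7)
The plan is to establish tightness of $\{Y_n\}$ in $C([0,T]:E)$, pass to an almost-surely convergent subsequence via Skorokhod representation, and identify the limit as $Y^{0,u}_x$ using the unique solvability in Theorem \ref{prop:propyoux}. For tightness, since $\{x_n\}$ is bounded in $E^{\star\star}$, $\varepsilon_n \in [0,1]$, and $u_n \in \mathcal{P}^N_2$, Lemma \ref{lem:a-priori-bounds} gives $\sup_n {\mathbb{E}}|Y_n|_{C([0,T]:E)}^p < \infty$, and together with Assumption \ref{assum:mathcal-M}(b) and Lemma \ref{lem:U-compact}(1) this yields uniform boundedness of $\varphi_n \doteq \mathcal{M}(S(\cdot)x_n + Y_n)$ in $L^\infty([0,T]:E)$ on sets of arbitrarily high probability. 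Decomposing $Y_n = Z_n + L_n$ into the stochastic and controlled convolutions, Assumption \ref{assum:G-new}(a) yields ${\mathbb{E}}|Z_n|_{C([0,T]:E)}^p \leq C \varepsilon_n^{p/2} \to 0$, while Assumption \ref{assum:G-new}(f), applied pathwise on the high-probability set where $|\varphi_n|_{L^\infty([0,T]:E)} \leq R$, gives relative compactness of $L_n$ in $C([0,T]:E)$ in probability. Together these imply tightness of $\{Y_n\}$.

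Next, extract a subsequence along which $(u_n, Y_n)$ converges in distribution in $\mathcal{S}^N \times C([0,T]:E)$; since $\mathcal{S}^N$ endowed with the weak $L^2$ topology is a compact metrizable space (being a closed ball in a separable Hilbert space), the Skorokhod representation theorem produces copies $\tilde u_n \to \tilde u$ in $\mathcal{S}^N$ and $\tilde Y_n \to \tilde Y^{\ast}$ in $C([0,T]:E)$ almost surely on a common probability space. To identify $\tilde Y^{\ast}$, pass to the limit in
\begin{equation*}
\tilde Y_n(t) = \sqrt{\varepsilon_n}\int_0^t S(t-s)G(s,\varphi_n(s))dw(s) + \int_0^t S(t-s)G(s,\varphi_n(s))\tilde u_n(s)ds,
\end{equation*}
where $\varphi_n = \mathcal{M}(S(\cdot)x_n + \tilde Y_n)$. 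The stochastic term vanishes in $L^p$ as $\varepsilon_n \to 0$ by Assumption \ref{assum:G-new}(a). Setting $\varphi \doteq \mathcal{M}(S(\cdot)x + \tilde Y^{\ast})$, the hypothesis $S(\cdot)x_n \to S(\cdot)x$ in $L^p$ combined with $\tilde Y_n \to \tilde Y^{\ast}$ in $C([0,T]:E)$ and Assumption \ref{assum:mathcal-M}(d) yields $\varphi_n \to \varphi$ in $L^p([0,T]:E)$.

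The main obstacle is the controlled integral, since $\tilde u_n$ converges only in the weak $L^2$ topology. I would write
\begin{equation*}
\mathcal{L}(\varphi_n)\tilde u_n - \mathcal{L}(\varphi)\tilde u = [\mathcal{L}(\varphi_n) - \mathcal{L}(\varphi)]\tilde u_n + \mathcal{L}(\varphi)[\tilde u_n - \tilde u].
\end{equation*}
The first term vanishes in $C([0,T]:E)$ by Assumption \ref{assum:G-new}(e) combined with the uniform bound $|\tilde u_n|_{L^2([0,T]:H)} \leq \sqrt{N}$ and $\varphi_n \to \varphi$ in $L^p$. For the second term, Assumption \ref{assum:G-new}(f) guarantees that the bounded linear operator $u \mapsto \mathcal{L}(\varphi)u$ sends the ball $\mathcal{S}^N$ into a relatively compact subset of $C([0,T]:E)$; being compact it maps weakly convergent sequences in $L^2$ to norm convergent sequences in $C([0,T]:E)$. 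Hence $\tilde Y^{\ast}$ satisfies \eqref{eq:youxt} with control $\tilde u$, and Theorem \ref{prop:propyoux} yields $\tilde Y^{\ast} = Y^{0,\tilde u}_x$ almost surely. Since every subsequence admits a further subsequence converging in distribution to a random variable with the same law as $Y^{0,u}_x$, the full sequence $Y_n$ converges in distribution to $Y^{0,u}_x$.
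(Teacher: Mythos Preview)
Your argument is correct and follows essentially the same route as the paper's: uniform moment bounds give tightness of $\{Y_n\}$ via Assumption \ref{assum:G-new}(c),(f); Skorokhod representation reduces to almost sure convergence; and the limit is identified by the same two-term splitting $\mathcal{L}(\varphi_n)\tilde u_n - \mathcal{L}(\varphi)\tilde u$, using \ref{assum:G-new}(e) together with \ref{assum:mathcal-M}(d) for the first piece and compactness of $\mathcal{L}(\varphi)$ from \ref{assum:G-new}(f) for the second, with Theorem \ref{prop:propyoux} closing the argument.

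One technical point: after invoking Skorokhod you write the full equation for $\tilde Y_n$ including the stochastic integral $\sqrt{\varepsilon_n}\int_0^t S(t-s)G(s,\varphi_n(s))\,dw(s)$, but on the new probability space there is no cylindrical Brownian motion $w$ adapted to a filtration that makes $\tilde u_n$ progressively measurable, so this expression is not a priori meaningful. The paper sidesteps this by first establishing $Z_{1,n}\to 0$ in probability on the \emph{original} space (from Assumption \ref{assum:G-new}(a) and the moment bound), concluding that $(Y_n,Z_{2,n},u_n)\Rightarrow(\tilde Y,\tilde Y,u)$ jointly, and only then applying Skorokhod to this triple; on the new space one then works solely with the Lebesgue integral $Z_{2,n}$ and no stochastic calculus is needed. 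Your argument is easily repaired in the same way.
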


\begin{proof}
Define $Z_{1,n}$ and $Z_{2,n}$ by
\begin{align*}
  Z_{1,n}(t) &\doteq \sqrt{\e_n} \int_0^t S(t-s) G(s, \mathcal{M}(Y_n + S(\cdot)x_n)(s))dw(s)\\
  Z_{2,n}(t) &\doteq \int_0^t S(t-s) G(s,\mathcal{M}(Y_n + S(\cdot)x_n)(s))u_n(s) ds
\end{align*}
Note that $Y_n = Z_{1,n} + Z_{2,n}$.
By \eqref{eq:Yeux-uniqueness} in the proof of Theorem \ref{thm:Yeux-uniqueness}, for some $\kappa_1 \in (0,\infty)$,
$\sup_n\E|Y_n|_{C([0,T]:E)}^p\leq \kappa_1.$
Thus by Assumption \ref{assum:mathcal-M}(b), for some $\kappa_2 \in (0,\infty)$,
$$\sup_n \E(\gamma^{-1}(|\mathcal{M}(Y_n+ S(\cdot)x_n)|_{L^\infty([0,T]:E)}))^p\leq \kappa_2.$$
 By Assumption \ref{assum:G-new} (c) and (f), $\{Y_n\}_{n\in \NN}$ is tight in $C([0,T]:E)$. Consequently, there is a subsequence (indexed again by $n \in \NN$) converging in distribution to
 $\tilde Y$ for some $C([0,T:E])$ valued random variable $\tilde Y$. As a consequence of Assumption \ref{assum:G-new}(a) and
the bound in the above display, $Z_{1,n} \to 0$ in probability in $C([0,T:E])$.
Thus $(Y_n, Z_{2,n}, u_n)$ converges to $(\tilde Y, \tilde Y, u)$ in distribution and by appealing to the Skorohod representation theorem we can assume without loss of generality that the convergence holds a.s.

 Let
  \begin{align*}
    &\tilde{Z}(t) \doteq \int_0^t  S(t-s)G(s,\mathcal{M}(\tilde{Y}+S(\cdot)x)(s))u(s)ds, \; t\in [0,T].
  \end{align*}
  To complete the proof, in view of the uniqueness given by Theorem \ref{prop:propyoux}, it suffices to show that $\tilde Y = \tilde Z$ a.s.
Note that
\begin{align*}
    |Z_{2,n}(t)  - \tilde{Z}(t)|_E
    &\leq \left| \int_0^t S(t-s)(G(s,\mathcal{M}(Y_n +S(\cdot)x_n)(s)) -G(s,\mathcal{M}(\tilde{Y}+S(\cdot)x)(s)))u_n(s)ds \right|_E \\
    &\quad+\left|\int_0^t  S(t-s)G(s,\mathcal{M}(\tilde{Y}+S(\cdot)x)(s))(u_n(s)-u(s))ds \right|_E\\
    &\doteq |J_{1,n}(t)|_E + |J_{2,n}(t)|_E.
  \end{align*}
In the notation of Assumption \ref{assum:G-new}, $J_{2,n} = \mathcal{L}(\varphi)(u_n -u)$ for $\varphi = \mathcal{M}(\tilde Y + S(\cdot)x)$. For a fixed $\varphi \in L^\infty([0,T]:E)$, Assumption \ref{assum:G-new}(f) guarantees that $\mathcal{L}(\varphi)$ is a compact operator from $L^2([0,T]:H)$ to $C([0,T]:E)$.  In particular, $\mathcal{L}(\varphi)$ is continuous from the weak topology on $L^2([0,T]:H)$ to the norm topology on $C([0,T]:E)$ (see \cite[Proposition VI.3.3(a)]{conway}). Therefore, the fact that $u_n \to u$ weakly in $L^2([0,T]:H)$ with probability one guarantees that $J_{2,n}$ goes to zero uniformly in $t\in[0,T]$.

As for $J_{1,n}$, by Assumption \ref{assum:G-new}(e),
\begin{equation} \label{eq:Y5}
  \sup_{t \in [0,T]} |J_{1,n}(t)|_E \leq \kappa N^{\frac{1}{2}}|\mathcal{M}(Y_n + S(\cdot)x) -\mathcal{M}(\tilde{Y} + S(\cdot)x_n)|_{L^p([0,T]:E)}
\end{equation}
Because $Y_n \to \tilde{Y}$ in $C([0,T]:E)$ with probability 1, $|Y_n|_{C([0,T]:E)}$ is eventually less than $|\tilde{Y}|_{C([0,T]:E)} + 1$. We assumed that $\sup_{n \in \mathbb{N}}|x_n|_{E^{\star\star}}$ is finite. Therefore, Assumption \ref{assum:mathcal-M}(d) guarantees that with probability one
\begin{equation} \label{eq:Y6}
  \lim_{n \to \infty} \sup_{0\le t \le T} |J_{1,n}(t)|_E =0.
\end{equation}
This proves that $\tilde Y = \tilde Z$ a.s. and completes the proof.
\end{proof}
An immediate consequence is the following result for the deterministic controlled equations.

\begin{theorem}
\label{thm:convlimcont} Assume that $\{x_n\}_{n \in {\mathbb{N}}}\subset
E^{\star\star}$ is a bounded sequence such that $S(\cdot)x_n$ converges to $%
S(\cdot)x$ in $L^p([0,T]:E)$, and suppose that for some $N \in {\mathbb{N}}$%
, $\{u_n\}_{n \in {\mathbb{N}}}\subset \mathcal{S}^N$ converges in the weak topology
to $u$. Then $Y^{0,u_n}_{x_n}$ converges in $C([0,T]:E)$ to $Y^{0,u}_x$.
\end{theorem}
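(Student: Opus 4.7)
The plan is to deduce Theorem \ref{thm:convlimcont} as a direct specialization of Theorem \ref{thm:unif-conv} by taking $\e_n = 0$ for every $n$ and viewing the deterministic controls $u_n \in \mathcal{S}^N$ as (constant-in-$\omega$) $\mathcal{P}_2^N$-valued random variables. Concretely, I would first observe that with $\e_n=0$ the integral equation \eqref{eq:Y-control-def} has no stochastic integral term, so the unique solution $Y^{0,u_n}_{x_n}$ provided by Theorem \ref{thm:Yeux-uniqueness} is in fact a deterministic element of $C([0,T]:E)$; the same holds for the limit $Y^{0,u}_x$ by Theorem \ref{prop:propyoux}.

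Next, since the $u_n$ are deterministic and $u_n \to u$ weakly in $\mathcal{S}^N$, the constant distributions $\delta_{u_n}$ converge weakly to $\delta_u$ on $\mathcal{S}^N$ equipped with the weak topology. Thus the hypotheses of Theorem \ref{thm:unif-conv} are satisfied: $\{x_n\}$ is bounded with $S(\cdot)x_n \to S(\cdot)x$ in $L^p([0,T]:E)$, $\e_n = 0 \to 0$, and $u_n \to u$ in distribution on $\mathcal{S}^N$. Applying Theorem \ref{thm:unif-conv} yields that $Y^{0,u_n}_{x_n}$ converges in distribution on $C([0,T]:E)$ to $Y^{0,u}_x$.

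Finally, since both $Y^{0,u_n}_{x_n}$ and the limit $Y^{0,u}_x$ are deterministic, convergence in distribution in the Polish space $C([0,T]:E)$ reduces to ordinary norm convergence: if $\delta_{Y^{0,u_n}_{x_n}} \Rightarrow \delta_{Y^{0,u}_x}$ then $|Y^{0,u_n}_{x_n} - Y^{0,u}_x|_{C([0,T]:E)} \to 0$. This gives the desired conclusion. There is essentially no obstacle here beyond bookkeeping, as Theorem \ref{thm:unif-conv} has already absorbed the real difficulty (handling the feedback in $G$, the lack of metrizability of the weak-$\star$ topology on bounded sets of $E^{\star\star}$, and the weak-to-norm continuity afforded by Assumption \ref{assum:G-new}(f)); the deterministic case is strictly easier and is stated separately only because it is the version needed to establish compactness of the level sets $\tilde{\Phi}_x(s)$ in Theorem \ref{thm:compact-level-sets-tilde}.
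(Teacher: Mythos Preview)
Your proposal is correct and matches the paper's own treatment exactly: the paper states Theorem \ref{thm:convlimcont} immediately after Theorem \ref{thm:unif-conv} with the one-line remark that it is ``an immediate consequence'' of the latter, and your argument (setting $\varepsilon_n=0$, regarding deterministic $u_n$ as degenerate $\mathcal{P}_2^N$-valued random variables, and reducing convergence in distribution of deterministic elements to norm convergence) is precisely how that reduction is carried out.
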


\subsection{Proof of Theorem \protect\ref{thm:compact-level-sets-tilde}}

\label{sec:secratefunprop} Recall the rate functions $\tilde I_x$ introduced
in \eqref{eq:rate-fct-Y}. In this section we prove Theorem \ref%
{thm:compact-level-sets-tilde}.

\begin{proof}[Proof of Theorem \ref{thm:compact-level-sets-tilde}]
Let  $K \subset E^{\star\star}$ be compact in the weak-$\star$ topology and let $N\in \NN$.
By the definition of $\tilde{I}_x$ (see \eqref{eq:rate-fct-Y}) and Theorem \ref{thm:convlimcont}
\[\bigcup_{x \in K} \tilde{\Phi}_x(N) = \left\{Y^{0,u}_x: u \in \mathcal{S}^{2N}, x \in K \right\}.\]
Take an arbitrary sequence $(x_n, u_n) \in K \times \mathcal{S}^{2N}$.
By the compactness of $K$ and Lemma \ref{lem:U-compact}(3), we can find $x\in K$ and a subequence (relabeled $x_n$) such that $S(\cdot) x_n \to S(\cdot)x$ in $L^p([0,T]:E)$. Since $\mathcal{S}^{2N}$ is compact in the weak toplogy, we can find a (further) subsequence such that for some $u \in \mathcal{S}^{2N}$, $u_n \to u$ weakly in $\mathcal{S}^{2N}$. From Theorem \ref{thm:convlimcont} we now have that $Y^{0,u_n}_{x_n} \to Y^{0,u}_x$ which completes the proof.
\end{proof}

{\ }

\subsection{Completing the proof of Theorem \protect\ref{thm:unif-Laplace-Y}}

We more or less follow the program of \cite{bd-2000,bdm-2008}, except for
the fact that $E^{\star\star}$ endowed with the weak-$\star$ topology is in
general not metrizable, and therefore not a Polish space. Despite this,
Theorem \ref{thm:unif-Laplace-Y} will follow from Theorem \ref{thm:unif-conv}%
.

Let $h:C([0,T]:E)\rightarrow \mathbb{R}$ be bounded and continuous. For $({%
\varepsilon },x)\in (0,\infty )\times E^{\star \star }$ we define
\begin{equation}
F({\varepsilon },x)\doteq -{\varepsilon }\log {\mathbb{E}}\left( e^{-\frac{%
h(Y_{x}^{\varepsilon })}{{\varepsilon }}}\right) ,
\end{equation}%
and for $x\in E^{\star \star }$%
\begin{equation}
F(0,x)\doteq \inf_{\varphi \in C([0,T]:E)}\{h(\varphi )+\tilde{I}_{x}(\varphi )\}.
\end{equation}%
With this notation, the uniform Laplace principle \eqref{eq:unif-Laplace-Y},
can be stated as
\begin{equation}
\lim_{{\varepsilon }\rightarrow 0}\sup_{x\in K}\left\vert F({\varepsilon }%
,x)-F(0,x)\right\vert =0.  \label{eq:unif-Laplace-F}
\end{equation}%

 Because for any fixed $\e>0$ and $x \in E^{\star\star}$ there is a measurable mapping that sends the infinite dimensional Wiener process $w \mapsto Y^\e_x$, there is a variational representation for $F(\e,x)$ given by  \cite[Theorem 2]{bdm-2008},

   \begin{equation} \label{eq:variational}
     F(\e,x) = \inf_{u \in \mathcal{P}_2} \E \left[\frac{1}{2} \int_0^T |u(s)|_H^2ds + h(Y^{\e,u}_x) \right].
   \end{equation}

 Following the localization method from \cite[Theorem 4.4]{bd-2000} for any bounded and continuous $h: C([0,T]:E) \to \mathbb{R}$ and $\delta>0$, there exists $N>0$ such that for any $\e>0$ and $x \in E^{\star\star}$ there exists $u = u^\e_x \in \mathcal{P}_2^N$ satisfying
 \begin{equation} \label{eq:variational-N}
   F(\e,x) \geq \E \left[\frac{1}{2} \int_0^T |u(s)|_H^2ds + h(Y^{\e,u}_x)\right]-\delta.
 \end{equation}

We prove Theorem \ref{thm:unif-Laplace-Y} in two steps.

\begin{lemma}[Uniform Laplace Principle upper bound]
For any $K \subset E^{\star\star}$ that is compact in the weak-$\star$
topology,
\begin{equation*}
\limsup_{{\varepsilon} \to 0} \sup_{x \in K} \left(-F({\varepsilon},x) +
F(0,x)\right) \leq 0.
\end{equation*}
\end{lemma}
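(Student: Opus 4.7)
The plan is to argue by contradiction, combining the nearly-optimal variational formula \eqref{eq:variational-N} with the distributional continuity delivered by Theorem \ref{thm:unif-conv} (and, for the noiseless flow, Theorem \ref{thm:convlimcont}). Suppose toward a contradiction that there exist $\alpha > 0$, $\varepsilon_n \downarrow 0$, and $x_n \in K$ with $F(0,x_n) - F(\varepsilon_n,x_n) > \alpha$ for every $n$. Applying \eqref{eq:variational-N} with $\delta = \alpha/4$ produces $N = N(\alpha,h) \in \mathbb{N}$ and controls $u_n \in \mathcal{P}_2^N$ satisfying
\[
F(\varepsilon_n, x_n) \;\geq\; \mathbb{E}\left[\tfrac{1}{2}\int_0^T |u_n(s)|_H^2\,ds + h(Y^{\varepsilon_n,u_n}_{x_n})\right] - \tfrac{\alpha}{4}.
\]

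I would then use the elementary upper bound
\[
F(0,x_n) \;\leq\; \mathbb{E}\left[\tfrac{1}{2}\int_0^T|u_n(s)|_H^2\,ds + h(Y^{0,u_n}_{x_n})\right],
\]
which holds because, for each $\omega$, the deterministic realization $u_n(\omega)$ is admissible in the infimum defining $F(0,x_n)$ via Theorem \ref{prop:propyoux}. Subtracting the two inequalities and invoking the contradiction hypothesis yields
\[
\mathbb{E}\left[h(Y^{0,u_n}_{x_n}) - h(Y^{\varepsilon_n,u_n}_{x_n})\right] \;>\; \tfrac{3\alpha}{4}.
\]
The remaining task is to show that this expectation in fact tends to $0$. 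Weak-$\star$ compactness of $K$ together with Lemma \ref{lem:U-compact}(3) provides a subsequence (not relabeled) and $x \in K$ such that $S(\cdot)x_n \to S(\cdot)x$ in $L^p([0,T]:E)$. Since $\mathcal{S}^N$ is weakly compact and metrizable (the latter because $L^2([0,T]:H)$ is separable), the laws of the $u_n$ are tight in $\mathcal{S}^N$ with its weak topology, so along a further subsequence $u_n$ converges in distribution to some $\mathcal{S}^N$-valued $u$.

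By Theorem \ref{thm:unif-conv}, $Y^{\varepsilon_n,u_n}_{x_n} \Rightarrow Y^{0,u}_x$ in $C([0,T]:E)$, and an inspection of that proof shows that the joint law of $(u_n, Y^{\varepsilon_n,u_n}_{x_n})$ converges to that of $(u, Y^{0,u}_x)$. Invoking the Skorohod representation theorem on this joint law, I would work on a single probability space on which $u_n \to u$ almost surely in the weak topology of $\mathcal{S}^N$ and $Y^{\varepsilon_n,u_n}_{x_n} \to Y^{0,u}_x$ almost surely in $C([0,T]:E)$. Since $Y^{0,u_n}_{x_n}(\omega) = Y^{0,u_n(\omega)}_{x_n}$ depends only on the deterministic inputs $u_n(\omega)$ and $x_n$, a pathwise application of Theorem \ref{thm:convlimcont} delivers the further almost sure convergence $Y^{0,u_n}_{x_n} \to Y^{0,u}_x$ in $C([0,T]:E)$. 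Boundedness and continuity of $h$ together with dominated convergence then give $\mathbb{E}[h(Y^{0,u_n}_{x_n}) - h(Y^{\varepsilon_n,u_n}_{x_n})] \to 0$, contradicting the lower bound $3\alpha/4$.

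The main technical point I expect to dwell on is the justification of the joint distributional convergence of $(u_n, Y^{\varepsilon_n,u_n}_{x_n})$ and the corresponding simultaneous Skorohod realization: it is this single enlarged probability space that hosts both the almost sure convergence of the stochastic trajectory and the deterministic one, which in turn is what makes the pathwise use of Theorem \ref{thm:convlimcont} legitimate. Everything else is a routine extraction of subsequences and an application of dominated convergence.
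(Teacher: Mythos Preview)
Your proof is correct and follows essentially the same route as the paper's: near-optimal controls from \eqref{eq:variational-N}, the upper bound on $F(0,x_n)$ via the same control, subsequence extraction through Lemma \ref{lem:U-compact}(3) and weak compactness of $\mathcal{S}^N$, and the convergence results of Theorems \ref{thm:unif-conv} and \ref{thm:convlimcont}. The paper avoids your joint-Skorohod detour by observing that Theorem \ref{thm:unif-conv} already permits $\varepsilon_n \equiv 0$, so both $Y^{\varepsilon_n,u_n}_{x_n}$ and $Y^{0,u_n}_{x_n}$ converge in distribution to the same limit $Y^{0,\tilde u}_x$ directly, and hence $\mathbb{E}h(Y^{0,u_n}_{x_n}) - \mathbb{E}h(Y^{\varepsilon_n,u_n}_{x_n}) \to 0$ without any need for a common probability space.
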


\begin{proof}
  Let $\e_n \downarrow 0$ be arbitrary. For each $n\in \mathbb{N}$, let $\{x_n\}_{n \in \NN} \subset K$ be such that
  \begin{equation} \label{eq:Laplace-upper-sup}
    \sup_{x \in K} \left(-F(\e_n,x) + F(0,x) \right) \leq -F(\e_n,x_n) + F(0,x_n) + \frac{1}{n}.
  \end{equation}
  Fix $\delta >0$. By 
  \eqref{eq:variational-N}, we can find $N\in \NN$ and $\{u_n\}_{n \in \NN} \subset \mathcal{P}_2^N$ such that for all $n\in \NN$
  \begin{equation} \label{eq:Laplace-upper-e}
    F(\e_n,x_n) \ge \E \left[\frac{1}{2} \int_0^T |u_n(s)|_H^2ds + h(Y^{\e_n,u_n}_{x_n}) \right] -\delta.
  \end{equation}
  It is also clear by the definition of $\tilde{I}_x$  that
  \begin{equation} \label{eq:Laplace-upper-0}
    F(0,x_n)  = \inf_{u \in L^2([0,T]:H)} \left[\frac{1}{2} \int_0^T |u(s)|_H^2 ds + h(Y^{0,u}_{x_n}) \right] \leq \E \left[\frac{1}{2} \int_0^T |u_n(s)|_H^2ds + h(Y^{0,u_n}_{x_n}) \right].
  \end{equation}

  Using the fact that $\mathcal{S}^N$ is a compact metric space under the weak topology, we can find a subsequence (which we relabel as $\{n\}$) such that $u_n$ converges in distribution to some $\tilde{u}\in \mathcal{P}_2^N$. By Lemma \ref{lem:U-compact}(3), there exists a further subsequence (which we again relabel as $\{n\}$) and an $x \in K$ such that $S(\cdot)x_n \to S(\cdot)x$ in $L^p([0,T]:E)$. It follows from Theorems \ref{thm:unif-conv} and \ref{thm:convlimcont} that $Y^{\e_n,u_n}_{x_n}$ and $Y^{0,u_n}_{x_n}$ both converge in distribution in $C([0,T]:E)$ to $Y^{0,\tilde u}_{x}$.

  Using the estimates in  \eqref{eq:Laplace-upper-sup}, \eqref{eq:Laplace-upper-e}, and \eqref{eq:Laplace-upper-0},
  \begin{align*}
    &\limsup_{n \to \infty} \sup_{x \in K} (-F(\e_n,x) + F(0,x)) \\
&\quad\leq \limsup_{n \to \infty}(-F(\e_n,x_n) + F(0,x_n)) + \frac{1}{n}\\
    &\quad\leq \limsup_{n \to \infty}\left( -\E \left[\frac{1}{2} \int_0^T |u_n(s)|_H^2ds + h(Y^{\e_n,u_n}_{x_n}) \right] + \E \left[\frac{1}{2} \int_0^T |u_n(s)|_H^2ds + h(Y^{0,u_n}_{x_n}) \right] \right)+ \frac{1}{n}+\delta\\
    &\quad\leq \delta.
  \end{align*}
  The final line is a consequence of the fact that
  \[\lim_{n \to \infty}\E h(Y^{\e_n,u_n}_{x_n}) =\lim_{n \to \infty}\E h(Y^{0,u_n}_{x_n}) = \E(h(Y^{0,\tilde u}_{x})). \]
  Since $\delta>0$ and $\e_n \downarrow 0$ were arbitrary, the result follows.
\end{proof}

\begin{lemma}[Uniform Laplace Principle lower bound]
For any $K \subset E^{\star\star}$ that is compact in the weak-$\star$
topology,
\begin{equation*}
\liminf_{{\varepsilon} \to 0} \inf_{x \in K} \left(-F({\varepsilon},x) +
F(0,x)\right) \geq 0.
\end{equation*}
\end{lemma}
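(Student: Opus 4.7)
The plan is to mirror the upper bound argument, but with the roles of the near-optimal controls reversed: we pick a deterministic control $u_n$ that is nearly optimal for $F(0,x_n)$ and then use the easy variational bound $F(\e_n,x_n) \le V_{\e_n}(u_n,x_n)$, which comes directly from the fact that $F(\e_n,\cdot)$ is defined as an infimum. Here $V_\e(u,x) \doteq \E\bigl[\tfrac{1}{2}\int_0^T|u(s)|_H^2 ds + h(Y^{\e,u}_x)\bigr]$.

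Fix $\e_n \downarrow 0$ and for each $n$ choose $x_n\in K$ with
\[-F(\e_n,x_n)+F(0,x_n) \le \inf_{x\in K}(-F(\e_n,x)+F(0,x))+1/n.\]
Setting $u\equiv 0$ in the definition of $F(0,\cdot)$ gives $F(0,x)\le h(Y^{0,0}_x)=h(0)\le \sup|h|$ for every $x\in E^{\star\star}$, so given $\delta>0$ a deterministic $\delta$-optimizer $u_n\in L^2([0,T]:H)$ for $F(0,x_n)$ can be chosen to satisfy $\tfrac{1}{2}\int_0^T|u_n(s)|_H^2 ds\le 2\sup|h|+\delta$; in particular $u_n\in\mathcal{S}^N$ for some $N$ depending only on $\sup|h|$ and $\delta$. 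Combining the bound $V_0(u_n,x_n)\le F(0,x_n)+\delta$ with $F(\e_n,x_n)\le V_{\e_n}(u_n,x_n)$, and noting that since $u_n$ is deterministic the quadratic terms in $V_{\e_n}(u_n,x_n)$ and $V_0(u_n,x_n)$ coincide, yields
\[-F(\e_n,x_n)+F(0,x_n)\ge V_0(u_n,x_n)-V_{\e_n}(u_n,x_n)-\delta = h(Y^{0,u_n}_{x_n})-\E h(Y^{\e_n,u_n}_{x_n})-\delta.\]

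What remains is a compactness/continuity argument to pass to the limit. Because $K$ is weak-$\star$ compact, Lemma \ref{lem:U-compact}(3) yields a subsequence (relabeled) and an $x\in K$ with $S(\cdot)x_n\to S(\cdot)x$ in $L^p([0,T]:E)$; because $\mathcal{S}^N$ is metrizable and compact in the weak topology of $L^2([0,T]:H)$, a further subsequence satisfies $u_n\to\tilde u$ weakly. Theorem \ref{thm:convlimcont} then gives $Y^{0,u_n}_{x_n}\to Y^{0,\tilde u}_x$ in $C([0,T]:E)$, and Theorem \ref{thm:unif-conv} gives $Y^{\e_n,u_n}_{x_n}\to Y^{0,\tilde u}_x$ in distribution, so by boundedness and continuity of $h$ both $h(Y^{0,u_n}_{x_n})$ and $\E h(Y^{\e_n,u_n}_{x_n})$ converge to $h(Y^{0,\tilde u}_x)$, whence the right-hand side of the display tends to $-\delta$ along this subsequence. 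Running the argument on every subsequence of $\{(\e_n,x_n)\}$ gives $\liminf_{n\to\infty}\inf_{x\in K}(-F(\e_n,x)+F(0,x))\ge -\delta$, and sending $\delta\downarrow 0$ finishes the proof since $\e_n\downarrow 0$ was arbitrary. The main obstacle, shared with the upper bound, is the non-metrizability of the weak-$\star$ topology on $E^{\star\star}$, which obstructs extracting a weak-$\star$ convergent subsequence from $\{x_n\}\subset K$ directly; the compactness of the semigroup circumvents this by providing $L^p$-convergence of $S(\cdot)x_n$ via Lemma \ref{lem:U-compact}(3), which is precisely the form of convergence of initial conditions consumed by Theorems \ref{thm:unif-conv} and \ref{thm:convlimcont}.
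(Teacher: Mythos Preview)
Your proof is correct and follows essentially the same approach as the paper's: choose $x_n$ nearly achieving the infimum, pick a deterministic near-optimizer $u_n\in\mathcal{S}^N$ for $F(0,x_n)$, bound $F(\e_n,x_n)$ from above via the variational formula at this same $u_n$, cancel the quadratic terms, and use Theorems \ref{thm:unif-conv}, \ref{thm:convlimcont} together with Lemma \ref{lem:U-compact}(3) to pass to the limit. The only cosmetic differences are that the paper uses a $1/n$-optimizer rather than a fixed $\delta$-optimizer (so no final $\delta\to 0$ step is needed), and it obtains the bound $N=2\|h\|_{L^\infty}+1$ directly from $-\|h\|_\infty \le F(0,x_n)\le \|h\|_\infty$ rather than via $F(0,x)\le h(0)$.
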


\begin{proof}
  The proof is very similar to the upper bound proof. Let $\e_n\downarrow 0$ be arbitrary. For each $n\in \mathbb{N}$, let $\{x_n\}_{n \in \NN} \subset K$ such that
  \begin{equation} \label{eq:Laplace-lower-inf}
    \inf_{x \in K} \left(-F(\e_n,x) + F(0,x) \right) \geq -F(\e_n,x_n) + F(0,x_n) - \frac{1}{n}.
  \end{equation}
  By the definitions of $F(0,x)$ and $\tilde{I}_x$, we can find a sequence $\{u_n\}_{n \in \NN} \subset \mathcal{S}^N$, where $N\doteq 2\|h\|_{L^\infty(C([0,T]):\mathbb{R})}+1$, such that
  \begin{equation} \label{eq:Laplace-lower-0}
    F(0,x_n) = \inf_{u \in \mathcal{S}^N}\left[ \frac{1}{2} \int_0^T |u(s)|_H^2ds + h(Y^{0,u}_{x_n})\right] >  \frac{1}{2} \int_0^T |u_n(s)|_H^2ds + h(Y^{0,u_n}_{x_n}) - \frac{1}{n}.
  \end{equation}
  By \eqref{eq:variational} and the fact that the chosen $u_n \in L^2([0,T]:H)\subset \mathcal{P}_2$,
  \begin{equation} \label{eq:Laplace-lower-e}
    F(\e_n,x_n)  = \inf_{u \in \mathcal{P}_2} \E\left[\frac{1}{2} \int_0^T |u(s)|_H^2 ds + h(Y^{\e_n,u}_{x_n}) \right] \leq \E \left[\frac{1}{2} \int_0^T |u_n(s)|_H^2ds + h(Y^{\e_n,u_n}_{x_n}) \right].
  \end{equation}

  From the weak compactness of $\mathcal{S}^N$, we can find a subsequence such that $u_n$ converges weakly to some $\tilde{u}$ in $\mathcal{S}^N$. By Lemma \ref{lem:U-compact}(3), there exists a further subsequence such that $S(\cdot)x_n \to S(\cdot)x$ in $L^p([0,T]:E)$ for some $x \in E^{\star\star}$. It follows from Theorem \ref{thm:unif-conv} and \ref{thm:convlimcont}
 that $Y^{\e_n,u_n}_{x_n}$ and $Y^{0,u_n}_{x_n}$ both converge in distribution in $C([0,T]:E)$ to $Y^{0,\tilde u}_{x}$ ($Y^{0,u_n}_{x_n}$ are actually not random).

  Applying estimates  \eqref{eq:Laplace-lower-inf}, \eqref{eq:Laplace-lower-0}, and \eqref{eq:Laplace-lower-e},
  \begin{align*}
    &\liminf_{n \to \infty} \inf_{x \in K} (-F(\e_n,x) + F(0,x))\\
&\quad \geq \liminf_{n \to \infty}(-F(\e_n,x_n) + F(0,x_n)) - \frac{1}{n}\\
    &\quad\geq \liminf_{n \to \infty}\left( -\E \left[\frac{1}{2} \int_0^T |u_n(s)|_H^2ds + h(Y^{\e_n,u_n}_{x_n}) \right] +  \left[\frac{1}{2} \int_0^T |u_n(s)|_H^2ds + h(Y^{0,u_n}_{x_n}) \right] \right)- \frac{2}{n}\\
    &\quad\geq 0.
  \end{align*}
  Since $\e_n\downarrow 0$ was arbitrary, the result follows.
\end{proof}

\section{Proof of Theorem \protect\ref{thm:ULP-implies-LDP}}

\label{sec:fin} Recall that $\Phi _{x}(s)\doteq \{\varphi \in \mathcal{E}%
:I_{x}(\varphi )\leq s\}$ are the level sets of the rate function. Let $\rho
$ be the metric on $\mathcal{E}$ and recall that ${\mathnormal{dist}}(y,B)$ denotes
the distance between a point $x\in \mathcal{E}$ and a set $B\subset \mathcal{%
E}$, i.e., ${\mathnormal{dist}}(y,B)=\inf_{z\in B}\rho (y,z)$.

We begin with the LDP lower bound, namely we prove \eqref{eq:ldpunilowbd}
with $A_0$ replaced by an arbitrary compact set $K$ in ${\mathcal{E}}_0$.
For $\varphi \in \mathcal{E}$, $j\geq0$ and $\delta>0$, define the bounded
continuous function $h_{j,\delta,\varphi}: \mathcal{E} \to \mathbb{R}$
\begin{equation*}
h_{j, \delta,\varphi}(\psi) \doteq j \left(\frac{\rho(\psi,\varphi)}{\delta}
\wedge 1 \right).
\end{equation*}
This function is nonnegative and it is equal to $j$ if $\rho(\varphi, \psi)\ge
\delta$. Therefore,
\begin{equation*}
{\mathbb{E}}\left(\exp\left(-\frac{h_{j,\delta,\varphi}(Z^{\varepsilon}_x)}{{%
\varepsilon}}\right) \right) \leq {\mathbb{P}}(\rho(Z^{\varepsilon}_x,%
\varphi)<\delta) + e^{-\frac{j}{{\varepsilon}}}.
\end{equation*}
Furthermore, because $h_{j,\delta,\varphi}(\varphi) = 0$,
\begin{equation}  \label{eq:h-leq-I}
\inf_{\psi \in \mathcal{E}} \left(h_{j,\delta,\varphi}(\psi) + I_x(\psi)
\right) \leq I_x(\varphi).
\end{equation}
Therefore,
\begin{align}  \label{eq:LDP-geq-ULP}
&\left.{\varepsilon} \log\left({\mathbb{P}}\left(\rho(Z^{{\varepsilon}}_x,
\varphi)< \delta \right) +e^{-\frac{j}{{\varepsilon}}}\right) + I_x(\varphi)
\right.  \notag \\
&\geq \left.{\varepsilon} \log{\mathbb{E}} \left(\exp\left(-\frac{%
h_{j,\delta,\varphi}(Z^{\varepsilon}_x)}{{\varepsilon}}\right) \right) +
\inf_{\psi \in \mathcal{E}} \left(h_{j,\delta,\varphi}(\psi) + I_x(\psi)
\right). \right.
\end{align}

Fix $s_{0}>0$ and take arbitrary sequences ${\varepsilon }_{n}\downarrow 0$,
$x_{n}\in K$, and $\varphi _{n}\in \Phi _{x_{n}}(s_{0})$. By the compactness
of $\bigcup_{x\in K}{\Phi _{x}(s_{0})}$ there exists a convergent
subsequence (relabled as $\varphi _{n}$) such that $\varphi _{n}\rightarrow
\varphi $ in $\mathcal{E}$. Because $\varphi _{n}\rightarrow \varphi $, for
any fixed $\delta >0$, the functions $h_{j,\delta ,\varphi _{n}}$ converge
to $h_{j,\delta ,\varphi }$ uniformly.

We note that for any bounded, continuous functions $g,h:\mathcal{E}%
\rightarrow \mathbb{R}$ and any $\mathcal{E}$-valued random variable $Y$,
\begin{equation}
\left\vert {\varepsilon }\log {\mathbb{E}}\left( \exp \left( -\frac{g(Y)}{{%
\varepsilon }}\right) \right) -{\varepsilon }\log {\mathbb{E}}\left( \exp
\left( -\frac{h(Y)}{{\varepsilon }}\right) \right) \right\vert \leq
|g-h|_{L^{\infty }(\mathcal{E})}  \label{eq:log-continuity}
\end{equation}%
and
\begin{equation}
\left\vert \inf_{\psi \in \mathcal{E}}\left( I_{x}(\psi )+g(\psi )\right)
-\inf_{\psi \in \mathcal{E}}\left( I_{x}(\psi )+h(\psi )\right) \right\vert
\leq |g-h|_{L^{\infty }(\mathcal{E})}.  \label{eq:inf-continuous}
\end{equation}%
Therefore
\begin{align*}
& \liminf_{n\rightarrow \infty }\left( {\varepsilon }_{n}\log \left( {%
\mathbb{P}}(\rho (Z_{x_{n}}^{{\varepsilon }_{n}},\varphi _{n})<\delta )+e^{-%
\frac{j}{{\varepsilon }_{n}}}\right) +I_{x_{n}}(\varphi _{n})\right) \\
& \geq \liminf_{n\rightarrow \infty }\left( {\varepsilon }_{n}\log {\mathbb{E%
}}\left( \exp \left( -\frac{h_{j,\delta ,\varphi _{n}}(Z_{x_{n}}^{{%
\varepsilon }_{n}})}{{\varepsilon }_{n}}\right) \right) +\inf_{\psi \in
\mathcal{E}}\left( h_{j,\delta ,\varphi _{n}}(\psi )+I_{x_{n}}(\psi )\right)
\right) \\
& \geq \liminf_{n\rightarrow \infty }\left( {\varepsilon }_{n}\log {\mathbb{E%
}}\left( \exp \left( -\frac{h_{j,\delta ,\varphi }(Z_{x_{n}}^{{\varepsilon }%
_{n}})}{{\varepsilon }_{n}}\right) \right) +\inf_{\psi \in \mathcal{E}%
}\left( h_{j,\delta ,\varphi }(\psi )+I_{x_{n}}(\psi )\right) \right) \\
& \hspace{1cm}-2\liminf_{n\rightarrow \infty }|h_{j,\delta ,\varphi
_{n}}-h_{j,\delta ,\varphi }|_{L^{\infty }(\mathcal{E})} \\
& =0,
\end{align*}%
where the first inequality is from \eqref{eq:LDP-geq-ULP}, the second from %
\eqref{eq:log-continuity} and \eqref{eq:inf-continuous}, and the last from
the fact that uniform Laplace principle holds and recalling that $%
h_{j,\delta ,\varphi _{n}}$ converges uniformly to $h_{j,\delta ,\varphi }$.

Choosing $j>s_0$ and noting that $I_{x_n}(\varphi_n) \leq s_0$, we now have
\begin{align*}
&\liminf_{n \to \infty} \left( {\varepsilon}_n \log\left( {\mathbb{P}}%
(\rho(Z^{{\varepsilon}_n}_{x_n}, \varphi_n)<\delta)\right) +
I_{x_n}(\varphi_n)\right) \geq 0.
\end{align*}
Since our sequences were arbitrary, the lower bound \eqref{eq:ldpunilowbd}
(with $A_0$ replaced by $K$) follows.

We now prove the upper bound in \eqref{eq:ldpuniuppbd} with $A_{0}$ replaced
by an arbitrary compact $K$ in ${\mathcal{E}}_{0}$. For $\delta >0$, $j\geq
0 $, $x\in {\mathcal{E}}_{0}$, $s\in (0,\infty )$ and $\psi \in {\mathcal{E}}
$, define $h_{j,\delta ,x,s}(\psi )\doteq j-j\left( \frac{{\mathnormal{dist}}(\psi
,\Phi _{x}(s))}{\delta }\wedge 1\right) $. Then for any ${\varepsilon }>0$,
\begin{equation}
{\mathbb{P}}({\mathnormal{dist}}(Z_{x}^{\varepsilon },\Phi _{x}(s))\geq
\delta )\leq {\mathbb{E}}\left( \exp \left( -\frac{h_{j,\delta
,x,s}(Z_{x}^{\varepsilon })}{{\varepsilon }}\right) \right) .
\label{eq:LDP-leq-ULP}
\end{equation}%
If $j>s_{0}\geq s$, then
\begin{equation}
\inf_{\psi \in \mathcal{E}}\left( h_{j,\delta ,x,s}(\psi )+I_{x}(\psi
)\right) \geq s  \label{eq:h-geq-s}
\end{equation}%
because $I_{x}(\psi )>s$ if $\psi \not\in \Phi _{x}(s)$ and $h_{j,\delta
,x,s}(\psi )=j$ if $\psi \in \Phi _{x}(s)$.

By assumption, $\Lambda _{K,s_0}=\bigcup_{x\in K}\Phi _{x}(s_0)$ is a compact
Polish space. We define the Hausdoff metric between closed subsets of $%
B_{1},B_{2}\subset \Lambda _{K,s_0}$ by
\begin{equation*}
\lambda (B_{1},B_{2})\doteq \inf \{\gamma >0:B_{1}\subset B_{2}^{\gamma }\text{
and }B_{2}\subset B_{1}^{\gamma }\},
\end{equation*}%
where $B_{i}^{\gamma }=\{\psi\in \Lambda _{K,s_0}:{\mathnormal{dist}}%
(\psi,B_{i})\leq \gamma \}$. The closed subsets of $\Lambda _{K,s_0}$ form a
compact metric space under this metric. In particular, for any sequence $%
x_{n}\in K$, and $s_{n}\leq s_{0}$, there exists a subsequence (relabeled $%
s_{n}$, $x_{n}$) and a closed subset $B$ such that $\lambda (\Phi
_{x_{n}}(s_{n}),B)\rightarrow 0$. Note that for all closed $%
B_{1},B_{2}\subset \Lambda _{K,s_0}$ and $\psi \in {\mathcal{E}}$
\begin{equation*}
|{\mathnormal{dist}}(\psi ,B_{1})-{\mathnormal{dist}}(\psi ,B_{2})|\leq
\lambda (B_{1},B_{2}).
\end{equation*}%
Thus $h_{j,\delta ,x_{n},s_{n}}$ converges uniformly in ${\mathcal{E}}$ to
\begin{equation*}
h_{j,\delta ,B}(\psi )\doteq j-j\left( \frac{{\mathnormal{dist}}(\psi ,B)}{\delta }%
\wedge 1\right) .
\end{equation*}%
Consequently, by \eqref{eq:LDP-leq-ULP} and \eqref{eq:h-geq-s},
\begin{align*}
& \limsup_{n\rightarrow \infty }\left( {\varepsilon }_{n}\log {\mathbb{P}}({%
\mathnormal{dist}}(Z_{x_{n}}^{{\varepsilon }_{n}},\Phi _{x_{n}}(s_{n}))\geq
\delta )+s_{n}\right) \\
& \leq \limsup_{n\rightarrow \infty }\left( {\varepsilon }_{n}\log {\mathbb{E%
}}\left( \exp \left( -\frac{h_{j,\delta ,x_{n},s_{n}}(Z_{x_{n}}^{\varepsilon
})}{{\varepsilon }_{n}}\right) \right) +\inf_{\psi \in \mathcal{E}}\left(
h_{j,\delta ,x_{n},s_{n}}(\psi )+I_{x_{n}}(\psi )\right) \right) .
\end{align*}%
By \eqref{eq:log-continuity} and \eqref{eq:inf-continuous}, the above
display is bounded above by
\begin{align*}
& \leq \limsup_{n\rightarrow \infty }\left( {\varepsilon }_{n}\log {\mathbb{E%
}}\left( \exp \left( -\frac{h_{j,\delta ,B}(Z_{x_{n}}^{\varepsilon })}{{%
\varepsilon }_{n}}\right) \right) +\inf_{\psi \in \mathcal{E}}\left(
h_{j,\delta ,B}(\psi )+I_{x_{n}}(\psi )\right) \right) \\
& \hspace{1cm}+2|h_{j,\delta ,x_{n},s_{n}}-h_{j,\delta ,B}|_{L^{\infty }(%
\mathcal{E})}.
\end{align*}%
By the uniform Laplace principle and the uniform convergence of $h_{j,\delta
,x_{n},s_{n}}$ to $h_{j,\delta ,B}$, this expression converges to 0. Since ${%
\varepsilon }_{n}>0$, $x_{n}\in K$, $s_{n}\leq s_{0}$ are arbitrary, we have
the desired bound in \eqref{eq:ldpuniuppbd} (with $A_{0}$ replaced by $K$).
\hfill \qed

\section{Exit time and exit place from a domain of attraction}

\label{S:exit}

A motivation for studying  uniform large deviations principles of the form in Theorem %
\ref{thm:LDP} is to prove exit time and exit place asymptotics. Let $%
X^{\varepsilon}_x$ be the solution to a time homogeneous version of %
\eqref{eq:intro-abstract}
\begin{equation*}
dX^{\varepsilon}_x(t) = [AX^{\varepsilon}_x(t) + B(X^{\varepsilon}_x(t))]dt
+ \sqrt{{\varepsilon}} G(X^{\varepsilon}_x(t))dw(t), \ \
X^{\varepsilon}_x(0) =x.
\end{equation*}
We assume that Assumptions \ref{assum:semigroup}, \ref{assum:mathcal-M}, and %
\ref{assum:G-new} hold for any time horizon $T \in (0,\infty)$ so that Theorem \ref%
{thm:LDP} is valid in $C([0,T]:E)$ for any fixed $T$. We additionally
assume that the function $\gamma$ from \eqref{eq:M-growth} is independent of
the time horizon $T$.

Let $D\subset E$ be a bounded open subset, which is a basin of attraction
for the unperturbed system $X^0_x$, by which we mean that Assumption \ref{assum:attraction} given below is satisfied. We use the notation  $\bar{D}$
for the closure of $D$ in $E$, $\partial D$ for the boundary of $D$
and $D^c$ for the complement of $D$.



\begin{assumption}
\label{assum:attraction} The open set $D$ is a basin of attraction for an $O \in E$ for the noiseless dynamical system
$\{X^0_x\}$, namely

\begin{enumerate}[(a)]

\item If $x \in D$, then $X^0_x(t) \in D$ for all $t>0$.

\item For any $\rho >0$, there exists $T_{0}>0$ such that
\begin{equation*}
\sup_{x\in D}|X_{x}^{0}(T_{0})-O|_{E}<\rho .
\end{equation*}

\item There exists $C\in (0,\infty )$ and $\rho _{0}>0$ such that for any $%
\rho \in (0,\rho _{0})$, if $|x-O|_{E}\leq \rho $, then $|X_{x}^{0}(t)-O|\leq
C\rho $ for all $t>0$.
\end{enumerate}
\end{assumption}

Let
\begin{equation}
\tau _{x}^{\varepsilon }\doteq\inf \{t>0:X_{x}^{\varepsilon }(t)\not\in D\}
\label{eq:exit-time}
\end{equation}%
be the exit time from the set $D$. Because the unperturbed system is
attracted to $O$, the stochastic system is unlikely to leave $D$ over short
time periods if ${\varepsilon }$ is small. We are interested in the
asymptotic growth rate of $\tau _{x}^{\varepsilon }$ as ${\varepsilon }%
\rightarrow 0$ as well as the exit behavior $X_{x}^{\varepsilon }(\tau
_{x}^{\varepsilon })$. The results of this section generalize the
Freidlin-Wentzell theory \cite{dz,F-W-book} to an infinite dimensional
setting.

In Theorem \ref{thm:LDP}, we proved that $\{X_{x}^{\varepsilon }\}$
satisfies a uniform large deviations principle with
respect to initial conditions in bounded subsets of $E$. The rate function
was denoted as $I_{x}:C([0,T]:E)\rightarrow \lbrack 0,\infty ]$ in %
\eqref{eq:rate-fct-X}. To emphasize the dependence of the rate function on
the time interval, for the rest of this section we denote the rate function
as $I_{x,0,T}:C([0,T]:E)\rightarrow \mathbb{R}$ and the associated level
sets by $\Phi _{x,0,T}(M)\subset C([0,T]:E)$. The rate function $I_{x,T_1,T_2}$ for $0\le T_1\le T_2$ is defined in a similar manner.

We next introduce the quasipotential (cf. \cite{F-W-book}) associated with the collection
of rate functions $\{I_{x,0,T}\}$ defined as
\begin{equation*}
V(x,y)\doteq \inf \left\{ I_{x,0,T}(\varphi ):\varphi (0)=x,\varphi (T)=y,T\in
(0,\infty )\right\},\;  x,y\in E.
\end{equation*}%
Without ambiguity, we can also define $V$ on subsets of $E$. If $%
D_{1},D_{2}\subset E$, then
\begin{equation*}
V(D_{1},D_{2}) \doteq \inf_{x\in D_{1}}\inf_{y\in D_{2}}V(x,y).
\end{equation*}%
If the initial condition is $O$ we use the notation $V(D_{1})=V(O,D_{1}).$

In many finite dimensional examples, one can easily prove that $V(x,y)$ is
jointly continuous in $x$ and $y$ (see for example \cite{F-W-book}). This is
unlikely to be true in the infinite dimensional setting. In fact, in many
examples, the quasipotential is equal to $\infty$ on a dense subset of $%
E\times E$.

We make some internal and external regularity assumptions about $V$.

\begin{assumption}
\label{assum:V-regularity} The domain $D$ and the quasipotential $V$ satisfy

\begin{enumerate}[(a)]

\item Outer regularity:
\begin{equation}  \label{eq:V-outer-reg}
V(\partial D) = V(\bar{D}^c)
\end{equation}

\item Inner regularity: If $\gamma_\rho \doteq \{x \in E: |x-O|_E \leq \rho\}$
and $N \subseteq \partial D$ , then
\begin{equation}  \label{eq:V-inner-reg}
\lim_{ \rho \to 0} V(\gamma_\rho, N) = V(O,N).
\end{equation}
\end{enumerate}
\end{assumption}


The main result of this section is that under Assumptions \ref%
{assum:attraction} and \ref{assum:V-regularity}, and also the assumptions
stated at the beginning of the section, the following exit time asymptotics
hold.

\begin{theorem}
\label{thm:exit-problems}

\begin{enumerate}
\item For any $x \in D$,
\begin{equation}  \label{eq:exit-rate-expected}
\lim_{{\varepsilon} \to 0} {\varepsilon} \log {\mathbb{E}}
\tau^{\varepsilon}_x = V(\partial D)
\end{equation}

\item For any $x \in D$ and $\eta>0$,
\begin{equation}  \label{eq:exit-rate-in-prob}
\lim_{{\varepsilon} \to 0} {\mathbb{P}} \left(\exp\left({\varepsilon}%
^{-1}(V(\partial D)-\eta) \right) \leq \tau^{\varepsilon}_x \leq \exp\left({%
\varepsilon}^{-1}(V(\partial D) + \eta) \right) \right) = 1.
\end{equation}

\item For any $x \in D$ and any closed $N \subset \partial D$ for which $V(N) > V(\partial D)$%
,
\begin{equation}  \label{eq:exit-place}
\lim_{{\varepsilon} \to 0} {\mathbb{P}}(X^{\varepsilon}_x(\tau^{%
\varepsilon}_x) \in N) = 0.
\end{equation}
\end{enumerate}
\end{theorem}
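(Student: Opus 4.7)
The plan is to adapt the classical Freidlin--Wentzell exit-time strategy to the present infinite-dimensional setting, using Theorem \ref{thm:LDP} (uniform LDP over bounded subsets of $E$) in place of the finite-dimensional uniform-over-compacts LDP. The natural objects are the stopping times of cycles between visits to a small neighborhood $\gamma_\rho=\{x:|x-O|_E\le\rho\}$ of the attractor, together with returns to the complement $\gamma_{2\rho}^c$ in between. Fix $\eta>0$; using Assumption \ref{assum:V-regularity}(b) I would choose $\rho$ small enough that $V(\gamma_\rho,\partial D)$ and $V(\gamma_\rho,N)$ approximate $V(\partial D)$ and $V(O,N)$ respectively to within $\eta/4$, and using Assumption \ref{assum:attraction}(b) a time $T_0$ such that $X^0_x$ enters $\gamma_{\rho/2}$ by time $T_0$ for every $x\in D$.

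First I would establish three uniform estimates. (i) A lower bound
$$\liminf_{\varepsilon\to 0}\varepsilon\log\inf_{x\in\gamma_\rho}\PP(\tau^\varepsilon_x\le T)\;\ge\;-(V(\partial D)+\eta),$$
obtained from the uniform LDP lower bound \eqref{eq:LDP-lower-unif} applied to a near-optimal control driving $X^0$ from a point of $\gamma_\rho$ to a point of $\bar D^c$, whose cost is within $\eta/4$ of $V(\partial D)$ by the outer regularity \eqref{eq:V-outer-reg}. (ii) An upper bound
$$\limsup_{\varepsilon\to 0}\varepsilon\log\sup_{x\in\bar D}\PP\bigl(X^\varepsilon_x\text{ does not enter }\gamma_{2\rho}\text{ by time }T_0\text{ and stays in }\bar D\bigr)\;\le\;-s_0$$
for arbitrarily large $s_0$, obtained from the uniform upper bound \eqref{eq:LDP-upper-unif}: any path avoiding $\gamma_{2\rho}$ is bounded away from the zero-cost trajectory, whose level sets are compact by Theorem \ref{thm:compact-level-sets-tilde} pushed forward through $\mathcal{M}$. (iii) A targeted upper bound, for closed $N\subseteq\partial D$,
$$\limsup_{\varepsilon\to 0}\varepsilon\log\sup_{x\in\gamma_{2\rho}}\PP(\tau^\varepsilon_x\le T,\;X^\varepsilon_x(\tau^\varepsilon_x)\in N)\;\le\;-(V(\gamma_{2\rho},N)-\eta),$$
again from \eqref{eq:LDP-upper-unif}, specialized to the closed set of paths starting in $\gamma_{2\rho}$ and hitting $N$ before time $T$.

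Combining these via the strong Markov property at the cycle entries yields Parts 1 and 2: using (ii) one shows the process returns to $\gamma_\rho$ in each window of length $O(T_0)$ with probability $1-o(1)$ exponentially fast, so $\tau^\varepsilon_x$ is well approximated by $T\cdot N^\varepsilon$ where $N^\varepsilon$ is the cycle count at exit; (i) and (iii) (with $N=\partial D$) sandwich $N^\varepsilon$ between geometric random variables with parameters of order $e^{-(V(\partial D)\pm\eta)/\varepsilon}$, so $\varepsilon\log\E\tau^\varepsilon_x\to V(\partial D)$ and the concentration statement \eqref{eq:exit-rate-in-prob} follows on sending $\eta\to 0$. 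For Part 3, (iii) shows the per-cycle probability of exiting through $N$ is at most $e^{-(V(N)-\eta)/\varepsilon}$, while the total number of cycles is of order $e^{(V(\partial D)+\eta)/\varepsilon}$; choosing $\eta$ with $V(N)-\eta>V(\partial D)+\eta$, which is possible since $V(N)>V(\partial D)$, makes the product vanish as $\varepsilon\to 0$.

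The main obstacle is estimate (ii): in the finite-dimensional Freidlin--Wentzell argument one exploits uniform-over-compacts LDP together with compactness of $\bar D$, whereas here $\bar D\subset E$ is bounded but generally non-compact. Fortunately the uniform-over-bounded-sets statement of Theorem \ref{thm:LDP} supplies exactly what is required, and its proof already exploits the compactness of the semigroup to render $\bigcup_{x\in\bar D}\Phi_x(s_0)$ compact via Theorem \ref{thm:compact-level-sets-tilde}. A secondary technical point is to execute the cycle argument cleanly given that, in infinite dimensions, the quasipotential $V(\cdot,\cdot)$ can fail to be jointly continuous or even finite on dense sets, so one cannot appeal to pointwise control of $V$ and must work throughout with the inner/outer regularity hypotheses of Assumption \ref{assum:V-regularity} and with set-valued quantities such as $V(\gamma_\rho,\cdot)$.
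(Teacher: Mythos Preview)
Your outline follows essentially the same route as the paper: reduce to the classical Freidlin--Wentzell/Dembo--Zeitouni cycle argument by establishing a handful of uniform estimates (the paper's Lemmas~\ref{lem:exit-1}--\ref{lem:not-too-fast}), then invoke the proof of \cite[Theorem 5.7.11]{dz}. Your (i), (ii), (iii) correspond closely to Lemmas~\ref{lem:exit-1}, \ref{lem:hard-to-stay-outside}, and~\ref{lem:Pro-leq-VN}, and you correctly identify that the uniform-over-bounded-sets LDP of Theorem~\ref{thm:LDP} is exactly what replaces the finite-dimensional uniform-over-compacts LDP.

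There is, however, one genuine omission. To pass from ``$N^\varepsilon$ is large'' to ``$\tau^\varepsilon_x$ is large'' in the lower bound of Part~2 (and hence Part~1), you need a lower bound on the duration of each cycle: it must be exponentially unlikely for $X^\varepsilon_x$ to cross from the inner ball $\gamma_\rho$ to the outer region in a short time. This is the content of Lemma~\ref{lem:not-too-fast} in the paper, and without it your sentence ``$\tau^\varepsilon_x$ is well approximated by $T\cdot N^\varepsilon$'' is only half justified---you control cycle length from above via (ii) but not from below. The fix is standard (use Assumption~\ref{assum:attraction}(c) and the bound \eqref{eq:Xu-X0} to show that any controlled path escaping a $C\rho$-neighborhood in time $t$ has cost $\gtrsim t^{-2/p}$), but it is a separate estimate you must supply. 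Relatedly, the outer sphere should be $\Gamma_\rho=\{|x-O|_E=2C\rho\}$ with the stability constant $C$ from Assumption~\ref{assum:attraction}(c), not $\gamma_{2\rho}$: the factor $C$ is precisely what makes the ``not too fast'' argument work, since the unperturbed process starting in $\gamma_\rho$ stays inside $\{|x-O|_E\le C\rho\}$.

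A smaller point: your justification for (ii) via ``bounded away from the zero-cost trajectory, whose level sets are compact by Theorem~\ref{thm:compact-level-sets-tilde}'' is slightly off. The actual argument (Lemma~\ref{lem:hard-to-stay-outside}) is cost-based rather than compactness-based: one shows directly, via \eqref{eq:Xu-X0} and Assumption~\ref{assum:attraction}(b), that any controlled path starting in $D$ and remaining in $D\setminus\gamma_\rho$ for time $T_0$ must have control cost at least some $a>0$, then applies the uniform LDP upper bound and iterates by the Markov property.
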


\subsection{ Properties of $X^{0,u}_x$}

The following two results record some useful properties of $X^{0,u}_x$ which
is the unique solution of \eqref{eq:eq1214ab} for a given $u \in
L^2([0,T]:H) $.

\begin{theorem}
\label{thm:X-continuity} For any $N>0$, the mapping $(x,u) \mapsto X^{0,u}_x
$ is continuous as a map from $E\times \mathcal{S}^N \to C([0,T]:E)$.
\end{theorem}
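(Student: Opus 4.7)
The plan is to prove sequential continuity, which suffices because $E\times \mathcal{S}^N$ is metrizable: the norm topology on $E$ is metric, and on the bounded set $\mathcal{S}^N$ the weak topology of the separable Hilbert space $L^2([0,T]:H)$ is metrizable as well. So fix $(x,u)\in E\times \mathcal{S}^N$ and suppose $x_n\to x$ in $E$ and $u_n\to u$ weakly in $\mathcal{S}^N$. I will reduce everything to Theorem \ref{thm:convlimcont} and the Lipschitz property of $\mathcal{M}$.

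First I observe that norm convergence $x_n\to x$ in $E$ propagates to uniform convergence of the semigroup orbits. Since $\{S(t)\}$ is a $C_0$-semigroup, $M\doteq \sup_{t\in[0,T]}|S(t)|_{\mathscr{L}(E)}$ is finite, so
\begin{equation*}
  |S(\cdot)x_n - S(\cdot)x|_{C([0,T]:E)} \leq M|x_n-x|_E \to 0,
\end{equation*}
and in particular $S(\cdot)x_n\to S(\cdot)x$ in $L^p([0,T]:E)$. Viewing $x_n,x$ as elements of $E^{\star\star}$ under the canonical isometric embedding, Theorem \ref{thm:convlimcont} applies and yields $Y^{0,u_n}_{x_n}\to Y^{0,u}_x$ in $C([0,T]:E)$.

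From the proofs of Theorems \ref{thm:uniqmilsoln} and \ref{thm:uniqdetcont} in Section \ref{sec:uniqmildsolns}, we have the identities $X^{0,u}_x=\mathcal{M}(S(\cdot)x + Y^{0,u}_x)$ and $X^{0,u_n}_{x_n}=\mathcal{M}(S(\cdot)x_n + Y^{0,u_n}_{x_n})$. Since $\{x_n\}$ is norm-bounded in $E$ and $\{Y^{0,u_n}_{x_n}\}$ converges in $C([0,T]:E)$, the sequence $\{S(\cdot)x_n + Y^{0,u_n}_{x_n}\}$ is bounded in $L^\infty([0,T]:E)$, and this bound also covers $S(\cdot)x + Y^{0,u}_x$. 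Applying Assumption \ref{assum:mathcal-M}(c) with a radius $R$ chosen large enough to cover this bound, we obtain a constant $C=C(R,T)$ such that
\begin{equation*}
  |X^{0,u_n}_{x_n} - X^{0,u}_x|_{C([0,T]:E)} \leq C\bigl(|S(\cdot)x_n - S(\cdot)x|_{C([0,T]:E)} + |Y^{0,u_n}_{x_n} - Y^{0,u}_x|_{C([0,T]:E)}\bigr) \to 0,
\end{equation*}
which is the desired continuity.

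The substantive work has already been done in Theorem \ref{thm:convlimcont}, which handles the joint limit in the controlled stochastic convolution equation using compactness of $\mathcal{L}(\varphi)$ as an operator from weak-$L^2$ to $C([0,T]:E)$ and the continuity in Assumption \ref{assum:mathcal-M}(d); there is therefore no genuine obstacle here, only the bookkeeping of passing from $Y$ to $X=\mathcal{M}(S(\cdot)x+Y)$ via the local Lipschitz continuity of $\mathcal{M}$.
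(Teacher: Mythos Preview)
Your proof is correct and follows essentially the same approach as the paper: invoke Theorem \ref{thm:convlimcont} to get $Y^{0,u_n}_{x_n}\to Y^{0,u}_x$ in $C([0,T]:E)$, observe $S(\cdot)x_n\to S(\cdot)x$ in $C([0,T]:E)$, and then apply the local Lipschitz continuity of $\mathcal{M}$ from Assumption \ref{assum:mathcal-M}(c) to the identity $X^{0,u}_x=\mathcal{M}(S(\cdot)x+Y^{0,u}_x)$. Your version is slightly more detailed (the metrizability remark and the explicit choice of $R$), but the argument is the same.
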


\begin{proof}
  If $x_n \to x$ in $E$  and $u_n \to u$ in $\mathcal{S}^N$ (endowed with the weak topology), then by Theorem \ref{thm:convlimcont}, $Y^{0,u_n}_{x_n} \to Y^{0,u}_x$ in $C([0,T]:E)$. Furthermore, $S(\cdot)x_n \to S(\cdot)x$ in $C([0,T]:E)$. Therefore, by the continuity of $\mathcal{M}$ given in Assumption \ref{assum:mathcal-M}(c),
  \[X^{0,u_n}_{x_n} = \mathcal{M}(S(\cdot)x_n + Y^{0,u_n}_{x_n}) \to \mathcal{M}(S(\cdot)x + Y^{0,u}_x) = X^{0,u}_x.\]
\end{proof}

If we have a general bounded sequence $|x_n|_E \leq R$, instead of a
convergent sequence, then we cannot in general say that a subsequence of $%
X^{0,u_n}_{x_n}$ converges to $X^{0,u}_x$ in $C([0,T]:E)$ for some $x \in E$ (since $%
X^{0,u_n}_{x_n}(0)=x_n$). The next theorem demonstrates that the only
difficulty is at $t=0$. In particular, there always exists a subsequence
that converges in $C([t_1,T]:E)$ for any $t_1 \in (0,T)$.

\begin{theorem}
\label{thm:X-contin-weak} For any $R\in \lbrack 0,\infty )$, $N\in \lbrack
0,\infty )$, and $0<t_{1}<T$ the set
\begin{equation*}
\left\{ X_{x}^{0,u}:|x|_{E}\leq R,u\in \mathcal{S}^{N}\right\}
\end{equation*}%
is pre-compact in $C([t_{1},T]:E)$.
\end{theorem}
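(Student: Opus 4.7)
My strategy is to extract a convergent subsequence from any sequence in the set by leveraging a flow (Markov-type) property of the mild integral equation to reduce matters to the already-proved continuity result of Theorem \ref{thm:X-continuity}. Given $\{x_n\} \subset E$ with $|x_n|_E \leq R$ and $\{u_n\} \subset \mathcal{S}^N$, I will extract a sub-subsequence along which $X^{0,u_n}_{x_n}$ converges in $C([t_1,T]:E)$.

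First I will set up the weak limits. By Alaoglu's theorem the bounded sequence $\{J_E(x_n)\} \subset E^{\star\star}$ has a weak-$\star$ convergent subsequence with some limit $x \in E^{\star\star}$; by Lemma \ref{lem:U-compact}(3) we may further refine so that $S(\cdot)x_n \to S(\cdot)x$ in $L^p([0,T]:E)$. Since $\mathcal{S}^N$ is weakly compact in $L^2([0,T]:H)$, a further extraction yields $u_n \to u$ weakly, and Theorem \ref{thm:convlimcont} then gives $Y^{0,u_n}_{x_n} \to Y^{0,u}_x$ in $C([0,T]:E)$. Setting $\Psi_n := S(\cdot)x_n + Y^{0,u_n}_{x_n}$ and $\Psi := S(\cdot)x + Y^{0,u}_x$, Lemma \ref{lem:U-compact}(1) ensures $\Psi \in L^\infty([0,T]:E)$, so the candidate limit $X^{0,u}_x := \mathcal{M}(\Psi)$ is well-defined in $L^\infty([0,T]:E)$. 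Since $\Psi_n \to \Psi$ in $L^p([0,T]:E)$ with uniform $L^\infty$ bound, Assumption \ref{assum:mathcal-M}(d) yields $X^{0,u_n}_{x_n} = \mathcal{M}(\Psi_n) \to X^{0,u}_x$ in $L^p([0,T]:E)$.

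Next I will upgrade to uniform convergence on $[t_1,T]$ via the flow property. Passing to a further subsequence, $X^{0,u_n}_{x_n}(t) \to X^{0,u}_x(t)$ in $E$-norm for Lebesgue-a.e. $t \in (0,T)$; fix any such $t_0 \in (0,t_1)$, so that $X^{0,u_n}_{x_n}(t_0) \to X^{0,u}_x(t_0)$ in $E$. Splitting the mild integral representation
\[
X^{0,u_n}_{x_n}(t) = S(t)x_n + \int_0^t S(t-s)B(s,X^{0,u_n}_{x_n}(s))\,ds + \int_0^t S(t-s)G(s,X^{0,u_n}_{x_n}(s))u_n(s)\,ds
\]
at $s=t_0$ and using the semigroup identity $S(t) = S(t-t_0)S(t_0)$ for $t \geq t_0$, a direct calculation shows that the shifted process $\tau \mapsto X^{0,u_n}_{x_n}(t_0+\tau)$ on $[0, T-t_0]$ is the unique mild solution of the analogous controlled equation with initial condition $X^{0,u_n}_{x_n}(t_0) \in E$, control $\tilde u_n(\tau) := u_n(\tau+t_0) \in \mathcal{S}^N$ (the bound $\int_0^{T-t_0}|\tilde u_n|_H^2 \leq N$ is immediate), and time-shifted coefficients $B(\cdot+t_0,\cdot)$, $G(\cdot+t_0,\cdot)$. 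Applying Theorem \ref{thm:X-continuity} on the horizon $[0,T-t_0]$ to the shifted problem, together with the $E$-norm convergence of the shifted initial conditions and the weak convergence $\tilde u_n \to \tilde u$, produces convergence of the shifted processes in $C([0,T-t_0]:E)$. Translating back gives $X^{0,u_n}_{x_n} \to X^{0,u}_x$ in $C([t_0,T]:E)$, and since $t_0 < t_1$ this yields the required pre-compactness in $C([t_1,T]:E)$.

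The main obstacle I expect is the rigorous justification of the flow property, which in turn requires verifying that the time-shifted coefficients $B(\cdot+t_0,\cdot)$ and $G(\cdot+t_0,\cdot)$ inherit Assumptions \ref{assum:mathcal-M} and \ref{assum:G-new} on the shorter horizon $[0,T-t_0]$ so that Theorem \ref{thm:X-continuity} is legitimately applicable. In the time-homogeneous setting this is immediate; in the non-autonomous case it follows from the fact that the structural assumptions are satisfied on every subinterval of $[0,T]$, but it requires some care to state cleanly.
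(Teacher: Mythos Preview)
Your proposal is correct and follows essentially the same route as the paper: extract subsequential limits via Lemma~\ref{lem:U-compact}(3) and weak compactness of $\mathcal{S}^N$, use Theorem~\ref{thm:convlimcont} and Assumption~\ref{assum:mathcal-M}(d) to get $L^p([0,T]:E)$ convergence of $X^{0,u_n}_{x_n}$, pass to a further a.e.-convergent subsequence to find a good time $t_0\in(0,t_1)$, and then invoke the flow property together with Theorem~\ref{thm:X-continuity} to upgrade to $C([t_0,T]:E)$ convergence. Your concern about the time-shifted coefficients is well-placed but moot here, since Theorem~\ref{thm:X-contin-weak} is stated and used only in Section~\ref{S:exit}, where the equation is assumed time-homogeneous.
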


\begin{proof}
  Fix $t_1 \in (0,T)$. Let $|x_n|_E \leq R$ and $u_n \in \mathcal{S}^N$ be arbitrary sequences. It suffices to show that a subsequence of $\{X^{0,u_n}_{x_n}\}$ converges in $C([t_1,T]:E)$.
 Because $\{S(t)\}$ is a compact semigroup, Lemma \ref{lem:U-compact}(3) guarantees that there exists a subsequence (relabeled $x_n$) such that $S(\cdot)x_n$ converges to $S(\cdot)x^{\star\star}$ in $L^p([0,T]:E)$ for some
$x^{\star\star} \in E^{\star\star}$ with $|x^{\star\star}|_{E^{\star\star}} \leq R$ and any $p \in (1,\infty)$. By the compactness of $\mathcal{S}^N$ (in the weak topology), there is a further subsequence such that $u_n \to u$ weakly. By Theorem \ref{thm:convlimcont}, $Y^{0,u_n}_{x_n} \to Y^{0,u}_{x^{\star\star}}$ in $C([0,T]:E)$.

  By Assumption \ref{assum:mathcal-M}(d)  for any $p \in [2,\infty)$,
  \[X^{0,u_n}_{x_n} = \mathcal{M}(S(\cdot)x_n + Y^{0,u_n}_{x_n}) \to \mathcal{M}(S(\cdot)x^{\star\star} + Y^{0,u}_x) \text{ in }L^p([0,T]:E). \]
  Because the convergence is in $L^p([0,T]:E)$, we can find a further subsequence such that
  $X^{0,u_n}_{x_n}(t)$ converges in $E$ for almost all $t \in [0,T]$. In particular, we can find a $t_0 \in (0,t_1)$, such that $X^{0,u_n}_{x_n}(t_0) \to y$ for some $y \in E$. Let $y_n = X^{0,u_n}_{x_n}(t_0)$.
Then for all $t \in [t_0,T]$,
    $X^{0,u_n}_{x_n}(t) =X^{0,\tilde{u}_n}_{y_n}(t-t_0)
 $
  where $\tilde{u}_n(t) \doteq u_n(t+t_0)1_{[0,T-t_0]}(t)$ is a translated version of $u_n$. Note that $\tilde{u}_n$ converges in $\mathcal{S}^N$ to $\tilde{u}$, where $\tilde u(t) \doteq u(t+t_0)1_{[0,T-t_0]}(t)$. Because $y_n \to y \in E$, it follows from Theorem \ref{thm:X-continuity} that
  \[X^{0,\tilde{u}_n}_{y_n} \to X^{0,u}_y \text{ in } C([0,T-t_0]:E).\]
  Therefore,
  \[X^{0,u_n}_{x_n}(\cdot) \to X^{0,u}_y(\cdot -t_0) \text{ in } C([t_0,T]:E).\]
  Because $t_0<t_1$, the convergence is also valid in $C([t_1,T]:E)$.
\end{proof}

\subsection{Proof of Theorem \protect\ref{thm:exit-problems}}

We more or less follow the proof in Chapter 5.7 of \cite{dz}, with
modifications to deal with the infinite dimensionality of the system. For
example, $\bar{D}$ is not compact, so all  proofs require extra care
when proving uniformity with respect to initial condition. For the rest of
this section, let $\gamma _{\rho }\doteq \left\{ x\in E:|x-O|_{E}\leq \rho
\right\} $. Let $\rho_0>0$  be small enough so that $\gamma_{C \rho_0} \subset D$ where $C$ is the constant from Assumption \ref{assum:attraction}(c).

By using the
following lemmas, the proof of Theorem \ref{thm:exit-problems} can be
completed by using the same arguments as in the proof of \cite[Theorem 5.7.11]{dz}.

\begin{lemma}
\label{lem:exit-1} For any $\eta>0$, there exists $T>0$ such that
\begin{equation*}
\liminf_{{\varepsilon} \to 0} \inf_{x \in D} {\varepsilon}\log  {\mathbb{P}}%
(\tau^{\varepsilon}_x \leq T) > - V(\partial D) -\eta.
\end{equation*}
\end{lemma}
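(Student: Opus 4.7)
The plan is to construct, for each initial condition $x \in D$, a deterministic controlled trajectory that reaches a point strictly outside $\bar D$ at a common time $T$, with action bounded by $V(\partial D) + \eta/3$ uniformly in $x$, and then invoke the uniform LDP lower bound from Theorem \ref{thm:LDP}(1) with $E_0 = D$.

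First, fix $\eta > 0$. Using the outer regularity Assumption \ref{assum:V-regularity}(a), the identity $V(\bar D^c) = \inf_{y \in \bar D^c} V(O,y) = V(\partial D)$ and the definitions of $V$ and $I_{O,0,T}$, choose $y^* \in E \setminus \bar D$, $T^* > 0$, and $u^* \in L^2([0,T^*]:H)$ such that $\psi^* \doteq X^{0,u^*}_O$ satisfies $\psi^*(T^*) = y^*$ and $\tfrac{1}{2}\int_0^{T^*}|u^*(s)|_H^2\,ds < V(\partial D) + \eta/3$. Because $\bar D$ is closed and $y^* \notin \bar D$, $2\delta^* \doteq \mathnormal{dist}(y^*,\bar D) > 0$.

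Next, by Theorem \ref{thm:X-continuity} the map $z \mapsto X^{0,u^*}_z$ is continuous from $E$ to $C([0,T^*]:E)$, so continuity at the single point $z=O$ yields $\rho > 0$ such that $|z - O|_E < \rho$ implies $|X^{0,u^*}_z - \psi^*|_{C([0,T^*]:E)} < \delta^*$. By Assumption \ref{assum:attraction}(b), choose $T_0 > 0$ with $\sup_{x \in D}|X^0_x(T_0) - O|_E < \rho$, and set $T \doteq T_0 + T^*$. For each $x \in D$, define the control
\begin{equation*}
\tilde u_x(t) \doteq \begin{cases} 0, & t \in [0,T_0], \\ u^*(t - T_0), & t \in (T_0, T], \end{cases}
\end{equation*}
and let $\tilde\varphi_x \doteq X^{0,\tilde u_x}_x$. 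By the time-homogeneity of $B$ and $G$ and the uniqueness of mild solutions of \eqref{eq:eq1214ab1} (Theorem \ref{thm:uniqdetcont}), $\tilde\varphi_x$ agrees with $X^0_x$ on $[0,T_0]$ and with $X^{0,u^*}_{X^0_x(T_0)}(\,\cdot - T_0)$ on $[T_0,T]$. Consequently $|\tilde\varphi_x(T) - y^*|_E < \delta^*$, and the action satisfies
\begin{equation*}
I_x(\tilde\varphi_x) \leq \tfrac{1}{2}\int_0^T |\tilde u_x(s)|_H^2\,ds = \tfrac{1}{2}\int_0^{T^*}|u^*(s)|_H^2\,ds < s_0 \doteq V(\partial D) + \eta/3.
\end{equation*}

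To conclude, observe that on the event $\{|X^\varepsilon_x - \tilde\varphi_x|_{C([0,T]:E)} < \delta^*\}$ we have $|X^\varepsilon_x(T) - y^*|_E < 2\delta^*$, which places $X^\varepsilon_x(T)$ outside $\bar D$ and therefore forces $\tau^\varepsilon_x \leq T$. Hence $\mathbb{P}(\tau^\varepsilon_x \leq T) \geq \mathbb{P}(|X^\varepsilon_x - \tilde\varphi_x|_{C([0,T]:E)} < \delta^*)$. Since $D$ is bounded in $E$ and $\tilde\varphi_x \in \Phi_x(s_0)$ for every $x \in D$, Theorem \ref{thm:LDP}(1) applied with $E_0 = D$, $\delta = \delta^*$, and $s_0$ as above yields
\begin{equation*}
\liminf_{\varepsilon \to 0} \inf_{x \in D} \varepsilon \log \mathbb{P}(\tau^\varepsilon_x \leq T) \geq -s_0 = -V(\partial D) - \eta/3 > -V(\partial D) - \eta,
\end{equation*}
as required. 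The main obstacle is that $D$ is only bounded, not compact, so one cannot extract convergent subsequences of initial conditions; this is circumvented by using the attraction in Assumption \ref{assum:attraction}(b) to funnel all trajectories into $\gamma_\rho$ with zero cost, reducing the construction of the controlled path to continuity of $X^{0,u^*}_z$ at the single point $z = O$, while the uniform LDP itself handles uniformity over $x \in D$.
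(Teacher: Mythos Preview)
Your proof is correct and follows essentially the same approach as the paper's: use outer regularity to pick a target $y^*\in\bar D^c$ reachable from $O$ with action $<V(\partial D)+\eta/3$, use continuity of $z\mapsto X^{0,u^*}_z$ at $z=O$ together with Assumption~\ref{assum:attraction}(b) to funnel every $x\in D$ into a small ball around $O$ at zero cost, concatenate, and then apply the uniform LDP lower bound over the bounded set $D$. The only cosmetic difference is that you index the control as $\tilde u_x$ even though it does not actually depend on $x$; the paper uses a single $x$-independent control, which is exactly what your construction gives.
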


\begin{proof}
  Fix $\eta>0$. By the outer regularity of $V$ (Assumption \ref{assum:V-regularity}(a)), there exists $y \in E \setminus \bar{D}$ such that $V(O,y) \leq V(O,\partial D) + \frac{\eta}{3}$. Then by \eqref{eq:rate-fct-X} we can find a $T_1>0$ and a control $\tilde{u} \in L^2([0,T_1]:H)$ such that the controlled trajectory $X^{0,\tilde{u}}_O$ satisfies $X^{0,\tilde{u}}_O(T_1) = y$ and $\frac{1}{2}\int_0^{T_1} |\tilde{u}(s)|_H^2 ds \leq V(O,\partial D) + \frac{2\eta}{3}$.
  Let $a = \dist_E(y,D)>0$. By the continuity property given in Theorem \ref{thm:X-continuity}, there exists a $\rho>0$ such that for all $x$ satisfying $|x-O|_E < \rho$, $|X^{0,\tilde{u}}_O - X^{0,\tilde{u}}_x|_{C([0,T_1]:E)} \leq \frac{a}{2}$.

  By Assumption \ref{assum:attraction}, the unperturbed system $X^{0}_x$ is uniformly attracted to the equilibrium point $O$ and thus there exists a $T_2>0$ such that $\sup_{x \in D} |X^{0}_x(T_2) - O|_E < \rho$. We build a new control $u$ by appending $0$ control with $\tilde{u}$ as follows
  \[u(t) \doteq \begin{cases}
    0 & \text{ if } 0 \leq t \leq T_2 \\
    \tilde{u}(t -T_2) & \text{ if } T_2 < t \leq T_2 + T_1
  \end{cases}.\]
  Let $T = T_1 + T_2$. Notice that $\frac{1}{2}\int_0^T|u(s)|_H^2 ds = \frac{1}{2}\int_0^{T_1}|\tilde{u}(s)|_H^2 ds$ and for any $x \in D$, $\dist(X^{0,u}_x(T), D) > \frac{a}{2}$.

   We have the containment
  \[\left\{\tau^\e_x \leq T \right\} \supset \left\{|X^\e_x - X^{0,u}_x|_{C([0,T]:E)} < \frac{a}{4} \right\}.\]
Then, by the uniform large deviations principle lower bound  (Theorem \ref{thm:LDP}(1)),
  \begin{align*} 
    &\liminf_{\e \to 0} \inf_{x \in D} \e \log \Pro \left(\tau^\e_x \leq T \right) \\
    &\quad \geq \liminf_{\e \to 0} \inf_{x \in D} \e \log \Pro\left(|X^\e_x-X^{0,u}_x|_{C([0,T]:E)} < \frac{a}{4} \right)\\
    &\quad \geq -\frac{1}{2} \int_0^T|u(s)|_H^2 ds \geq -V(O,\partial D) - \frac{2\eta}{3}.
  \end{align*}
  Notice that the previous step is exactly where it is required that the large deviations principle be uniform over the bounded set $D$, and not merely over compact subsets. The result follows because $\eta>0$ is arbitrary.
\end{proof}
Recall $\gamma_{\rho}$ from Assumption \ref{assum:V-regularity}(b).
\begin{lemma}
\label{lem:hard-to-stay-outside} Fix $\rho \in (0,\rho_0)$ and let $\sigma_{x,\rho}^\e = \inf \{t>0:
X^{\varepsilon}_x (t) \in \gamma_\rho \cup \partial D\}$. Then
\begin{equation}
\lim_{ t \to \infty} \limsup_{{\varepsilon} \to 0} \sup_{x \in D}{\varepsilon} \log
 {\mathbb{P}}(\sigma_{x,\rho}^\e>t) = -\infty.
\end{equation}
\end{lemma}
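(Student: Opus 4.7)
The plan is to reduce the lemma via Assumption \ref{assum:attraction}(b) to a uniform exponential bound on $\Pro(|X^\e_x - X^0_x|_{C([0,T_0]:E)} \geq \rho/2)$, establish that bound via the uniform LDP upper bound of Theorem \ref{thm:LDP}(2), and then iterate through the Markov property. Using Assumption \ref{assum:attraction}(b), pick $T_0 > 0$ such that $|X^0_x(T_0) - O|_E < \rho/2$ for all $x \in D$. On the event $\{\sigma^\e_{x,\rho} > T_0\}$ the process satisfies $X^\e_x(T_0) \notin \gamma_\rho$, so $|X^\e_x(T_0) - O|_E > \rho$; combined with the choice of $T_0$ this forces $|X^\e_x - X^0_x|_{C([0,T_0]:E)} \ge \rho/2$. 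Hence it suffices to show that for some $\alpha > 0$,
\begin{equation} \label{eq:goal-exponential}
\limsup_{\e \to 0} \sup_{x \in D} \e \log \Pro\bigl(|X^\e_x - X^0_x|_{C([0,T_0]:E)} \geq \rho/2\bigr) \leq -\alpha.
\end{equation}

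The central step is to prove that
$$\beta \doteq \inf_{x \in D}\, \inf\bigl\{ I_{x,0,T_0}(\varphi) : |\varphi - X^0_x|_{C([0,T_0]:E)} \geq \rho/4 \bigr\} > 0.$$
Suppose not. Then there exist $x_n \in D$ and $u_n \in L^2([0,T_0]:H)$ with $\frac{1}{2}\int_0^{T_0}|u_n(s)|_H^2\, ds \to 0$ and $|X^{0,u_n}_{x_n} - X^0_{x_n}|_{C([0,T_0]:E)} \geq \rho/4$. Since $\{x_n\} \subset D$ is norm bounded, Lemma \ref{lem:U-compact}(3) yields $x^{\star\star} \in E^{\star\star}$ and a subsequence along which $S(\cdot) x_n \to S(\cdot) x^{\star\star}$ in $L^p([0,T_0]:E)$. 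Since $u_n \to 0$ in $L^2$, it converges weakly to $0$ in $\mathcal{S}^N$ for sufficiently large $N$, and Theorem \ref{thm:convlimcont} gives $Y^{0,u_n}_{x_n} \to Y^{0,0}_{x^{\star\star}} = 0$ in $C([0,T_0]:E)$. Lemma \ref{lem:a-priori-bounds} and the boundedness of $D$ furnish a uniform $L^\infty([0,T_0]:E)$-bound on both $S(\cdot)x_n + Y^{0,u_n}_{x_n}$ and $S(\cdot) x_n$, so Assumption \ref{assum:mathcal-M}(c) applied to $X^{0,u_n}_{x_n} = \mathcal{M}(S(\cdot)x_n + Y^{0,u_n}_{x_n})$ and $X^0_{x_n} = \mathcal{M}(S(\cdot)x_n)$ yields $|X^{0,u_n}_{x_n} - X^0_{x_n}|_{C([0,T_0]:E)} \leq C\, |Y^{0,u_n}_{x_n}|_{C([0,T_0]:E)} \to 0$, contradicting the lower bound $\rho/4$.

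Fix $s_0 \in (0,\beta)$. For every $x \in D$ and every $\varphi \in \Phi_{x,0,T_0}(s_0)$ one has $|\varphi - X^0_x|_{C([0,T_0]:E)} < \rho/4$ by the definition of $\beta$, and hence on $\{|X^\e_x - X^0_x|_{C([0,T_0]:E)} \geq \rho/2\}$ the triangle inequality gives $\dist(X^\e_x, \Phi_{x,0,T_0}(s_0)) \geq \rho/4$. Theorem \ref{thm:LDP}(2) applied with $E_0 = D$, $\delta = \rho/4$, and level $s_0$ then yields \eqref{eq:goal-exponential} with $\alpha = s_0$; sending $s_0 \uparrow \beta$ gives $\alpha = \beta$. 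On $\{\sigma^\e_{x,\rho} > T_0\}$ we have $X^\e_x(T_0) \in D$, so the Markov property at time $T_0$ combined with induction gives
$$\Pro\bigl(\sigma^\e_{x,\rho} > kT_0\bigr) \leq \Bigl( \sup_{y \in D} \Pro\bigl(\sigma^\e_{y,\rho} > T_0\bigr) \Bigr)^k, \qquad k \in \mathbb{N},\; x \in D.$$
Combining this with \eqref{eq:goal-exponential} produces $\limsup_{\e \to 0} \sup_{x \in D} \e \log \Pro(\sigma^\e_{x,\rho} > kT_0) \leq -k\beta$; for general $t$ use $\Pro(\sigma^\e_{x,\rho} > t) \leq \Pro(\sigma^\e_{x,\rho} > \lfloor t/T_0\rfloor T_0)$ and send $t \to \infty$. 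The main obstacle is the uniform positivity of $\beta$: because $D$ is bounded but not compact in $E$, no strongly convergent subsequence $x_n \to x$ in $E$ is available, and this is precisely where the weak-$\star$ compactness of bounded subsets of $E^{\star\star}$ (Lemma \ref{lem:U-compact}(3)) together with the continuity of $(x,u)\mapsto Y^{0,u}_x$ from Theorem \ref{thm:convlimcont} do the work.
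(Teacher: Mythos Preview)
Your proof is correct and follows the same overall architecture as the paper: obtain a uniform positive lower bound on the rate of trajectories that fail to enter $\gamma_\rho$ by time $T_0$, feed this into the uniform LDP upper bound of Theorem \ref{thm:LDP}(2), and then iterate via the Markov property.

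The only genuine difference is in how the positive lower bound ($\beta$ in your notation, $a$ in the paper) is obtained. The paper proceeds by a direct quantitative estimate: using Assumption \ref{assum:G-new}(d) together with the fact that the controlled trajectory is bounded, it gets $|Y^{0,u}_x|_{C([0,T_0]:E)} \leq \kappa\, T_0^{1/p}\,|u|_{L^2([0,T_0]:H)}$, and then the Lipschitz property of $\mathcal{M}$ (Assumption \ref{assum:mathcal-M}(c)) turns this into $|X^{0,u}_x - X^0_x|_{C([0,T_0]:E)} \leq \kappa'\, T_0^{1/p}\,|u|_{L^2}$. This immediately yields an explicit $a>0$ without any compactness or subsequence argument. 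You instead argue by contradiction, passing to a subsequence via Lemma \ref{lem:U-compact}(3), invoking the continuity result Theorem \ref{thm:convlimcont} to get $Y^{0,u_n}_{x_n}\to 0$, and then applying Assumption \ref{assum:mathcal-M}(c) exactly as the paper does. Your route works, but the detour through $E^{\star\star}$ and Theorem \ref{thm:convlimcont} is not really needed here: the direct bound from Assumption \ref{assum:G-new}(d) already delivers $|Y^{0,u_n}_{x_n}|_{C([0,T_0]:E)}\to 0$ uniformly in $x_n\in D$ once you have a uniform $L^\infty$ bound on $X^{0,u_n}_{x_n}$ (available from the a priori estimate \eqref{eq:Yeux-uniqueness} and Assumption \ref{assum:mathcal-M}(b)). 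So your weak-$\star$ compactness step, while valid, is avoidable in this lemma; the paper's quantitative estimate is shorter and also yields the constant explicitly.
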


\begin{proof}
  Let $\rho_0$ be as in Assumption \ref{assum:V-regularity}(c). Fix $\rho\in(0,\rho_0)$ so that $\gamma_{C\rho} \subset D$. By Assumption \ref{assum:attraction}(b), there exists $T_0\in (0,\infty)$ such that
  \begin{equation} \label{eq:T0-choice}
    \sup_{x \in D} |X^0_x(T_0)-O|_E \leq \frac{\rho}{2}.
  \end{equation}
  Let $u \in L^2([0,T_0]:H)$ be any control and $x \in D$ be any initial condition such that $X^{0,u}_x(t) \in D$ for all $t \in [0,T_0]$. Let $Y^{0,u}_x(t) = \int_0^t S(t-s)G(X^{0,u}_x(s))u(s)ds$ and $Y^0_x = 0$. With these definitions,
  $X^{0,u}_x = \mathcal{M}(S(\cdot)x + Y^{0,u}_x)$ and $X^0_x = \mathcal{M}(S(\cdot)x + Y^0_x)$.

   By Assumption \ref{assum:G-new}(d),
  \[|Y^{0,u}_x|_{C([0,T_0]:E)} \leq \kappa_1\left( \int_0^{T_0} (\gamma^{-1}(|X^{0,u}_x(s)|_{E}))^p ds \right)^{\frac{1}{p}}|u|_{L^2([0,T_0]:H)}.\]
  The initial condition $x$ and control $u$ were chosen so that $X_x^{0,u}$ does not leave the bounded  set $D$ over $[0,T_0]$. Therefore $|X^{0,u}_x|_{C([0,T_0]:E)}$ is bounded and,
  \[|Y^{0,u}_x|_{C([0,T_0]:E)} \leq \kappa_2 T_0^{\frac{1}{p}}|u|_{L^2([0,T_0]:H)}.\]
By the Lipschitz continuity of $\mathcal{M}$ (Assumption \ref{assum:mathcal-M}(c)) and the fact that $Y^{0,u}_x$ and $Y^0_x$ are bounded in $C([0,T_0]:E)$, there exists  $\kappa_3 \in (0,\infty)$ such that
  \begin{equation} \label{eq:Xu-X0}
    |X^{0,u}_x - X^0_x|_{C([0,T_0]:E)} \leq \kappa_3 |Y^{0,u}_x - Y^0_x|_{C([0,T_0]:E)} = \kappa_3|Y^{0,u}_x|_{C([0,T_0]:E)} \leq  \kappa_4 T_0^{\frac{1}{p}} |u|_{L^2([0,T_0]:H)}
  \end{equation}
where $\kappa_3, \kappa_4$ depend on $T_0$.

  If $|X^{0,u}_x(T_0)-O|_E > \frac{3\rho}{4}$, then because of \eqref{eq:T0-choice} and \eqref{eq:Xu-X0},
  \[\frac{\rho}{4} \leq |X^{0,u}_x(T_0) - X^0_x(T_0)|_E \leq \kappa_4 T_0^{\frac{1}{p}}|u|_{L^2([0,T_0]:H)} .\]
Thus there exists some  $a \in (0,\infty)$ such that whenever $x \in D$, $X^{0,u}_x(t) \in D$ for $t \in [0, T_0]$ and $|X^{0,u}_x(T_0)-O|_E >\frac{3\rho}{4}$,
  \[a < \frac{1}{2} \int_0^{T_0}|u(s)|_H^2ds.\]

  This means that if $x \in D$, then the set of trajectories
  \[\left\{\varphi \in C([0,T_0]:E) : \varphi(0)=x, \varphi(t) \in D \text{ for } t \in  [0,T_0],  |\varphi(T_0)-O|_E> \frac{3\rho}{4} \right\} \cap \Phi_{x,0,T_0}(a) =\emptyset , \]
  and therefore
  \begin{align*}
    &\left\{\varphi \in C([0,T_0]:E) :\varphi(0)=x, \varphi(t) \in D,  |\varphi(t)-O|_E> \rho, \ t \in [0, T_0] \right\} \\ &\subset \left\{\varphi \in C([0,T_0]:E): \dist_{C([0,T_0]:E)}(\varphi,\Phi_{x,0,T_0}(a)) \geq \frac{\rho}{4} \right\}.
  \end{align*}
  Note that $a$ does not depend on the initial condition $x \in D$.

  By the uniform large deviations principle upper bound (Theorem \ref{thm:LDP}(2)),
  \begin{align*}
    &\limsup_{\e \to 0} \sup_{x \in D} \e \log \Pro(\sigma_{x,\rho}^\e> T_0)\\
    &\quad =\limsup_{\e \to 0} \sup_{x \in D} \e \log \Pro(X^\e_x(t) \in D\setminus \gamma_\rho \text{ for } t \in [0,T_0])\\
    &\quad \leq \limsup_{\e\to 0} \sup_{x \in D} \e \log \Pro\left( \dist_{C([0,T_0]:E)}(X^\e_x,\Phi_{x,0,T_0}(a)) \geq \frac{\rho}{4} \right) \leq -a.
  \end{align*}

  We can show that the likelihood of staying in $D \setminus \gamma_\rho$ for longer time periods becomes exponentially less likely by use of Markov property as follows. For $k \in \mathbb{N}$
  \begin{align*}
    \sup_{x \in D} \e \log \Pro(\sigma_{x,\rho}^\e> kT_0)
    & =\sup_{x \in D}\e \log\Pro\left(X^\e_x(t) \in D \setminus \gamma_\rho \text{ for } t \in [0, kT_0] \right)\\
    &\leq  \e \log\left(\sup_{x \in D} \Pro(X^\e_x(t) \in D\setminus \gamma_\rho \text{ for } t \in [0, T_0]) \right)^k.
  \end{align*}
  Therefore,
  \[\limsup_{\e \to 0} \sup_{x \in D} \e \log  \Pro(\sigma_{x,\rho}^\e>k T_0) = -ka\]
  and since $k \in \mathbb{N}$ is arbitrary, the result follows.
\end{proof}

Let $\Gamma _{\rho }\doteq\left\{ x\in E:|x-O|_{E}=2C\rho \right\} $
where $C$ is from Assumption \ref{assum:attraction}(c).
\begin{lemma} \label{lem:Pro-leq-VN}
For any closed set $N \subset \partial D$,
\begin{equation*}
\lim_{\rho \to 0} \limsup_{{\varepsilon} \to 0}\sup_{x \in \Gamma_\rho} {%
\varepsilon} \log {\mathbb{P}}(X^{\varepsilon}_x(\sigma_{x,\rho}^\e) \in N) \leq
-V(N).
\end{equation*}
\end{lemma}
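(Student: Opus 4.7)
The plan is to decompose the event by introducing a large time horizon $T$:
\[\Pro(X^\e_x(\sigma^\e_{x,\rho}) \in N) \leq \Pro(\sigma^\e_{x,\rho} > T) + \Pro(\sigma^\e_{x,\rho} \leq T,\, X^\e_x(\sigma^\e_{x,\rho}) \in N).\]
The first term is controlled by Lemma \ref{lem:hard-to-stay-outside}: choose $T$ so large that $\limsup_{\e \to 0} \sup_{x \in D} \e \log \Pro(\sigma^\e_{x,\rho} > T) \leq -V(N) - 1$, making this contribution negligible. The second event is contained in $\{X^\e_x \in F_{N,T}\}$, where the closed set
\[F_{N,T} \doteq \{\varphi \in C([0,T]:E) : \varphi(t) \in N \text{ for some } t \in [0,T]\}\]
does not depend on the initial condition.

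Fix $\eta \in (0, V(N)/2)$ (the case $V(N) = \infty$ is handled identically, with $V(N) - 2\eta$ replaced by an arbitrary $M \in (0,\infty)$). The heart of the proof is the following topological claim: there exist $\rho_0 > 0$ and $\delta_0 > 0$ such that for every $\rho \in (0,\rho_0)$, the open $\delta_0$-neighborhood in $C([0,T]:E)$ of $\bigcup_{x \in \Gamma_\rho}\Phi_{x,0,T}(V(N) - 2\eta)$ is disjoint from $F_{N,T}$. Granted this claim, for each $x \in \Gamma_\rho$ one has the inclusion
\[\{X^\e_x \in F_{N,T}\} \subset \left\{\dist_{C([0,T]:E)}(X^\e_x, \Phi_{x,0,T}(V(N) - 2\eta)) \geq \delta_0\right\},\]
and the uniform LDP upper bound of Theorem \ref{thm:LDP}, applied to the bounded set $E_0 = \Gamma_\rho$, gives $\limsup_{\e \to 0}\sup_{x \in \Gamma_\rho} \e \log \Pro(X^\e_x \in F_{N,T}) \leq -(V(N) - 2\eta)$. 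Combining this with the time-tail bound and letting $\eta \to 0$ yields the desired inequality.

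To establish the topological claim I argue by contradiction, using weak compactness of $L^2$-sublevel sets together with the continuity result of Theorem \ref{thm:X-continuity}. Suppose the claim fails; then there exist sequences $\rho_n, \delta_n \downarrow 0$, $x_n \in \Gamma_{\rho_n}$, trajectories $\varphi_n = X^{0,u_n}_{x_n}$ with $\frac{1}{2}|u_n|_{L^2([0,T]:H)}^2 \leq V(N) - 2\eta + 1/n$, and times $t_n \in [0,T]$ satisfying $\dist_E(\varphi_n(t_n), N) < \delta_n$. Since $\{u_n\}$ lies in the weakly compact set $\mathcal{S}^{2V(N)+2}$, extract a subsequence along which $u_n$ converges weakly to some $u \in L^2([0,T]:H)$ and $t_n \to t_\star \in [0,T]$. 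Because $x_n \to O$ in norm (as $|x_n - O|_E = 2C\rho_n \to 0$), Theorem \ref{thm:X-continuity} gives $\varphi_n \to X^{0,u}_O \doteq \varphi$ in $C([0,T]:E)$; uniform convergence together with the continuity of $\varphi$ then yield $\varphi(t_\star) \in \bar N = N$. Lower semicontinuity of the $L^2$ norm under weak convergence produces
\[V(O,N) \leq \frac{1}{2}\int_0^{t_\star}|u(s)|_H^2\,ds \leq \liminf_{n \to \infty}\frac{1}{2}|u_n|_{L^2([0,T]:H)}^2 \leq V(N) - 2\eta,\]
contradicting $V(O,N) = V(N)$. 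The principal technical subtlety is that bounded subsets of $E$ are not compact in general, but this is sidestepped here because the initial conditions $x_n \in \Gamma_{\rho_n}$ automatically converge in norm to the equilibrium $O$.
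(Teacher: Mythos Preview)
Your overall strategy matches the paper's, but there is a genuine gap in how you order the choices of $T$ and $\rho$. Lemma \ref{lem:hard-to-stay-outside} applies only for a \emph{fixed} $\rho$, and the time $T$ needed to make the tail $\Pro(\sigma^\e_{x,\rho}>T)$ negligible grows without bound as $\rho\downarrow 0$ (the auxiliary $T_0$ in that proof must satisfy $\sup_{x\in D}|X^0_x(T_0)-O|<\rho/2$). Your topological claim, however, is stated for a fixed horizon $T$, and its contradiction argument explicitly sends $\rho_n\to 0$ so that you can use $x_n\to O$ in norm and invoke Theorem \ref{thm:X-continuity}. The resulting $\rho_0$ therefore depends on $T$, so you would need $\rho<\rho_0(T(\rho))$, a circularity that is never resolved; equivalently, the contradiction sequence lives in $C([0,T_n]:E)$ with $T_n\to\infty$, and nothing bounds the hitting times $t_n$.

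The paper breaks this circle by first fixing $a<V(N)$, invoking inner regularity \eqref{eq:V-inner-reg} to choose a \emph{single} $\rho$ with $V(\Gamma_{2C\rho},N)>a$, and only then choosing $T_0=T_0(\rho)$. The contradiction is run with $\rho$ fixed, so the points $x_n\in\Gamma_\rho$ sit on a fixed sphere and do not converge in $E$. To extract a limit the paper uses Theorem \ref{thm:X-contin-weak} (precompactness in $C([t_1,T_0]:E)$ for any $t_1>0$) together with a short-time estimate, based on \eqref{eq:Xu-X0} and Assumption \ref{assum:attraction}(c), showing that $t_n\ge T_1>0$; the limit trajectory then crosses $\Gamma_{2C\rho}$ en route to $N$, contradicting $V(\Gamma_{2C\rho},N)>a$. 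The shortcut you propose---letting $x_n\to O$ so as to use Theorem \ref{thm:X-continuity} instead of Theorem \ref{thm:X-contin-weak}---is precisely what creates the $T$--$\rho$ circularity; the infinite-dimensional compactness issue you claim to have sidestepped reenters through this back door.
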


\begin{proof}
  Fix $a < V(N)$ . By the inner regularity of Assumption \ref{assum:V-regularity}(b), there exists
$\rho\in(0,\frac{\rho_0}{2C})$ such that
  \begin{equation}\label{eq:vgam2cr}
	V(\Gamma_{2C\rho},N) > a.\end{equation}
	By Lemma \ref{lem:hard-to-stay-outside}, we can choose $T_0>0$ large enough such that
  \begin{equation} \label{eq:T0-choice-2}
    \limsup_{\e \to 0} \sup_{x \in \Gamma_\rho} \e \log \Pro(\sigma_{x,\rho}^\e > T_0) \leq -a .
  \end{equation}
  The probability $\Pro(X^\e_x(\sigma_{x,\rho}^\e) \in N)$ can be decomposed as
  \begin{align*}
    &\Pro(X^\e_x(\sigma_{x,\rho}^\e) \in N) = \Pro(X^\e_x(\sigma_{x,\rho}^\e) \in N, \ \sigma_{x,\rho}^\e\leq T_0)
    + \Pro(X^\e_x(\sigma_{x,\rho}^\e) \in N, \ \sigma_{x,\rho}^\e>T_0).
  \end{align*}

  Now we show that there exists  $\delta>0$ such that any trajectory $\varphi$ that starts at $\varphi(0)=x \in \Gamma_\rho$ and exits $D$ through $N$ before time $T_0$ has the property that
  \begin{equation} \label{eq:dist-from-level}
    \dist_{C([0,T_0]:E)}(\varphi, \Phi_{x,0,T_0}(a)) >\delta.
  \end{equation}
  This claim says that any trajectory that starts at $x\in\Gamma_\rho$ and satisfies $I_{x,0,T_0}(\varphi) \leq a$, cannot come within distance $\delta$ of a trajectory exiting $D$ through the set $N \subset \partial D$ before time $T_0$. In the important case where $N=\partial D$, the claim says that such a trajectory cannot come within distance $\delta$ of $\partial D$ before time $T_0$.

  To prove \eqref{eq:dist-from-level}, suppose by contradiction that there exist sequences $\varphi_n \in C([0,T_0]:E)$ and $t_n \in [0,T_0]$ such that $\varphi_n(0)=x_n \in \Gamma_{\rho}$, $\varphi_n(t) \in D$ for $t \in [0,t_n]$, $\dist_E(\varphi_n(t_n),  N) \leq \frac{1}{n}$, and $I_{x_n,0,T_0}(\varphi_n) \leq a$. In other words, $\{\varphi_n\}_{n \in \NN}$ is a sequence in $\bigcup_{x \in \Gamma_\rho}\Phi_{x,0,T_0}(a)$ that gets arbitrarily close to exiting $D$ through $N$. There must exist controls $u_n \in \mathcal{S}^{2a}$ such that $\varphi_n = X^{0,u_n}_{x_n}$.

  Assumption \ref{assum:attraction}(c) guarantees that $|X^0_{x_n}(t)-O|_E \leq 2C^2 \rho$ for $t>0$. By \eqref{eq:Xu-X0}, for any $t\in [0,t_n]$,
  \[|X^{0,u_n}_{x_n}(t) - O|_E \leq |X^{0,u_n}_{x_n}(t)-X^0_{x_n}(t)|_E + |X^0_{x_n}(t) - O|_E \leq \kappa_3 t^{\frac{1}{p}} \sqrt{2a} + 2C^2\rho. \]
  Choose $T_1<T_0$ small enough so that the above equation guarantees that for all $n \in \NN$ and $t\in [0,T_1]$,
  \[|X^{0,u_n}_{x_n}(t) - O|_E \leq 4C^2\rho.\]
  Notice that these equations imply that $t_n \in (T_1,T_0]$ for each $n \in \NN$.

  By the compactness of $[T_1,T_0]$ there is a subsequence $\{t_n\} \subset [T_1,T_0]$ that converges to some $t_\star$. By Theorem \ref{thm:X-contin-weak}, there is a further subsequence of $\{\varphi_n\}$ that converges to a limit $\varphi$ in $C([T_1,T_0]:E)$. The above estimates imply that $|\varphi(T_1)-O| \leq 4C^2\rho$, $\varphi(t^\star) \in N$, and $I_{x,T_1,T_0}(\varphi) \leq a$, where $x = \varphi(T_1)$. By continuity, there must exist a time $T_2\in[T_1,T_0]$ such that $\varphi(T_2) \in \Gamma_{2C\rho}$.
  The existence of such a $\varphi$ says that $V(\Gamma_{2 C\rho}, N) \leq a$ which is a contradiction to \eqref{eq:vgam2cr}. Therefore, \eqref{eq:dist-from-level} must hold.

  It follows from \eqref{eq:T0-choice-2}, \eqref{eq:dist-from-level}, and Theorem \ref{thm:LDP}(2) that
  \begin{align*}
    &\limsup_{\e\to 0} \sup_{x \in \Gamma_\rho} \e \log \Pro(X^\e_x(\sigma_{x,\rho}^\e) \in N) \\
    &\quad \leq \limsup_{\e \to 0} \sup_{x \in \Gamma_\rho} \e \log \Big(\Pro(X^\e_x(\sigma_{x,\rho}^\e) \in N, \sigma_{x,\rho}^\e \leq T_0) + \Pro(X^\e_x(\sigma_{x,\rho}^\e) \in N, \sigma_{x,\rho}^\e> T_0)  \Big)\\
    &\quad \leq \max \left\{\limsup_{\e \to 0} \sup_{x \in \Gamma_\rho} \e \log \Pro(X^\e_x(\sigma_{x,\rho}^\e) \in N, \sigma_{x,\rho}^\e \leq T_0), \ \ \limsup_{\e \to 0} \sup_{x \in \Gamma_\rho} \e \log \Pro(\sigma_{x,\rho}^\e>T_0)  \right\}\\
    &\quad \leq \max\left\{ \limsup_{\e \to 0} \sup_{x \in \Gamma_\rho} \e \log \Pro(\dist_{C([0,T_0]:E)}(X^\e_x, \Phi_{x,0,T_0}(a))\geq \delta), -a\right\}\\
    & \quad\leq -a.
  \end{align*}

  The result follows because $a< V(N)$ was arbitrary.
\end{proof}

\begin{lemma}
For every $\rho>0$ such that $\gamma_\rho \subset D$, and $x \in D$,
\begin{equation*}
\lim_{ {\varepsilon} \to 0} {\mathbb{P}}(X^{\varepsilon}_x(\sigma_{x,\rho}^\e) \in
\gamma_\rho) =1
\end{equation*}
\end{lemma}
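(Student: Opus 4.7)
The plan is to show that, with probability tending to $1$, the trajectory $X^\e_x$ remains close to the deterministic flow $X^0_x$ long enough for $X^0_x$ to enter the interior of $\gamma_\rho$; on that event, the first hitting time of $\gamma_\rho \cup \partial D$ must occur at a point of $\gamma_\rho$ rather than $\partial D$.

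To set this up, I would use Assumption \ref{assum:attraction}(b) to pick $T>0$ with $|X^0_x(T)-O|_E < \rho/2$. Since $X^0_x([0,T])$ is a compact subset of the open set $D$, the quantity $d_0 \doteq \inf_{t\in[0,T]} \dist(X^0_x(t),\partial D)$ is strictly positive. Setting $\delta \doteq \tfrac{1}{2}\min(d_0,\rho/2)$, the event
$$A_\e \doteq \{|X^\e_x - X^0_x|_{C([0,T]:E)} < \delta\}$$
forces $X^\e_x(t) \in D$ for every $t\in[0,T]$ and $X^\e_x(T) \in \gamma_\rho$. Hence, on $A_\e$, one has $\sigma_{x,\rho}^\e \leq T$ and, by continuity, $X^\e_x(\sigma_{x,\rho}^\e) \in \gamma_\rho$. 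It therefore suffices to prove $\Pro(A_\e)\to 1$.

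For the latter, observe that $I_x(\varphi)=0$ if and only if $\varphi = X^0_x$ (only the control $u \equiv 0$ has zero cost). Since every level set $\Phi_x(M)$ is compact (this transfers from Theorem \ref{thm:compact-level-sets-tilde} through the continuity of $\mathcal{M}$ via the identity $X^{0,u}_x = \mathcal{M}(S(\cdot)x + Y^{0,u}_x)$), a routine compactness argument gives
$$\alpha \doteq \inf\{I_x(\varphi) : |\varphi - X^0_x|_{C([0,T]:E)} \geq \delta\} > 0.$$
Consequently the compact set $\Phi_x(\alpha/2)$ and the closed set $F \doteq \{\varphi : |\varphi - X^0_x|_{C([0,T]:E)} \geq \delta\}$ are disjoint, so $\delta' \doteq \dist(F, \Phi_x(\alpha/2)) > 0$. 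Applying the uniform upper bound of Theorem \ref{thm:LDP}(2) with $E_0 = \{x\}$, $s_0 = \alpha/2$, and threshold $\delta'$ then yields
$$\limsup_{\e\to 0} \e\log \Pro(A_\e^c) \leq \limsup_{\e\to 0} \e\log \Pro\bigl(\dist_{C([0,T]:E)}(X^\e_x,\Phi_x(\alpha/2)) \geq \delta'\bigr) \leq -\alpha/2 < 0,$$
so $\Pro(A_\e^c)\to 0$ exponentially, completing the proof.

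No step presents a serious obstacle; the only point requiring care is the translation of the Freidlin--Wentzell upper bound (phrased in terms of distance from level sets of $I_x$) into a bound on the probability of being far from $X^0_x$. This is handled above by choosing the level $\alpha/2$ so that the compact set $\Phi_x(\alpha/2)$ is strictly separated in $C([0,T]:E)$ from the complement of the $\delta$-ball around $X^0_x$, which in turn uses both the compactness of level sets and the uniqueness of $X^0_x$ as the zero of $I_x$.
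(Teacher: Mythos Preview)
Your proof is correct and follows essentially the same idea as the paper's: on the event that $X^\e_x$ stays within a small tube around the deterministic trajectory $X^0_x$ up to a time at which $X^0_x$ has entered $\gamma_\rho$, the process must hit $\gamma_\rho$ before $\partial D$. The paper's own proof is a single sentence invoking the convergence $X^\e_x \to X^0_x$ in $C([0,T]:E)$ (which is convergence in probability, available directly from Theorem \ref{thm:unif-conv} with $u_n\equiv 0$), together with Assumption \ref{assum:attraction}(a),(b). Your step~5 recovers this convergence by going through the LDP upper bound and the positivity of $\alpha$; this is valid but more elaborate than needed, since the law-of-large-numbers statement $\mathbb{P}(|X^\e_x - X^0_x|_{C([0,T]:E)} \geq \delta) \to 0$ is already a direct consequence of the convergence results in Section~\ref{sec:pfuniflap} and the continuity of $\mathcal{M}$.
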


\begin{proof}
  This is just a consequence of the fact that $X^\e_x$ converges to $X^0_x$ in $C([0,T]:E)$, with probability one, for every $T$ as $\e \to 0$  and by Assumption \ref{assum:attraction}(a) and (b), $X^0_x$ hits $\gamma_\rho$ and never hits $\partial D$.
\end{proof}

The  next lemma says that it is exponentially unlikely for $%
X^{\varepsilon}_x$ to cross from $\gamma_\rho$ to $\Gamma_\rho$ quickly.

\begin{lemma}
\label{lem:not-too-fast} Let $C>0$ be from Assumption \ref{assum:attraction}%
(c). For $\rho\in(0,\rho_0)$ and any $c>0$ there exists $T=T(c,\rho)>0$ such
that
\begin{equation*}
\limsup_{{\varepsilon} \to 0} \sup_{x \in \gamma_\rho} \e \log {\mathbb{P}} \left( |
X^{\varepsilon}_x - O|_{C([0,T]:E)} \geq 2 C\rho \right) < -c.
\end{equation*}
\end{lemma}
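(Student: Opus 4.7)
The plan is to deduce the lemma from the uniform LDP upper bound (Theorem \ref{thm:LDP}(2)) applied to the bounded set $\gamma_\rho$. Fix $a \doteq c+1$. It suffices to exhibit $T > 0$ and $\delta > 0$ such that
\[
\sup_{x \in \gamma_\rho}\ \sup_{\varphi \in \Phi_{x,0,T}(a)} |\varphi - O|_{C([0,T]:E)} \le 2C\rho - \delta,
\]
for then $\{|X^\e_x - O|_{C([0,T]:E)} \ge 2C\rho\} \subset \{\mathnormal{dist}_{C([0,T]:E)}(X^\e_x,\Phi_{x,0,T}(a)) \ge \delta\}$ and the result follows from Theorem \ref{thm:LDP}(2), since $\gamma_\rho$ is a bounded subset of $E$ and $-a < -c$.

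To obtain the displayed bound, start by noting that Assumption \ref{assum:attraction}(c) gives $|X^0_x - O|_{C([0,T]:E)} \le C\rho$ for every $x \in \gamma_\rho$. Hence it is enough to show that for some small $T$, every controlled trajectory $X^{0,u}_x$ with $x \in \gamma_\rho$ and $\tfrac{1}{2}\int_0^T |u(s)|_H^2\,ds \le a$ satisfies $|X^{0,u}_x - X^0_x|_{C([0,T]:E)} \le C\rho/2$. I would carry this out with a bootstrap argument: let $\tau \doteq \inf\{t \in [0,T]: |X^{0,u}_x(t) - O|_E > 2C\rho\}\wedge T$, so that on $[0,\tau]$ the trajectory is bounded by $M_0 \doteq |O|_E + 2C\rho$ and $\gamma^{-1}(|X^{0,u}_x(s)|_E) \le \gamma^{-1}(M_0)$. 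Then Assumption \ref{assum:G-new}(d) gives
\[
|Y^{0,u}_x|_{C([0,\tau]:E)}^p \le C\,(2a)^{p/2}\, T\, (\gamma^{-1}(M_0))^p,
\]
where $Y^{0,u}_x$ is as in \eqref{eq:youxt}. Since $|S(\cdot)x|_{L^\infty([0,T]:E)}$ is uniformly bounded for $x \in \gamma_\rho$, I choose $R$ large enough so that $|S(\cdot)x + Y^{0,u}_x|_{L^\infty([0,\tau]:E)} \le R$ for all such $x,u$; the Lipschitz bound in Assumption \ref{assum:mathcal-M}(c) then yields
\[
|X^{0,u}_x - X^0_x|_{C([0,\tau]:E)} = |\mathcal{M}(S(\cdot)x + Y^{0,u}_x) - \mathcal{M}(S(\cdot)x)|_{C([0,\tau]:E)} \le C(R,T)\, |Y^{0,u}_x|_{C([0,\tau]:E)},
\]
so the whole expression is $\le \kappa\, T^{1/p}$ for a constant $\kappa$ depending only on $a$, $\rho$, $C$, and the semigroup bound on $[0,1]$. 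Choosing $T$ small enough that $\kappa T^{1/p} \le C\rho/2$ forces $\tau = T$ (otherwise at $t = \tau < T$ we would get $2C\rho = |X^{0,u}_x(\tau) - O|_E \le C\rho/2 + C\rho < 2C\rho$, a contradiction), and then $|X^{0,u}_x - O|_{C([0,T]:E)} \le 3C\rho/2 = 2C\rho - \delta$ with $\delta \doteq C\rho/2$.

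The main obstacle is the self-referential nature of the a priori estimate for $X^{0,u}_x$, because the Lipschitz constant of $\mathcal{M}$ in Assumption \ref{assum:mathcal-M}(c) depends on the size of its arguments and the bound from Assumption \ref{assum:G-new}(d) on $Y^{0,u}_x$ in turn depends on $\gamma^{-1}(|X^{0,u}_x|_E)$. The stopping-time device above decouples these by restricting to the interval $[0,\tau]$ on which $X^{0,u}_x$ is a priori bounded in $E$, and $T$ is then chosen small enough that the only consistent scenario is $\tau = T$. Once this deterministic estimate is in hand, the probabilistic conclusion is immediate from Theorem \ref{thm:LDP}(2) applied uniformly over the bounded initial condition set $\gamma_\rho$, yielding $\limsup_{\e \to 0}\sup_{x \in \gamma_\rho} \e \log \Pro(|X^\e_x - O|_{C([0,T]:E)} \ge 2C\rho) \le -a < -c$.
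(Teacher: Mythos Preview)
Your proposal is correct and follows essentially the same route as the paper. Both arguments show that any controlled trajectory $X^{0,u}_x$ with $x\in\gamma_\rho$ and bounded control energy satisfies $|X^{0,u}_x-X^0_x|_{C([0,T]:E)}\le \kappa\,T^{1/p}$, so that for small $T$ no element of the level set $\Phi_{x,0,T}(a)$ can reach distance $2C\rho$ from $O$, and then invoke the uniform LDP upper bound of Theorem~\ref{thm:LDP}(2). The paper simply quotes the estimate \eqref{eq:Xu-X0} (already derived in the proof of Lemma~\ref{lem:hard-to-stay-outside}) in place of your explicit stopping-time bootstrap, and works with the level $c$ and threshold $\tfrac{3C\rho}{2}$ rather than your $a=c+1$ and $\tfrac{3C\rho}{2}$ implicit in $\delta=C\rho/2$; these are cosmetic differences, and your choice $a=c+1$ in fact yields the strict inequality in the statement more transparently.
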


\begin{proof}
   Let $c>0$ and $\rho \in (0,\rho_0)$. By Assumption \ref{assum:attraction}(c), if the unperturbed process $X^0_x$ starts at $x \in \gamma_\rho$, then $|X^0_x(t)-O|_E \leq C\rho$ for all $t>0$. If $X^{0,u}_x$ were to reach $\{x \in E: |x-O|_E = \frac{3C\rho}{2}\}$ at time $t$, then by \eqref{eq:Xu-X0},
  \[\frac{C\rho}{2} \leq |X^{0,u}_x(t) - X^0_x(t)|_E \leq \kappa_4 t^{\frac{1}{p}}  |u|_{L^2([0,t]:H)}.\]
  It follows that we can find $T_0 \in (0,1)$ small enough such that
  \[\frac{1}{2}\int_0^{T_0} |u(s)|_H^2 ds > c\]
  for any $u$ such that $x \in \gamma_\rho$ and $|X^{0,u}_x(t)-O|_E = \frac{3C\rho}{2}$ for some $t \in [0,T_0]$. Therefore, for any $x \in \gamma_\rho$, the set
  \begin{align*}
    &\left\{\varphi \in C([0,T_0]:E): \varphi(0) =x, \ \ |\varphi - O|_{C([0,T_0]:E)} \geq 2C\rho\right\} \\
    &\subset \left\{\varphi \in C([0,T_0]:E): \dist_{C([0,T_0]:E)}(\varphi, \Phi_{x,0,T_0}(c)) \geq \frac{C\rho}{2}\right\}.
  \end{align*}
  By the uniform large deviations upper bound (Theorem \ref{thm:LDP}(2)),
  \begin{align*}
    &\limsup_{ \e \to 0} \sup_{x \in \gamma_\rho} \e \log \Pro(|X^\e_x - O|_{C([0,T]:E)} \geq 2C\rho)\\
    &\leq \limsup_{\e \to 0} \sup_{x \in \gamma_\rho} \e \log \Pro\left(\dist_{C([0,T_0]:E)}(X^\e_x, \Phi_{x,0,T_0}(c))\geq \frac{C\rho}{2}\right) \leq -c.
  \end{align*}

\end{proof}

Using Lemmas \ref{lem:exit-1}-\ref{lem:not-too-fast}, Theorem \ref%
{thm:exit-problems} now follows from the proof of \cite[Theorem 5.7.11]{dz}. We note that Lemma \ref{lem:not-too-fast}
and Lemma 5.7.23 of \cite{dz} are not exactly the same. The role of these lemmas are to prove that it is exponentially unlikely for the process $X^\e_x$ to transition from $\gamma_\rho$ to $\Gamma_\rho$ in a short period of time. We remark that
in Lemma 5.7.23 of \cite{dz}, it is proven that it is exponentially unlikely for the process $X^\e_x$ to  deviate from its initial condition over short periods of time uniformly for all $x \in D$. However
that is
 more than what is needed to prove Theorem \ref{thm:exit-problems} and in particular the estimate in  Lemma \ref{lem:not-too-fast} suffices for the proof.

\section{Sufficient conditions for Assumption \protect\ref{assum:G-new}}

\label{sec:suff-conds} In this section we provide a sufficient condition
under which Assumption \ref{assum:G-new} holds. The results of the section hold for any  fixed $T<\infty$.
Many of the arguments in
this section are based on the stochastic factorization method of \cite[%
Section 5.3.1]{DaP-Z}. The sufficient conditions will be verified for
particular models of interest in the next section.


\begin{assumption}
\label{assum:G-sufficient} There exist Banach spaces $E_{1}$, $E_{2}$ such
that (i) $E\subset E_{1}\subset E_{2}$ and the embeddings are continuous,
(ii) for every $t>0$, $S(t)$ can be extended to a bounded operator from $%
E_{2}$ to $E$, and (iii) for some $M\in (0,\infty )$, $r\in \lbrack 0,1/2)$
we have that for every $t\in (0,T]$ and $x\in E_{1}$,
\begin{equation}
|S(t)x|_{E}\leq Mt^{-r}|x|_{E_{1}}.  \label{eq:E1-E-semigroup}
\end{equation}%
For any $s,t>0$ and $x\in E$, $G(s,x)\in \mathscr{L}(H,E_{2})$ and $%
S(t)G(s,x)\in \mathscr{L}(H,E)$. There exists $\alpha \in (r,1/2)$ and $p>%
\frac{1}{\alpha -r}$ such that for every $E$-valued progressively measurable
process $\varphi $ satisfying ${\mathbb{E}}(\gamma ^{-1}(|\varphi
|_{L^{\infty }([0,T]:E)}))^{p}<\infty $, where $\gamma $ is the
nondecreasing function from $\mathbb{R}_{+}$ to $\mathbb{R}_{+}$ satisfying %
\eqref{eq:M-growth}, and any $t\in \lbrack 0,T]$, the stochastic integral
\begin{equation}
Z_{\alpha }(\varphi )(t)\doteq \int_{0}^{t}(t-s)^{-\alpha }S(t-s)G(s,\varphi
(s))dw(s)  \label{eq:eqzalvarp}
\end{equation}%
is well-defined as an $E_{1}$-valued random variable for every $t\in \lbrack
0,T]$. There exists $\zeta \in L_{\text{loc}}^{1}([0,T]:\mathbb{R}_{+})$ such
that for any $E$-valued progressively measurable process satisfying $\varphi
$ satisfying $${\mathbb{E}}(\gamma ^{-1}(|\varphi |_{L^{\infty
}([0,T]:E)}))^{p}<\infty ,$$ and any $t\in \lbrack 0,T]$,
\begin{equation}
\label{eq:G-suff-bound}
{\mathbb{E}}\left\vert Z_\alpha(\varphi)(t)\right\vert _{E_{1}}^{p}\leq {\mathbb{E}}\left( \int_{0}^{t}\zeta
(t-s)(\gamma ^{-1}(|\varphi (s)|_{E}))^{2}ds\right) ^{p/2},
\end{equation}%
and for all $E$-valued progressively measurable processes $\varphi ,\psi \in
L^{p}(\Omega :L^{\infty }([0,T]:E))$ and any $t\in \lbrack 0,T]$,
\begin{equation}
\label{eq:G-suff-Lip}
{\mathbb{E}}\left\vert Z_\alpha(\varphi)(t) - Z_\alpha(\psi)(t)\right\vert _{E_{1}}^{p}\leq {\mathbb{E}}\left(
\int_{0}^{t}\zeta (t-s)|\varphi (s)-\psi (s)|_{E}^{2}ds\right) ^{p/2}.
\end{equation}

\end{assumption}

The following theorem shows that Assumption \ref{assum:G-sufficient} along
with Assumption \ref{assum:mathcal-M} on the map $\mathcal{M}$ and
the compactness of the semigroup $S(t):E\rightarrow E$ (Assumption \ref%
{assum:semigroup}) implies Assumption \ref{assum:G-new}.

\begin{theorem}
\label{thm:suff-conds} If Assumptions \ref{assum:semigroup}, \ref{assum:mathcal-M},
and \ref%
{assum:G-sufficient} are satisfied, then Assumption \ref{assum:G-new} is
also satisfied with $\gamma$ and $p $ as in Assumptions \ref{assum:mathcal-M} and \ref{assum:G-sufficient}.%
\end{theorem}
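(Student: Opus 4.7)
The proof is based on the classical stochastic factorization method of Da~Prato and Zabczyk. The starting observation is the Beta-function identity $\int_s^t (t-r)^{\alpha-1}(r-s)^{-\alpha}\,dr = \pi/\sin(\pi\alpha)$ for $\alpha \in (0,1)$, which combined with a stochastic Fubini argument yields
\begin{equation*}
Z(\varphi)(t) = \frac{\sin(\pi\alpha)}{\pi}\int_0^t (t-r)^{\alpha-1}\,S(t-r)\,Z_\alpha(\varphi)(r)\,dr,
\end{equation*}
with $Z_\alpha(\varphi)$ as in \eqref{eq:eqzalvarp}, together with the analogous deterministic identity
$\mathcal{L}(\varphi)u(t) = \frac{\sin(\pi\alpha)}{\pi}\int_0^t (t-r)^{\alpha-1} S(t-r) Y_\alpha^u(\varphi)(r)\,dr$, where $Y_\alpha^u(\varphi)(r) \doteq \int_0^r (r-s)^{-\alpha} S(r-s) G(s,\varphi(s))u(s)\,ds$. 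Both representations reduce the task of controlling an $E$-valued integral to controlling an $E_1$-valued process convolved against the smoothing kernel $(t-r)^{\alpha-1}S(t-r)$.

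The moment estimates in Assumption~\ref{assum:G-new}(a),(b),(d),(e) then follow by combining these representations with the bound $|S(t-r)x|_E \leq M(t-r)^{-r}|x|_{E_1}$ and H\"{o}lder's inequality. Since $p > 1/(\alpha-r)$, the conjugate exponent $q = p/(p-1)$ satisfies $(\alpha-1-r)q > -1$, so the scalar kernel $(t-r)^{\alpha-1-r}$ lies in $L^q([0,T])$, and
\begin{equation*}
|Z(\varphi)|^p_{C([0,t]:E)} \leq C \int_0^t |Z_\alpha(\varphi)(r)|_{E_1}^p\,dr.
\end{equation*}
Taking expectations, applying \eqref{eq:G-suff-bound}, and invoking Young's convolution inequality (which uses only $\zeta \in L^1_{\text{loc}}$) to pass from $\mathbb{E}\int_0^t\bigl(\int_0^r \zeta(r-s)(\gamma^{-1}(|\varphi(s)|_E))^2\,ds\bigr)^{p/2}\,dr$ to a multiple of $\mathbb{E}\int_0^t (\gamma^{-1}(|\varphi(s)|_E))^p\,ds$ yields part (a); part (b) follows identically from \eqref{eq:G-suff-Lip}. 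For parts (d) and (e), Cauchy--Schwarz in $H$ bounds $|Y_\alpha^u(\varphi)(r)|_{E_1}$ by $|u|_{L^2([0,r]:H)}$ times $\bigl(\int_0^r (r-s)^{-2\alpha}\|S(r-s)G(s,\varphi(s))\|^2_{\mathscr{L}(H,E_1)}\,ds\bigr)^{1/2}$, and the operator-ideal bound implicit in \eqref{eq:G-suff-bound} controls the inner integral by $\int_0^r \zeta(r-s)(\gamma^{-1}(|\varphi(s)|_E))^2\,ds$; the remainder of the argument then proceeds exactly as for (a),(b).

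Parts (c) and (f) are deduced from the compactness of the convolution operator $K \colon L^p([0,T]:E_1) \to C([0,T]:E)$ defined by $(Kf)(t) = \int_0^t (t-r)^{\alpha-1}S(t-r)f(r)\,dr$. Equicontinuity of bounded subsets of the image follows from strong continuity of $S(\cdot)$ together with the $L^q$-integrability of the kernel; pointwise relative compactness at each $t>0$ is obtained by splitting $\int_0^t = \int_0^{t-\delta} + \int_{t-\delta}^t$, making the second piece small by H\"{o}lder, and factoring $S(\delta/2)$ out of the first to exploit the compactness of the semigroup (Assumption \ref{assum:semigroup}). Compactness of $K$, combined with the bounds on $\{Z_\alpha(\varphi)\}$ and $\{Y_\alpha^u(\varphi)\}$ in $L^p([0,T]:E_1)$ (in probability for the former family and uniformly for the latter) guaranteed by the hypotheses of (c) and (f), then delivers the desired tightness and pre-compactness, since $Z(\varphi)$ and $\mathcal{L}(\varphi)u$ are up to the universal constant $\sin(\pi\alpha)/\pi$ the images under $K$ of $Z_\alpha(\varphi)$ and $Y_\alpha^u(\varphi)$ respectively. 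The main technical obstacles are (i) correctly identifying the operator-ideal interpretation of \eqref{eq:G-suff-bound} so that the deterministic Cauchy--Schwarz estimate for $Y_\alpha^u$ closes, and (ii) rigorously extending the compactness of $S(\delta/2)\colon E\to E$ to a compact operator $E_1\to E$ needed in the compactness of $K$; both are handled by standard density/approximation arguments analogous to those underlying Theorem \ref{thm:doubledualextend}.
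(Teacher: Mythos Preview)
Your approach matches the paper's: stochastic factorization plus compactness of the convolution operator $\mathscr{F}_\alpha$ via Arzel\`a--Ascoli. The differences lie in how your two flagged obstacles are actually resolved.

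For parts (d),(e), your Cauchy--Schwarz bound invokes $\|S(r-s)G(s,\varphi(s))\|_{\mathscr{L}(H,E_1)}$, but Assumption~\ref{assum:G-sufficient} does not supply pointwise operator-norm control---only $p$-th moment bounds on the $E_1$-valued stochastic integral $Z_\alpha(\varphi)(t)$. The paper resolves your obstacle~(i) not by an ``operator-ideal interpretation'' but by duality: for each $x^\star\in E_1^\star$ the It\^o isometry gives
\[
\mathbb{E}\langle Z_\alpha(\varphi)(t),x^\star\rangle_{E_1,E_1^\star}^2 = \int_0^t (t-s)^{-2\alpha}\bigl|[S(t-s)G(s,\varphi(s))]^\star x^\star\bigr|_H^2\,ds,
\]
and \eqref{eq:G-suff-bound} (with Jensen) bounds the left side by $|x^\star|_{E_1^\star}^2\int_0^t\zeta(t-s)(\gamma^{-1}(|\varphi(s)|_E))^2\,ds$. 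This is precisely the statement that the adjoint map $x^\star\mapsto (t-\cdot)^{-\alpha}[S(t-\cdot)G(\cdot,\varphi(\cdot))]^\star x^\star$ is bounded from $E_1^\star$ to $L^2([0,t]:H)$, hence $L_t^\alpha(\varphi):L^2([0,t]:H)\to E_1$ is bounded with the same norm. Separately, the paper uses Girsanov's theorem (Lemma~\ref{lem:defined-from-Girsanov}) to show that $L_t^\alpha(\varphi)u$ is $E_1$-valued in the first place, by writing it as the difference of two stochastic integrals (driven by $w$ and by $w-\int_0^\cdot u$), each $E_1$-valued by hypothesis; this step is missing from your sketch.

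Your obstacle~(ii) does not arise in the paper's argument. Rather than extending $S(\delta)$ to a compact operator $E_1\to E$, the paper first bounds $\int_0^{t-\delta}(t-s)^{\alpha-1}S(t-s-\delta)\varphi(s)\,ds$ directly in the $E$-norm (using $|S(u)x|_E\le Mu^{-r}|x|_{E_1}$ and $(t-s)^{\alpha-1}\le (t-s-\delta)^{\alpha-1}$), and then applies the compact operator $S(\delta):E\to E$ to that bounded set.
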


Assumption \ref{assum:G-sufficient} merely assumes that certain stochastic
integrals are well-defined as $E_{1}$-valued random variables and satisfy
certain moment bounds for each fixed $t>0$. Theorem \ref{thm:suff-conds}
shows this assumption, together with Assumptions \ref{assum:semigroup}, is
enough to prove that the stochastic integrals $Z(\varphi )$ defined in %
\eqref{eq:Z-varphi-def} are $E$-valued continuous stochastic processes.
Furthermore, these assumptions also suffice for the Lebesgue integrals
defined in \eqref{eq:mathcal-L-def} to be well-defined and continuous in $t$%
. Finally, they imply that $\{Z(\varphi ):{\mathbb{E}}(\gamma ^{-1}(|\varphi
|_{L^{\infty }([0,T]:E)}))^{p}\leq R\}$ is tight in $C([0,T]:E)$ and $\{%
\mathcal{L}(\varphi )u:|\varphi |_{L^{\infty }([0,T]:E)}\leq R,u\in \mathcal{S}^{N}\}$
is relatively compact in $C([0,T]:E)$.

The rest of this section is devoted to the proof of Theorem \ref%
{thm:suff-conds}. We assume throughout the section that Assumptions \ref%
{assum:semigroup}, \ref{assum:mathcal-M} and \ref{assum:G-sufficient} are satisfied and $p$, $%
\alpha $, $E_{1},E_{2}$, $\gamma $ are as in Assumption \ref%
{assum:G-sufficient}. Consider the convolution mapping
\begin{equation}
\mathscr{F}_{\alpha }(\varphi )(t)\doteq \int_{0}^{t}(t-s)^{\alpha
-1}S(t-s)\varphi (s)ds,\;t\in \lbrack 0,T].  \label{def:F-alpha}
\end{equation}%
The following result from \cite{DaP-Z} says that $\mathscr{F}_{\alpha }$ is a well-defined
bounded linear map on $L^{p}([0,T]:E_{1})$.

\begin{lemma}[Proposition 5.9 in \protect\cite{DaP-Z}]
\label{lem:F-alpha-regularity} For any $t \in [0,T]$, $\mathscr{F}_\alpha$
is a bounded linear operator from {$L^p([0,t]:E_1) \to C([0,t]:E)$ .}
\end{lemma}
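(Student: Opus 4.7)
The plan is to prove the lemma in two stages: first establish the pointwise $E$-valued bound via Hölder's inequality combined with the smoothing estimate \eqref{eq:E1-E-semigroup}, and then upgrade to a continuity statement by a density argument.

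For the pointwise bound, fix $\tau \in [0,t]$. Using \eqref{eq:E1-E-semigroup} inside the definition \eqref{def:F-alpha} gives
\begin{equation*}
|\mathscr{F}_\alpha(\varphi)(\tau)|_E \le M \int_0^\tau (\tau-s)^{\alpha - 1 - r} |\varphi(s)|_{E_1}\,ds.
\end{equation*}
Let $q = p/(p-1)$ be the Hölder conjugate of $p$. Applying Hölder to the integral yields
\begin{equation*}
|\mathscr{F}_\alpha(\varphi)(\tau)|_E \le M \left(\int_0^\tau (\tau - s)^{q(\alpha - 1 - r)}\,ds \right)^{1/q} |\varphi|_{L^p([0,\tau]:E_1)}.
\end{equation*}
The condition $p > 1/(\alpha - r)$ in Assumption \ref{assum:G-sufficient} is equivalent to $q(\alpha - 1 - r) > -1$, so the temporal integral is finite and equals $C_q \tau^{q(\alpha - r) - 1}$. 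Taking the $1/q$-th power and absorbing constants produces the uniform bound
\begin{equation*}
\sup_{\tau \in [0,t]} |\mathscr{F}_\alpha(\varphi)(\tau)|_E \le C\, t^{\alpha - r - 1/p} |\varphi|_{L^p([0,t]:E_1)},
\end{equation*}
proving boundedness as a linear map into $L^\infty([0,t]:E)$.

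For continuity, after the substitution $u = \tau - s$ we have $\mathscr{F}_\alpha(\varphi)(\tau) = \int_0^\tau u^{\alpha - 1} S(u) \varphi(\tau - u)\,du$. For $0 \le \tau_1 < \tau_2 \le t$, split
\begin{equation*}
\mathscr{F}_\alpha(\varphi)(\tau_2) - \mathscr{F}_\alpha(\varphi)(\tau_1) = \int_0^{\tau_1} u^{\alpha-1} S(u)\bigl[\varphi(\tau_2 - u) - \varphi(\tau_1 - u)\bigr] du + \int_{\tau_1}^{\tau_2} u^{\alpha - 1} S(u) \varphi(\tau_2 - u)\, du.
\end{equation*}
The second term is bounded by the same Hölder/smoothing argument applied on $(\tau_1, \tau_2)$, giving an estimate of order $(\tau_2 - \tau_1)^{\alpha - r - 1/p}$ times $|\varphi|_{L^p}$, which vanishes as $\tau_2 - \tau_1 \to 0$. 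The first term is handled by density: for $\varphi \in C([0,T]:E_1)$ one may check the integrand is dominated by the integrable function $M u^{\alpha - 1 - r}\,(2|\varphi|_{L^\infty})$, while the bracketed difference tends to $0$ in $E_1$ pointwise in $u$ as $\tau_2 \to \tau_1$, so dominated convergence gives continuity. Continuity at $\tau = 0$ is immediate from the pointwise bound above, which yields $|\mathscr{F}_\alpha(\varphi)(\tau)|_E \to 0$ as $\tau \to 0$.

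To conclude, general $\varphi \in L^p([0,t]:E_1)$ is approximated by $\varphi_n \in C([0,t]:E_1)$ in $L^p$-norm; the boundedness estimate from the first paragraph shows $\mathscr{F}_\alpha(\varphi_n) \to \mathscr{F}_\alpha(\varphi)$ uniformly on $[0,t]$, and since each $\mathscr{F}_\alpha(\varphi_n)$ is $E$-valued continuous, so is the limit. The main technical point is the treatment of the singular kernel near the upper limit, which is precisely where the quantitative condition $p > 1/(\alpha - r)$ is used; everything else is a routine Hölder/density argument.
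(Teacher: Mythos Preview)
The paper does not provide its own proof of this lemma; it simply cites it as Proposition~5.9 in Da~Prato--Zabczyk~\cite{DaP-Z}. Your argument is correct and is essentially the standard one: the H\"older estimate together with the smoothing bound \eqref{eq:E1-E-semigroup} gives the uniform $E$-norm bound (the hypothesis $p>1/(\alpha-r)$ being exactly what makes the kernel $u^{q(\alpha-1-r)}$ integrable), and continuity in time follows from the splitting into a tail piece controlled by the same H\"older bound and a translation piece handled by dominated convergence for continuous $\varphi$, then extended by density.
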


The following lemma is a consequence of the fact that for any $\alpha \in
(0,1)$ and $\sigma <t$,
\begin{equation*}
\int_{\sigma }^{t}(t-s)^{\alpha -1}(s-\sigma )^{-\alpha }ds=\frac{\pi }{\sin
(\alpha \pi )}.
\end{equation*}%
The lemma shows that under Assumptions \ref{assum:semigroup} and \ref%
{assum:G-sufficient}, Assumption \ref{assum:G-new}(a) is satisfied.

\begin{lemma}[Stochastic factorization]
\label{lem:stoch-fact} For any $\{\mathcal{F}_t\}$-progressively measurable $%
E$-valued process $\varphi\in L^\infty([0,T]:E)$ satisfying ${\mathbb{E}}%
(\gamma^{-1}(|\varphi|_{L^\infty([0,T]:E)}))^p<\infty$, the stochastic
integral $Z(\varphi)$ given by \eqref{eq:Z-varphi-def} is a well-defined $%
\mathcal{F}_t$-progressively measurable process in $C([0,T]:E)$ and
\begin{equation}
Z(\varphi)(t) = \frac{\sin(\alpha \pi)}{\pi} \mathscr{F}_\alpha(Z_\alpha(%
\varphi))(t)=\frac{\sin(\alpha\pi)}{\pi} \int_0^t (t-s)^{\alpha-1}
S(t-s)Z_\alpha(\varphi)(s)ds
\end{equation}
where $Z_\alpha(\varphi)$ is as in \eqref{eq:eqzalvarp}. Furthermore, $%
Z(\varphi) \in L^p(\Omega :C([0,T]:E))$ and for any $t \in [0,T]$,
\begin{equation}  \label{eq:Z-Lp-bound}
{\mathbb{E}}|Z(\varphi)|_{C([0,t]:E)}^p \leq \frac{\|\mathscr{F}_\alpha\|^p
}{\pi^p}|\zeta|_{L^1([0,T]:\mathbb{R})}^{\frac{p}{2}}{\mathbb{E}}\int_0^t
(\gamma^{-1}(|\varphi(s)|_{E}))^pds
\end{equation}
where $\|\mathscr{F}_\alpha\| \doteq \|\mathscr{F}_\alpha\|_{\mathscr{L}%
(L^p([0,T]:E_1),C([0,T]:E))}$ denotes the operator norm of $\mathscr{F}%
_\alpha$. Consequently, Assumption \ref{assum:G-new}(a) is satisfied.
\end{lemma}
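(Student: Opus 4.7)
The plan is the classical stochastic factorization of Da Prato--Kwapień--Zabczyk. The Beta-integral identity $\int_\sigma^t (t-s)^{\alpha-1}(s-\sigma)^{-\alpha}\,ds = \pi/\sin(\alpha\pi)$ displayed just before the lemma will allow us to rewrite the stochastic convolution $Z(\varphi)(t)$ as the deterministic convolution $\mathscr{F}_\alpha$ applied to the auxiliary stochastic integral $Z_\alpha(\varphi)$; once this representation is in hand, pathwise $E$-continuity and the moment bound follow immediately from Lemma \ref{lem:F-alpha-regularity}.

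First I would bound $Z_\alpha(\varphi)$ in $L^p(\Omega\times[0,T]:E_1)$. Since $\alpha > r$ yields $p > 1/(\alpha-r) > 2$, so $p/2 > 1$, Jensen's inequality applied with the probability measure $\zeta(t-s)\,ds/|\zeta|_{L^1([0,T]:\mathbb{R})}$ converts \eqref{eq:G-suff-bound} into
\begin{equation*}
\E|Z_\alpha(\varphi)(t)|_{E_1}^p \leq |\zeta|_{L^1([0,T]:\mathbb{R})}^{p/2-1}\,\E\!\int_0^t \zeta(t-s)(\gamma^{-1}(|\varphi(s)|_E))^p\,ds,
\end{equation*}
and integrating over $t$ and applying Fubini gives $\int_0^T \E|Z_\alpha(\varphi)(t)|_{E_1}^p\,dt \leq |\zeta|_{L^1([0,T]:\mathbb{R})}^{p/2}\,\E\int_0^T (\gamma^{-1}(|\varphi(s)|_E))^p\,ds < \infty$.

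Next I would establish the factorization identity by a stochastic Fubini argument. Since $E$ is separable, Lemma \ref{lem:separable-dual-rv} reduces the problem to checking the identity after pairing with an arbitrary $x^\star\in E^\star$. Using Definition \ref{def:stoch-int} and the semigroup property $S(t-s)S(s-\sigma)=S(t-\sigma)$,
\begin{equation*}
\left\langle \mathscr{F}_\alpha(Z_\alpha(\varphi))(t),x^\star\right\rangle_{E,E^\star} = \int_0^t\!\int_\sigma^t (t-s)^{\alpha-1}(s-\sigma)^{-\alpha}\,ds\,\left\langle dw(\sigma),G^\star(\sigma,\varphi(\sigma))S^\star(t-\sigma)x^\star\right\rangle_H
\end{equation*}
after a classical scalar stochastic Fubini; the Beta identity collapses the inner $ds$-integral to $\pi/\sin(\alpha\pi)$, producing $(\pi/\sin(\alpha\pi))\langle Z(\varphi)(t),x^\star\rangle_{E,E^\star}$. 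Lemma \ref{lem:F-alpha-regularity} guarantees $\mathscr{F}_\alpha(Z_\alpha(\varphi))\in L^p(\Omega:C([0,T]:E))$, so this defines the desired continuous modification of $Z(\varphi)$; progressive measurability is inherited because $\mathscr{F}_\alpha$ is deterministic, and combining the operator-norm bound from Lemma \ref{lem:F-alpha-regularity} with the first step yields \eqref{eq:Z-Lp-bound}, from which Assumption \ref{assum:G-new}(a) is immediate.

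The main technical obstacle will be justifying the stochastic Fubini step, since the $E$-valued stochastic integral is defined only weakly through Definition \ref{def:stoch-int}. Testing against a fixed $x^\star$ reduces the problem to a scalar It\^o interchange whose hypotheses---square integrability of the kernel against the It\^o process---are precisely what the first step, together with the integrability of the Beta kernel, provide.
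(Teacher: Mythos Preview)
Your proposal is correct and follows essentially the same route as the paper: bound $Z_\alpha(\varphi)$ in $L^p(\Omega\times[0,T]:E_1)$, establish the factorization identity by pairing with $x^\star\in E^\star$ and invoking a scalar stochastic Fubini together with the Beta identity, then read off continuity and the moment estimate from Lemma~\ref{lem:F-alpha-regularity}. The only substantive place where the paper is more explicit is the Fubini justification, where it rewrites the square-integrability condition via the It\^o isometry as $\E\langle S(t-\sigma)Z_\alpha(\varphi)(\sigma),x^\star\rangle^2$ and then controls it through \eqref{eq:E1-E-semigroup} and the $L^p$ bound on $Z_\alpha(\varphi)$; your sketch gestures at this but would need that step filled in.
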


\begin{proof}
  Let $\varphi$ be as in the statement of the lemma and
$\tilde{Z} \doteq \frac{\sin(\pi\alpha)}{\pi} \mathscr{F}_\alpha(Z_\alpha(\varphi))$. By Assumption \ref{assum:G-sufficient}
(see \eqref{eq:G-suff-bound}) and Lemma \ref{lem:F-alpha-regularity}, for $t \in [0,T]$,
  \[\pi^p\E|\tilde{Z}|_{C([0,t]:E)}^p \leq \|\mathscr{F}_\alpha\|^p\, \E\int_0^t |Z_\alpha(\varphi)(\sigma)|_{E_1}^pd\sigma \leq  \|\mathscr{F}_\alpha\|^p \E \int_0^t \left(\int_0^\sigma \zeta(\sigma - s) (\gamma^{-1}(|\varphi(s)|_E))^2 ds \right)^{\frac{p}{2}}d\sigma.\]
  By Young's inequality for convolutions,
  \[\E|\tilde{Z}|_{C([0,t]:E)}^p \leq \frac{\|\mathscr{F}_\alpha\|^p}{\pi^p}\,|\zeta|_{L^1([0,T]:\mathbb{R})}^{\frac{p}{2}} \E\int_0^t(\gamma^{-1}(|\varphi(s)|_{E}))^pds. \]

   By Definition \ref{def:stoch-int}, in order to show that the stochastic integral in \eqref{eq:Z-varphi-def} is well defined, it suffices to show that
for any $x^\star \in E^\star$ and $t \in [0,T]$,
  \begin{equation} \int_0^t \left<dw(s), [S(t-s)G(s,\varphi(s))]^{\star} x^\star\right>_{H} =\left<\tilde{Z}(t),x^\star \right>_{E,E^\star} . \label{eq:eq1058}\end{equation}
 This will in fact show that $Z(\varphi)=\tilde Z$ and  in view of the above bounds on $\E|\tilde{Z}|_{C([0,T]:E)}^p$ complete the proof of the lemma.
Using the semigroup property of $S$ and the stochastic Fubini Theorem \cite[Theorem 4.33]{DaP-Z}
\begin{align*}
& \int_{0}^{t}\left\langle (t-s)^{\alpha -1}S(t-s)Z_{\alpha }(\varphi
)(s),x^{\ast }\right\rangle _{E,E^{\ast }}ds \\
& \quad =\int_{0}^{t}\int_{0}^{s}\left\langle (t-s)^{\alpha
-1}S(t-s)(s-\sigma )^{-\alpha }S(s-\sigma )G(\sigma ,\varphi (\sigma
))dw(\sigma ),x^{\ast }\right\rangle _{E,E^{\ast }}ds \\
& \quad =\int_{0}^{t}\int_{\sigma }^{t}(t-s)^{\alpha -1}(s-\sigma )^{-\alpha
}ds\left\langle S(t-\sigma )G(\sigma ,\varphi (\sigma ))dw(\sigma ),x^{\ast
}\right\rangle _{E,E^{\ast }} \\
& \quad =\frac{\pi }{\sin (\alpha \pi )}\int_{0}^{t}\left\langle S(t-\sigma
)G(\sigma ,\varphi (\sigma ))dw(\sigma ),x^{\ast }\right\rangle _{E,E^{\ast
}},
\end{align*}%
and thus \eqref{eq:eq1058} follows.
To justify the use of the Fubini Theorem we will show that
  \begin{equation} \label{eq:Fubini-cond}
    \int_0^t (t-\sigma)^{\alpha-1} \left(\E \int_0^\sigma (\sigma-s)^{-2\alpha} |[S(t-s)G(s,\varphi(s))]^\star x^\star|_{H}^2ds  \right)^{1/2}d\sigma<\infty.
  \end{equation}
  The last inequality can be verified by Assumption \ref{assum:G-sufficient}. Indeed, by the Ito isometry, \eqref{eq:E1-E-semigroup}, and Jensen's inequality,
  \begin{align*}
    &\E \int_0^\sigma (\sigma-s)^{-2\alpha}|(S(t-s)G(s,\varphi(s)))^\star x^\star|_H^2 ds
    =\E\left<S(t-\sigma)Z_\alpha(\varphi)(\sigma),x^\star\right>_{E,E^\star}^2 \\
    &\leq \E |S(t-\sigma)Z_\alpha(\varphi)(\sigma)|_E^2 |x^{\star}|_{E^\star}^2
    \leq M^2(t-\sigma)^{-2r}\E |Z_\alpha(\varphi)(\sigma)|_{E_1}^2 |x^{\star}|_{E^\star}^2\\
    &\leq M^2 (t-\sigma)^{-2r} \left(\E |Z_\alpha(\varphi)(\sigma)|_{E_1}^p\right)^{\frac{2}{p}} | x^\star|_{E^\star}^2.
  \end{align*}
  By Assumption \ref{assum:G-sufficient}, the above expression is bounded above by
  \[ M^2(t-\sigma)^{-2r}|x^\star|_{E^\star}^2 \left( \E \left[\int_0^\sigma \zeta(\sigma-s) (\gamma^{-1}(|\varphi(s)|_E))^2 ds\right]^{p/2}\right)^{2/p}.\]
  Thus, by  H\"older's inequality, \eqref{eq:Fubini-cond} is now bounded by
  \begin{align*}
    & \kappa_1 |x^\star|_{E^\star} \int_0^t (t-\sigma)^{\alpha-r-1}
    \left(\E\left[\int_0^\sigma \zeta(\sigma -s) (\gamma^{-1}(|\varphi(s)|_E))^2 ds\right]^{\frac{p}{2}} \right)^{1/p} d\sigma\\
    &\leq  \kappa_1 |x^\star|_{E^\star} \left(\int_0^t (t-\sigma)^{\frac{(\alpha-r-1)p}{p-1}}d\sigma \right)^{\frac{p-1}{p}}
\left(\E \left( \int_0^t\left[\int_0^\sigma \zeta(\sigma-s) (\gamma^{-1}(|\varphi(s)|_E))^2ds\right ]^{\frac{p}{2}} d\sigma\right)\right)^{\frac{1}{p}}.
  \end{align*}
  By Assumption \ref{assum:G-sufficient}, $p>\frac{1}{\alpha-r}$. Therefore, $\frac{(\alpha-r-1)p}{p-1}>-1$ and so the first integral is finite.
  Thus, by Young's inequality for convolutions, the last display is bounded above by
  \[ \kappa_2|\zeta|_{L^1([0,T]: \mathbb{R})}^{\frac{1}{2}} \left(\E\int_0^t  (\gamma^{-1}(|\varphi(s)|_{E}))^pds\right)^{\frac{1}{p}}.\]
  This proves \eqref{eq:Fubini-cond}
and, as discussed earlier, completes the proof of the lemma.
  \end{proof}

We now show that under Assumption \ref{assum:G-sufficient}, Assumption \ref%
{assum:G-new} (b) is satisfied.

\begin{lemma}
\label{lem:stoch-conv-Lipschitz} For any $\{\mathcal{F}_{t}\}$-progressively
measurable $\varphi ,\psi \in L^{p}(\Omega :L^{\infty }([0,T]:E))$ and $t\in
\lbrack 0,T]$,
\begin{equation*}
{\mathbb{E}}\left\vert Z(\varphi )-Z(\psi )\right\vert _{C([0,t]:E)}^{p}\leq
\frac{\Vert \mathscr{F}_{\alpha }\Vert ^{p}}{\pi ^{p}}|\zeta |_{L^{1}([0,T]:%
\mathbb{R})}^{\frac{p}{2}}{\mathbb{E}}\int_{0}^{t}|\varphi (s)-\psi
(s)|_{E}^{{p}}ds.
\end{equation*}%
Consequently, Assumption \ref{assum:G-new}(b) is satisfied.
\end{lemma}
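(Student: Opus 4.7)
The plan is to mimic, step for step, the stochastic factorization argument used in the proof of Lemma \ref{lem:stoch-fact}, now exploiting the \emph{linearity} of the deterministic convolution operator $\mathscr{F}_\alpha$ to handle the difference $Z(\varphi)-Z(\psi)$. First, note that for $\varphi,\psi$ as in the statement, the hypothesis $\varphi,\psi \in L^p(\Omega:L^\infty([0,T]:E))$ combined with the linear growth bound \eqref{eq:gamma-inv} on $\gamma^{-1}$ gives ${\mathbb{E}}(\gamma^{-1}(|\varphi|_{L^\infty([0,T]:E)}))^p<\infty$, and similarly for $\psi$, so Lemma \ref{lem:stoch-fact} applies to each and yields
\[
Z(\varphi)(t)-Z(\psi)(t) = \frac{\sin(\alpha \pi)}{\pi}\,\mathscr{F}_\alpha\bigl(Z_\alpha(\varphi)-Z_\alpha(\psi)\bigr)(t),\quad t\in[0,T],
\]
where the right-hand side is well-defined as a difference of $C([0,T]:E)$-valued random variables and the identity follows from the linearity of $\mathscr{F}_\alpha$.

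Second, I would apply the bounded linear operator estimate from Lemma \ref{lem:F-alpha-regularity} to the $E_1$-valued process $Z_\alpha(\varphi)-Z_\alpha(\psi)$, obtaining pathwise
\[
\pi^p \bigl|Z(\varphi)-Z(\psi)\bigr|_{C([0,t]:E)}^p \leq \|\mathscr{F}_\alpha\|^p \int_0^t \bigl|Z_\alpha(\varphi)(s)-Z_\alpha(\psi)(s)\bigr|_{E_1}^p\,ds.
\]
Taking expectations, using Fubini, and invoking the Lipschitz bound \eqref{eq:G-suff-Lip} from Assumption \ref{assum:G-sufficient} yields
\[
\pi^p\,{\mathbb{E}}\bigl|Z(\varphi)-Z(\psi)\bigr|_{C([0,t]:E)}^p \leq \|\mathscr{F}_\alpha\|^p\,{\mathbb{E}}\int_0^t\left(\int_0^s \zeta(s-r)\,|\varphi(r)-\psi(r)|_E^2\,dr\right)^{p/2}ds.
\]

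Third, applying Young's inequality for convolutions pathwise in $\omega$ with exponent $p/2\geq 1$, regarding the inner integral as the convolution of $\zeta \in L^1([0,T]:\mathbb{R})$ with $|\varphi-\psi|_E^2$, I would bound the right-hand side by
\[
\|\mathscr{F}_\alpha\|^p\,|\zeta|_{L^1([0,T]:\mathbb{R})}^{p/2}\,{\mathbb{E}}\int_0^t |\varphi(s)-\psi(s)|_E^p\,ds,
\]
yielding the claimed inequality, and hence verifying Assumption \ref{assum:G-new}(b) with $C = \pi^{-p}\|\mathscr{F}_\alpha\|^p |\zeta|_{L^1([0,T]:\mathbb{R})}^{p/2}$.

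Since each of the three steps is a direct transcription of the corresponding step in Lemma \ref{lem:stoch-fact} (with $|\varphi(s)-\psi(s)|_E^2$ replacing $(\gamma^{-1}(|\varphi(s)|_E))^2$), no real obstacle is anticipated; the only item requiring explicit justification is that the integrability hypothesis on $\varphi,\psi$ is indeed strong enough to invoke the factorization identity of Lemma \ref{lem:stoch-fact} for both processes, which was addressed at the outset via \eqref{eq:gamma-inv}.
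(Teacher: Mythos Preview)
Your proposal is correct and follows essentially the same approach as the paper: factorize $Z(\varphi)-Z(\psi)$ via Lemma~\ref{lem:stoch-fact}, apply the operator bound on $\mathscr{F}_\alpha$ from Lemma~\ref{lem:F-alpha-regularity}, invoke the Lipschitz estimate \eqref{eq:G-suff-Lip}, and finish with Young's convolution inequality. Your explicit check via \eqref{eq:gamma-inv} that $\varphi,\psi\in L^p(\Omega:L^\infty([0,T]:E))$ places both processes in the domain where Lemma~\ref{lem:stoch-fact} applies is a detail the paper leaves implicit.
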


\begin{proof}
  By the stochastic factorization formula (Lemma \ref{lem:stoch-fact}),
  \begin{equation}Z(\varphi) - Z(\psi) = \frac{\sin(\pi\alpha)}{\pi} \mathscr{F}_\alpha(Z_\alpha(\varphi) - Z_\alpha(\psi)),\end{equation}
  where $Z_\alpha$ is defined in \eqref{eq:eqzalvarp}.
   By Lemma \ref{lem:F-alpha-regularity},
  \[\E|\mathscr{F}_\alpha(Z_\alpha(\varphi) - Z_\alpha(\psi))|_{C([0,t]:E)}^p \leq \|\mathscr{F}_\alpha\|^p \E\int_0^t|Z_\alpha(\varphi)(\sigma) - Z_\alpha(\psi)(\sigma)|_{E_1}^p d\sigma,\]
where we recall that $\|\mathscr{F}_\alpha\|$ is the norm of the linear operator $\mathscr{F}_\alpha$.
   Combining the above display with \eqref{eq:G-suff-Lip},
  \begin{align*}
	\E|\mathscr{F}_\alpha(Z_\alpha(\varphi) - Z_\alpha(\psi))|_{C([0,t]:E)}^p &\leq \|\mathscr{F}_\alpha\|^p \E \int_0^t \left(\int_0^\sigma \zeta(\sigma-s)|\varphi(s)-\psi(s)|_E^2ds \right)^{p/2}d\sigma\\
	& \leq \|\mathscr{F}_\alpha\|^p |\zeta|_{L^1([0,T]:\mathbb{R})}^{\frac{p}{2}} {\mathbb{E}}\int_{0}^{t}|\varphi (s)-\psi
(s)|_{E}^{{p}}ds,\end{align*}
  where the last inequality once more uses  Young's inequality for convolutions.
This completes the proof.
\end{proof}
{\
The following lemma shows that the assumption that the stochastic integral
in \eqref{eq:eqzalvarp} is well-defined in fact says that certain Lebesgue
integrals are well-defined in the Pettis sense. }

\begin{lemma}
\label{lem:defined-from-Girsanov} For any $t\in \lbrack 0,T]$, $\varphi \in
L^{\infty }([0,T]:E)$ and $u\in L^{2}([0,T]:H)$, the integral
\begin{equation}
L_{t}^{\alpha }(\varphi )u\doteq \int_{0}^{t}(t-s)^{-\alpha
}S(t-s)G(s,\varphi (s))u(s)ds.  \label{eq:L-alpha-t-def}
\end{equation}%
is well-defined as an element of $E_{1}$ in the Pettis sense, namely $L_{t}^{\alpha }(\varphi )u$ is the unique element of $E_{1}$  that for
every $x^{\star }\in E_{1}^{\star }$ satisfies
\begin{equation*}
\left\langle L_t^\alpha(\varphi)u,x^{\star }\right\rangle _{E_{1},E_{1}^{\star
}}=\int_{0}^{t}\left\langle (t-s)^{-\alpha }S(t-s)G(s,\varphi
(s))u(s),x^{\star }\right\rangle _{E_{1},E_{1}^{\star }}ds.
\end{equation*}
\end{lemma}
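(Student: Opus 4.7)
The plan is to realize $L_t^\alpha(\varphi)u$ as the $E_1$-valued Bochner expectation of the random variable $Z_\alpha(\varphi)(t)$ under a Girsanov-shifted probability measure. First I would check that the candidate weak functional
$$L^\star(x^\star) \doteq \int_0^t \bigl\langle (t-s)^{-\alpha} S(t-s) G(s,\varphi(s)) u(s), x^\star\bigr\rangle_{E_1,E_1^\star}\, ds$$
is a bounded linear functional on $E_1^\star$. By Definition \ref{def:stoch-int} applied to $Z_\alpha(\varphi)(t)$ together with the It\^o isometry, the map $s \mapsto (t-s)^{-\alpha}[S(t-s)G(s,\varphi(s))]^\star x^\star$ lies in $L^2([0,t]:H)$ with squared norm $\E|\langle Z_\alpha(\varphi)(t), x^\star\rangle|^2 \le |x^\star|_{E_1^\star}^2\,\E|Z_\alpha(\varphi)(t)|_{E_1}^2$, which is finite by Assumption \ref{assum:G-sufficient}. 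Cauchy--Schwarz with $u \in L^2([0,t]:H)$ then yields $|L^\star(x^\star)| \le C|x^\star|_{E_1^\star}$, so the defining scalar integral of the lemma makes sense for every $x^\star \in E_1^\star$.

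Next, I would introduce the probability measure $\mathbb{Q}$ on $(\Omega,\mathcal{F})$ with Radon--Nikodym density
$$D \doteq \exp\!\left(\int_0^T \langle u(s), dw(s)\rangle_H - \tfrac12 \int_0^T |u(s)|_H^2\,ds\right).$$
Because $u$ is a deterministic element of $L^2([0,T]:H)$, $D$ is lognormal under $\Pro$ and lies in $L^q(\Omega,\Pro)$ for every $q \in [1,\infty)$. Combined with $Z_\alpha(\varphi)(t) \in L^p(\Omega;E_1)$ (obtained from $|\varphi|_{L^\infty([0,T]:E)}<\infty$ and the bound in Assumption \ref{assum:G-sufficient}), H\"older's inequality gives $D\cdot Z_\alpha(\varphi)(t) \in L^1(\Omega,\Pro;E_1)$, so the Bochner expectation
$$\eta \doteq \E_\Pro[D\cdot Z_\alpha(\varphi)(t)] = \E_\mathbb{Q}[Z_\alpha(\varphi)(t)]$$
is a well-defined element of $E_1$.

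Finally I would verify that $\eta$ represents $L^\star$. Since Bochner expectation commutes with the duality pairing, $\langle \eta, x^\star\rangle = \E_\mathbb{Q}\langle Z_\alpha(\varphi)(t), x^\star\rangle$. By Girsanov's theorem, under $\mathbb{Q}$ the process $\tilde w(s) \doteq w(s) - \int_0^s u(r)\,dr$ is an $H$-cylindrical Brownian motion, and setting $g_{x^\star}(s) \doteq (t-s)^{-\alpha}[S(t-s)G(s,\varphi(s))]^\star x^\star$ one obtains
$$\langle Z_\alpha(\varphi)(t), x^\star\rangle = \int_0^t \langle g_{x^\star}(s), d\tilde w(s)\rangle_H + \int_0^t \langle g_{x^\star}(s), u(s)\rangle_H\, ds.$$
The first integral on the right has mean zero under $\mathbb{Q}$ because its integrand is deterministic, so $\langle \eta, x^\star\rangle = \int_0^t \langle g_{x^\star}(s), u(s)\rangle_H\, ds = L^\star(x^\star)$. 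This is precisely the defining property of the Pettis integral $L_t^\alpha(\varphi)u$, and uniqueness in $E_1$ follows from Hahn--Banach. The main obstacle is justifying the Girsanov manipulation at the level of an $E_1$-valued integrand and the commutation of Bochner expectation with $x^\star$; both rely essentially on the $L^p(\Omega;E_1)$ bound from Assumption \ref{assum:G-sufficient} combined with the all-moment integrability of the lognormal density $D$.
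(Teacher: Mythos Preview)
Your proof is correct and rests on the same Girsanov idea as the paper's, but the implementation differs. The paper does not take an expectation: it observes that, by Assumption~\ref{assum:G-sufficient} applied once under $\Pro$ and once under the shifted measure $\hat\Pro$ (and then transported back by absolute continuity), both stochastic integrals $\int_0^t(t-s)^{-\alpha}S(t-s)G(s,\varphi(s))\,dw(s)$ and $\int_0^t(t-s)^{-\alpha}S(t-s)G(s,\varphi(s))\,d\hat w(s)$ are $E_1$-valued almost surely, so their pathwise difference $\tilde L$ is an $E_1$-valued random variable whose pairing with any $x^\star$ reduces immediately to the deterministic scalar integral; since that scalar is non-random, $\tilde L$ is a.s.\ constant and equals the desired Pettis integral. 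This subtraction route sidesteps your H\"older check that $D\cdot Z_\alpha(\varphi)(t)\in L^1(\Omega;E_1)$ and any strong-measurability issue for the Bochner expectation. Your route, on the other hand, identifies $L_t^\alpha(\varphi)u$ explicitly as $\E_{\mathbb{Q}}[Z_\alpha(\varphi)(t)]$, which is conceptually clean and exploits the deterministic nature of $u$ through the all-moments bound on the lognormal density.
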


\begin{proof}
  By Girsanov theorem (cf. \cite[Theorem 10.14]{DaP-Z}),
  \[\hat{w}(t) \doteq w(t) - \int_0^t u(s)ds\]
  is a cylindrical Wiener process on $H$ under the measure $\hat\Pro$ given by
  \[\frac{d\hat{\Pro}}{d\Pro} \doteq \exp \left(\int_0^T \left<u(s),dw(s)\right>_H - \frac{1}{2}\int_0^T |u(s)|_H^2 ds \right).\]
  Assumption \ref{assum:G-sufficient} guarantees that both
  \[\int_0^t (t-s)^{-\alpha}S(t-s)G(s,\varphi(s))dw(s) \text{ and } \int_0^t (t-s)^{-\alpha}S(t-s)G(s,\varphi(s))d\hat w(s)\]
  are $E_1$-valued random variables with probability one under the measures $\Pro$ and $\hat\Pro$ respectively. Since $\Pro$ and $\hat\Pro$ are mutually absolutely continuous, both stochastic integrals are $E_1$-valued with probability one with respect to either measure.

  Let
  \[\tilde{L} \doteq  \int_0^t (t-s)^{-\alpha}S(t-s)G(s,\varphi(s))dw(s) - \int_0^t (t-s)^{-\alpha}S(t-s)G(s,\varphi(s))d\hat w(s).\]
  Then $\tilde{L}$ is $E_1$ valued. It suffices to show that for any $x^\star \in E_1^\star$,
  \begin{equation} \label{eq:Pettis-L}
    \left<\tilde{L},x^\star\right>_{E_1,E_1^\star} = \int_0^t \left<(t-s)^{-\alpha}S(t-s)G(s,\varphi(s))u(s),x^\star\right>_{E_1,E_1^\star}ds,
  \end{equation}
  As a consequence of Definition 2.3 and the definition of $\hat w$,
  \begin{align*}
    \left<\tilde{L},x^\star\right>_{E_1,E_1^\star} &= \int_0^t \left<dw(s), (t-s)^{-\alpha}[S(t-s)G(s,\varphi(s))]^\star x^\star \right>_H \\
    &\quad- \int_0^t \left< d\hat w(s), (t-s)^{-\alpha}[S(t-s)G(s,\varphi(s))]^\star x^\star \right>_H\\
    &= \int_0^t  \left< u(s), (t-s)^{-\alpha}[S(t-s)G(s,\varphi(s))]^\star x^\star\right>_H ds\\
    &= \int_0^t \left< (t-s)^{-\alpha}S(t-s)G(s,\varphi(s))u(s), x^\star\right>_{E,E^\star}ds.
  \end{align*}
This proves \eqref{eq:Pettis-L} and completes the proof.
\end{proof}

The following lemma shows that under Assumption \ref{assum:G-sufficient},
Assumption \ref{assum:G-new} (d) and (e) are satisfied.

\begin{lemma}
\label{lem:control-term-Lipschitz} For $\varphi \in L^{\infty }([0,T]:E)$
and $u\in L^{2}([0,T]:H)$, $\mathcal{L}(\varphi )u$ from %
\eqref{eq:mathcal-L-def} is well-defined and in $C([0,T]:E)$. Also, for any $%
t\in \lbrack 0,T]$,
\begin{equation}
\left\vert \mathcal{L}(\varphi )u\right\vert _{C([0,t]:E)}\leq \frac{\Vert %
\mathscr{F}_{\alpha }\Vert }{\pi }|\zeta |_{L^{1}([0,T]:\mathbb{R})}^{\frac{1%
}{2}}\left( \int_{0}^{t}(\gamma ^{-1}(|\varphi (s)|_{E}))^{p}ds\right) ^{%
\frac{1}{p}}|u|_{L^{2}([0,t]:H)}  \label{eq:determ-int-bound}
\end{equation}%
and for $\varphi ,\psi \in L^{\infty }([0,T]:E)$ and $u \in L^2([0,T]:H)$.,
\begin{equation}
\left\vert \mathcal{L}(\varphi )u-\mathcal{L}(\psi )u\right\vert
_{C([0,t]:E)}\leq \frac{\Vert \mathscr{F}_{\alpha }\Vert }{\pi }|\zeta
|_{L^{1}([0,T]:\mathbb{R})}^{\frac{1}{2}}|\varphi -\psi
|_{L^{p}([0,t]:E)}|u|_{L^{2}([0,T]:H)}.  \label{eq:control-conv-bound}
\end{equation}%
Consequently Assumption \ref{assum:G-new}(d) and (e) are satisfied.
\end{lemma}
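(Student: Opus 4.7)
The approach mirrors the stochastic factorization of Lemma~\ref{lem:stoch-fact}. The plan is to show that
\begin{equation*}
\mathcal{L}(\varphi)u(t) = \frac{\sin(\alpha\pi)}{\pi}\,\mathscr{F}_\alpha\bigl(L^\alpha_\cdot(\varphi)u\bigr)(t),
\end{equation*}
where $L^\alpha_\sigma(\varphi)u$ is the Pettis integral introduced in Lemma~\ref{lem:defined-from-Girsanov}, and then apply the regularizing property of $\mathscr{F}_\alpha$ (Lemma~\ref{lem:F-alpha-regularity}) together with a pointwise bound on $|L^\alpha_\sigma(\varphi)u|_{E_1}$.

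The factorization identity is the deterministic analogue of Lemma~\ref{lem:stoch-fact}, and I would derive it by inserting the identity $\frac{\sin(\alpha\pi)}{\pi}\int_s^t(t-\sigma)^{\alpha-1}(\sigma-s)^{-\alpha}d\sigma=1$ into the definition \eqref{eq:mathcal-L-def} of $\mathcal{L}(\varphi)u$, using the semigroup property $S(t-s)=S(t-\sigma)S(\sigma-s)$, and switching the order of integration via Fubini for Pettis integrals; the switch is legal once I know that $\sigma\mapsto L^\alpha_\sigma(\varphi)u$ lies in $L^p([0,T]:E_1)$, which will come out of the next step.

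The main obstacle is obtaining a deterministic bound on $|L^\alpha_\sigma(\varphi)u|_{E_1}$, since Assumption~\ref{assum:G-sufficient} only supplies a moment bound for the stochastic object $Z_\alpha(\varphi)$. To bridge this I would use duality: for $x^\star\in E_1^\star$ with $|x^\star|_{E_1^\star}\le 1$, Lemma~\ref{lem:defined-from-Girsanov} yields
\begin{equation*}
\bigl\langle L^\alpha_\sigma(\varphi)u,x^\star\bigr\rangle_{E_1,E_1^\star} = \int_0^\sigma \bigl\langle u(s),(\sigma-s)^{-\alpha}(S(\sigma-s)G(s,\varphi(s)))^\star x^\star\bigr\rangle_H\,ds,
\end{equation*}
and Cauchy--Schwarz together with the It\^o isometry give
\begin{equation*}
\int_0^\sigma (\sigma-s)^{-2\alpha}\bigl|(S(\sigma-s)G(s,\varphi(s)))^\star x^\star\bigr|_H^2\,ds = \mathbb{E}\bigl|\langle Z_\alpha(\varphi)(\sigma),x^\star\rangle\bigr|^2 \le \mathbb{E}|Z_\alpha(\varphi)(\sigma)|_{E_1}^2.
\end{equation*}
Because $\alpha-r<1/2$ forces $p>1/(\alpha-r)>2$, Jensen's inequality promotes the $p$-th-moment bound \eqref{eq:G-suff-bound} into a second-moment bound, and taking the supremum over $x^\star$ delivers
\begin{equation*}
|L^\alpha_\sigma(\varphi)u|_{E_1}^p \le |u|_{L^2([0,\sigma]:H)}^p\left(\int_0^\sigma \zeta(\sigma-s)(\gamma^{-1}(|\varphi(s)|_E))^2\,ds\right)^{p/2}.
\end{equation*}

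To finish, by Lemma~\ref{lem:F-alpha-regularity} and $\sin(\alpha\pi)\le 1$,
\begin{equation*}
|\mathcal{L}(\varphi)u|_{C([0,t]:E)}^p \le \frac{\|\mathscr{F}_\alpha\|^p}{\pi^p}\int_0^t|L^\alpha_\sigma(\varphi)u|_{E_1}^p\,d\sigma;
\end{equation*}
substituting the pointwise bound, pulling $|u|_{L^2([0,t]:H)}^p$ out of the outer integral, and applying Young's inequality for convolutions exactly as in the proof of Lemma~\ref{lem:stoch-fact} yields \eqref{eq:determ-int-bound} after taking $p$-th roots. Continuity of $\mathcal{L}(\varphi)u$ as a $C([0,T]:E)$-valued function is inherited from the mapping property of $\mathscr{F}_\alpha$. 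The Lipschitz estimate \eqref{eq:control-conv-bound} is then immediate: the identical chain of inequalities applied to $L^\alpha_\sigma(\varphi)u-L^\alpha_\sigma(\psi)u$, using \eqref{eq:G-suff-Lip} in place of \eqref{eq:G-suff-bound}, produces the required bound.
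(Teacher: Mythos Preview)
Your proposal is correct and follows essentially the same approach as the paper's proof: both use the deterministic factorization $\mathcal{L}(\varphi)u=\frac{\sin(\alpha\pi)}{\pi}\mathscr{F}_\alpha(L^\alpha_\cdot(\varphi)u)$, bound $|L^\alpha_\sigma(\varphi)u|_{E_1}$ via duality by relating the dual pairing to $\mathbb{E}|Z_\alpha(\varphi)(\sigma)|_{E_1}^2$ through the It\^o isometry, invoke Jensen to pass from the $p$-th-moment bound \eqref{eq:G-suff-bound} to a second-moment bound, and finish with Lemma~\ref{lem:F-alpha-regularity} and Young's inequality. The only cosmetic difference is that the paper packages the duality step by explicitly identifying $L^\alpha_t(\varphi)$ as the adjoint of the bounded operator $\tilde L^\alpha_t(\varphi):x^\star\mapsto (t-\cdot)^{-\alpha}[S(t-\cdot)G(\cdot,\varphi(\cdot))]^\star x^\star$, whereas you apply Cauchy--Schwarz directly to the pairing; these are the same argument.
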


\begin{proof}
  Proofs of the two inequalities are similar, so we only prove \eqref{eq:determ-int-bound}. By Assumption \ref{assum:G-sufficient} and Jensen's inequality, for any $\varphi\in L^\infty([0,T]:E)$ the stochastic integral
  $Z_\alpha(\varphi)(t)$
  is well defined as an $E_1$-valued random variable and
  \begin{equation}
	\E\left|Z_\alpha(\varphi)(t) \right|_{E_1}^2 \leq \int_0^t \zeta(t-s) (\gamma^{-1}(|\varphi(s)|_E))^2 ds. \label{eq:eq451}\end{equation}
  For any $x^\star \in E_1^\star$,
  \begin{align*}
    &\left<Z_\alpha(\varphi)(t),x^\star \right>_{E_1,E_1^\star}
     = \int_0^t (t-s)^{-\alpha} \left< dw(s), [S(t-s)G(s,\varphi(s))]^\star x^\star \right>_H
  \end{align*}
  and so
  by \eqref{eq:stoch-int-L2-norm},
  \begin{align*}
    &\E\left<Z_\alpha(\varphi)(t),x^\star \right>_{E_1,E_1^\star}^2
     = \int_0^t (t-s)^{-2\alpha}\Big|[S(t-s)G(s,\varphi(s))]^\star x^\star \Big|_H^2ds.
  \end{align*}
  The above equalities imply
  \begin{align*}
    &\int_0^t (t-s)^{-2\alpha}\left|[S(t-s)G(s,\varphi(s))]^\star x^\star \right|_H^2ds\\
    &\quad \leq\E \left|Z_\alpha(\varphi)(t) \right|_{E_1}^2|x^\star|_{E_1^\star}^2\\
    &\quad \leq |x^\star|_{E_1^\star}^2 \int_0^t \zeta(t-s)(\gamma^{-1}(|\varphi(s)|_E)^2ds,
  \end{align*}
  where the last inequality is from \eqref{eq:eq451}.
Thus $\tilde{L}_t^\alpha(\varphi) : E_1^\star \to L^2([0,t]:H)$ defined by
  \[(\tilde{L}^\alpha_t(\varphi)x^\star)(s) \doteq (t-s)^{-\alpha}[S(t-s)G(s,\varphi(s))]^\star x^\star\]
  is a bounded linear operator with norm bounded by $(\int_0^t \zeta(t-s)(\gamma^{-1}(|\varphi(s)|_E))^2ds)^{1/2}$.

  Let $L^\alpha_t(\varphi): L^2([0,t]:H) \to E_1$  be the linear operator
  defined in \eqref{eq:L-alpha-t-def}.
%
We claim that $L^\alpha_t(\varphi)$ is a bounded operator and $(L^\alpha_t(\varphi))^\star = \tilde{L}^\alpha_t(\varphi)$.
Indeed, for
 $x^\star \in E_1^\star$ and $u \in L^2([0,T]:H)$,
  \begin{align*}
    &\left<u, \tilde{L}^\alpha_t(\varphi)x^\star \right>_{L^2([0,t]:H)}
    =\int_0^t \left<u(s), (t-s)^{-\alpha}[S(t-s)G(s,\varphi(s))]^\star x^\star \right>_Hds\\
    &= \int_0^t\left<(t-s)^{-\alpha}S(t-s) G(s,\varphi(s))u(s), x^\star\right>_{E_1,E_1^\star}ds
    =\left<L^\alpha_t(\varphi)u,x^\star \right>_{E_1,E_1^\star} .
  \end{align*}
 Consequently,
 \begin{equation} \label{eq:L-alpha-t-bound}
   \left\|L^\alpha_t(\varphi) \right\|_{\mathscr{L}(L^2([0,t]:H),E_1)} \leq \left(\int_0^t \zeta(t-s)(\gamma^{-1}(|\varphi(s)|_E))^2 ds\right)^{1/2}.
  \end{equation}

 From \eqref{eq:L-alpha-t-bound} and an application of Young's inequality we see that $t \mapsto L^\alpha_t(\varphi)u \in L^p([0,T]:E_1)$.
A similar argument as in the proof of Lemma \ref{lem:stoch-fact} now shows that the following factorization formula holds.
  \[(\mathcal{L}(\varphi)u)(t) = \int_0^t S(t-s)G(s,\varphi(s))u(s)ds = \frac{\sin(\pi \alpha)}{\pi} \mathscr{F}_\alpha(L^\alpha_\cdot(\varphi)u)(t).\]
In particular, the left side is well defined in $C([0,T]:E)$ and
  by Lemma \ref{lem:F-alpha-regularity} and \eqref{eq:L-alpha-t-bound},
  \begin{align*}
    &\left|\mathcal{L}(\varphi)u\right|_{C([0,t]:E)}
    \leq \frac{\|\mathscr{F}_\alpha\|}{\pi} \left(\int_0^t |L_\sigma^\alpha(\varphi)u|_{E_1}^p d\sigma\right)^{\frac{1}{p}}\\
    &\leq \frac{\|\mathscr{F}_\alpha\|}{\pi}  \left(\int_0^t \left|\int_0^\sigma \zeta(\sigma-s)(\gamma^{-1}(|\varphi(s)|_E))^2 ds \right|^{\frac{p}{2}}|u|_{L^2([0,\sigma]:H)}^{{p}}d\sigma\right)^{\frac{1}{p}},
  \end{align*}
  which is bounded above by the right side of \eqref{eq:determ-int-bound} by Young's inequality for convolutions.
\end{proof}

Finally we prove that Assumption \ref{assum:G-sufficient} implies the
tightness properties in Assumption \ref{assum:G-new} (c) and (f). We begin by
establishing an important property of the convolution mapping $\mathscr{F}%
_{\alpha}$ introduced in \eqref{def:F-alpha}. By Lemma \ref%
{lem:F-alpha-regularity}, $\mathscr{F}_\alpha$ is a bounded linear operator
from $L^p([0,T]:E_1)$ to $C([0,T]:E)$. In the next theorem, we show that the
fact that $S(t)$ is a compact semigroup implies that $\mathscr{F}_\alpha$ is
a compact operator.

\begin{theorem}
\label{thm:F-alpha} The map $\mathscr{F}_\alpha$ from $L^p([0,T]:E_1)$ to $%
C([0,T]:E)$ is a compact linear operator.
\end{theorem}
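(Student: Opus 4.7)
The plan is to show compactness by approximating $\mathscr{F}_\alpha$ in the operator norm by a family of operators $\mathscr{F}_\alpha^\eta$ that truncate the singular part of the kernel near $s=t$, and then verify that each approximant is compact via Arzel\`a--Ascoli. Since the space of compact operators between Banach spaces is closed under operator-norm limits, this yields the result. Concretely, for $\eta \in (0,T)$, set
\begin{equation*}
\mathscr{F}_\alpha^\eta(\varphi)(t) \doteq \mathbbm{1}_{\{t \ge \eta\}}\int_0^{t-\eta}(t-s)^{\alpha-1}S(t-s)\varphi(s)\,ds.
\end{equation*}

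For the operator-norm approximation, the difference $\mathscr{F}_\alpha(\varphi)(t)-\mathscr{F}_\alpha^\eta(\varphi)(t)$ is the integral of $(t-s)^{\alpha-1}S(t-s)\varphi(s)$ over $[(t-\eta)^+,t]$. Using $|S(t-s)|_{\mathscr{L}(E_1,E)} \le M(t-s)^{-r}$ from \eqref{eq:E1-E-semigroup} and H\"older's inequality with conjugate exponent $q=p/(p-1)$, the $E$-norm is bounded by a constant times $\eta^{\alpha-r-1/p}|\varphi|_{L^p([0,T]:E_1)}$. The exponent is strictly positive thanks to the standing hypothesis $p>1/(\alpha-r)$, so $\|\mathscr{F}_\alpha-\mathscr{F}_\alpha^\eta\| \to 0$ as $\eta\downarrow 0$.

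Fixing $\eta>0$ and the closed unit ball $B\subset L^p([0,T]:E_1)$, I would verify Arzel\`a--Ascoli for $\mathscr{F}_\alpha^\eta(B) \subset C([0,T]:E)$. Pointwise relative compactness: for $t \ge \eta$, factor
\begin{equation*}
\mathscr{F}_\alpha^\eta(\varphi)(t) = S(\eta/2)\int_0^{t-\eta}(t-s)^{\alpha-1}S(t-s-\eta/2)\varphi(s)\,ds,
\end{equation*}
where the inner integral is uniformly bounded in $E$ over $\varphi\in B$ (again by H\"older, since $t-s-\eta/2 \ge \eta/2$). Compactness of $S(\eta/2):E\to E$ then shows $\{\mathscr{F}_\alpha^\eta(\varphi)(t):\varphi\in B\}$ is relatively compact in $E$. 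Equicontinuity: for $\eta \le t_1 < t_2 \le T$, split the difference into an integral over $[t_1-\eta,t_2-\eta]$ (controlled by H\"older using $t_2-s \ge \eta/2$, hence $\to 0$ uniformly on $B$) plus an integral over $[0,t_1-\eta]$ of the kernel difference $[(t_2-s)^{\alpha-1}-(t_1-s)^{\alpha-1}]S(t_2-s) + (t_1-s)^{\alpha-1}[S(t_2-s)-S(t_1-s)]$. The first summand is handled by the explicit bound $|(t_1-s)^{\alpha-1}-(t_2-s)^{\alpha-1}|\le (1-\alpha)(t_2-t_1)(t_1-s)^{\alpha-2}$ together with $(t_1-s)\ge \eta$. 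The transition at $t_1 < \eta \le t_2$ is handled using $\mathscr{F}_\alpha^\eta(\varphi)(\eta)=0$ and the preceding estimates.

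The main obstacle will be the second summand of the kernel difference, which requires that $t \mapsto S(t)$ be continuous in the operator norm of $\mathscr{L}(E_1,E)$ on $[\eta/2,T]$. To establish this, I would observe that $S(t_0):E_1\to E$ is compact for each $t_0>0$: using the semigroup property on $E$, $S(t_0)x = S(t_0/2)(S(t_0/2)x)$ for $x \in E_1$, where $S(t_0/2):E_1 \to E$ is bounded (by \eqref{eq:E1-E-semigroup}) and $S(t_0/2):E\to E$ is compact. Combining this with the standard fact that a $C_0$-semigroup converges uniformly on norm-compact subsets of $E$, a uniform estimate $|S(t_1+h)-S(t_1)|_{\mathscr{L}(E_1,E)} \to 0$ for $t_1$ in compact subsets of $(0,\infty)$ follows, yielding the needed equicontinuity. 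Together with Step 1 and the closure property of compact operators under norm limits, this completes the proof.
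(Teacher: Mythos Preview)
Your proof is correct but organized differently from the paper's. The paper applies Arzel\`a--Ascoli directly to the image of the unit ball under $\mathscr{F}_\alpha$ itself: it first shows that the set $K_T=\{\mathscr{F}_\alpha(\varphi)(t):|\varphi|_{L^p}\le 1,\ t\in[0,T]\}$ is relatively compact in $E$ (via the same $S(\delta)$-factorization you use), and then proves equicontinuity by a three-term split $J_1+J_2+J_3$ analogous to yours. You instead truncate the singular part of the kernel, prove operator-norm convergence $\mathscr{F}_\alpha^\eta\to\mathscr{F}_\alpha$, and verify compactness of each $\mathscr{F}_\alpha^\eta$ by Arzel\`a--Ascoli, invoking the closure of compact operators under norm limits. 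Your route buys you a simpler Arzel\`a--Ascoli step (all integrands are bounded away from the singularity, so the H\"older estimates are trivial), at the price of the extra approximation layer.

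One remark on your handling of the semigroup-difference term: you argue via operator-norm continuity of $t\mapsto S(t)$ in $\mathscr{L}(E_1,E)$ on $[\eta/2,T]$, deriving this from compactness of $S(t_0):E_1\to E$ and strong continuity on compacta. This is correct (the uniformity in $t_1$ follows by writing $S(t_1)=S(t_1-\eta/2)S(\eta/2)$ and using commutativity $[S(h)-I]S(t_1-\eta/2)=S(t_1-\eta/2)[S(h)-I]$), but it is a slight detour. The paper handles the same term more directly by observing that it equals $[S(t_2-t_1)-I]\mathscr{F}_\alpha(\varphi)(t_1)$, which lies in $[S(h)-I](K_T)$ for the \emph{fixed} compact set $K_T$ already constructed; strong continuity on $K_T$ then finishes it. You could streamline your argument the same way, since your third term equals $[S(t_2-t_1)-I]\mathscr{F}_\alpha^\eta(\varphi)(t_1)$ and you have already shown $\{\mathscr{F}_\alpha^\eta(\varphi)(t):\varphi\in B,\ t\in[\eta,T]\}$ lies in a single compact set.
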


\begin{proof}
  It suffices to show that $\{\mathscr{F}_\alpha(\varphi): |\varphi|_{L^p([0,T]:E_1)} \leq 1\}$ is a relatively compact subset of $C([0,T]:E)$.
 We proceed in two steps, using an infinite dimensional Arzela-Ascoli argument. First, we show that for any fixed $ t \in[0, T]$, the set $\{\mathscr{F}_\alpha(\varphi)(t): |\varphi|_{L^p([0,T]:E_1)} \leq 1\}\subset E$ is a relatively compact subset of $E$. Then  we show that $\{\mathscr{F}_\alpha(\varphi): |\varphi|_{L^p([0,T]:E_1)} \leq 1\}\subset C([0,T]:E)$ is equicontinuous.
For the first step, let $M$ and $r$ be as in \eqref{eq:E1-E-semigroup} and for $\delta>0$ define
  \[q(\delta) \doteq M\left(\int_0^\delta s^{\frac{p(\alpha-r-1)}{p-1}} ds\right)^{\frac{p-1}{p}}.\]
  This is finite since $p> \frac{1}{\alpha-r}$ and therefore $\frac{p(\alpha-r-1)}{p-1}>-1$.
By  \eqref{eq:E1-E-semigroup}, \eqref{def:F-alpha} and  H\"older's inequality, for any  $\varphi \in L^p([0,T]:E)$,
  \begin{equation} \label{eq:F-alpha-bound}
    \left|\mathscr{F}_\alpha (\varphi) \right|_{C([0,T]:E)} \leq q(T) |\varphi|_{L^p([0,T]:E_1)}.
  \end{equation}
Next, for any $\delta \in (0,t)$,
  \begin{align}\label{eq:F-alpha-decompose}
   \mathscr{F}_\alpha(\varphi)(t) = &S(\delta)\int_0^{t-\delta} (t-s)^{\alpha-1} S(t-s-\delta)\varphi(s)ds\\ \nonumber
   & + \int_{t-\delta}^t (t-s)^{\alpha-1} S(t-s)\varphi(s)ds.
  \end{align}

  By \eqref{eq:E1-E-semigroup}, \eqref{eq:F-alpha-decompose}, and the fact that $(t-s)^{\alpha-r-1} \leq (t-s-\delta)^{\alpha-r-1}$ for $0\le s< t-\delta <t\le T$
\begin{equation}\left|\int_0^{t-\delta}(t-s)^{\alpha-1}S(t-s-\delta)\varphi(s)ds\right|_E \leq q(T) |\varphi|_{L^p([0,T]:E_1)}\label{eq:eq1110}\end{equation}
and by another application of \eqref{eq:E1-E-semigroup} and
 H\"older's inequality,
  \begin{equation}\left| \int_{t-\delta}^t (t-s)^{\alpha - 1} S(t-s)\varphi(s)ds \right|_E \leq q(\delta) |\varphi|_{L^p([0,T]:E_1)}.\label{eq:eq1114}\end{equation}
From the compactness assumption on the semigroup (see Assumption \ref{assum:semigroup}), for every $R>0$,
   $K^{\delta,R} \doteq \{S(\delta)x: |x|_E \leq R\}$ is a relatively compact subset of $E$.
Thus from \eqref{eq:eq1110}
whenever $|\varphi|_{L^p([0,T]:E_1)} \leq 1$,
  \[S(\delta) \int_0^{t-\delta} (t-s)^{\alpha-1} S(t-s-\delta)\varphi(s)ds \in K^{\delta,q(T)}.\]
  From \eqref{eq:F-alpha-decompose} and \eqref{eq:eq1114}  we now see that if
$|\varphi|_{L^p([0,T]:E_1)} \leq 1$ and $0 \leq t \leq T$, then
  \[\dist_E( \mathscr{F}_\alpha(\varphi)(t), K^{\delta,q(T)})\leq q(\delta),\]
  which converges to zero as $\delta \to 0$.
  It now follows that
  \begin{equation} \label{eq:K-alpha-p-T}
    {K}_{T}=\{\mathscr{F}_\alpha(\varphi)(t): |\varphi|_{L^p([0,T]:E_1)}\leq 1, \  0\leq t\leq T\}
  \end{equation}
   is a relatively compact subset of $E$. This completes the first step.

  We now show that $\{\mathscr{F}_\alpha(\varphi):|\varphi|_{L^p([0,T]:E_1)}\leq 1 \}$ is equicontinuous. Note that for $\delta >0$ and $\sigma<t \le \sigma+\delta$,
  \begin{align*}
    \mathscr{F}_\alpha(\varphi)(t) - \mathscr{F}_\alpha(\varphi)(\sigma) &= \int_\sigma^t (t-s)^{\alpha-1}S(t-s)\varphi(s)ds \\&\quad + (S(t-\sigma)-I)\int_0^\sigma(\sigma-s)^{\alpha-1} S(\sigma-s)\varphi(s)ds\\
    &\quad + \int_0^\sigma \left((t-s)^{\alpha-1} - (\sigma-s)^{\alpha -1} \right)S(t-s)\varphi(s)ds \\
    &\doteq J_1 + J_2 + J_3
  \end{align*}
By \eqref{eq:eq1114},
  \[|J_1|_E \leq q(\delta) |\varphi|_{L^p([0,T]:E)} \le q(\delta).\]
  Next, from \eqref{eq:K-alpha-p-T}
  \[|J_2|_E \leq \sup_{x \in K_{T}} |(S(t-\sigma) - I)x|_E \le \sup_{x \in K_{T}, u \in [0,\delta]} |(S(u) - I)x|_E
\doteq \theta_1(\delta),\]
  where $K_{T}$ is the relatively compact set defined in \eqref{eq:K-alpha-p-T}.
  Also, by H\"older's inequality,
  \[|J_3|_E \leq \left(\int_0^\sigma \left( (\sigma-s)^{\alpha-r-1} - (t-s)^{\alpha-r-1}  \right)^{\frac{p}{p-1}}ds \right)^{\frac{p-1}{p}} |\varphi|_{L^p([0,T]:E_1)}. \]
Note that
$$\int_0^\sigma  \left( (\sigma-s)^{\alpha-r-1} - (t-s)^{\alpha-r-1}  \right)^{\frac{p}{p-1}}ds
\le \int_0^T [u^{\alpha-r-1} - (u+\delta)^{\alpha-r-1}]^{\frac{p}{p-1}}du \doteq \theta_2(\delta).$$
Combining the above estimates we have for every $\varphi$ with $|\varphi|_{L^p([0,T]:E_1)}\leq 1$,
$$\sup_{(t,\sigma)\in [0,T]: |t-\sigma|\le \delta}|\mathscr{F}_\alpha(\varphi)(t) - \mathscr{F}_\alpha(\varphi)(\sigma)|
\le q(\delta) + \theta_1(\delta) + (\theta_2(\delta))^{\frac{p-1}{p}}.$$
By definition, $q(\delta) \to 0$ as $\delta \to 0$. Also by strong continuity of the semigroup and compactness of
$K_T$, $\theta_1(\delta)\to 0$ as $\delta \to 0$. Finally $\theta_2(\delta)\to 0$ as $\delta \to 0$ by an application
of dominated convergence theorem. Using these observations in the above display we have the desired equicontinuity.
This completes the proof.
\end{proof}

The following lemma shows that under Assumption \ref{assum:G-sufficient},
Assumption \ref{assum:G-new} (c) and (f) are satisfied.

\begin{lemma}
\label{lem:integral-tightness} Let $\mathcal{L}(\varphi )$ and $Z(\varphi )$
be given by \eqref{eq:mathcal-L-def} and \eqref{eq:Z-varphi-def}
respectively. Then for any $R\in (0,\infty )$, the collection of random
variables
\begin{equation*}
\left\{ Z(\varphi ):\varphi \text{ is }\mathcal{F}_{t}\text{- progressively
measurable and }{\mathbb{E}}(\gamma ^{-1}(|\varphi |_{L^{\infty
}([0,T]:E)}))^{p}\leq R\right\}
\end{equation*}%
is tight in $C([0,T]:E)$ and for any $R\in (0,\infty )$ and $N\in (0,\infty )
$,
\begin{equation*}
\left\{ \mathcal{L}(\varphi )u:|\varphi |_{L^{\infty }([0,T]:E)}\leq R,u\in
\mathcal{S}^{N}\right\}
\end{equation*}%
is a pre-compact subset of $C([0,T]:E)$. Therefore Assumption \ref%
{assum:G-new}(c) and (f) are satisfied.
\end{lemma}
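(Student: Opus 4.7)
The plan is to use the factorization identities from Lemma \ref{lem:stoch-fact} and the proof of Lemma \ref{lem:control-term-Lipschitz}, namely
\[Z(\varphi) = \frac{\sin(\pi\alpha)}{\pi}\mathscr{F}_\alpha(Z_\alpha(\varphi)), \qquad \mathcal{L}(\varphi)u = \frac{\sin(\pi\alpha)}{\pi}\mathscr{F}_\alpha(L^\alpha_\cdot(\varphi)u),\]
together with the fact (Theorem \ref{thm:F-alpha}) that $\mathscr{F}_\alpha: L^p([0,T]:E_1) \to C([0,T]:E)$ is a compact linear operator. Since a compact operator sends bounded sets to pre-compact sets, both claims will reduce to showing that the respective ``pre-images'' $Z_\alpha(\varphi)$ and $L^\alpha_\cdot(\varphi)u$ are bounded (in a suitable probabilistic sense for the stochastic case) in $L^p([0,T]:E_1)$.

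For the deterministic collection $\{\mathcal{L}(\varphi)u\}$, I would use the bound \eqref{eq:L-alpha-t-bound} established inside the proof of Lemma \ref{lem:control-term-Lipschitz}: whenever $|\varphi|_{L^\infty([0,T]:E)} \leq R$ and $u \in \mathcal{S}^N$,
\[|L^\alpha_t(\varphi)u|_{E_1} \leq \left(\int_0^t \zeta(t-s)(\gamma^{-1}(|\varphi(s)|_E))^2 ds\right)^{1/2} |u|_{L^2([0,t]:H)} \leq \gamma^{-1}(R)\, |\zeta|_{L^1([0,T]:\mathbb{R})}^{1/2}\, N^{1/2}.\]
Thus the map $t \mapsto L^\alpha_t(\varphi)u$ lies in a fixed bounded subset of $L^p([0,T]:E_1)$, uniformly over $\varphi$ and $u$ in the indicated collection, so compactness of $\mathscr{F}_\alpha$ gives pre-compactness of $\{\mathcal{L}(\varphi)u\}$ in $C([0,T]:E)$.

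For the stochastic collection $\{Z(\varphi)\}$, I would first derive a uniform moment bound on $Z_\alpha(\varphi)$ in $L^p([0,T]:E_1)$. Starting from \eqref{eq:G-suff-bound} and applying Young's inequality for convolutions (here $p/2 \geq 1$ since $p > \frac{1}{\alpha - r} > 2$),
\[\mathbb{E}|Z_\alpha(\varphi)|_{L^p([0,T]:E_1)}^p = \int_0^T \mathbb{E}|Z_\alpha(\varphi)(t)|_{E_1}^p\, dt \leq |\zeta|_{L^1([0,T]:\mathbb{R})}^{p/2}\, \mathbb{E}\int_0^T (\gamma^{-1}(|\varphi(s)|_E))^p\, ds \leq T\, |\zeta|_{L^1([0,T]:\mathbb{R})}^{p/2}\, R.\]
Given $\eta > 0$, Markov's inequality then yields $M > 0$, independent of $\varphi$, with $\mathbb{P}(|Z_\alpha(\varphi)|_{L^p([0,T]:E_1)} > M) < \eta$. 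Setting $K_M \doteq \frac{\sin(\pi\alpha)}{\pi}\,\overline{\mathscr{F}_\alpha(\{\psi \in L^p([0,T]:E_1) : |\psi|_{L^p([0,T]:E_1)} \leq M\})}$, Theorem \ref{thm:F-alpha} ensures $K_M$ is a compact subset of $C([0,T]:E)$, and the factorization identity gives $\mathbb{P}(Z(\varphi) \in K_M) \geq 1 - \eta$ uniformly. This is precisely tightness in $C([0,T]:E)$.

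There is no serious obstacle here: the heavy lifting has already been done in Theorem \ref{thm:F-alpha} (establishing compactness of $\mathscr{F}_\alpha$ via an Arzelà--Ascoli argument that exploits compactness of $S(t)$) and in the factorization identities. The only points that require a little care are the convolution bound via Young's inequality (which also justifies $L^p$-boundedness of $Z_\alpha(\varphi)$ from the pointwise bound \eqref{eq:G-suff-bound}) and distinguishing the two types of conclusion: pre-compactness for the deterministic family because the input bound is deterministic and uniform, versus tightness for the stochastic family because one can only bound $Z_\alpha(\varphi)$ in expectation and must pass through Markov's inequality.
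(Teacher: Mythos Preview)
Your proposal is correct and follows essentially the same approach as the paper: both use the factorization identities to reduce the problem to showing uniform $L^p([0,T]:E_1)$ bounds on $Z_\alpha(\varphi)$ (in expectation) and $L^\alpha_\cdot(\varphi)u$ (deterministically), and then invoke the compactness of $\mathscr{F}_\alpha$ from Theorem~\ref{thm:F-alpha}. You are in fact slightly more explicit than the paper in spelling out the Markov-inequality step that converts the uniform moment bound into tightness, which the paper leaves implicit.
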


\begin{proof}
  From Lemma \ref{lem:stoch-fact} we  have that
  \[Z(\varphi) = \frac{\sin(\pi\alpha)}{\pi} \mathscr{F}_\alpha(Z_\alpha(\varphi)),\;\;
  \mathcal{L}(\varphi)u = \frac{\sin(\pi\alpha)}{\pi}\mathscr{F}_\alpha(L_\cdot^\alpha(\varphi)u)\]
  where $Z_\alpha(\varphi)$ is given in \eqref{eq:eqzalvarp} and $L_t^\alpha(\varphi)$ is given in \eqref{eq:L-alpha-t-def}.

  We showed in the proof of Lemma \ref{lem:stoch-fact} that
  \begin{align*}
    &\E\int_0^T |Z_\alpha(\varphi)(t)|_{E_1}^pdt \leq  \kappa|\zeta|_{L^1([0,T]: \mathbb{R})}^{\frac{p}{2}}\E\int_0^T(\gamma^{-1}(|\varphi(s)|_E))^pds.
  \end{align*}
  Because $\mathscr{F}_\alpha$ is a compact operator by Theorem \ref{thm:F-alpha} and
  \[\sup\left\{ \E|Z_\alpha(\varphi)|_{L^p([0,T]:E_1)}^p : \varphi \text{ is }\mathcal{F}_t\text{-progressively measurable and }\E(\gamma^{-1}(|\varphi|_{L^{\infty}([0,T]:E)}))^p\leq R\right\}< \infty,\]
  it follows that
  \[\left\{Z(\varphi):  \varphi \text{ is }\mathcal{F}_t\text{-progressively measurable and }\E(\gamma^{-1}(|\varphi|_{L^{\infty}([0,T]:E)}))^p \leq R\right\}\]
  is tight in $C([0,T]:E)$.
%
%
  Along the same lines, \eqref{eq:L-alpha-t-bound} implies that if $|\varphi|_{L^\infty([0,T]:E)} \leq R$ and $|u|_{L^2([0,T]:H)} \leq N$,
  \[\sup\left\{\left(\int_0^T |L_t^\alpha(\varphi)u|_{E_1}^pdt \right): |\varphi|_{L^\infty([0,T]:E)} \leq R, u \in \mathcal{S}^N\right\}<\infty. \]
  Therefore because $\mathscr{F}_\alpha$ is a compact operator,
  \[\left\{\mathcal{L}(\varphi)u : |\varphi|_{L^\infty([0,T]:E)}\leq R, u \in \mathcal{S}^N \right\}\]
  is pre-compact in $C([0,T]:E)$.
\end{proof}
We can now complete the proof of Theorem \ref{thm:suff-conds}.

\emph{Proof of Theorem \ref{thm:suff-conds}.} Suppose that Assumptions \ref%
{assum:semigroup} and \ref{assum:G-sufficient} are satisfied. Then Lemma \ref%
{lem:stoch-fact} shows that Assumption \ref{assum:G-new} (a) is satisfied;
Lemma \ref{lem:stoch-conv-Lipschitz} verifies part (b) of the assumption;
Lemma \ref{lem:control-term-Lipschitz} verifies parts (d) and (e); and Lemma %
\ref{lem:integral-tightness} verifies parts (c) and (f). \hfill \qed

\section{Examples}

\label{S:examples}

In this section we discuss two examples of Banach space valued stochastic differential equations that can be treated using the methods developed in this work. The first example considers a class of reaction-diffusion equations that have been studied in \cite{c-2003,cr-2004}.  To keep the presentation simple we make several simplifying assumptions while keeping  the two main technically challenging ingredients in the model, namely a multiplicative degenerate noise, and reaction term that is locally Lipschitz rather than Lipschitz.
Using results from Sections  \ref{S:Laplace} and \ref{sec:suff-conds} we give a different proof of the uniform large deviation principle established in \cite{cr-2004}.

Our second example considers a two-dimensional Navier-Stokes equation exposed to small multiplicative noise. We study the solutions in $C([0,T]:H)$ where $H$ is a Hilbert-space of divergence free $L^2$ vector fields. The nonlinearity in the Navier-Stokes equation, $B(u) =(u \cdot \nabla)u$, is not locally Lipschitz when $u$ is in $H$; in fact, it requires extra regularity in order to be well-defined. This complication requires that we formulate a modified version of Assumption \ref{assum:G-new} in order to prove that the solutions to the two-dimensional Navier-Stokes equation satisfy a large deviations principle uniformly over initial conditions in bounded subsets of $H$.

\subsection{Reaction-Diffusion Equation with multiplicative colored noise}
Consider the reaction diffusion equation with Dirichlet boundary conditions on a bounded open set $\mathcal{O} \subset \mathbb{R}^d$ with a smooth boundary. This was studied by Sowers \cite{s-1992}, Cerrai and Rockner \cite{cr-2004}, and others. 
In particular the uniform large deviation principle for this model follows from results in \cite{cr-2004}. However the goal of this section is to provide a simpler proof using results of Sections \ref{S:Laplace} and \ref{sec:suff-conds}.

Consider the stochastic reaction diffusion equation of the form
\begin{equation} \label{eq:RDE}
  \begin{cases}
    \frac{\partial X^\e_x}{\partial t}(t,\xi) = \mathcal{A} X^\e_x(t,\xi) + b(t,\xi,X^\e_x(t,\xi)) + \sqrt{\e}g(t,\xi, X^\e_x(t,\xi))Q\frac{\partial w}{\partial t }(t,\xi), \ \ t\ge 0, \xi \in \bar{\mathcal{O}} \\
    X^\e_x(t,\xi) = 0, \ \ \ (t,\xi) \in [0,\infty)\times \partial \mathcal{O}, \ \
    X^\e_x(0,\xi) = x(\xi), \; \xi \in \bar{\mathcal{O}},
  \end{cases}
\end{equation}
Here $\mathcal{A}$ is a second order differential operator of the form
$$\mathcal{A}(\xi) = \sum_{h,k=1}^d a_{hk}(\xi) \frac{\partial^2}{\partial \xi_h \partial \xi_k} + \sum_{h=1}^d \vartheta_h(\xi) \frac{\partial}{\partial \xi_h}, \; \xi \in \bar{\mathcal{O}},$$
where $a_{hk}$ are in $C^1(\bar{\mathcal{O}})$ and $\vartheta_h$ are in $C(\bar{\mathcal{O}})$. The $[a_{hk}]$ matrix is non-negative, symmetric and uniformly elliptic. Conditions on real valued functions $b$ and $g$ will be specified below.
The noise is given as a cylindrical Brownian motion $w$ on the Hilbert space $H=L^2(\mathcal{O})$. $Q$ is a bounded linear operator on $H$, additional conditions on which will be specified below.
The solution $X^\e_x(t,\cdot)$ for equation \eqref{eq:RDE} will take values in the Banach space
$E = C_0(\bar{\mathcal{O}})$, namely the space of continuous functions on $\bar{\mathcal{O}}$ vanishing at the boundary. Thus $X^\e_x$ will have sample paths in $C([0,T]:E)$.


We will denote by $A$  the realization of $\mathcal{A}$ endowed with the Dirichlet boundary condition. Then $A$ generates an analytic semigroup $\{S(t)\}_{t\ge 0}$ in $H$ whose restriction
to $E = C_0(\bar{\mathcal{O}})$  defines a $C_0$-semigroup on $E$ (see \cite{cr-2004} and references therein). Furthermore for every $t>0$, $S(t)$ is a compact operator on $E$ and in particular $\{S(t)\}$ satisfies Assumption \ref{assum:semigroup}.


We now introduce the assumption on the coefficients $b$ and $g$. Define for $t\ge 0$, $\xi \in \bar{\mathcal{O}}$ and $x:\bar{\mathcal{O}}\to \mathbb{R}$, $B(t,x)(\xi) \doteq b(t,\xi, x(\xi))$.
\begin{assumption} \label{assum:b-f-Lip}
   There is a locally bounded $\Phi: [0,\infty) \to \mathbb{R}_+$ such that the following hold (see Hypothesis 4 of \cite{cr-2004}).
  \begin{enumerate}[(a)]
    \item For every $t\ge 0$, $B(t): E\to E$ is locally Lipschitz continuous, locally uniformly in $t$
        and there is an $m \in \mathbb{N}$ such that for all $x \in E$
$$|B(t,x)|_E \le \Phi(t) (1+ |x|^m_E).$$
    \item For all $t\ge 0$ and $x,h \in E$, there exists some
    $$\delta_h \in \partial |h|_E \doteq \{h^\star \in E^\star: |h^\star|_{E^\star}=1, \left<h,h^\star\right>_{E,E^\star} = |h|_E \},$$ such that
        $$\left<B(t,x+h) - B(t,x), \delta_h \right>_{E,E^\star} \leq \Phi(t)(1 + |h|_E+|x|_E).$$
\item For all $t\ge 0$ and $\sigma \in \mathbb{R}$
  \[\sup_{\xi \in \bar{\mathcal{O}}} |g(t,\xi,\sigma)| \leq \Phi(t)(1 + |\sigma|^{\frac{1}{m}}).\]
%
    \item The function $g: [0,\infty)\times \bar{\mathcal{O}} \times \mathbb{R} \to \mathbb{R}$ is  continuous and the following Lipschitz property
holds: For all $t\ge 0$ and $\sigma, \rho \in \mathbb{R}$
  \[\sup_{\xi \in \bar{\mathcal{O}}} |g(t,\xi,\sigma) - g(t,\xi,\rho)|  \leq \Phi(t)|\sigma-\rho|.\]
 \end{enumerate}
\end{assumption}

With $B$ as defined before Assumption \ref{assum:b-f-Lip}, it can be checked that Assumption \ref{assum:mathcal-M} is satisfied with $\gamma(x) \doteq \kappa_1(1+x^m)$ for some
$\kappa_1 \in (0,\infty)$.
Indeed, Assumption \ref{assum:mathcal-M}(a) and (b) are proven as in \cite[Lemma 5.4]{c-2003}.   See also \cite{cr-2004}. Assumption \ref{assum:mathcal-M}(c) and (d) are consequences of the fact that $B(t): E \to E$ is locally Lipschitz continuous, locally uniformly in $t$, and the Gr\"onwall inequality.

We now specify our condition on the operator $Q$.

\begin{assumption} \label{assum:Q}
  $Q$ is a bounded linear operator on $H=L^2(\mathcal{O})$ with complete orthonormal system of eigenfunctions $\{f_j\}_{j \in \NN} \subset H$ such that
  $Qf_j = \lambda_j f_j.$
  If $d \geq 2$, there exists a number $2< q < \frac{2d}{d-2}$
  \begin{equation}
    \sum_{j=1}^\infty \lambda_j^{q} < \infty.
  \end{equation}
  If $d=1$ we require $\sup_j \lambda_j<\infty$.
%
\end{assumption}
With $E_2=H$, define $G: [0,\infty)\times E \to \mathscr{L}(H,E_2)$ as follows. For $t\ge 0$, $x\in E$
$$(G(t,x)h)(\xi) \doteq g(t,\xi, x(\xi))(Qh)(\xi), \;\; h \in H, \xi \in \mathcal{O}.$$

With $\mathcal{M}$ as in Assumption \ref{assum:mathcal-M}, let $X^{\e}_x$ be the mild solution of \eqref{eq:intro-abstract} in the sense of Definition \ref{def:mildsoln}. In view of Theorem \ref{thm:uniqmilsoln},
in order to check that \eqref{eq:intro-abstract} has a unique mild solution, it suffices to verify Assumption \ref{assum:G-new}. For that, due to Theorem \ref{thm:suff-conds}, it is enough to show that Assumption \ref{assum:G-sufficient} is satisfied.  Recall that $E_2$ is taken to be $H$. With this choice of $E_2$ it is shown in \cite[Proof of Theorem 4.2]{c-2003} that, with $G$ and $A$ as defined in this section, Assumption \ref{assum:G-sufficient} is satisfied with $E_1 = L^p(\mathcal{O})$ for some $p>2$ and $\zeta(t) = t^{-(2\alpha + \frac{d}{2\zeta_*})}$, for some $\alpha \in (0, 1/2)$ and $\zeta_*>0$ which are specified in \cite{c-2003}.

Thus we have verified Assumptions \ref{assum:semigroup}, \ref{assum:mathcal-M}, and \ref{assum:G-new}. Therefore from Theorem \ref{thm:LDP}
	 $\{X^{\varepsilon}_x\}_{\e>0}$ satisfies a large deviation principle in $C([0,T]:E)$ with respect to the rate function
	$I_x$ in \eqref{eq:rate-fct-X}, uniformly in the class  of all bounded subsets of $E$.

\subsection{Two-dimensional stochastic Navier-Stokes equation}
 The goal of this section is to present an example that illustrates that the general program for establishing a uniform large deviations principle developed in Sections 1-7 can be implemented even if Assumptions
\ref{assum:semigroup}, \ref{assum:mathcal-M} and \ref{assum:G-new} do not hold exactly in the form presented in Section \ref{S:notes-assums}. For this we will consider a
2-dimensional Navier-Stokes equation on a smooth, open, bounded spatial domain $ \mathcal{O} \subset \mathbb{R}^2$ of the form

\begin{equation} \label{eq:NS}
  \begin{cases}
    \frac{\partial u}{\partial t}(t,\xi) = \Delta u(t,\xi) -(u\cdot \nabla)u(t,\xi) - \nabla p(t,\xi) + \sqrt{\e}g(u(t,\xi))Q\frac{\partial w}{\partial t}(t,\xi), \ \ (t,\xi) \in [0,\infty)\times \bar{\mathcal{O}} \\
    \textnormal{div} u(t,\xi) = 0, \ \ (t,\xi) \in [0,\infty)\times \bar{\mathcal{O}},\ \ u(t,\xi) = 0, \ \ (t,\xi) \in [0,\infty)\times \partial \mathcal{O}, \ \
    u(0,\xi) = x(\xi), \; \xi \in \bar{\mathcal{O}},
  \end{cases}
\end{equation}
where  the first requirement in the second line is the incompressibility condition, the second a Dirichlet boundary condition and the third an initial condition.
The noise $w(t,\xi)$ in the above equation is a cylindrical Brownian motion on the Hilbert space  $L^2\doteq L^2(\mathcal{O}:\RR^2)$ given on some filtered probability space $(\Omega, \mathcal{F}, \Pro, \{\mathcal{F}_t\})$. Conditions on the operator $Q$ will be specified below.
The solution is a pair $u(t,\xi)\in \mathbb{R}^2$ and $p(t,\xi) \in \mathbb{R}$. The equation can be interpreted as an abstract evolution equation with $u$ and $p$ in appropriate function spaces.
A uniform large deviations principle for the 2-D Navier-Stokes equation with additive noise was recently established in \cite{bcf-2015} and  a large deviations result for the multiplicative noise case was studied in \cite{ss-2006}. The latter paper  did not consider  a uniform large deviations principle.

We now introduce our main assumptions on the model. The first assumption is on the operator $Q$ and the second on the diffusion coefficient $g$.
\begin{assumption} \label{assum:NS-noise-reg}
   $Q:L^2 \to L^2$ is a nonnegative linear  operator with  a complete orthonormal system $\{f_k\}$ of eigenfunctions, i.e., $Q f_k = \lambda_k f_k$, $\lambda_k\geq0$ for $k \in \mathbb{N}$. We also assume that the $\{f_k\}$ are uniformly bounded, i.e., $\sup_k|f_k|_{L^\infty(\mathcal{O}:\RR^2)}<\infty$ . Furthermore, we assume that  $Q^2$ is trace class, namely
  \[\Tr(Q^2) \doteq \sum_{k=1}^\infty \lambda_k^2 <\infty.\]
\end{assumption}

\begin{assumption} \label{assum:NS-g-reg}
  The function $g:\mathbb{R}^2 \to \mathbb{R}$ is bounded and  Lipschitz continuous, namely
  \[|g|_{L^\infty(\RR^2:\RR)} \doteq \sup_{\eta \in \RR^2} |g(\eta)|_\RR <\infty, \text{ and } \ \ \ \ |g|_{\textnormal{Lip}} \doteq \sup_{\eta_1 \not= \eta_2} \frac{|g(\eta_1) - g(\eta_2)|_\RR}{|\eta_1 - \eta_2|_{\RR^2}} <\infty.\]
\end{assumption}

\begin{remark}
  We assume that the noise is trace class and that $g$ is uniformly bounded only for the purpose of simplifying the proofs of the section. Wellposedness of the  two dimensional Navier-Stokes equation can be established under more general conditions (see for example Assumption 5.1 of \cite{bcf-2015}) and we expect that with additional work similar techniques can be used for establishing uniform large deviation principles in such general settings as well. Also, although we do not include a deterministic forcing term in the equation in order to keep the presentation simple, treatment of the equation with a deterministic forcing term proceeds along similar lines.
\end{remark}

We will use the following definitions and notation from Temam \cite[Section 2.5]{t-1995}.
\begin{itemize}
	\item Let
$\mathcal{V} \doteq \{\varphi \in C^\infty_0(\mathcal{O}:\mathbb{R}^2) : \textnormal{div}\varphi =0 \}$.
That is, $\mathcal{V}$ is the set of infinitely differentiable $\mathbb{R}^2$ valued divergence-free vectors on $\mathcal{O}$ with zero boundary conditions.
\item Let
$H$ be the closure of $\mathcal{V}$ in $L^2(\mathcal{O}:\mathbb{R}^2)$ and
let $P\in \mathscr{L}(L^2,H)$ be the Leray-Helmholtz projection onto $H$.
\item Let $V$ be the closure of $\mathcal{V}$ in $H_0^1$ (the Hilbert space of functions in $L^2(\mathcal{O})$ with derivatives of order one in $L^2(\mathcal{O})$ as well, and vanishing at the boundary).
Let $V'$ be the dual of $V$. Then we have the following dense, continuous embedding $V \hookrightarrow H=H' \hookrightarrow V'$.
\item Let $Au \doteq P\Delta u$ for $u \in D(A) \doteq \{u \in H_0^1 \cap H^2, \textnormal{div} u =0\}$ where $H^2$ is the space of functions in $L^2(\mathcal{O})$ with derivatives of order one and two in $L^2(\mathcal{O})$ as well.
Since $\mathcal{O}$ is a bounded domain, and the pseudo-inverse $(-A)^{-1}$ is a compact operator in $H$, there exists a complete orthonormal basis $\{e_k\}$ of $H$ made up of eigenfunctions for $A$. We order the sequence in such a way that $A e_k = -\gamma_k e_k$ for $0< \gamma_1 \leq \gamma_2 \leq \gamma_3 \leq...$.
 The semigroup generated by $A$ denoted as $\{S(t)\}$ is  $C_0$, analytic, and therefore also compact for $t>0$ as an operator from $H \to H$.
\item For any $\delta \in \mathbb{R}$ we define the fractional powers of $(-A)$ by $(-A)^\delta e_k = \gamma_k^{\delta} e_k$. The Hilbert spaces $H^\delta$ are defined as the completion of $\mathcal{V}$ under the norm $$|x|_{H^\delta} \doteq |(-A)^{\delta/2} x|_H = \left[\sum_{k=1}^\infty \gamma_k^{\delta}\left<x,e_k\right>_H^2\right]^{1/2}.$$ When $\delta=2$ this coincides with the Hilbert space $H^2$ introduced earlier. When $\delta = 1$, $H^1 = V$. When $\delta=0$, $H^0=H$.
\item For $p\geq1$ let $L^p \doteq L^p(\mathcal{O}:\mathbb{R}^2)$.
\item Define the trilinear form $b: V\times V\times V \to \mathbb{R}$ as
\[b(u,v,w) \doteq  \sum_{i,j=1}^2 \int\limits_\mathcal{O} u_i \partial_i v_j w_jdx, \; u,v,w\in V.\]
Using the form, define $B:V\times V \to V'$ as the continuous bilinear operator
$$\langle w, B(u,v) \rangle_{V,V'} \doteq b(u,v,w).$$
We write for $u\in V$, $B(u,u)$ as $B(u)$.
\end{itemize}

The following properties of the above trilinear form will be used in various estimates.
\begin{lemma}[\cite{t-1995} Lemma 2.1] \label{lem:b-regularity}
  $b$ is well defined and trilinear as a map from $H^{m_1}\times H^{1+m_2}\times H^{m_3} \to \mathbb{R}$ where $m_i\geq 0$ and either $m_1 + m_2 +m_3 \geq 1$ and none of them is equal to $1$ or $m_1 + m_2 + m_3>1$ if at least one of the $m_i=1$.
  We will use the estimates, due to \eqref{eq:antisymm}
  \begin{align} \label{eq:b-reg-L4}
    &|b(u,v,w)| \leq \kappa |u|_{L^4} |v|_{L^4} |w|_{H^1},\\
    \label{eq:b-reg-H32}
    &|b(u,v,w)| \leq \kappa |u|_H |v|_{L^4} |w|_{H^{\frac{3}{2}}},\\
    \label{eq:b-reg-H32-switch}
    &|b(u,v,w)| \leq \kappa |u|_{L^4} |v|_{H} |w|_{H^{\frac{3}{2}}}
  \end{align}
\end{lemma}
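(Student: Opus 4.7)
The plan is to combine two-dimensional Sobolev embeddings with H\"older's inequality and the antisymmetry identity $b(u,v,w)=-b(u,w,v)$ valid for $u\in V$, which is what \eqref{eq:antisymm} records (integration by parts together with $\textnormal{div}\,u=0$). Writing the trilinear form pointwise as $b(u,v,w)=\sum_{i,j=1}^{2}\int_{\mathcal{O}} u_i(\partial_i v_j)w_j\,dx$, the three factors to estimate are $u\in L^{p_1}$, $\nabla v\in L^{p_2}$ (which requires $v$ to live one derivative above $H^{m_2}$, i.e.\ in $H^{1+m_2}$), and $w\in L^{p_3}$, tied together by $1/p_1+1/p_2+1/p_3=1$.

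First I would establish the general well-definedness claim. In two spatial dimensions the Sobolev embedding $H^{m}\hookrightarrow L^{p}$ holds with $1/p=1/2-m/2$ for $0\le m<1$ and with every $p<\infty$ for $m=1$, while the endpoint $H^{1}\hookrightarrow L^{\infty}$ fails. Matching these embeddings against the constraint $1/p_1+1/p_2+1/p_3=1$ yields the scaling condition $m_1+m_2+m_3\ge 1$. When none of the $m_i$ equals $1$ this weak inequality is sufficient since each $p_i$ can be chosen as the critical Lebesgue exponent; when some $m_i=1$ the corresponding $p_i$ must be chosen strictly below $\infty$, which forces the strict inequality $m_1+m_2+m_3>1$.

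The three explicit bounds then all follow by first applying the antisymmetry $b(u,v,w)=-b(u,w,v)$ to move the derivative off $v$ and onto $w$, and then applying H\"older once. For \eqref{eq:b-reg-L4}, use exponents $(4,2,4)$ to obtain
\[
|b(u,v,w)|=|b(u,w,v)|\le |u|_{L^4}\,|\nabla w|_{L^2}\,|v|_{L^4}\le \kappa\,|u|_{L^4}|v|_{L^4}|w|_{H^1}.
\]
For \eqref{eq:b-reg-H32} and \eqref{eq:b-reg-H32-switch}, use H\"older exponents $(2,4,4)$ and $(4,4,2)$ respectively, and in both cases control $|\nabla w|_{L^4}$ by $\kappa|w|_{H^{3/2}}$ via the 2D embedding $H^{1/2}\hookrightarrow L^4$ (consistent with $1/4=1/2-(1/2)/2$).

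The only delicate point is the endpoint $m_i=1$: since $H^1$ does not embed into $L^\infty$ in two dimensions, one must be careful to keep each Lebesgue exponent finite when distributing the H\"older indices, which is the source of the strict inequality in the borderline case of the admissible range. Apart from that bookkeeping, the proof is a direct application of the embeddings and H\"older, so I would not expect any substantive obstacle.
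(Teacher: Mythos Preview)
Your argument is correct and matches the standard derivation: the paper does not supply its own proof here (it cites Temam \cite{t-1995}), but it explicitly indicates that the three displayed estimates follow ``due to \eqref{eq:antisymm}'', which is precisely the antisymmetry-then-H\"older-then-embedding route you carry out. Your choice of H\"older exponents $(4,2,4)$, $(2,4,4)$, $(4,4,2)$ after swapping the middle and last arguments, together with $H^{1/2}\hookrightarrow L^4$ in two dimensions to control $|\nabla w|_{L^4}$ by $|w|_{H^{3/2}}$, is exactly right; the general admissibility statement via the scaling condition $m_1+m_2+m_3\ge 1$ (strict when some $m_i=1$) is likewise the standard Temam argument.
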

It can be checked that $b$ is antisymmetric in the sense that for $u,v,w \in V$,
\begin{equation} \label{eq:antisymm}
  b(u,v,w) = -b(u,w,v).
\end{equation}
 Consequently $b(u,v,v) = 0$ for $u \in H$ and $v \in V$.

For $x \in H$, let $\tilde{G}(x)\in \mathscr{L}(L^2)$ be defined as,
  \[[\tilde{G}(x)h](\xi) \doteq g(x(\xi))[Qh](\xi), h  \in L^2.\]
  Let $G(x) \doteq P\tilde{G}(x) \in \mathscr{L}(L^2,H)$.


Applying $P$ to equation \eqref{eq:NS} we get the following abstract evolution equation for $X^\e_x = Pu$
\begin{equation} \label{eq:N-S-abstract}
  dX^\e_x(t) = [A X^\e_x(t) - B(X^\e_x(t))]dt + \sqrt{\e}G(X^\e_x(t))dw(t), \ \  X^\e_x(0) = x \in H.
\end{equation}
Theorem \ref{thm:uniqxeuxNS} will give that there is a unique solution of \eqref{eq:N-S-abstract} (in the sense made precise
in Section \ref{sec:wellposNavSto}) in $C([0,T]:H) \cap L^4([0,T]:L^4)$.
The goal of this section is to establish a   LDP for $\{X^\e_x\}_{\e>0}$  in $C([0,T]:H)$ that is uniform with respect to $x$ in bounded subsets of $H$.
We begin with a discussion of the wellposedness of \eqref{eq:N-S-abstract} and its controlled analogue.
The basic outline of the proof of wellposedness is similar to the approach taken in Section \ref{sec:uniqmildsolns}.
The key conditions for the proof there were Assumptions  \ref{assum:semigroup}, \ref{assum:mathcal-M}, and \ref{assum:G-new}.
As noted earlier, Assumption \ref{assum:semigroup} holds for the current example. In the next two sections (Sections \ref{sec:welposNS} and \ref{sec:vergnew}) we show that although Assumptions \ref{assum:mathcal-M} and \ref{assum:G-new} do not hold in the exact form stated in Section \ref{S:notes-assums}, a slightly modified versions of these assumptions do hold for the example. We then describe in Section
\ref{sec:wellposNavSto} how these suffice for the wellposedness of \eqref{eq:N-S-abstract} and its controlled analogue and finally in Section \ref{sec:uldpNS} we use the properties established in Sections \ref{sec:welposNS} and \ref{sec:vergnew} to establish the desired uniform LDP.


%
%
%
%
%
%
%
%
%
%
%

%
%
\subsubsection{Verification of a Modification of Assumption \ref{assum:mathcal-M}.}
\label{sec:welposNS}

As in Section \ref{sec:uniqmildsolns}, the wellposedness of \eqref{eq:N-S-abstract} and its controlled analogues can be analyzed by studying properties of a certain
deterministic map $\mathcal{M}$ on a suitable function space. However, it turns out that the appropriate map for the current setting does not satisfy Assumption \ref{assum:mathcal-M} in exactly the
form stated in Section \ref{S:notes-assums} and one needs to modify the definition of $\mathcal{M}$ appropriately. Roughly speaking, the map $\mathcal{M}$ will not act on
$L^\infty([0,T]:H)$ but rather on a subspace consisting of functions with additional spatial integrability properties.

Consider the equation
\begin{equation} \label{eq:NS-M}
dv(t) = [Av(t) - B(v(t) + \Psi(t))]dt, \ \ v(0) = 0.
\end{equation}

The following result introduces the map $\mathcal{M}$ that is appropriate for the current setting. Properties of the map given in this result
are the analogues of requirements in Assumption \ref{assum:mathcal-M} that are needed in the study of \eqref{eq:N-S-abstract}.
For $T>0$, let $\mathbf{L_T} \doteq L^4([0,T]:L^4)\cap L^\infty([0,T]:H)$.
The norm on $\mathbf{L_T}$ is given as $|\cdot|_{\mathbf{L_T}} \doteq |\cdot|_{L^4([0,T]:L^4)} + |\cdot|_{L^\infty([0,T]:H)}$.
\begin{theorem} \label{thm:NS-mathcal-M}
  For any $\Psi \in \mathbf{L_T}$, there is a 
   unique mild solution $v \in \mathbf{L_T}$ of  \eqref{eq:NS-M}, namely,
$$v(t) = -\int_0^t S(t-s) [B(s, v(s) + \Psi(s))] ds, \; t \in [0,T].$$
Define the map
	$\mathcal{M}$ from $\mathbf{L_T}$ to itself as $\mathcal{M}(\Psi) \doteq v+\Psi$ if $v$ solves
	\eqref{eq:NS-M} for $\Psi$. The map $\mathcal{M}$ has the following properties.
  \begin{enumerate}[(a)]
  \item If $\Psi \in C([0,T]:H) \cap \mathbf{L_T} $, then $M(\Psi) \in C([0,T]:H)\cap \mathbf{L_T}$.
  \item There exists a nondecreasing function $\gamma : [0,\infty) \to [0,\infty)$ such that for any $\Psi \in \mathbf{L_T}$ and $t \in [0,T]$
      \begin{equation} \label{eq:NS-M-growth}
        |\mathcal{M}(\Psi)|_{\mathbf{L_t}}  \leq \gamma(|\Psi|_{\mathbf{L_t}}).
      \end{equation}
  \item For any $R \in (0,\infty)$ there exists $C=C(R)\in (1,\infty)$ such that whenever $\Phi, \Psi \in \mathbf{L_T}$ with $|\Phi|_{\mathbf{L_T}} \leq R$ and $|\Psi|_{\mathbf{L_T}} \leq R$, and $t \in [0,T]$,
      \begin{equation} \label{eq:NS-M-Lipschitz-Linfty}
        |\mathcal{M}(\Phi) - \mathcal{M}(\Psi)|_{\mathbf{L_t}} \leq C |\Phi - \Psi|_{\mathbf{L_t}}.
      \end{equation}
  \item For any $R \in (0,\infty)$ and $p \in [2,\infty)$, there exists $C=C(R,p) \in (0,\infty)$ such that whenever $\Phi, \Psi \in \mathbf{L_T}$ with $|\Phi|_{\mathbf{L_T}} \leq R$ and $  |\Psi|_{\mathbf{L_T}} \leq R$, and $t \in [0,T]$
      \begin{equation} \label{eq:NS-M-Lipschitz-Lp}
        |\mathcal{M}(\Phi) - \mathcal{M}(\Psi)|_{L^p([0,t]:H)} \leq C |\Phi - \Psi|_{L^4([0,t]:H)}^{\frac{2}{p}}.
      \end{equation}
  \end{enumerate}
\end{theorem}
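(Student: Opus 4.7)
My strategy is the classical 2D Navier--Stokes energy method, grounded in the antisymmetry $b(u,v,v) = 0$ for $u \in H$ (which removes the worst part of the nonlinearity when one tests with the solution itself) and Ladyzhenskaya's inequality $|u|_{L^4}^2 \leq C|u|_H|u|_V$ (which in two dimensions lets us convert space--time $L^4$ norms into energy norms). To justify the formal differentiations, I would work with a Galerkin truncation of the equation, derive the claimed bounds at the Galerkin level, and then pass to the limit by standard arguments.

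First I would establish (b) by taking the $H$-inner product of $\partial_t v = Av - B(v+\Psi)$ with $v$. Writing $z = v+\Psi$ and using $b(z,v,v)=0$, the nonlinear contribution reduces to $-b(v,\Psi,v)-b(\Psi,\Psi,v)$. Antisymmetry together with \eqref{eq:b-reg-L4} and Ladyzhenskaya gives
\[
|b(v,\Psi,v)| = |b(v,v,\Psi)| \leq C|v|_H^{1/2}|v|_V^{3/2}|\Psi|_{L^4} \leq \tfrac{1}{4}|v|_V^2 + C|v|_H^2|\Psi|_{L^4}^4,
\]
and similarly $|b(\Psi,\Psi,v)| \leq \tfrac{1}{4}|v|_V^2 + C|\Psi|_{L^4}^4$. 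The resulting differential inequality $\tfrac{d}{dt}|v|_H^2 + |v|_V^2 \leq C|v|_H^2|\Psi|_{L^4}^4 + C|\Psi|_{L^4}^4$ will, via Gronwall, bound $|v|_{L^\infty([0,T]:H)}^2 + |v|_{L^2([0,T]:V)}^2$ by a nondecreasing function of $|\Psi|_{\mathbf{L_T}}$. A second application of Ladyzhenskaya, $|v|_{L^4}^4 \leq C|v|_H^2|v|_V^2$, then upgrades this to the $L^4([0,T]:L^4)$ bound, completing (b).

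For (c) and uniqueness I would repeat the method on the difference $w = v_1-v_2$ with $\eta = \Psi_1-\Psi_2$ and $z_i = v_i+\Psi_i$. Bilinear expansion of $B(z_1)-B(z_2)$ together with $b(z_2,w,w)=0$ yields
\[
\tfrac{1}{2}\tfrac{d}{dt}|w|_H^2 + |w|_V^2 = -b(w,z_1,w) - b(\eta,z_1,w) - b(z_2,\eta,w),
\]
and each trilinear term is controlled after using antisymmetry to route the derivative onto $w$, producing an inequality of the form $\tfrac{d}{dt}|w|_H^2 + |w|_V^2 \leq C|w|_H^2(|z_1|_{L^4}^4+|z_2|_{L^4}^4) + C|\eta|_{L^4}^2(|z_1|_{L^4}^2+|z_2|_{L^4}^2)$. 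Gronwall plus a final pass through Ladyzhenskaya then gives $|w|_{\mathbf{L_t}} \leq C(R)|\eta|_{\mathbf{L_t}}$, which is (c); uniqueness is the case $\eta \equiv 0$. Local existence follows from a Banach fixed--point argument on the mild map over a short time interval, with (c) providing the contraction estimate, and global existence on $[0,T]$ comes from the a priori bound of (b). Part (a) follows straightforwardly from the mild formula, the strong continuity of $\{S(t)\}$ on $H$, and the estimate $|B(u)|_{V'} \leq C|u|_{L^4}^2$ combined with the $L^4([0,T]:L^4)$ control already established.

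The hard part will be (d), where the right-hand side uses the weaker $L^4([0,t]:H)$ rather than $L^4([0,t]:L^4)$. From (b) applied to $w$ one has the crude uniform bound $|w|_{L^\infty([0,t]:H)} \leq C(R)$, and time--interpolation gives $|w|_{L^p([0,t]:H)} \leq |w|_{L^\infty([0,t]:H)}^{1-2/p}|w|_{L^2([0,t]:H)}^{2/p}$, so it suffices to prove $|w|_{L^2([0,t]:H)} \leq C(R)|\eta|_{L^4([0,t]:H)}$. A direct energy estimate as in (c) only isolates $|\eta|_{L^4}$, so I would instead return to the mild formula $w(t) = -\int_0^t S(t-s)[B(z_1)-B(z_2)]\,ds$ and measure the forcing in the negative--order space $(H^{3/2})'$ via \eqref{eq:b-reg-H32} and \eqref{eq:b-reg-H32-switch}: these produce the key bound $|B(z_i,\eta)|_{(H^{3/2})'} + |B(\eta,z_i)|_{(H^{3/2})'} \leq C|z_i|_{L^4}|\eta|_H$, in which only the $H$-norm of $\eta$ appears. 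Combining this with the analytic smoothing $|S(t)|_{\mathscr{L}((H^{3/2})',H)} \leq Ct^{-3/4}$, Young's convolution inequality in time, and Gronwall to absorb the $w$-dependent parts of $B(z_1)-B(z_2)$ should then close the estimate. The main obstacle is precisely this trade--off between the lost spatial regularity of $\eta$ and the parabolic smoothing of $S$, which forces us to work in the full Sobolev scale $H^\delta$ rather than only the triple $V \subset H \subset V'$.
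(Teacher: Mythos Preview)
Your treatment of parts (a)--(c) matches the paper almost exactly: existence and the growth bound come from the standard $H$-energy identity plus Ladyzhenskaya (the paper cites \cite{bl-2004,bl-2006} for this), and the Lipschitz bound (c) comes from the same energy method applied to the difference, with the trilinear terms routed through \eqref{eq:b-reg-L4} and the $|w|_V$ factors absorbed by Young's inequality. No issue there.

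For (d) you correctly isolate the crucial point: one must use the $H^{3/2}$-level trilinear estimates \eqref{eq:b-reg-H32}--\eqref{eq:b-reg-H32-switch} so that only $|\eta|_H$, not $|\eta|_{L^4}$, appears. But the mild-formula route you sketch does not close. Writing $w(t)=-\int_0^t S(t-s)[B(z_1)-B(z_2)]\,ds$ and using $|S(t)|_{\mathscr{L}((H^{3/2})',H)}\le Ct^{-3/4}$ gives
\[
|w(t)|_H\le C\int_0^t (t-s)^{-3/4}\,a(s)\big(|w(s)|_H+|\eta(s)|_H\big)\,ds,\qquad a=|z_1|_{L^4}+|z_2|_{L^4}\in L^4_t .
\]
The self-term here sits exactly at the critical threshold: to absorb it by any Gronwall/iteration/Young argument one needs $(t-s)^{-3/4}$ against an $L^4_t$ coefficient, i.e.\ the kernel in $L^{4/3}_t$, and $\int_0^T s^{-1}\,ds=\infty$. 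Equivalently, taking $L^2_t$ of both sides and using Young with the kernel in $L^1_t$ yields $|w|_{L^2_tH}\le C(R)\big(|w|_{L^4_tH}+|\eta|_{L^4_tH}\big)$; interpolating $|w|_{L^4_tH}\le |w|_{L^\infty_tH}^{1/2}|w|_{L^2_tH}^{1/2}$ and absorbing leaves a residual $C(R)|w|_{L^\infty_tH}$ term that is bounded only by $C(R)$, not by $|\eta|_{L^4_tH}$. So ``Gronwall to absorb the $w$-dependent parts'' hides a genuine obstruction.

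The paper avoids this by running the energy method one rung lower, in $H^{-1}$: testing the equation for $v=v_1-v_2$ against $(-A)^{-1}v$ produces
\[
\frac{d}{dt}|v|_{H^{-1}}^2+|v|_H^2\le \kappa\big(|\mathcal{M}(\Phi)|_{L^4}+|\mathcal{M}(\Psi)|_{L^4}\big)^4|v|_{H^{-1}}^2+\kappa\big(|\mathcal{M}(\Phi)|_{L^4}+|\mathcal{M}(\Psi)|_{L^4}\big)^2|z|_H^2,
\]
where the $+|v|_H^2$ on the left comes from the dissipation $\langle Av,(-A)^{-1}v\rangle=-|v|_H^2$. Ordinary Gronwall (with $v(0)=0$) then gives $|v|_{L^\infty_tH^{-1}}\le C(R)|z|_{L^4_tH}$, and feeding this back into the inequality yields $|v|_{L^2_tH}\le C(R)|z|_{L^4_tH}$. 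Your mild-formula approach sees only smoothing, not this dissipative gain, which is precisely what breaks the critical balance. If you want to stay with your line, you would need a maximal-regularity argument rather than a pointwise smoothing bound; the $H^{-1}$ energy estimate is the cheaper fix.
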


\begin{proof}
  We follow  \cite{bl-2004} and \cite{bl-2006}. Specifically, for $\Psi \in \mathbf{L_T}$,  existence and uniqueness of a mild solution $v \in \mathbf{L_T}$ of  \eqref{eq:NS-M} has been shown
in \cite[Theorem 4.5]{bl-2006}.
The same result shows that if $\Psi \in C([0,T]:H) \cap L^4([0,T]:L^4)$, then in fact $\mathcal{M}(\Psi) \in C([0,T]:H) \cap L^4([0,T]:L^4)$.  This proves part (a).

  Part (b) follows from \cite[Lemma 3.19]{bl-2004} where it is shown that there exists an nondecreasing function $\gamma_1$ such that
  \[|v|_{L^\infty([0,T]:H)} + |v|_{L^2([0,T]:H^1)} \leq \gamma_1( |\Psi|_{L^4([0,T]:L^4)}).\]
  Using the interpolation inequality, for $m_1<m_2$, $\theta \in (0,1)$, and  $u \in H^{m_2}$,
\begin{equation} \label{eq:interpolation}
  |u|_{H^{(1-\theta)m_1 + \theta m_2}} \leq |u|_{H^{m_1}}^{1-\theta} |u|_{H^{m_2}}^\theta,
\end{equation}
  \begin{align} \label{eq:NS-L4-embed}
    &|v|_{L^4([0,T]:L^4)}^4 \leq\kappa |v|_{L^4([0,T]:H^{1/2})}^4 \leq \kappa\int_0^T |v(s)|_{H^{1/2}}^4 ds \leq \kappa\int_0^T |v(s)|_H^2 |v(s)|_{H^1}^2 ds \nonumber \\
    &\leq \kappa|v|_{L^\infty([0,T]:H)}^2|v|_{L^2([0,T]:H^1)}^2.
  \end{align}
where the first inequality is from  Sobolev embedding.
  Then, since $\mathcal{M}(\Psi) = v + \Psi$,
  part (b) holds with $\gamma(R) = (1+\kappa^{1/4})\gamma_1(R) + R$.

  The proof of (c) follows the same energy arguments as used in proofs of parts (a) and (b) (cf. \cite{bl-2004} and \cite{bl-2006}). Specifically,
for $\Phi,\Psi$ and $R$ as in the statement of part (c),
let $v_1 \doteq \mathcal{M}(\Phi) - \Phi$ and $v_2 \doteq \mathcal{M}(\Psi) - \Psi$. Let $v \doteq v_1-v_2$. We also let $z \doteq \Phi-\Psi$. Notice that $\mathcal{M}(\Phi) - \mathcal{M}(\Psi) = v + z$. The weak derivative of $v$ is
  \[v'(t) = Av(t) - B(\mathcal{M}(\Phi)(t)) + B(\mathcal{M}(\Psi)(t)).\]
  Therefore,
  \begin{align*}
    \frac{d}{dt}|v(t)|_H^2 &= 2\left<v(t),v'(t)\right>_H\\
     &=-2|v(t)|_{H^1}^2 - 2b(\mathcal{M}(\Phi)(t), \mathcal{M}(\Phi)(t), v(t)) + 2b(\mathcal{M}(\Psi)(t),\mathcal{M}(\Psi)(t),v(t)).
  \end{align*}
  By the trilinearity of $b$ and the fact that $\mathcal{M}(\Phi) - \mathcal{M}(\Psi) = v + z$,
  \begin{align*}
    &\frac{d}{dt}|v(t)|_H^2 + 2|v(t)|_{H^1}^2 \\
&\quad = -2b(\mathcal{M}(\Phi)(t) - \mathcal{M}(\Psi)(t), \mathcal{M}(\Phi)(t),v(t))
     + 2b(\mathcal{M}(\Psi)(t), -\mathcal{M}(\Phi)(t) + \mathcal{M}(\Psi)(t),v(t)) \\
    &\quad= -2b(v(t)+z(t), \mathcal{M}(\Phi)(t), v(t)) - 2b(\mathcal{M}(\Psi)(t), v(t) + z(t), v(t)).
  \end{align*}
  By \eqref{eq:b-reg-L4},
  \[\frac{d}{dt}|v(t)|_H^2 + 2|v(t)|_{H^1}^2 \leq  \kappa_1\left( |v(t)|_{L^4} + |z(t)|_{L^4} \right) \left(|\mathcal{M}(\Phi)(t)|_{L^4}
  + |\mathcal{M}(\Psi)(t)|_{L^4}
  \right)|v(t)|_{H^1}.\]
  By the Sobolev Embedding Theorem and the interpolation inequality \eqref{eq:interpolation},
  \[|v(t)|_{L^4} \leq \kappa |v(t)|_{H^{1/2}} \leq \kappa|v(t)|_H^{\frac{1}{2}}|v(t)|_{H^1}^{\frac{1}{2}}\]
  and thus
  \[\frac{d}{dt}|v(t)|_H^2 + 2|v(t)|_{H^1}^2 \leq  \kappa_2 \Big(|\mathcal{M}(\Phi)(t)|_{L^4} + |\mathcal{M}(\Psi)(t)|_{L^4}\Big)  \left( |v(t)|_{H}^{\frac{1}{2}} |v(t)|_{H^1}^{\frac{3}{2}}
  + |z(t)|_{L^4}|v(t)|_{H^1}
  \right).\]
  By Young's inequality, we can absorb all of the $|v(t)|_{H^1}$ terms into the term on the left-hand side. It follows that
  \begin{align} \label{eq:NS-energy-Linfty}
    \frac{d}{dt}|v(t)|_H^2 + |v(t)|_{H^1}^2 \leq \kappa_3\Bigg(&|v(t)|_H^2 \Big(|\mathcal{M}(\Phi)(t)|_{L^4} + |\mathcal{M}(\Psi)(t)|_{L^4}\Big)^4 \nonumber\\&+ |z(t)|_{L^4}^2 \Big(|\mathcal{M}(\Phi)(t)|_{L^4} + |\mathcal{M}(\Psi)(t)|_{L^4}\Big)^2\Bigg).
  \end{align}
  By Gr\"onwall's inequality and the fact that $v(0)=0$,
  \begin{align*}
    |v(t)|_H^2 \leq &\kappa_4 \exp\left(\kappa_4 \Big(|\mathcal{M}(\Phi)|_{L^4([0,t]:L^4)}^4 + |\mathcal{M}(\Psi)|_{L^4([0,t]:L^4)}^4\Big)\right)\\
     &\times \int_0^t |z(s)|_{L^4}^2\left(|\mathcal{M}(\Phi)(s)|_{L^4} + |\mathcal{M}(\Psi)(s)|_{L^4} \right)^2ds
  \end{align*}
  By H\"older's inequality, part (b), and the assumption that $|\Phi|_{L^4([0,T]:L^4)} \leq R$ and $|\Psi|_{L^4([0,T]:L^4)} \leq R$,  we see that there is some $\kappa_5$ depending on $R$ such that for any $t \in [0,T]$
  \[|v|_{L^\infty([0,t]:H)} \leq \kappa_5 |z|_{L^4([0,t]:L^4)}.\]
  Plugging this back into \eqref{eq:NS-energy-Linfty}, shows that
  \[|v|_{L^2([0,t]:H^1)} \leq \kappa_6 |z|_{L^4([0,t]:L^4)}.\]
  We get a bound on $|v|_{L^4([0,T]:L^4)}$ by \eqref{eq:NS-L4-embed}.
  Finally, the result follows because $\mathcal{M}(\Phi) - \mathcal{M}(\Psi) = v+z$.

  Finally we prove part (d). Arguments will be a bit  different since we want these $L^p([0,T]:H)$ bounds of $\mathcal{M}(\Phi) - \mathcal{M}(\Psi)$ to not depend on the $L^4([0,T]:L^4)$ bound of $\Phi - \Psi$. To accomplish this, we use the same notation as part (c) but we do the energy estimates in the space $H^{-1}$. In particular,
  \begin{align*}
    &\frac{d}{dt}|v(t)|_{H^{-1}}^2 = 2\left< v'(t),(-A)^{-1}v(t)\right>_H \\
    & = -2|v(t)|_H^2 - 2b(\mathcal{M}(\Phi)(t),\mathcal{M}(\Phi)(t), (-A)^{-1}v(t)) + 2b(\mathcal{M}(\Psi)(t),\mathcal{M}(\Psi)(t),(-A)^{-1}v(t)).
  \end{align*}
  By the trilinearity of $b$,
  \begin{align*}
    \frac{d}{dt}|v(t)|_{H^{-1}}^2 + 2|v(t)|_H^2 = &-2b(v(t) + z(t), \mathcal{M}(\Phi)(t), (-A)^{-1}v(t))\\
     &- 2b(\mathcal{M}(\Psi)(t), v(t) + z(t), (-A)^{-1}v(t)).
  \end{align*}
  By \eqref{eq:b-reg-H32} and \eqref{eq:b-reg-H32-switch},
  \begin{align*}
    &\frac{d}{dt}|v(t)|_{H^{-1}}^2 + 2|v(t)|_H^2 \leq \kappa_1|v(t) + z(t)|_H \left(|\mathcal{M}(\Phi)(t)|_{L^4} + |\mathcal{M}(\Psi)(t)|_{L^4}\right) |(-A)^{-1}v(t)|_{H^{\frac{3}{2}}}
  \end{align*}
  From the definition of the $H^\delta$ spaces, $|(-A)^{-1}v(t)|_{H^{\frac{3}{2}}} = |v(t)|_{H^{-\frac{1}{2}}}$. And by interpolation \eqref{eq:interpolation}, $|v(t)|_{H^{-\frac{1}{2}}} \leq |v(t)|_H^{\frac{1}{2}}|v(t)|_{H^{-1}}^{\frac{1}{2}}$. Combining these estimates,
  \begin{align*}
    &\frac{d}{dt}|v(t)|_{H^{-1}}^2 + 2|v(t)|_H^2 \leq \kappa_2 |v(t)|_H^{\frac{3}{2}}|v(t)|_{H^{-1}}^{\frac{1}{2}} \left(|\mathcal{M}(\Phi)(t)|_{L^4} + |\mathcal{M}(\Psi)(t)|_{L^4}\right) \\
    &+\kappa_2|z(t)|_H \left(|\mathcal{M}(\Phi)(t)|_{L^4} + |\mathcal{M}(\Psi)(t)|_{L^4}\right) |v(t)|_{H^{-\frac{1}{2}}}.
  \end{align*}
  For the second term on the right hand side we can use the crude estimate $|v(t)|_{H^{-\frac{1}{2}}} \leq \kappa |v(t)|_H$. Applying Young's inequality to both terms, we can absorb the $|v(t)|_H$ terms into the term on the left hand side.
  \begin{align} \label{eq:NS-energy-Lp}
    &\frac{d}{dt}|v(t)|_{H^{-1}}^2 + |v(t)|_H^2 \leq \kappa_3 |v(t)|_{H^{-1}}^2\left(|\mathcal{M}(\Phi)(t)|_{L^4} + |\mathcal{M}(\Psi)(t)|_{L^4}\right)^4 \nonumber\\
    &+ \kappa_3|z(t)|_H^2 \left(|\mathcal{M}(\Phi)(t)|_{L^4} + |\mathcal{M}(\Psi)(t)|_{L^4}\right)^2.
  \end{align}
  By the Gr\"onwall inequality and the fact that $v(0)=0$,
  \[|v(t)|_{H^{-1}}^2 \leq \kappa_4e^{\kappa_4 \left(|\mathcal{M}(\Phi)|_{L^4([0,t]:L^4)}^4 + |\mathcal{M}(\Psi)|_{L^4([0,t]:L^4)}^4\right)} \int_0^t |z(s)|_H^2\left(|\mathcal{M}(\Phi)(s)|_{L^4} + |\mathcal{M}(\Psi)(s)|_{L^4}\right)^2ds.  \]
  By H\"older's inequality and the fact that it follows from (b) that $|\mathcal{M}(\Phi)|_{L^4([0,T]:L^4)} $ and $|\mathcal{M}(\Psi)|_{L^4([0,T]:L^4)} $ are bounded, we see that
  \[|v(t)|_{H^{-1}}^2 \leq \kappa_5 |z|_{L^4([0,t]:H)}^2.\]
  Plugging this back into \eqref{eq:NS-energy-Lp}, we see that
  \[\int_0^t |v(s)|_H^2 ds \leq \kappa_6 |z|_{L^4([0,t]:H)}^2.\]
  Note that $\kappa_6$ depends on $R$.
  It follows from the H\"older inequality that
  \begin{equation}  \label{eq:NS-L2-bound}
    |\mathcal{M}(\Phi) - \mathcal{M}(\Psi)|_{L^2([0,t]:H)} \leq |v|_{L^2([0,t]:H)} + |z|_{L^2([0,t]:H)} \leq \kappa_7 |z|_{L^4([0,t]:H)}.
  \end{equation}
  For $p\geq 2$, we notice that
  \begin{align*}
	&\int_0^t |\mathcal{M}(\Phi)(s) - \mathcal{M}(\Psi)(s)|^p_H ds \\
	&\quad \leq \left(|\mathcal{M}(\Phi)|_{L^\infty([0,T]:H)} +| \mathcal{M}(\Psi)|_{L^\infty([0,T]:H)}\right)^{p-2} \int_0^t |\mathcal{M}(\Phi)(s) - \mathcal{M}(\Psi)(s)|_H^2.
\end{align*}
  By part (b), \eqref{eq:NS-L2-bound}, and the assumption that $|\Phi|_{\mathbf{L_T}} \leq R$, $|\Psi|_{\mathbf{L_T}} \leq R$, we have
  \[\int_0^t |\mathcal{M}(\Phi)(s) - \mathcal{M}(\Psi)(s)|^p_H ds \leq \kappa_8 |z|_{L^4([0,t]:H)}^2 \]
where $\kappa_8$ depends on $R$ and $p$.
\end{proof}

\subsubsection{Verification of a modification of Assumption \ref{assum:G-new}}
\label{sec:vergnew}
In Section \ref{sec:suff-conds} we saw that Assumption \ref{assum:G-sufficient} together with Assumption \ref{assum:semigroup} ensures that Assumption \ref{assum:G-new} is satisfied. In this section we will first
verify in Lemma \ref{lem:NS-assum-G} an analogue of Assumption \ref{assum:G-sufficient} and then use it to establish an analogue of Assumption \ref{assum:semigroup} in Theorem \ref{thm:NS-G-new}. We begin with the following lemma.
Recall the eigenvectors and eigenvalues $\{f_k,\lambda_k\}$ of $Q$ introduced in Assumption \ref{assum:NS-noise-reg}.
Also recall the definitions of $G$ and $\tilde G$ given below  Lemma \ref{eq:b-reg-L4}.
\begin{lemma}  \label{lem:NS-G-ek-reg}
 There exists $C\in (0,\infty)$ such that for any  $\varphi \in H$, $\psi \in H$, and $k\in \mathbb{N}$,
  \begin{equation} \label{eq:NS-S-G-ek-bound}
    |G(\varphi)f_k|_H \leq C \lambda_k
  \end{equation}
  and
  \begin{equation} \label{eq:NS-S-G-ek-Lipschitz}
    |(G(\varphi) - G(\psi))f_k |_H \leq C\lambda_k|\varphi - \psi|_H.
  \end{equation}
\end{lemma}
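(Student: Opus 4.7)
The proof will be a direct calculation exploiting the explicit form of $\tilde G$ together with three ingredients already in place: the eigenfunction relation $Qf_k = \lambda_k f_k$, the boundedness and Lipschitz continuity of $g$ from Assumption \ref{assum:NS-g-reg}, and the uniform $L^\infty$ bound $M_\star \doteq \sup_k |f_k|_{L^\infty(\mathcal{O}:\RR^2)} < \infty$ from Assumption \ref{assum:NS-noise-reg}. Since $P$ is the orthogonal Leray-Helmholtz projection, it has norm $1$ as an operator $L^2 \to H$, so it suffices to bound $\tilde G(\varphi)f_k$ and $(\tilde G(\varphi)-\tilde G(\psi))f_k$ in $L^2$.

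For the first estimate, I would note that by definition
\[
  [\tilde G(\varphi)f_k](\xi) = g(\varphi(\xi))[Qf_k](\xi) = \lambda_k\, g(\varphi(\xi))\, f_k(\xi),\quad \xi \in \mathcal{O}.
\]
Using $|g(\varphi(\xi))| \leq |g|_{L^\infty(\RR^2:\RR)}$ pointwise and the fact that $\{f_k\}$ is an orthonormal system in $L^2$,
\[
  |\tilde G(\varphi)f_k|_{L^2}^2 \leq \lambda_k^2\, |g|_{L^\infty(\RR^2:\RR)}^2 \int_{\mathcal{O}} |f_k(\xi)|^2\, d\xi = \lambda_k^2\, |g|_{L^\infty(\RR^2:\RR)}^2.
\]
Boundedness of $P$ then gives $|G(\varphi)f_k|_H \leq |g|_{L^\infty(\RR^2:\RR)}\lambda_k$, which yields \eqref{eq:NS-S-G-ek-bound}.

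For the Lipschitz estimate, the same calculation yields
\[
  [(\tilde G(\varphi) - \tilde G(\psi))f_k](\xi) = \lambda_k\,(g(\varphi(\xi)) - g(\psi(\xi)))\,f_k(\xi).
\]
Here I would use the Lipschitz property of $g$ pointwise and pull out the uniform bound on $f_k$:
\[
  |(\tilde G(\varphi) - \tilde G(\psi))f_k|_{L^2}^2 \leq \lambda_k^2 |g|_{\textnormal{Lip}}^2 M_\star^2 \int_{\mathcal{O}} |\varphi(\xi) - \psi(\xi)|^2\, d\xi = \lambda_k^2 |g|_{\textnormal{Lip}}^2 M_\star^2 |\varphi - \psi|_H^2,
\]
where I have used that the $H$-norm is just the restriction of the $L^2$-norm. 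Applying $P$ gives \eqref{eq:NS-S-G-ek-Lipschitz} with constant $|g|_{\textnormal{Lip}} M_\star$. Taking $C = \max(|g|_{L^\infty(\RR^2:\RR)}, |g|_{\textnormal{Lip}} M_\star)$ completes the proof.

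There is no real obstacle in this lemma: the computation is almost immediate once one notices that the eigenfunction relation diagonalizes $Q$ and that pointwise multiplication by the scalar function $g(\varphi(\cdot))$ acts diagonally as well. The only place where care is needed is the Lipschitz bound, where one must control $|\varphi - \psi|\cdot|f_k|$ in $L^2$: pulling the $f_k$ outside as an $L^\infty$ factor is exactly the step that uses the uniform-boundedness portion of Assumption \ref{assum:NS-noise-reg}, and without this hypothesis the estimate would not close with a $k$-independent constant.
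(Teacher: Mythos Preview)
Your proof is correct and follows essentially the same direct computation as the paper: use $Qf_k=\lambda_k f_k$, bound via $|P|\le 1$, and apply the boundedness (resp.\ Lipschitz property) of $g$ together with the uniform $L^\infty$ bound on the $f_k$. The only cosmetic difference is that in the first estimate you use $|f_k|_{L^2}=1$ rather than the $L^\infty$ bound, which is a slightly sharper but equally elementary variant.
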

\begin{proof}
  By Assumption \ref{assum:NS-noise-reg}, for some $\kappa_1 \in (0,\infty)$,
$\sup_{k \in \NN} |f_k|_{L^\infty}\leq \kappa_1$. Therefore by Assumption \ref{assum:NS-g-reg}, and the fact that $P: L^2 \to H$ is a projection,
  \[|G(\varphi)f_k|_H^2 \leq |\tilde G(\varphi)f_k|_{L^2}^2 = \lambda_k^2 \int\limits_{\mathcal{O}} |g(\varphi(\xi))|_{\RR}^2|f_k(\xi)|_{\RR^2}^2d\xi \leq \kappa_1^2\lambda_k^2 |g|_{L^\infty(\RR^2:\RR)}^2\]
  and by the same argument
  \[|(G(\varphi) - G(\psi))f_k|_H^2 \leq \kappa_1^2\lambda_k^2|g|_{\text{Lip}}^2|\varphi - \psi|_H^2.\]
The result follows.
\end{proof}

Now we verify a modification of Assumption \ref{assum:G-sufficient}.
\begin{lemma} \label{lem:NS-assum-G}
  For any $\alpha \in (1/4,1/2)$, $p>\frac{1}{\alpha-\frac{1}{4}}$ and any $\mathcal{F}_t$-progressively measurable $\varphi$
with values in $L^\infty([0,T]:H)$,
  \[
    Z_\alpha(\varphi)(t) \doteq\int_0^t (t-s)^{-\alpha} S(t-s)G(\varphi(s))dw(s)
  \]
  is a well defined $H$-valued random variable. Furthermore, for $t \in [0,T]$,
  \[\E \left|Z_\alpha(\varphi) (t)\right|_H^p \leq C\E \left(\int_0^t (t-s)^{-2\alpha}ds \right)^{\frac{p}{2}}\]
  and
  \[\E \left|Z_\alpha(\varphi)(t) - Z_\alpha(\psi)(t) \right|_H^p \leq C\E\left(\int_0^t (t-s)^{-2\alpha} |\varphi(s)-\psi(s)|_H^2 ds \right)^{\frac{p}{2}}\]
\end{lemma}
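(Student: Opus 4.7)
The plan is to exploit that the target space $H$ is Hilbertian, so that the $E$-valued stochastic integral from Definition \ref{def:stoch-int} reduces to the classical Hilbert-space valued stochastic integral against the cylindrical Brownian motion $w$, and moment estimates follow from the Burkholder-Davis-Gundy (BDG) inequality for continuous $H$-valued martingales. Three ingredients will be used throughout: (i) $\{S(t)\}$ is a $C_0$-semigroup on $H$, so $M_T \doteq \sup_{s \in [0,T]} |S(s)|_{\mathscr{L}(H)} < \infty$; (ii) Lemma \ref{lem:NS-G-ek-reg} furnishes $|G(\varphi)f_k|_H \le C\lambda_k$ and $|(G(\varphi)-G(\psi))f_k|_H \le C\lambda_k\,|\varphi-\psi|_H$ for every $k \in \mathbb{N}$; and (iii) Assumption \ref{assum:NS-noise-reg} yields $\sum_k \lambda_k^2 = \Tr(Q^2) < \infty$.

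I would first check that the integrand $\Phi(s) \doteq (t-s)^{-\alpha} S(t-s) G(\varphi(s))$ is a Hilbert-Schmidt operator from $L^2$ to $H$ with integrable squared norm. Using the CONS $\{f_k\}$ of $L^2$ from Assumption \ref{assum:NS-noise-reg}, the three ingredients combine to give
\[
\|\Phi(s)\|_{\text{HS}(L^2,H)}^2 = (t-s)^{-2\alpha}\sum_{k=1}^\infty |S(t-s)G(\varphi(s))f_k|_H^2 \le (t-s)^{-2\alpha} M_T^2 C^2 \Tr(Q^2),
\]
which is integrable on $(0,t)$ because $\alpha < 1/2$. This shows that the standard Hilbert-space valued stochastic integral $\int_0^t \Phi(s)\, dw(s)$ is well defined in $L^2(\Omega:H)$; a brief check, based on Lemma \ref{lem:separable-dual-rv} and the self-duality of $H$, confirms that it coincides with the object $Z_\alpha(\varphi)(t)$ specified by Definition \ref{def:stoch-int}.

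For the moment bounds I fix $t$ and view $r \mapsto M_r \doteq \int_0^r (t-s)^{-\alpha} S(t-s) G(\varphi(s))\, dw(s)$, $r \in [0,t]$, as an $H$-valued continuous martingale with quadratic variation $\int_0^r \|\Phi(s)\|_{\text{HS}(L^2,H)}^2 ds$. The BDG inequality in $H$ gives
\[
\E |M_t|_H^p \le C_p\, \E\left(\int_0^t \|\Phi(s)\|_{\text{HS}(L^2,H)}^2 ds\right)^{p/2},
\]
and inserting the pointwise Hilbert-Schmidt bound established above yields the first claimed estimate after absorbing the constants $M_T^2 C^2 \Tr(Q^2)$ into $C$. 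The Lipschitz estimate is obtained in the same way applied to $Z_\alpha(\varphi) - Z_\alpha(\psi)$, using instead the Lipschitz half of Lemma \ref{lem:NS-G-ek-reg} to bound the integrand's Hilbert-Schmidt norm squared by $(t-s)^{-2\alpha} M_T^2 C^2 \Tr(Q^2)\,|\varphi(s)-\psi(s)|_H^2$. I do not anticipate a substantive obstacle: the only mild care-point is to ensure that the weak/Pettis-style definition of the integral used in the paper is equivalent, in the Hilbert space setting, to the standard It\^o integral for Hilbert-Schmidt operator valued integrands, which follows from separability and self-duality of $H$.
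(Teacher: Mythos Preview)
Your proposal is correct and follows essentially the same approach as the paper: well-definedness via the Hilbert--Schmidt/It\^o isometry using Lemma \ref{lem:NS-G-ek-reg} and $\Tr(Q^2)<\infty$, followed by the BDG inequality for $H$-valued martingales to obtain the $p$-th moment bounds. The paper's proof is slightly terser (it only writes out the Lipschitz estimate and leaves the semigroup bound implicit), but the ingredients and structure are the same.
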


\begin{proof}
  By the Ito isometry for Hilbert-space-valued stochastic integrals (cf. Section 4.2.1 of \cite{DaP-Z}) and Lemma \ref{lem:NS-G-ek-reg} the stochastic integrals in the
statement of the lemma are well defined $H$-valued random variables. We only prove the second inequality as the proof of the first inequality is similar.
By the Burkholder-Gundy-Davis inequality,
  \begin{align*}
    &\E \left|\int_0^t(t-s)^{-\alpha} S(t-s)(G(\varphi(s)) - G(\psi(s)))dw(s) \right|_{H}^p \\
    &\leq \kappa_1\E \left(\sum_{k=1}^\infty \int_0^t (t-s)^{-2\alpha}|S(t-s)(G(\varphi(s)) - G(\psi(s))) f_k|_H^2 ds \right)^{\frac{p}{2}}\\
    &\leq \kappa_2 \E \left( \int_0^t (t-s)^{-2\alpha} |\varphi(s)-\psi(s)|_H^2 \Tr(Q^2)ds \right)^{\frac{p}{2}}.
  \end{align*}
  The result follows.
\end{proof}

Because $S(t)$ is an analytic semigroup, for $x \in H$ and $t>0$, $|S(t)x|_{H^{1/2}} \leq M t^{-1/4}|x|_H$ \cite[Theorem 2.6.13]{pazy}.
From this fact and Lemma \ref{lem:NS-assum-G} it follows that an analogue of Assumption \ref{assum:G-sufficient} is satisfied where the spaces $E, E_1, E_2, H$ in Assumption \ref{assum:G-sufficient} are replaced by
$H^{\frac{1}{2}},H, H, L^2$, $r= 1/4$, $\zeta(t) = t^{-2\alpha}$ and the function $\gamma^{-1}$ is replaced by the constant function.

By following the methods of Section \ref{sec:suff-conds} and Lemma \ref{lem:NS-assum-G}  we can verify the following modification of Assumption \ref{assum:G-new}. We omit the proof.
For  $\mathcal{F}_t$-progressively measurable $\varphi \in L^\infty([0,T]:H)$, let
\[Z(\varphi)(t) \doteq \int_0^t S(t-s)G(\varphi(s))dw(s)\]
and for $\varphi \in L^\infty([0,T]:H)$, $u \in L^2([0,T]:L^2)$,
\[\left(\mathcal{L}(\varphi)u\right)(t) \doteq \int_0^t S(t-s)G(\varphi(s))u(s)ds.\]
Define for $N \in \NN$, $\mathcal{S}^N \doteq \{u \in L^2([0,T]:L^2): \int_0^T |u(s)|_{L^2}^2 \leq N \}$ endowed with the metric of weak convergence.

\begin{theorem} \label{thm:NS-G-new}
  There exists $p>2$ and  $C\in (0,\infty)$ such that the following  hold.
  \begin{enumerate}[1.]
    \item For any $\mathcal{F}_t$-progressively measurable $\varphi \in L^\infty([0,T]:H)$ with probability one, $Z(\varphi)$ is $C([0,T]:H^{1/2})$-valued and any $t \in [0,T]$,
    \begin{equation}
      \E\left|Z(\varphi) \right|_{C([0,t]:H^{1/2})}^p \leq C.
    \end{equation}
    \item For any $\mathcal{F}_t$-progressively measurable $\varphi, \psi \in L^p(\Omega:L^\infty([0,T]:H))$, and $t \in [0,T]$,
    \begin{equation}
      \E \left|Z(\varphi) - Z(\psi) \right|_{C([0,t]:H^{1/2})}^p \leq C \E \int_0^t |\varphi(s) - \psi(s)|_H^pds.
    \end{equation}
    \item For any $R>0$, the collection
    \[\left\{Z(\varphi): \varphi \text{ is } \mathcal{F}_t\text{-progressively measurable and } \varphi \in L^\infty([0,T]:H) \right\}\]
     is tight in $C([0,T]:H^{1/2})$.
    \item For any $\varphi \in L^\infty([0,T]:H)$ and $u \in L^2([0,T]:L^2)$, $\mathcal{L}(\varphi)u$ is well defined and $C([0,T]:H^{1/2})$-valued and for any $t \in [0,T]$,
    \begin{equation}
      \left|\mathcal{L}(\varphi)u \right|_{C([0,t]:H^{1/2})}^p \leq C|u|_{L^2([0,t]:L^2)}^p.
    \end{equation}
    \item For any $\varphi, \psi \in L^\infty([0,T]:H)$ and $u \in L^2([0,T]:L^2)$,
    \begin{equation}
       \left| \mathcal{L}(\varphi)u - \mathcal{L}(\psi)u\right|^p_{C([0,t]:H^{1/2})} \leq C|u|_{L^2([0,T]:L^2)}^p \int_0^t |\varphi(s) - \psi(s)|_H^pds.
    \end{equation}
    \item For any $R,N>0$, the collection
    \[\left\{\mathcal{L}(\varphi)u : |\varphi|_{L^\infty([0,T]:H)} \leq R, u \in \mathcal{S}^N  \right\}\]
    is pre-compact in $C([0,T]:H^{1/2})$.
  \end{enumerate}
\end{theorem}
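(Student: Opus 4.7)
The plan is to adapt the machinery of Section \ref{sec:suff-conds} with the identifications $E = H^{1/2}$, $E_1 = H$, and $E_2 = L^2$ (so $H^{1/2}\subset H \subset L^2$ with continuous embeddings). Since $\{S(t)\}$ is analytic on $H$, we have the smoothing estimate $|S(t)x|_{H^{1/2}}\leq M t^{-1/4}|x|_H$ for $x\in H$, so the role of $r$ in Assumption \ref{assum:G-sufficient} is played by $1/4$. We will fix $\alpha \in (1/4,1/2)$ and $p>\max\{2, 1/(\alpha-1/4)\}$. The bounds in Lemma \ref{lem:NS-assum-G}, with $\zeta(s)=s^{-2\alpha}\in L^1_{\mathrm{loc}}([0,T]:\mathbb{R}_+)$, are exactly the analogues of \eqref{eq:G-suff-bound} and \eqref{eq:G-suff-Lip}. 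Crucially, since $g$ is bounded, the bound on $Z_\alpha(\varphi)$ has no dependence on $\varphi$; this is why no analogue of $\gamma^{-1}$ appears on the right-hand sides of the inequalities in Theorem \ref{thm:NS-G-new}.

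For parts (1), (2), (4), and (5) we apply the stochastic factorization of Lemma \ref{lem:stoch-fact},
\[Z(\varphi)(t) = \frac{\sin(\pi\alpha)}{\pi}\int_0^t (t-s)^{\alpha-1}S(t-s)Z_\alpha(\varphi)(s)\,ds,\]
together with its deterministic analogue
\[\mathcal{L}(\varphi)u(t) = \frac{\sin(\pi\alpha)}{\pi}\int_0^t (t-s)^{\alpha-1}S(t-s)L^\alpha_s(\varphi)u\,ds,\]
where $L^\alpha_s(\varphi)u$ is the Pettis-sense integral defined as in Lemma \ref{lem:defined-from-Girsanov} (the Girsanov argument there applies verbatim, with $E_1$ replaced by $H$). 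The key tool is the Hilbert-space-valued analogue of Lemma \ref{lem:F-alpha-regularity}: the map $\mathscr{F}_\alpha: L^p([0,T]:H) \to C([0,T]:H^{1/2})$, defined by the same formula \eqref{def:F-alpha}, is bounded. This follows from the estimate $|S(t)|_{\mathscr{L}(H,H^{1/2})}\leq M t^{-1/4}$ and the condition $p(\alpha-1/4)>1$ via H\"older's inequality, exactly as in the proof of Proposition 5.9 in \cite{DaP-Z}. Combining this operator bound with the moment estimates of Lemma \ref{lem:NS-assum-G}, the operator-norm bound on $L^\alpha_t(\varphi)$ obtained as in the proof of Lemma \ref{lem:control-term-Lipschitz}, and Young's convolution inequality yields the inequalities in parts (1), (2), (4), and (5) by the same calculations carried out in Lemmas \ref{lem:stoch-fact}, \ref{lem:stoch-conv-Lipschitz} and \ref{lem:control-term-Lipschitz}.

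For the tightness and precompactness statements in (3) and (6), we adapt Theorem \ref{thm:F-alpha} to show that $\mathscr{F}_\alpha : L^p([0,T]:H) \to C([0,T]:H^{1/2})$ is a compact linear operator. Since $S(t)$ is compact on $H$ for every $t>0$ and $S(t/2):H\to H^{1/2}$ is bounded, the composition $S(t)=S(t/2)S(t/2)$ is compact as a map $H \to H^{1/2}$. The Arzela--Ascoli argument in Theorem \ref{thm:F-alpha} then goes through with $E$ replaced by $H^{1/2}$ and $E_1$ by $H$: the splitting \eqref{eq:F-alpha-decompose} gives pointwise-in-$t$ precompactness of images in $H^{1/2}$, and the equicontinuity argument works identically because only the estimate $|S(t)|_{\mathscr{L}(H,H^{1/2})}\leq Mt^{-1/4}$ and strong continuity of $S(\cdot)$ on $H^{1/2}$ were used. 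The precompactness in (6) then follows from the uniform bound on $|L^\alpha_\cdot(\varphi)u|_{L^p([0,T]:H)}$ for $|\varphi|_{L^\infty([0,T]:H)}\leq R$, $u\in \mathcal{S}^N$; and the tightness in (3) follows from the uniform moment bound on $|Z_\alpha(\varphi)|_{L^p([0,T]:H)}$, as in Lemma \ref{lem:integral-tightness}.

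The adaptation is essentially mechanical; the only point requiring care is checking that each operator-theoretic tool from Section \ref{sec:suff-conds} (boundedness and compactness of $\mathscr{F}_\alpha$, the factorization formulas, the Pettis-sense definition of the deterministic integral) survives the transition from a general Banach space codomain to the specific Hilbert-space codomain $H^{1/2}$. Since $H^{1/2}$ is a Hilbert space and the smoothing estimate for $S(t)$ from $H$ to $H^{1/2}$ is available with exponent $r=1/4<\alpha$, all these verifications are routine.
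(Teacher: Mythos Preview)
Your proposal is correct and follows essentially the same approach the paper outlines: the paper explicitly states (just before and after the theorem) that the proof proceeds by the methods of Section~\ref{sec:suff-conds} with the space identifications $E=H^{1/2}$, $E_1=H$, $r=1/4$, $\zeta(t)=t^{-2\alpha}$, and $\gamma^{-1}$ replaced by a constant, using Lemma~\ref{lem:NS-assum-G} as the analogue of Assumption~\ref{assum:G-sufficient}. Your identification of the key steps---boundedness and compactness of $\mathscr{F}_\alpha:L^p([0,T]:H)\to C([0,T]:H^{1/2})$, the factorization formulas, and the Arzel\`a--Ascoli argument---matches the paper's intended (but omitted) proof; the only cosmetic difference is that the paper takes $E_2=H$ rather than $L^2$, which is immaterial since $G$ maps into $H\subset L^2$.
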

We remark on the differences between properties established in Theorem \ref{thm:NS-G-new} and Assumption \ref{assum:G-new}.
The theorem shows that the first three lines of Assumption \ref{assum:G-new} are satisfied exactly with $(E, E_2,H)$ replaced with $(H^{1/2}, H, L^2)$. Parts 1-6 of the theorem are analogous to parts (a)-(f) of Assumption \ref{assum:G-new}.
The main difference is that $Z(\varphi)$ and $\mathcal{L}(\varphi)u$ are $C([0,T]:H^{1/2})$-valued while $\varphi \in L^\infty([0,T]:H)$. In Assumption \ref{assum:G-new}, we used the same underlying space, $E$ for both.
Proofs in Section  \ref{sec:suff-conds} do not use in an essential way that the spaces
 of $\varphi$ and  $Z(\varphi)$ are the same and the proof of Theorem \ref{thm:NS-G-new} follows using similar arguments.
Finally, the assumption that $g$ is uniformly bounded simplifies the bounds in Theorem \ref{thm:NS-G-new} parts 1 and 4
and the tightness statement in 3 since the function $\gamma^{-1}$ is simply  replaced by the constant function.

%

\subsubsection{Wellposedness}
\label{sec:wellposNavSto}
We begin with the following lemma that shows that for every $x\in H$
 $S(\cdot)x + Y^\e_x$ is in the domain of $\mathcal{M}$.

\begin{lemma} \label{lem:NS-S-dot-in-L4}
  For any $x \in H$, $S(\cdot)x \in C([0,T]:H) \cap L^4([0,T]:L^4)$ and there exists $C>0$ such that
  \[|S(\cdot)x|_{C([0,T]:H)\cap L^4([0,T]:L^4)} \leq C|x|_H.\]
\end{lemma}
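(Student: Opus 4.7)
The plan is to treat the two norms separately, since $|S(\cdot)x|_{C([0,T]:H)\cap L^4([0,T]:L^4)} = |S(\cdot)x|_{C([0,T]:H)} + |S(\cdot)x|_{L^4([0,T]:L^4)}$. The first factor is straightforward: the map $t \mapsto S(t)x$ is continuous into $H$ because $\{S(t)\}$ is a $C_0$-semigroup on $H$, and the contraction property of the Stokes semigroup gives $\sup_{t \in [0,T]}|S(t)x|_H \le |x|_H$. So $|S(\cdot)x|_{C([0,T]:H)} \le |x|_H$.

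The work is in bounding the $L^4([0,T]:L^4)$ norm. I would use the chain of estimates already exploited in display \eqref{eq:NS-L4-embed}, namely the Sobolev embedding $H^{1/2} \hookrightarrow L^4$ in dimension two followed by the interpolation inequality $|u|_{H^{1/2}} \le |u|_H^{1/2}|u|_{H^1}^{1/2}$. Together these yield
\begin{equation*}
\int_0^T |S(t)x|_{L^4}^4\,dt \le \kappa \int_0^T |S(t)x|_H^2 |S(t)x|_{H^1}^2\,dt \le \kappa |x|_H^2 \int_0^T |S(t)x|_{H^1}^2\,dt.
\end{equation*}

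The remaining ingredient is the standard energy identity for the Stokes semigroup: since $A$ is self-adjoint and $\langle -Ay,y\rangle_H = |y|_{H^1}^2$ for $y \in D(A)$, for $x \in D(A)$ we have $\frac{d}{dt}|S(t)x|_H^2 = -2|S(t)x|_{H^1}^2$, so integrating gives $\int_0^T |S(t)x|_{H^1}^2\,dt = \tfrac{1}{2}(|x|_H^2 - |S(T)x|_H^2) \le \tfrac{1}{2}|x|_H^2$. The identity then extends to all $x \in H$ by density, since both sides are continuous in $x \in H$ (the right-hand side trivially, and the left-hand side because $x \mapsto S(\cdot)x$ is bounded from $H$ into $L^2([0,T]:V)$ by the analytic semigroup theory). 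Combining this with the previous display yields $|S(\cdot)x|_{L^4([0,T]:L^4)}^4 \le \tfrac{\kappa}{2}|x|_H^4$, hence $|S(\cdot)x|_{L^4([0,T]:L^4)} \le (\kappa/2)^{1/4}|x|_H$. Adding the two bounds completes the proof with $C = 1 + (\kappa/2)^{1/4}$. No step here should present serious difficulty; the only item requiring care is the justification of the energy identity for general $x\in H$, which is handled by the density argument just described.
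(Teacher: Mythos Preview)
Your proof is correct and is essentially the same argument as the paper's. The only cosmetic difference is that the paper carries out the $L^4([0,T]:L^4)$ bound by writing $|S(t)x|_{H^{1/2}}^2$ explicitly in the eigenbasis, applying Cauchy--Schwarz to the sum, and then integrating $\int_0^T \gamma_k e^{-4\gamma_k t}\,dt \le \tfrac{1}{4}$; this is exactly the spectral transcription of your interpolation inequality $|u|_{H^{1/2}}^2 \le |u|_H|u|_{H^1}$ followed by the energy identity $\int_0^T|S(t)x|_{H^1}^2\,dt \le \tfrac{1}{2}|x|_H^2$.
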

\begin{proof}
  The fact that for every $x\in H$, $S(\cdot)x \in C([0,T]:H)$ and $|S(\cdot)x|_{C([0,T]:H)} \le \kappa_1 |x|_H$
is immediate since $\{S(t)\}$ is a  $C_0$ semigroup. Thus  we only need to show that $S(\cdot): H \to L^4([0,T]:L^4)$ is a bounded linear operator. By the Sobolev embedding theorem, $L^4 \supset H^{1/2}$. Therefore, for any $x \in H$,
  \begin{align}
  \label{eq:NS-semigorup-bounded}
  &\int_0^T |S(t)x|_{L^4}^4 dt \leq \kappa_2 \int_0^T |S(t)x|_{H^{1/2}}^4 dt
  \leq \kappa_2 \int_0^T \left(\sum_{k=1}^\infty \gamma_k^{\frac{1}{2}}e^{-2\gamma_k t}\left<x,e_k\right>_H^2\right)^2dt \nonumber\\
  & \leq \kappa_2 \int_0^T \left(\sum_{k=1}^\infty \gamma_k e^{-4\gamma_k t} \left<x,e_k\right>_H^2 \right)\left(\sum_{k=1}^\infty \left<x,e_k\right>_H^2 \right)dt \leq \frac{\kappa}{4}|x|_H^4.
\end{align}
The result follows.
\end{proof}
Let $\mathcal{P}_2^N$ be the collection of $\mathcal{F}_t$-progressively measurable processes that are in $\mathcal{S}^N$ with probability one.
The following uniqueness result is analogous to Theorem \ref{thm:Yeux-uniqueness}.
\begin{theorem} \label{thm:NS-Y-exist}
  For any $x \in H$, $\e\ge 0$, $N\in \NN$ and $u \in \mathcal{P}_2^N$, there exists a unique  $Y^{\e,u}_x \in L^p(\Omega:C([0,T]:H^{1/2}))$ solving
  \begin{align}
    Y^{\e,u}_x(t) = \sqrt{\e} &\int_0^t S(t-s)G(\mathcal{M}(S(\cdot)x + Y^\e_x)(s))dw(s)\nonumber\\
     &+ \int_0^t S(t-s)G(\mathcal{M}(S(\cdot)x + Y^\e_x)(s))u(s)ds.\label{eq:yeuxNS}
  \end{align}
\end{theorem}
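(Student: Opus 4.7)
The overall strategy mirrors that of Theorem \ref{thm:Yeux-uniqueness} in Section \ref{sec:uniqmildsolns}: introduce a truncation of the right-hand side of \eqref{eq:yeuxNS} so that the resulting map is a contraction on a small time interval, obtain a unique solution for the truncated problem, derive a priori bounds on the solution that are uniform in the truncation parameter, and then remove the truncation. The main technical point to track is that, unlike the setting of Assumptions \ref{assum:mathcal-M} and \ref{assum:G-new}, the domain of $\mathcal{M}$ here is $\mathbf{L_T}=L^4([0,T]:L^4)\cap L^\infty([0,T]:H)$ rather than $L^\infty([0,T]:H)$, while the stochastic and control convolutions naturally live in $C([0,T]:H^{1/2})$. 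These are compatible via the Sobolev embedding $H^{1/2}\hookrightarrow L^4$ (so $C([0,T]:H^{1/2})\hookrightarrow \mathbf{L_T}$) together with Lemma \ref{lem:NS-S-dot-in-L4}, which guarantees $S(\cdot)x+\varphi\in\mathbf{L_T}$ whenever $\varphi\in C([0,T]:H^{1/2})$ and $x\in H$.

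First I would fix $R>0$ and define, on the Banach space $\mathcal{E}_R\doteq\{\varphi\in L^p(\Omega:C([0,T]:H^{1/2})): |\varphi|_{C([0,T]:H^{1/2})}\leq R\text{ a.s.}\}$, the map
\[
\mathscr{K}^{\e,u}_{x,R}(\varphi)(t)\doteq \mathcal{T}_R\Bigg[\sqrt{\e}\int_0^t S(t-s)G(\mathcal{M}(S(\cdot)x+\varphi)(s))dw(s)+\int_0^t S(t-s)G(\mathcal{M}(S(\cdot)x+\varphi)(s))u(s)ds\Bigg],
\]
where $\mathcal{T}_R$ is the $H^{1/2}$-valued cutoff from Lemma \ref{lem:cutoff} applied pointwise in $t$. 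By Sobolev embedding and Lemma \ref{lem:NS-S-dot-in-L4}, any $\varphi\in\mathcal{E}_R$ satisfies $|S(\cdot)x+\varphi|_{\mathbf{L_T}}\leq \kappa(R+|x|_H)$, so by Theorem \ref{thm:NS-mathcal-M}(b) the argument $\mathcal{M}(S(\cdot)x+\varphi)$ is bounded in $\mathbf{L_T}$ by some $R'=R'(R,|x|_H)$. Combining Theorem \ref{thm:NS-mathcal-M}(c) with Theorem \ref{thm:NS-G-new}(2) and (5), and the Lipschitz property of $\mathcal{T}_R$, gives
\[
\E|\mathscr{K}^{\e,u}_{x,R}(\varphi)-\mathscr{K}^{\e,u}_{x,R}(\psi)|^p_{C([0,T_1]:H^{1/2})}\leq \kappa(R,|x|_H,N)\bigl(\e^{p/2}+N^{p/2}\bigr)\,T_1\,\E|\varphi-\psi|^p_{C([0,T_1]:H^{1/2})}
\]
for $T_1\in(0,T]$. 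Choosing $T_1$ small enough yields a contraction on $\mathcal{E}_R$, and a standard iterative patching argument produces a unique solution $Y^{\e,u}_{x,R}$ of the truncated equation on all of $[0,T]$.

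Next I would derive an a priori bound on $|Y^{\e,u}_{x,R}|_{C([0,T]:H^{1/2})}$ that is independent of $R$. Applying Theorem \ref{thm:NS-G-new}(1) and (4) together with Theorem \ref{thm:NS-mathcal-M}(b) and Lemma \ref{lem:NS-S-dot-in-L4} yields
\[
\E|Y^{\e,u}_{x,R}|^p_{C([0,t]:H^{1/2})}\leq C(\e^{p/2}+N^{p/2})\bigl(1+\gamma(|x|_H)^p\bigr)
\]
for some constant $C$ depending only on $T$; note that here the fact that $g$ is uniformly bounded (so the bounds in Theorem \ref{thm:NS-G-new}(1),(4) do not involve the $\varphi$-argument) is crucial, and is what allows the estimate to close without a Gr\"onwall iteration that blows up in $R$. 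In particular, the bound is independent of $R$. Consequently, for $R_1<R_2$ the truncated solutions agree up to the first time either exits a ball of radius $R_1$, and the a priori bound shows these exit times tend to $T$ a.s.\ as $R_1\to\infty$. Passing to the limit $R\to\infty$ and using Fatou's lemma yields a unique $Y^{\e,u}_x\in L^p(\Omega:C([0,T]:H^{1/2}))$ solving \eqref{eq:yeuxNS}.

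The principal obstacle, and the reason why the argument departs from the framework of Section \ref{sec:uniqmildsolns}, is the regularity mismatch between $\mathcal{M}$ and the stochastic/deterministic convolutions: $\mathcal{M}$ requires $L^4([0,T]:L^4)$ control of its argument, while the natural a priori bounds on the convolutions give $C([0,T]:H^{1/2})$-regularity. Reconciling these via Sobolev embedding in the fixed-point iteration, and verifying that $|S(\cdot)x+\varphi|_{\mathbf{L_T}}$ stays controlled inside the Lipschitz estimate \eqref{eq:NS-M-Lipschitz-Linfty}, is where most of the care is needed; the energy-type Lipschitz bound in Theorem \ref{thm:NS-mathcal-M}(c), together with the uniform-in-$\varphi$ bounds from Theorem \ref{thm:NS-G-new} afforded by boundedness of $g$, is precisely what makes this work.
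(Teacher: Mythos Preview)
Your proposal is correct and follows essentially the same route as the paper's proof: truncate via $\mathcal{T}_R$ in $H^{1/2}$, run the contraction argument of Lemma \ref{lem:a-priori-bounds} using Theorem \ref{thm:NS-mathcal-M}(c) together with Theorem \ref{thm:NS-G-new}(2),(5), obtain an $R$-independent a priori bound from Theorem \ref{thm:NS-G-new}(1),(4), and remove the truncation as in Theorem \ref{thm:Yeux-uniqueness}. The only cosmetic difference is that the paper keeps track of two balls $\mathcal{E}_R\subset\mathbf{L_T}$ and $\tilde{\mathcal{E}}_R\subset C([0,T]:H^{1/2})$ rather than one, and your a priori bound carries an unnecessary (but harmless) factor of $\gamma(|x|_H)$, since the bounds in Theorem \ref{thm:NS-G-new}(1),(4) are actually independent of both $\varphi$ and $x$ due to the boundedness of $g$.
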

\begin{proof}
  This is proven in much the same way as Theorem \ref{thm:Yeux-uniqueness}. The only thing that we need to be careful of is the fact that for general $x \in H$, $S(\cdot)x \not \in C([0,T]:H^{1/2})$. As we proved in Lemma \ref{lem:NS-S-dot-in-L4}, $S(\cdot)x \in C([0,T]:H)\cap L^4([0,T]:L^4)$, which is contained in the domain of $\mathcal{M}$.
  We define $\T_R$ as in Lemma \ref{lem:cutoff} with $E=H^{1/2}$. Let
  \[\mathcal{E}_R \doteq \{\varphi \in C([0,T]:H)\cap L^4([0,T]:L^4) : |\varphi|_{\mathbf{L_T}} \leq R\}\]
  and
  \[\tilde{\mathcal{E}}_R \doteq \{\varphi \in C([0,T]:H^{1/2}) : |\varphi|_{C([0,T]:H^{1/2})} \leq R\}.\]
    If $\varphi \in \mathcal{E}_R$ and $x \in H$, then by Lemma \ref{lem:NS-S-dot-in-L4}, $S(\cdot)x + \varphi \in \mathcal{E}_{R + C|x|_H}$. By Theorem \ref{thm:NS-mathcal-M}(c), $\mathcal{M}$ is Lipschitz continuous from $\mathcal{E}_{R + C|x|_H} \to C([0,T]:H)$ for any $R>0$, $x\in H$. Let $\mathscr{K}^{\e,u}_{x,R}:
L^p(\Omega: \mathcal{E}_R) \to L^p(\Omega: \tilde{\mathcal{E}}_R)$ be defined by \eqref{eq:mathscr-K-def}.
With $x\in H$, $u \in \mathcal{P}_2^N$ and $\e\ge 0$, consider $Y^{\e,u}_{x,R}$ as in \eqref{eq:eqyeuxr}.
Then along the lines of Lemma \ref{lem:a-priori-bounds} we have that there is a unique $\mathcal{F}_t$-progressively measurable continuous $\tilde{\mathcal{E}}_R$ valued process $Y^{\e,u}_{x,R}$ that solves \eqref{eq:eqyeuxr}.
Furthermore, the $L^p$ estimate in the statement of Lemma \ref{lem:a-priori-bounds} continues to hold with $E$ on the left side replaced by $H^{1/2}$ and $E^{\star\star}$ on the right replaced with $H$. Using this the proof of unique solvability of \eqref{eq:yeuxNS} is completed exactly as in Theorem \ref{thm:Yeux-uniqueness}. We omit the details.
\end{proof}

Consider now, for $x\in H$, $u \in \mathcal{P}_2^N$, and $\e\ge 0$,  the controlled analogue of \eqref{eq:N-S-abstract}:
\begin{equation} \label{eq:N-S-abstractcontNS}
  dX^{\e,u}_x(t) = [A X^{\e,u}_x(t) - B(X^{\e,u}_x(t))]dt + \sqrt{\e}G(X^{\e,u}_x(t))dw(t)+ G(X^{\e,u}_x(t))u(t)dt, \ \  X^{\e,u}_x(0) = x.
\end{equation}
By a mild solution of \eqref{eq:N-S-abstractcontNS} we mean a $\{\clf_t\}$-progressively measurable $H$ valued continuous stochastic process
$X^{\e,u}_x$ with sample paths in $C([0,T]:H) \cap L^4([0,T]:L^4)$ such that
  \begin{align} \label{eq:XeuxNS}
    X^{\e,u}_x = &\mathcal{M} \left(S(\cdot)x
    +\sqrt{\e} \int_0^\cdot S(\cdot-s)G(X^{\e,u}_x(s))dw(s)
    + \int_0^\cdot S(\cdot-s)G(X^{\e,u}_x(s))u(s)ds\right).
  \end{align}
The following result is analogous to Theorems \ref{thm:uniqmilsoln} and \ref{thm:uniqdetcont} and, given Theorem \ref{thm:NS-Y-exist}, the proof follows by similar arguments as used for these results given in Section \ref{sec:uniqmildsolns}. Details are omitted.
\begin{theorem}
	\label{thm:uniqxeuxNS}
	For every $x\in H$, $u \in \mathcal{P}_2^N$, and $\e\ge 0$ there exists  a unique mild solution
	$X^{\e,u}_x$ of \eqref{eq:N-S-abstractcontNS}.
\end{theorem}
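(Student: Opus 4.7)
The plan is to mirror the argument used in the proofs of Theorems \ref{thm:uniqmilsoln} and \ref{thm:uniqdetcont} in Section \ref{sec:uniqmildsolns}, bootstrapping off the wellposedness of $Y^{\e,u}_x$ given in Theorem \ref{thm:NS-Y-exist} and the Lipschitz/continuity properties of $\mathcal{M}$ from Theorem \ref{thm:NS-mathcal-M}.

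For existence, fix $x \in H$, $\e\ge 0$, and $u \in \mathcal{P}_2^N$. Let $Y^{\e,u}_x \in L^p(\Omega : C([0,T]:H^{1/2}))$ be the unique solution of \eqref{eq:yeuxNS} supplied by Theorem \ref{thm:NS-Y-exist}. Since $C([0,T]:H^{1/2}) \subset C([0,T]:H) \cap L^4([0,T]:L^4)$ by the continuous embedding $H^{1/2} \hookrightarrow L^4$ (Sobolev), and since $S(\cdot)x \in C([0,T]:H)\cap L^4([0,T]:L^4)$ with $|S(\cdot)x|_{\mathbf{L_T}} \le \kappa|x|_H$ by Lemma \ref{lem:NS-S-dot-in-L4}, the sum $S(\cdot)x + Y^{\e,u}_x$ lies almost surely in the domain $\mathbf{L_T}$ of $\mathcal{M}$. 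Define
\begin{equation*}
X^{\e,u}_x \doteq \mathcal{M}\bigl(S(\cdot)x + Y^{\e,u}_x\bigr).
\end{equation*}
By Theorem \ref{thm:NS-mathcal-M}(a),(b), $X^{\e,u}_x$ has sample paths in $C([0,T]:H)\cap L^4([0,T]:L^4)$, and it is $\{\mathcal{F}_t\}$-progressively measurable. Substituting the defining equation \eqref{eq:yeuxNS} for $Y^{\e,u}_x$ into the definition of $X^{\e,u}_x$ and observing that $\mathcal{M}(S(\cdot)x + Y^{\e,u}_x) = X^{\e,u}_x$ appears inside $G(\cdot)$ in \eqref{eq:yeuxNS}, one sees at once that $X^{\e,u}_x$ satisfies \eqref{eq:XeuxNS}.

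For uniqueness, suppose $\tilde X^{\e,u}_x$ is another mild solution of \eqref{eq:N-S-abstractcontNS}, so $\tilde X^{\e,u}_x$ is $\{\mathcal{F}_t\}$-progressively measurable with sample paths in $C([0,T]:H)\cap L^4([0,T]:L^4)$ and satisfies \eqref{eq:XeuxNS}. Set
\begin{equation*}
\tilde Y^{\e,u}_x(t) \doteq \sqrt{\e}\int_0^t S(t-s) G(\tilde X^{\e,u}_x(s))\,dw(s) + \int_0^t S(t-s) G(\tilde X^{\e,u}_x(s))u(s)\,ds.
\end{equation*}
Because $\tilde X^{\e,u}_x \in L^\infty([0,T]:H)$ a.s., Theorem \ref{thm:NS-G-new}(1) and (4) (whose bounds are independent of $\varphi$ since $g$ is uniformly bounded) yield $\tilde Y^{\e,u}_x \in L^p(\Omega : C([0,T]:H^{1/2}))$. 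From \eqref{eq:XeuxNS}, $\tilde X^{\e,u}_x = \mathcal{M}(S(\cdot)x + \tilde Y^{\e,u}_x)$, and substituting this identity back into the definition of $\tilde Y^{\e,u}_x$ shows that $\tilde Y^{\e,u}_x$ solves \eqref{eq:yeuxNS}. The uniqueness clause of Theorem \ref{thm:NS-Y-exist} therefore gives $\tilde Y^{\e,u}_x = Y^{\e,u}_x$ a.s., and consequently
\begin{equation*}
\tilde X^{\e,u}_x = \mathcal{M}(S(\cdot)x + \tilde Y^{\e,u}_x) = \mathcal{M}(S(\cdot)x + Y^{\e,u}_x) = X^{\e,u}_x.
\end{equation*}

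The main delicate point is verifying that an arbitrary mild solution $\tilde X^{\e,u}_x$ gives rise to a $\tilde Y^{\e,u}_x$ that lies in the same space $L^p(\Omega : C([0,T]:H^{1/2}))$ in which the uniqueness of $Y^{\e,u}_x$ is stated; this is where the uniform boundedness of $g$ in Assumption \ref{assum:NS-g-reg} is crucial, since it makes the moment estimates in Theorem \ref{thm:NS-G-new}(1),(4) independent of any a priori bound on $\tilde X^{\e,u}_x$ beyond its being $L^\infty([0,T]:H)$-valued, which is automatic from the definition of mild solution.
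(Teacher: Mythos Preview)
Your proposal is correct and follows exactly the approach the paper indicates (the paper omits the details, saying only that the proof mirrors that of Theorems \ref{thm:uniqmilsoln} and \ref{thm:uniqdetcont} using Theorem \ref{thm:NS-Y-exist}). Your observation that the uniform boundedness of $g$ makes the moment estimates in Theorem \ref{thm:NS-G-new}(1),(4) independent of any a priori control on $\tilde X^{\e,u}_x$---and hence that $\tilde Y^{\e,u}_x$ automatically lands in $L^p(\Omega:C([0,T]:H^{1/2}))$---is the right way to close the loop that the paper leaves implicit.
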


\subsubsection{Uniform large deviation principle for $X^\e_x$}
\label{sec:uldpNS}
When $u=0$, we denote the unique mild solution of \eqref{eq:N-S-abstractcontNS}  as $X^\e_x$ and the unique solution
of \eqref{eq:yeuxNS} as $Y^\e_x$. The main result of this section is the following uniform large deviation principle for
$X^\e_x$ in $C([0,T]:H)$. Define for $x\in H$ and $\varphi \in C([0,T]:H)$, $I_x(\varphi)$ by \eqref{eq:rate-fct-X}
with $H$ there replaced by $L^2$ and $X^{0,u}_x$ given by the solution of \eqref{eq:N-S-abstractcontNS} (with $\e=0$) rather than \eqref{eq:eq1214ab1}.
\begin{theorem} \label{thm:NS-LDP}
	The collection $\{X^{\varepsilon}_x\}_{\e>0}$ satisfies a large deviation principle in $C([0,T]:H)$ with respect to the rate function
	$I_x$, uniformly in the class $\mathscr{A}$ of all bounded subsets of $H$. That is, for
	any bounded subset $E_0$ of $H$, the  uniform lower and upper bounds  in \eqref{eq:LDP-lower-unif} and  \eqref{eq:LDP-upper-unif} hold with $E$ there replaced with $H$.
\end{theorem}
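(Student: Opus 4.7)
The approach mirrors the proof of Theorem \ref{thm:LDP} in Section \ref{sec:secthmldp}, adapted to the Navier--Stokes setting. Since $H$ is a Hilbert space, hence reflexive, bounded subsets are relatively compact in the weak topology, so we work directly with the weak topology on $H$ rather than passing to a double dual. The plan has four main steps: a convergence lemma for the semigroup applied to weakly convergent sequences, convergence of the controlled processes $Y^{\e,u}_x$, a uniform Laplace principle for $\{Y^\e_x\}$ (then converted to a uniform LDP via Theorem \ref{thm:ULP-implies-LDP}), and a contraction-principle transfer from $Y^\e_x$ to $X^\e_x$ using $\mathcal{M}$.

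First, I would establish an analogue of Lemma \ref{lem:U-compact}(3): for any bounded sequence $\{x_n\} \subset H$, there exists a subsequence and an $x \in H$ with $x_n \rightharpoonup x$ weakly such that $S(\cdot)x_n \to S(\cdot)x$ in $L^p([0,T]:H) \cap L^4([0,T]:L^4)$ for every $p \in [1,\infty)$. Convergence in $L^p([0,T]:H)$ follows from strong convergence $S(t)x_n \to S(t)x$ in $H$ for each fixed $t>0$ (by compactness of $S(t)$), the uniform bound $|S(t)x_n|_H \leq C|x_n|_H$, and dominated convergence. For $L^4([0,T]:L^4)$ convergence, factor $S(t) = S(t/2)S(t/2)$: the first factor yields strong convergence in $H$, and the analytic smoothing $|S(t/2)y|_{H^{1/2}} \leq M(t/2)^{-1/4}|y|_H$ combined with Sobolev embedding $H^{1/2} \hookrightarrow L^4$ gives a dominating integrable majorant as in \eqref{eq:NS-semigorup-bounded}, so dominated convergence applies again.

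Second, I would prove analogues of Theorems \ref{thm:unif-conv} and \ref{thm:convlimcont}: if $\{x_n\} \subset H$ is bounded with $S(\cdot)x_n \to S(\cdot)x$ in the sense of step one, $\e_n \downarrow 0$, and $u_n \to u$ in distribution in $\mathcal{S}^N$ (weak topology), then $Y^{\e_n,u_n}_{x_n} \to Y^{0,u}_x$ in distribution in $C([0,T]:H^{1/2})$. The proof uses the $L^p$ a priori bound from Theorem \ref{thm:NS-Y-exist}, tightness via Theorem \ref{thm:NS-G-new}(3) and (6), vanishing of the stochastic integral from Theorem \ref{thm:NS-G-new}(1), the compactness of the operator $\mathcal{L}(\varphi)$ from weak-$L^2$ to $C([0,T]:H^{1/2})$ by Theorem \ref{thm:NS-G-new}(6) together with Proposition VI.3.3(a) of \cite{conway} to identify the limit of the control term, and the $L^p$-continuity of $\mathcal{M}$ from Theorem \ref{thm:NS-mathcal-M}(d) to identify the limit of the composition. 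Uniqueness of the limiting $Y^{0,u}_x$ (solution of the analogue of \eqref{eq:youxt} in the NS context) follows as in Theorem \ref{thm:NS-Y-exist}. As a corollary, defining $\tilde{I}_x$ as in \eqref{eq:rate-fct-Y} adapted to the Navier--Stokes context, the family $\{\tilde{I}_x\}_{x \in H}$ has compact level sets uniformly over weakly compact subsets of $H$ (analogue of Theorem \ref{thm:compact-level-sets-tilde}).

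Third, the variational representation of \cite[Theorem 2]{bdm-2008} and the localization of \cite[Theorem 4.4]{bd-2000} apply, and the argument of Section \ref{sec:pfuniflap} yields a uniform Laplace principle for $\{Y^\e_x\}$ in $C([0,T]:H^{1/2})$ over weakly compact subsets of $H$. Theorem \ref{thm:ULP-implies-LDP} then upgrades this to a uniform LDP for $\{Y^\e_x\}$ in $C([0,T]:H^{1/2})$. Finally, for any bounded $E_0 \subset H$, its weak closure $\bar{E}_0^w$ is weakly compact; by Lemma \ref{lem:NS-S-dot-in-L4}, $\sup_{x \in \bar{E}_0^w}|S(\cdot)x|_{\mathbf{L_T}} < \infty$; the embedding $H^{1/2} \hookrightarrow L^4 \cap H$ gives $C([0,T]:H^{1/2}) \hookrightarrow \mathbf{L_T}$ continuously; and by Theorem \ref{thm:NS-mathcal-M}(c), $\mathcal{M}$ is Lipschitz on bounded subsets of $\mathbf{L_T}$ into $C([0,T]:H)$. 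Combining these with $X^\e_x = \mathcal{M}(S(\cdot)x + Y^\e_x)$, the contraction principle argument from the end of Section \ref{sec:secthmldp} transfers the uniform LDP from $\{Y^\e_x\}$ to $\{X^\e_x\}$ in $C([0,T]:H)$, yielding \eqref{eq:LDP-lower-unif} and \eqref{eq:LDP-upper-unif} with $E$ replaced by $H$.

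The principal obstacle is step two: carrying out the simultaneous limit in $(x_n, \e_n, u_n)$ when the initial data converge only weakly. The technical heart is identifying the weak limit of $\mathcal{L}(\mathcal{M}(S(\cdot)x_n + Y_n))u_n$, which requires both that $\mathcal{L}(\varphi)$ be compact enough to send weakly convergent $u_n$ to norm-convergent images, and that $\mathcal{M}$ be continuous in a topology weaker than the sup-norm on $H^{1/2}$ (since $S(\cdot)x_n$ does not in general converge uniformly in $H$). Theorem \ref{thm:NS-mathcal-M}(d) and Theorem \ref{thm:NS-G-new}(6) are precisely tailored to surmount this.
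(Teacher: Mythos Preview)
Your overall strategy matches the paper's: establish a uniform Laplace principle for $\{Y^\e_x\}$ in $C([0,T]:H^{1/2})$ over weakly compact subsets of $H$, convert this to a uniform LDP via Theorem \ref{thm:ULP-implies-LDP}, and then transfer to $\{X^\e_x\}$ using the Lipschitz property of $\mathcal{M}$ on bounded subsets of $\mathbf{L_T}$. Steps two through four are essentially what the paper does in Theorem \ref{thm:NS-Y-ULP} and the proof of Theorem \ref{thm:NS-LDP}.

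However, your step one contains a genuine error: the claim that $S(\cdot)x_n \to S(\cdot)x$ in $L^4([0,T]:L^4)$ is false. The paper explicitly flags this: the map $x \mapsto S(\cdot)x$ from $H$ to $L^4([0,T]:L^4)$ is bounded (Lemma \ref{lem:NS-S-dot-in-L4}) but \emph{not compact}. Your dominated convergence argument fails because the crude pointwise estimate $|S(t)(x_n-x)|_{L^4}^4 \le C t^{-1}|x_n-x|_H^4$ is not integrable near $t=0$, and the refined computation \eqref{eq:NS-semigorup-bounded} yields only a uniform bound on $\int_0^T |S(t)x|_{L^4}^4\,dt$, not a pointwise integrable majorant independent of $x$. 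Concretely, for the Stokes eigenfunctions $e_k \rightharpoonup 0$ weakly in $H$ one has $\int_0^T |S(t)e_k|_{L^4}^4\,dt = |e_k|_{L^4}^4(1-e^{-4\gamma_k T})/(4\gamma_k)$, and since generically $|e_k|_{L^4}^4 \sim \gamma_k$, this need not vanish.

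The fix is exactly what the paper does and what your own invocation of Theorem \ref{thm:NS-mathcal-M}(d) already permits: that estimate reads $|\mathcal{M}(\Phi)-\mathcal{M}(\Psi)|_{L^p([0,t]:H)} \le C |\Phi-\Psi|_{L^4([0,t]:H)}^{2/p}$, so on the right you only need convergence of $S(\cdot)x_n$ in $L^4([0,T]:H)$ (which does hold, by compactness of $S(t):H\to H$ for $t>0$ and dominated convergence), together with a uniform $\mathbf{L_T}$ bound on $S(\cdot)x_n$ (which holds by Lemma \ref{lem:NS-S-dot-in-L4}) to ensure the hypothesis $|\Phi|_{\mathbf{L_T}},|\Psi|_{\mathbf{L_T}}\le R$. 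Drop the $L^4([0,T]:L^4)$ convergence claim from step one and your argument goes through exactly as the paper's.
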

As was the case for the proof of Theorem \ref{thm:LDP}, the key step in the proof of the above theorem is establishing a
uniform Laplace principle for $\{Y^{\e}_x\}$. For $x \in H$ and $\varphi \in C([0,T]:H^{1/2})$, define
$\tilde I_x(\varphi)$ by \eqref{eq:rate-fct-Y} with $H$ replaced by $L^2$ and $Y^{0,u}_x$ given as the
unique solution of \eqref{eq:yeuxNS} (with $\e=0$) rather than \eqref{eq:youxt}. Then we have the following uniform
Laplace principle for $\{Y^{\e}_x\}$. Note the difference from Theorem \ref{thm:unif-Laplace-Y} -- $E$ is replaced with $H^{1/2}$ however the uniformity parameter $x$ is in $H$ rather than in $E^{\star\star} = H^{1/2}$.
One problem that we will encounter is that the semigroup, which by Lemma \ref{lem:U-compact} is compact from $H \to L^p([0,T]:H)$ for any $p\geq 1$, is not compact from $H \to L^4([0,T]:L^4)$.
Despite this difficulty, the proof can be completed as it turns out that it is enough for $S(\cdot)$ to be bounded as an operator from  $H \to  L^4([0,T]:L^4)$ and compact as an operator from $H \to L^4([0,T]:H)$.
\begin{theorem} \label{thm:NS-Y-ULP}
	The collection $\{\tilde I_x\}_{x \in H}$ is a family of rate functions with compact level sets on closed and bounded subsets of $H$.
  $\{Y^{\e}_x\}$ satisfies a uniform Laplace principle in $C([0,T]:H^{1/2})$  with rate function $\tilde{I}_x$ that is uniform over $x$ in bounded subsets of $H$.

Specifically, for any bounded subset $E_0 \subset H$ and any bounded continuous $h: C([0,T]:H^{1/2}) \to \mathbb{R}$,
\[\lim_{\e \to 0} \sup_{x \in E_0}\left|\e \log\E\left(\exp(-\e^{-1}h(Y^\e_x)) \right) + \inf_{\phi \in C([0,T]:H^{1/2})}\{\tilde{I}_x(\phi) + h(\phi)\} \right| = 0.\]
\end{theorem}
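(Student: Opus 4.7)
The plan is to mirror the strategy of Theorem \ref{thm:unif-Laplace-Y} with two main adjustments to accommodate the Navier--Stokes setting: (i) the uniformity parameter $x$ ranges over bounded subsets of $H$ (not of $H^{1/2}$), and (ii) the map $\mathcal{M}$ is only well-defined on $\mathbf{L_T}$ rather than on $L^\infty([0,T]:H^{1/2})$. The key insight that will make this work is Lemma \ref{lem:U-compact}(3): because $\{S(t)\}$ is a compact semigroup on $H$, for any bounded sequence $\{x_n\}\subset H$ there is a subsequence and an $x\in H$ with $S(\cdot)x_n\to S(\cdot)x$ in $L^p([0,T]:H)$ for any $p\in[1,\infty)$. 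Combined with Lemma \ref{lem:NS-S-dot-in-L4}, which ensures $\sup_n |S(\cdot)x_n|_{\mathbf{L_T}}<\infty$, the inputs to $\mathcal{M}$ will lie in a fixed bounded subset of $\mathbf{L_T}$, so Theorem \ref{thm:NS-mathcal-M}(d) applies.

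The first step is to establish the Navier--Stokes analogue of Theorem \ref{thm:unif-conv}: given a bounded sequence $\{x_n\}\subset H$ with $S(\cdot)x_n\to S(\cdot)x$ in $L^4([0,T]:H)$, a sequence $\varepsilon_n\downarrow 0$, and $u_n\to u$ in distribution in $\mathcal{S}^N$ (weak topology), we have $Y_n\doteq Y^{\varepsilon_n,u_n}_{x_n}\to Y^{0,u}_x$ in distribution in $C([0,T]:H^{1/2})$. The $L^p$ a priori estimate analogous to \eqref{eq:Yeux-uniqueness} (proved inside Theorem \ref{thm:NS-Y-exist}) gives uniform bounds $\sup_n \mathbb{E}|Y_n|_{C([0,T]:H^{1/2})}^p<\kappa$; combined with parts 3 and 6 of Theorem \ref{thm:NS-G-new} applied to $\varphi_n\doteq\mathcal{M}(S(\cdot)x_n+Y_n)$ (whose $\mathbf{L_T}$-norms are controlled by Theorem \ref{thm:NS-mathcal-M}(b) and Lemma \ref{lem:NS-S-dot-in-L4}), this yields tightness of $\{Y_n\}$ in $C([0,T]:H^{1/2})$. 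Passing to a subsequence and invoking Skorohod's representation, we may assume $Y_n\to\tilde Y$ a.s. The stochastic integral term vanishes in probability by Theorem \ref{thm:NS-G-new}(1). To identify the limit of $\int_0^\cdot S(\cdot-s)G(\mathcal{M}(S(\cdot)x_n+Y_n)(s))u_n(s)\,ds$, split into two pieces as in the proof of Theorem \ref{thm:unif-conv}: the piece with the fixed integrand $\mathcal{L}(\mathcal{M}(S(\cdot)x+\tilde Y))(u_n-u)$ goes to zero by compactness of $\mathcal{L}(\varphi)$ (Theorem \ref{thm:NS-G-new}(6) plus \cite[Prop.\ VI.3.3(a)]{conway}), and the difference piece is controlled via Theorem \ref{thm:NS-G-new}(5) by $|\mathcal{M}(S(\cdot)x_n+Y_n)-\mathcal{M}(S(\cdot)x+\tilde Y)|_{L^p([0,T]:H)}$, which tends to zero by Theorem \ref{thm:NS-mathcal-M}(d) (applied with the $L^4([0,T]:H)$ convergence of $S(\cdot)x_n+Y_n$ to $S(\cdot)x+\tilde Y$ and uniform $\mathbf{L_T}$ bounds). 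Uniqueness for \eqref{eq:yeuxNS} then gives $\tilde Y=Y^{0,u}_x$. The deterministic analogue (cf.\ Theorem \ref{thm:convlimcont}) follows by taking $\varepsilon_n\equiv 0$.

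With this convergence in hand, the compact level-set property is immediate: for any bounded $E_0\subset H$ and $s<\infty$,
\[
\bigcup_{x\in E_0}\tilde{\Phi}_x(s)=\bigl\{Y^{0,u}_x:x\in E_0,\;u\in \mathcal{S}^{2s}\bigr\}
\]
is the continuous image, under the convergence just proven, of a sequentially compact set (bounded subsets of $H$ yield $L^4([0,T]:H)$-convergent subsequences of $S(\cdot)x_n$ by Lemma \ref{lem:U-compact}(3), and $\mathcal{S}^{2s}$ is weakly compact and metrizable). The remainder of the argument then proceeds exactly as in Section 6: use the variational representation \cite[Theorem 2]{bdm-2008} for $F(\varepsilon,x)=-\varepsilon\log\mathbb{E}\exp(-h(Y^\varepsilon_x)/\varepsilon)$ and the $\mathcal{P}_2^N$ localization from \cite[Theorem 4.4]{bd-2000}. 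For the upper bound, extract near-optimal controls $u_n\in\mathcal{P}_2^N$ for $F(\varepsilon_n,x_n)$ with $x_n\in E_0$ realizing the supremum, pass to a subsequence with $u_n\Rightarrow\tilde u$ and $S(\cdot)x_n\to S(\cdot)x$ in $L^4([0,T]:H)$, and apply the convergence result to conclude. The lower bound is symmetric, using near-optimal deterministic controls for $F(0,x_n)$.

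The main obstacle will be the convergence step, specifically the part where we have to control $|\mathcal{M}(S(\cdot)x_n+Y_n)-\mathcal{M}(S(\cdot)x+\tilde Y)|_{L^p([0,T]:H)}$: the $L^4$-in-space convergence of $S(\cdot)x_n$ is not available (only $L^4$-in-time with $H$-in-space), so we must rely on Theorem \ref{thm:NS-mathcal-M}(d), which crucially only demands $L^4([0,T]:H)$ convergence of the inputs while providing $L^p([0,T]:H)$ convergence of the outputs, provided the inputs remain bounded in $\mathbf{L_T}$. This asymmetry between the norm controlling the inputs and the norm in which convergence is assumed is precisely the technical device that makes uniformity over $H$ (rather than $H^{1/2}$) attainable.
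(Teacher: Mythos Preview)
Your proposal is correct and follows essentially the same approach as the paper's proof. The paper likewise reduces everything to the Navier--Stokes analogue of Theorem \ref{thm:unif-conv}, uses Skorohod representation after tightness from Theorem \ref{thm:NS-G-new}, and singles out exactly the step you flag as the obstacle---controlling $|\mathcal{M}(S(\cdot)x_n+Y_n)-\mathcal{M}(S(\cdot)x+\tilde Y)|_{L^p([0,T]:H)}$ via Theorem \ref{thm:NS-mathcal-M}(d), exploiting that only $L^4([0,T]:H)$ convergence of inputs (not $L^4([0,T]:L^4)$) is required once the $\mathbf{L_T}$ bounds are in place.
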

\begin{proof}
The proof is similar to that of Theorem \ref{thm:unif-Laplace-Y}. The main step is to	  show that if $E_0\ni x_n \to x$ weakly in $H$, $[0,1)\ni \e_n \to 0$, and $u_n \to u$ in distribution in $\mathcal{S}^N$, then $Y^{\e_n,u_n}_{x_n} \to Y^{0,u}_x$ in distribution in $C([0,T]:H^{1/2})$.


 The proof of this statement follows in a similar way as the proof of Theorem \ref{thm:unif-conv}. The only step that we need to be careful about is how to go from \eqref{eq:Y5} to \eqref{eq:Y6}.
By Theorem \ref{thm:NS-G-new} (3) and (5) $\{Y^{\e_n,u_n}_{x_n}\}$ is tight in $C([0,T]:H^{1/2})$.
As in the proof of Theorem \ref{thm:unif-conv}, using Skorohod's lemma we can find a probability space and a subsequence such that $Y_n \doteq Y^{\e_n,u_n}_{x_n} \to \tilde{Y}$ in $C([0,T]:H^{1/2})$ almost surely.
In particular, $Y_n \to \tilde Y$ in $C([0,T]:H) \cap L^4([0,T]:L^4)$,
   therefore for almost any $\omega \in \Omega$, for large enough $n$ (depending on $\omega$),
   \[|Y_n|_{\mathbf{L_T}} \leq |\tilde{Y}|_{\mathbf{L_T}}+1.\]
    Furthermore, by Lemma \ref{lem:NS-S-dot-in-L4},
    \[\sup_{n \in \NN}|S(\cdot)x_n|_{\mathbf{L_T}} \leq C\sup_{n \in \NN}|x_n|_H.\]
    By Theorem \ref{thm:NS-mathcal-M}(d) with $R = |\tilde{Y}|_{\mathbf{L_T}} + C\sup_{n \in \NN}|x_n|_H+1$
and $p\ge 2$,
  \[|\mathcal{M}(S(\cdot) x_n + Y_n) - \mathcal{M}(S(\cdot)x + \tilde{Y})|_{L^p([0,T]:H)} \leq \kappa \left(|S(\cdot)(x-x_n)|_{L^4([0,T]:H)}^{\frac{2}{p}} + |Y_n - \tilde Y|_{L^4([0,T]:H)}^{\frac{2}{p}} \right).\]
  As noted earlier, although the mapping $x \in H \mapsto S(\cdot)x \in L^4([0,T]:L^4)$ is not compact, the map $x
	  \mapsto S(\cdot)x \in L^p([0,T];H)$ is compact. This combined with the fact that $Y_n \to \tilde{Y}$ in $C([0,T]:H^{1/2})$ we have that the left side
converges to zero as $n\to \infty$.

The rest of the proof of Theorem \ref{thm:unif-Laplace-Y} follows with minor modifications to account for the fact that $Y^{\e,u}_x \in C([0,T]:H^{1/2})$ while $x \in H$ (in the proofs of Section \ref{sec:pfuniflap} $Y^{\e,u}_x \in C([0,T]:E)$ and $x \in E^{\star\star}$).
\end{proof}

We can now complete the proof of Theorem \ref{thm:NS-LDP}.\\

{\bf Proof of Theorem \ref{thm:NS-LDP}}

We observe that if for $x\in H$ and some $u \in \mathcal{P}_2^N$, $Y^{\e,u}_x$ is the unique solution of \eqref{eq:yeuxNS}  	then $X^{\e,u}_x = \mathcal{M}(S(\cdot)x + Y^{\e,u}_x)$ is the unique solution of \eqref{eq:N-S-abstractcontNS}.

  By Lemma \ref{lem:NS-S-dot-in-L4}, $S(\cdot) \in \mathscr{L}(H,\mathbf{L_T})$. Let $\psi \in C([0,T]:H^{1/2})$. Let $R = \|S(\cdot)\|_{\mathscr{L}(H,\mathbf{L_T})} + |\psi|_{\mathbf{L_T}} + 1$. Let $C=C(R)$ be as from Theorem \ref{thm:NS-mathcal-M}(c).
 Let $\kappa \in (0,\infty)$ be such that $|z|_{\mathbf{L_T}} \leq \kappa |z|_{C([0,T]:H^{1/2})}$ for all
$z \in C([0,T]:H)\cap L^4([0,T]:L^4)$. Then whenever $|Y^\e_x - \psi|_{C([0,T]:H^{1/2})} < \delta/(\kappa C)$, it follows that
  \begin{align*}
    |X^\e_x - \mathcal{M}(S(\cdot)x+\psi)|_{C([0,T]:H)} &\leq C|(S(\cdot)x +Y^\e_x) - (S(\cdot)x + \psi)|_{\mathbf{L_T}}\\
    & \leq C \kappa |Y^\e_x - \psi|_{C([0,T]:H^{1/2})}  < \delta.
  \end{align*}
The rest of the proof is the same as for Theorem \ref{thm:LDP}.
\hfill \qed

\appendix

\section{Compactness and the double dual space}

\label{S:doubledual} Throughout this section $E$ will be a separable Banach
space and $E^\star$ will denote its dual space. Let $E^{\star\star}$ be the
dual of $E^\star$. There is a natural identification of $E$ as a subset of $%
E^{\star\star}$ because the evaluation operation is a bounded linear
functional. Specifically, we define the \emph{canonical map} $J_E: E \to
E^{\star\star}$ as follows. For any $x \in E$, $J_E(x): E^\star \to \mathbb{R%
}$ by $J_E(x)(x^\star) = x^\star(x)$. In the duality notation,
\begin{equation*}
\left<x^\star, J_E(x)\right>_{E^\star, E^{\star\star}} \doteq \left<x,
x^\star\right>_{E,E^\star}
\end{equation*}

If $J_E(E) = E^{\star\star}$, then $E$ is called a reflexive Banach space.
In the context of SPDEs, we are also interested in non-reflexive Banach
spaces like the space of continuous functions, H\"older spaces, and Sobolev
spaces.

If $E$ is an infinite dimensional Banach space, then the closed unit ball of
$E$ is not compact in the norm topology.
Alaoglu's Theorem, which we state below, guarantees that the closed unit
ball of a dual space is compact in the weak-$\star$ topology. We also state
Goldstine's theorem, which proves that the of the unit ball of $E$ under the
canonical embedding is dense in the unit ball of $E^{\star\star}$ in the
weak-$\star$ topology. This means that even if $E$ is not reflexive, all
elements of $E^{\star\star}$ can be approximated by a net of elements of $E$%
. For proofs see \cite[Section 15.1 and 15.3]{fitzpatrick}.

\begin{theorem}[Alaoglu's Theorem]
\label{thm:alaoglu} Every closed, bounded ball in a dual space is weak-$%
\star $ compact.
\end{theorem}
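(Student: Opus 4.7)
The plan is to prove Alaoglu's theorem by realizing a closed ball in $E^\star$ as a closed subspace of a product of compact intervals, then invoking Tychonoff's theorem. This is the standard argument and the only real subtlety lies in the identification of topologies.

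First I would fix $r \in (0,\infty)$ and set $B_r \doteq \{x^\star \in E^\star : |x^\star|_{E^\star} \leq r\}$. For each $x \in E$, define $D_x \doteq [-r|x|_E, r|x|_E] \subset \mathbb{R}$, which is compact. By Tychonoff's theorem the product $\Pi \doteq \prod_{x \in E} D_x$, equipped with the product topology, is compact. Define
\[
\Psi : B_r \to \Pi, \qquad \Psi(x^\star) \doteq (\langle x, x^\star\rangle_{E,E^\star})_{x \in E}.
\]
Note that $\Psi$ is well defined because $|\langle x, x^\star\rangle_{E,E^\star}| \leq |x|_E\, |x^\star|_{E^\star} \leq r|x|_E$ for every $x^\star \in B_r$.

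Next I would verify that $\Psi$ is a homeomorphism onto its image when $B_r$ is equipped with the weak-$\star$ topology and $\Pi$ with the product topology. Both topologies are precisely those generated by pointwise evaluation at each $x \in E$, so continuity and injectivity of $\Psi$ (and continuity of $\Psi^{-1}$ on the image) are immediate from the definitions.

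Then I would show that $\Psi(B_r)$ is a closed subset of $\Pi$. The image consists exactly of those tuples $a = (a_x)_{x \in E} \in \Pi$ for which the assignment $x \mapsto a_x$ is a linear functional on $E$; boundedness by $r$ is already encoded in the constraint $a_x \in D_x$. For each pair $(x,y) \in E \times E$ and each pair of scalars $(\alpha,\beta) \in \mathbb{R}^2$, the set
\[
C_{x,y,\alpha,\beta} \doteq \{a \in \Pi : a_{\alpha x + \beta y} = \alpha a_x + \beta a_y\}
\]
is closed in $\Pi$ since it is the preimage of $\{0\}$ under a continuous function of finitely many coordinates. The image $\Psi(B_r)$ equals the intersection $\bigcap C_{x,y,\alpha,\beta}$ over all such quadruples, hence is closed. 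Being a closed subset of a compact space, $\Psi(B_r)$ is compact, and therefore $B_r$ is weak-$\star$ compact.

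The main (and only) obstacle is purely notational: one must be careful that the set-theoretic product $\Pi$ has cardinality depending on $|E|$, so there is no metrizability in general, and the argument genuinely requires the full strength of Tychonoff's theorem rather than sequential compactness. Once that is understood, all remaining verifications are direct from the definitions of the weak-$\star$ topology and the product topology, with no additional analytic input needed.
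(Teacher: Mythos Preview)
Your proof is correct and is the standard Tychonoff-based argument. The paper does not actually prove this theorem; it simply states it and refers the reader to \cite[Section 15.1 and 15.3]{fitzpatrick} for a proof, so there is nothing to compare against beyond noting that your argument is the classical one found in most functional analysis texts.
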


\begin{theorem}[Goldstine's Theorem]
\label{thm:Goldstine} Let $B_1 \doteq \{x \in E: |x|_{E}\leq 1\}$ be the closed
unit ball in $E$ and $B_1^{\star\star}\doteq \{x^{\star\star} \in E^{\star\star}:
|x^{\star\star}|_{E^{\star\star}} \leq 1\}$. Let $J_E: E \to E^{\star\star}$
be the canonical embedding. The closure of $J_E(B_1)$ in the weak-$\star$
topology in $E^{\star\star}$ is $B_1^{\star\star}$.
\end{theorem}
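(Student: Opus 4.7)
The plan is to prove Goldstine's theorem via a Hahn--Banach separation argument in the weak-$\star$ topology on $E^{\star\star}$, which is a standard approach.

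First I would observe that $J_{E}(B_{1})\subset B_{1}^{\star\star}$, since for any $x\in B_{1}$,
\[
|J_{E}(x)|_{E^{\star\star}} \;=\; \sup_{|x^{\star}|_{E^{\star}}\le 1}|\langle x,x^{\star}\rangle_{E,E^{\star}}| \;=\; |x|_{E} \;\le\; 1.
\]
Let $K$ denote the weak-$\star$ closure of $J_{E}(B_{1})$ in $E^{\star\star}$. By Theorem~\ref{thm:alaoglu} the set $B_{1}^{\star\star}$ is weak-$\star$ compact, and $K$ is a weak-$\star$ closed subset of $B_{1}^{\star\star}$; moreover, $K$ is convex because $J_{E}(B_{1})$ is convex and weak-$\star$ closure preserves convexity. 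It therefore suffices to show $K=B_{1}^{\star\star}$.

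Suppose for contradiction there exists $x_{0}^{\star\star}\in B_{1}^{\star\star}\setminus K$. The key fact I would invoke is that the topological dual of $E^{\star\star}$ equipped with the weak-$\star$ topology is precisely $J_{E^{\star}}(E^{\star})\cong E^{\star}$; that is, every weak-$\star$ continuous linear functional on $E^{\star\star}$ is of the form $x^{\star\star}\mapsto \langle x^{\star},x^{\star\star}\rangle_{E^{\star},E^{\star\star}}$ for some $x^{\star}\in E^{\star}$. Applying the Hahn--Banach separation theorem in the locally convex space $(E^{\star\star},\text{weak-}\star)$ to the disjoint convex sets $\{x_{0}^{\star\star}\}$ (compact) and $K$ (closed), I obtain some $x^{\star}\in E^{\star}$ and $\alpha\in\mathbb{R}$ such that
\[
\langle x^{\star},J_{E}(x)\rangle_{E^{\star},E^{\star\star}} \;\le\; \alpha \;<\; \langle x^{\star},x_{0}^{\star\star}\rangle_{E^{\star},E^{\star\star}}\qquad \text{for all }x\in B_{1}.
\]

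Now the left-hand side equals $\langle x,x^{\star}\rangle_{E,E^{\star}}$, and since $B_{1}$ is symmetric ($x\in B_{1}\Leftrightarrow -x\in B_{1}$), I get $|\langle x,x^{\star}\rangle_{E,E^{\star}}|\le\alpha$ for all $x\in B_{1}$, hence $|x^{\star}|_{E^{\star}}\le\alpha$. On the other hand,
\[
\alpha \;<\; \langle x^{\star},x_{0}^{\star\star}\rangle_{E^{\star},E^{\star\star}} \;\le\; |x^{\star}|_{E^{\star}}\,|x_{0}^{\star\star}|_{E^{\star\star}} \;\le\; |x^{\star}|_{E^{\star}} \;\le\; \alpha,
\]
a contradiction. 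Thus $K=B_{1}^{\star\star}$, as desired.

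The only nontrivial ingredient is the identification of the weak-$\star$ continuous functionals on $E^{\star\star}$ with $E^{\star}$; this is a standard fact about initial topologies generated by a family of linear functionals, but is the one step that must be quoted carefully. The rest of the argument is then a clean convex separation plus the observation that $B_{1}$ is symmetric, which is what converts the separating inequality into a norm bound on $x^{\star}$ and produces the contradiction.
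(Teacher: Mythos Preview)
Your proof is correct and follows the standard Hahn--Banach separation argument for Goldstine's theorem. The paper does not give its own proof of this result; it simply states the theorem in Appendix~\ref{S:doubledual} and cites \cite[Section 15.1 and 15.3]{fitzpatrick} for the proof, so there is nothing to compare against beyond noting that your argument is the classical one and is entirely self-contained.
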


Let $E$ and $F$ be Banach spaces and let $J_F: F \to F^{\star\star}$ and $%
J_E: E \to E^{\star\star}$ be the canonical embeddings. For any bounded
linear operator $L:E \to F$, the adjoint of $L$ is the unique bounded linear
operator $L^\star: F^\star \to E^\star$ such that
\begin{equation*}
\left<Lx,y^{\star} \right>_{F, F^\star} = \left<x, L^\star y^\star
\right>_{E,E^\star}.
\end{equation*}
In general, $L^{\star\star}$ is a map from $E^{\star\star} \to
F^{\star\star} $ that extends $L$ in the sense that for any $x \in E$, $%
J_F(Lx) = L^{\star\star}J_E(x)$. The following theorem says that if $L$ is a
compact operator, then $L$ can be extended to a compact linear operator from
$E^{\star\star} \to F$.

\begin{theorem}
\label{thm:doubledualextend} Let $E$ and $F$ be Banach spaces. Let $L:E \to
F $ be a compact linear operator. Then $L$ can be uniquely extended to a
compact linear operator from $E^{\star\star} \to F$ by setting
\begin{equation*}
Lx^{\star\star} \doteq J_F^{-1}L^{\star\star}x^{\star\star}.
\end{equation*}
\end{theorem}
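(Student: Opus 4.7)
The plan is to take $\tilde L \doteq J_F^{-1}\circ L^{\star\star}$ as the candidate extension and verify in order: (i) well-definedness (i.e.\ $L^{\star\star}(E^{\star\star})\subseteq J_F(F)$), (ii) linearity and the extension property, (iii) compactness, and (iv) uniqueness. Step (i) is the only substantive step; everything else follows from standard abstract properties of adjoints and Schauder's theorem.

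For step (i), I would fix $x^{\star\star}\in E^{\star\star}$ and invoke Goldstine's theorem (Theorem \ref{thm:Goldstine}) to obtain a net $(x_\alpha)\subset E$ with $|x_\alpha|_E\leq |x^{\star\star}|_{E^{\star\star}}$ such that $J_E(x_\alpha)\to x^{\star\star}$ in the weak-$\star$ topology of $E^{\star\star}$. Since $L^{\star\star}$ is the adjoint of the bounded operator $L^\star$, it is weak-$\star$ to weak-$\star$ continuous, and the functorial identity $L^{\star\star}\circ J_E = J_F\circ L$ then gives
\begin{equation*}
J_F(Lx_\alpha)\;=\;L^{\star\star}J_E(x_\alpha)\;\longrightarrow\; L^{\star\star}x^{\star\star}
\quad \text{weak-}\star\text{ in } F^{\star\star}.
\end{equation*}
On the other hand, compactness of $L$ forces $(Lx_\alpha)$ into a norm-relatively compact subset of $F$, so some subnet converges in norm to a point $y\in F$; applying the isometry $J_F$, along this subnet $J_F(Lx_\alpha)\to J_F(y)$ in norm, hence also weak-$\star$. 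Uniqueness of weak-$\star$ limits (the topology is Hausdorff since $F^\star$ separates points of $F^{\star\star}$) forces $L^{\star\star}x^{\star\star}=J_F(y)\in J_F(F)$, and the formula $\tilde L x^{\star\star}=J_F^{-1}L^{\star\star}x^{\star\star}$ makes sense.

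Once well-definedness is established, linearity of $\tilde L$ is inherited from $J_F^{-1}$ (linear on the closed subspace $J_F(F)$) and $L^{\star\star}$; the extension property $\tilde L\circ J_E=L$ is immediate from $L^{\star\star}\circ J_E=J_F\circ L$; and boundedness follows from $|\tilde L x^{\star\star}|_F=|L^{\star\star}x^{\star\star}|_{F^{\star\star}}\leq \|L\|\,|x^{\star\star}|_{E^{\star\star}}$ since $J_F$ is an isometry. For compactness I would apply Schauder's theorem twice: $L$ compact implies $L^\star:F^\star\to E^\star$ compact, which in turn implies $L^{\star\star}:E^{\star\star}\to F^{\star\star}$ compact; composing with the isometric isomorphism $J_F^{-1}$ from $J_F(F)$ onto $F$ preserves compactness.

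For uniqueness, the formula $\tilde L=J_F^{-1}\circ L^{\star\star}$ trivially determines a single operator; if one wishes to interpret the statement as uniqueness among compact extensions, it follows by combining the weak-$\star$ to weak-$\star$ continuity of $L^{\star\star}$ with compactness to get that $\tilde L$ is weak-$\star$ to norm continuous on bounded sets, and then invoking weak-$\star$ density of $J_E(E)$ in $E^{\star\star}$ (Goldstine) together with agreement on $J_E(E)$. The chief obstacle in the whole proof is step (i): the image $L^{\star\star}x^{\star\star}$ a priori lives in $F^{\star\star}$, and showing it actually lies in the canonical copy $J_F(F)$ requires precisely the coupling of Goldstine's weak-$\star$ approximation by $J_E(x_\alpha)$ with the strengthened conclusion that the net $(Lx_\alpha)$ has a subnet converging \emph{in norm} in $F$, which is where the hypothesis that $L$ is compact (and not merely bounded) is essential.
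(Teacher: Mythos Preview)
Your proof is correct and follows essentially the same approach as the paper: both use Goldstine's theorem to approximate $x^{\star\star}$ by a bounded net in $E$, then exploit compactness of $L$ to extract a norm-convergent subnet $(Lx_\alpha)$ in $F$ and identify its limit with $L^{\star\star}x^{\star\star}$ via uniqueness of weak-$\star$ limits, thereby showing $L^{\star\star}(E^{\star\star})\subset J_F(F)$. The only minor difference is in the compactness argument for the extension: the paper observes directly that $\tilde L(B_1^{\star\star})=\overline{L(B_1)}$, whereas you invoke Schauder's theorem twice---both routes are valid and equally short.
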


\begin{proof}
By Goldstine's Theorem (Theorem \ref{thm:Goldstine}), $E^{\star\star}$ is the weak-$\star$ completion of $J(E)$. Therefore, for any $x^{\star\star} \in E^{\star\star}$, there exists an index set $\cli$ and a net $\{x^i\}_{i \in \cli} \subset E$ such that $J_E(x^i) \to x^{\star\star}$ in the weak-$\star$ topology. By the compactness of $L$, we can find a subnet of $x^i$ (relabeled $x^i$) and a $y \in F$ such that $|L x^i - y|_F \to 0$. This means that $|L^{\star\star}J_E(x^i) - J_F(y)|_{F^{\star\star}} \to 0$ also.

Furthermore, by the weak-$\star$ convergence of $J_E(x^i) \to x^{\star\star}$, for any $y^{\star} \in F^{\star}$,
\begin{align*}
  &\left<Lx^i, y^\star \right>_{F,F^\star} = \left<x^i, L^\star y^\star \right>_{E,E^\star} = \left< L^{\star}y^{\star}, J_E(x^i)\right>_{E^\star, E^{\star\star}}\\
  &\to \left< L^{\star}y^\star, x^{\star\star} \right>_{E^\star, E^{\star\star}} = \left<y^\star, L^{\star\star} x^{\star\star} \right>_{F^\star,F^{\star\star}}.
\end{align*}
On the other hand, because $L x^i \to y \in F$,
\[\left<Lx^i, y^\star \right>_{F, F^\star} \to \left< y, y^\star\right>_{F,F^\star} = \left< y^\star,J_F(y)\right>_{F^\star,F^{\star\star}}.\]
This implies that $J_F(y) = L^{\star\star}x^{\star\star}$ and that $y$ is independent of the net $x^i \to x^{\star\star}$. Since $x^{\star\star} \in E^{\star\star}$ was arbitrary, we have proven that $L^{\star\star}(E^{\star\star}) \subset J_F(F)$.

We can then extend $L$ to an operator from $E^{\star\star} \to F$ by letting $Lx^{\star\star} = J_F^{-1}(L^{\star\star}x^{\star\star})$.

These arguments show that
\[\overline{\{Lx: x \in B_1\}} = \{Lx^{\star\star}: x^{\star\star} \in B^{\star\star}_1\} \]
where the overline denotes the closure in $F$ norm. Because the original $L$ was a compact operator, these sets are compact, and therefore, the extension $L$ is also a compact operator.
\end{proof}

Now we present some properties of compact semigroups and the double dual
space of a Banach space.

\begin{definition}
\label{def:c0semigroup} A collection $\{S(t)\}_{t\ge 0}$ of bounded linear
operators on $E$ is called a $C_0$-semigroup if $S(0)=I$ (the identity
operator), $S(t+s) = S(t)S(s)$ for all $t,s\ge 0$, and for any $x \in E$, $%
t\mapsto S(t)x$ is continuous from $[0,\infty)$ to $E$ (with norm topology
on $E$). The infinitesimal generator $A$ of a semigroup is a linear operator
with domain
\begin{equation*}
D(A) \doteq \left\{x \in E: \lim_{h \downarrow 0} \frac{S(t)x -x}{h}
\mbox{
exists}\right\}
\end{equation*}
and
\begin{equation*}
Ax \doteq \lim_{h \downarrow 0} \frac{S(t)x -x}{h}, \; x \in D(A).
\end{equation*}
We say the $C_0$-semigroup $\{S(t)\}_{t\ge 0}$ is a compact semigroup if for
every $t> 0$, $S(t)$ is a compact operator.
\end{definition}

We note that by \cite[Theorem 1.2.2]{pazy}, for a semigroup as in Definition %
\ref{def:c0semigroup},
\begin{equation*}
\sup_{0\le t \le T}\|S(t)\|_{\mathscr{L}(E)} <\infty \mbox{ for every } T>0.
\end{equation*}


\begin{lemma}
\label{lem:U-compact} Assume that $S(t)$ is a compact $C_0$ semigroup.
Denote for $t>0$, the extension of $S(t)$ from  $E$ to $E^{\star\star}$ given
by Theorem \ref{thm:doubledualextend} once again as $S(t)$.

\begin{enumerate}

\item For every $x^{\star\star} \in E^{\star\star}$ and $t_1 \in (0,T)$, the
map $t \mapsto S(t)x^{\star\star}$ from $[t_1,T]$ to $E$ is continuous.
Setting $S(0)x^{\star\star} =0$, $|S(t) x^{\star\star}|_E \le \|S(t)\|_{%
\mathscr{L}(E)}|x^{\star\star}|_{E^{\star\star}}$ for every $t\ge 0$ and the
map $t \mapsto S(t)x^{\star\star}$ from $[0,T]$ to $E$ is in $L^p([0,T]:E)$
for all $p\in [1,\infty]$.

\item Let ${\mathcal{I}}$ be a directed set and let $\{x^i\}_{i\in {\mathcal{%
I}}} \subset E^{\star\star}$ be a bounded net converging to $x\in
E^{\star\star}$ (in the weak- $\star$ topology). Then the net $%
\{S(\cdot)x^i\}_{i\in {\mathcal{I}}}$ converges, in $C([t_1,T]:E)$ for any $%
0<t_1<T$ and in $L^p([0,T]: E)$ for any $p\in[1,\infty)$, to $S(\cdot)x$.

\item Let $\{x_n\}_{n \in {\mathbb{N}}} \subset E^{\star\star}$ be such that
for some $C<\infty$, $|x_n|_{E^{\star\star}} \leq C$ for every $n$. Then
there exists a subsequence (relabeled $x_n$) such that $S(t)x_n$ converges
for all $t>0$. As a function of time $S(\cdot)x_n$ converges in $%
C([t_1,T]:E) $ for any $0<t_1\leq T$ and in $L^p([0,T]:E)$ for any $p \in
[1,\infty)$. 
Moreover, there exists $x^{\star\star} \in E^{\star\star}$ such that
\begin{equation}  \label{eq:eq748}
\lim_{n \to \infty} |S(\cdot)x_n - S(\cdot)x^{\star\star}|_{C([t_1,T]:E)} =
0 \text{ and } \lim_{n \to \infty} |S(\cdot)x_n -
S(\cdot)x^{\star\star}|_{L^p([0,T]:E)} = 0.
\end{equation}
\end{enumerate}
\end{lemma}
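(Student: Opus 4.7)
The overall strategy is to leverage Theorem~\ref{thm:doubledualextend}, which extends each $S(t)$ ($t>0$) to a compact operator from $E^{\star\star}$ into $E$, together with Goldstine's theorem (Theorem~\ref{thm:Goldstine}) and Alaoglu's theorem (Theorem~\ref{thm:alaoglu}). The semigroup property $S(t)=S(t-t_1)S(t_1)$ will be used repeatedly to reduce statements on $[t_1,T]$ with $t_1>0$ to statements about the strongly continuous semigroup on $E$ acting on the (nice) element $S(t_1)x^{\star\star}\in E$.

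For part~1, I would first prove the norm bound. Given $x^{\star\star}\in E^{\star\star}$, by Goldstine there is a net $\{x^i\}\subset E$ with $|x^i|_E\le |x^{\star\star}|_{E^{\star\star}}$ and $J_E(x^i)\to x^{\star\star}$ in weak-$\star$. As in the proof of Theorem~\ref{thm:doubledualextend}, some subnet satisfies $S(t)x^i\to S(t)x^{\star\star}$ in $E$-norm, and taking limits in $|S(t)x^i|_E\le \|S(t)\|_{\mathscr{L}(E)}|x^i|_E$ gives the bound. Strong continuity on $[t_1,T]$ is then immediate from writing $S(t)x^{\star\star}=S(t-t_1)[S(t_1)x^{\star\star}]$ with $S(t_1)x^{\star\star}\in E$ and using $C_0$-continuity. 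Measurability on $(0,T]$ follows from the same factorization, and Bochner measurability on $[0,T]$ (with $S(0)x^{\star\star}\doteq 0$) together with the norm bound gives $S(\cdot)x^{\star\star}\in L^p([0,T]:E)$ for every $p\in[1,\infty]$.

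For part~2, fix $t>0$ and a bounded net $x^i\to x$ in weak-$\star$. Because $S(t)$ is a compact operator from $E^{\star\star}$ to $E$ by Theorem~\ref{thm:doubledualextend}, the image $\{S(t)x^i\}$ is relatively compact in $E$; any $E$-norm cluster point $y$ of a subnet satisfies $J_E(y)=S(t)^{\star\star}x$ (because $S(t)^{\star\star}$ is weak-$\star$ to weak-$\star$ continuous), so $y=S(t)x$, forcing the entire net to converge to $S(t)x$ in $E$. To upgrade this to convergence in $C([t_1,T]:E)$, factor $S(t)=S(t-t_1/2)\,S(t_1/2)$ for $t\in[t_1,T]$ and estimate
\[
\sup_{t\in[t_1,T]}|S(t)x^i - S(t)x|_E \le \Bigl(\sup_{s\in[t_1/2,T]}\|S(s)\|_{\mathscr{L}(E)}\Bigr)\,|S(t_1/2)(x^i - x)|_E,
\]
which tends to zero by the pointwise statement at $t=t_1/2$. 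Finally, $L^p([0,T]:E)$ convergence follows by dominated convergence using the uniform bound from part~1.

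For part~3, the main subtlety is that $E$ being separable does not force $E^\star$ to be separable, so the weak-$\star$ topology on bounded subsets of $E^{\star\star}$ need not be metrizable and $\{x_n\}$ need not admit a weak-$\star$-convergent subsequence. I therefore split the argument: first, the compactness of $S(1/m)\colon E^{\star\star}\to E$ gives that each $\{S(1/m)x_n\}$ is relatively compact in $E$, and a diagonal extraction produces a subsequence (still denoted $x_n$) along which $S(1/m)x_n\to z^{(m)}$ in $E$-norm for every $m$. Arguing as in part~2 via $S(t)=S(t-1/(2m))S(1/(2m))$, this subsequence satisfies $S(\cdot)x_n\to y(\cdot)$ in $C([t_1,T]:E)$ for every $t_1>0$, and $y(t)=S(t-s)y(s)$ for $0<s\le t$. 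To produce the $x^{\star\star}$ in~\eqref{eq:eq748}, I invoke Alaoglu: $\{J_E(x_n)\}$ lies in a weak-$\star$ compact ball of $E^{\star\star}$ and therefore has a weak-$\star$ cluster point $x^{\star\star}$, meaning there is a subnet $x_{n(\alpha)}\to x^{\star\star}$ in weak-$\star$. Applying part~2 to this subnet yields $S(t)x_{n(\alpha)}\to S(t)x^{\star\star}$ in $E$; but this is a subnet of a norm-convergent sequence with limit $y(t)$, so $y(t)=S(t)x^{\star\star}$, which is the desired identification. The $L^p([0,T]:E)$ convergence again follows from dominated convergence and the uniform bound in part~1.

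The principal obstacle, as just indicated, is reconciling the sequential (subsequence) statement of part~3 with the non-metrizability of the weak-$\star$ topology on bounded subsets of $E^{\star\star}$. The trick is to do the subsequence extraction on the compact-operator side (in $E$) and use Alaoglu and part~2 purely to \emph{name} the limit $x^{\star\star}$, rather than to extract a weak-$\star$-convergent subsequence of $\{x_n\}$ directly.
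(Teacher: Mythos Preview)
Your proposal is correct and follows essentially the same route as the paper's proof: Goldstine plus Theorem~\ref{thm:doubledualextend} for the norm bound in part~1, the semigroup factorization $S(t)=S(t-t_1)S(t_1)$ to upgrade pointwise to uniform convergence in parts~1--3, diagonal extraction via compactness of $S(1/m)$ for the subsequence in part~3, and Alaoglu plus a subnet argument (using part~2) to identify the limit as $S(\cdot)x^{\star\star}$. Your explicit discussion of why a subnet rather than a subsequence is needed to name $x^{\star\star}$ mirrors exactly the paper's reasoning.
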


\begin{proof}
Consider first part 1.	Fix $x^{\star\star} \in E^{\star\star}$. Then by Theorem \ref{thm:Goldstine},  there exists a net $\{x^i\}_{i \in \cli} \in E$ such that $|x^i|_E \le |x^{\star\star}|_{E^{\star\star}}$
	and $J_E(x^i) \to x^{\star\star}$ in the weak-$\star$ topology. From Theorem \ref{thm:doubledualextend}, for any
	$t>0$, $S(t)x^i \to S(t)x^{\star\star}$ in $E$. Additionally, for all $t>0$
	$$|S(t)x^{\star\star}|_E \le \sup_{i\in \cli} |S(t)x^i|_E \le  \|S(t)\|_{\mathscr{L}(E)}
	|x^{\star\star}|_{E^{\star\star}} <\infty .$$
	Since $\sup_{0\le t \le T} \|S(t)\|_{\mathscr{L}(E)} <\infty$, $t\mapsto S(t)x^{\star\star}$ is in $L^p([0,T]:E)$ for any $p\in [1,\infty]$. Also for $t_n, t \ge t_0 >0$ such that
	$t_n \to t$, $S(t_n)x^{\star\star} = S(t_n-t_0)y$, $S(t)x^{\star\star} = S(t-t_0)y$ where
	$y= S(t_0)x^{\star\star}$. Since $S(t)$ is a $C_0$-semigroup on $E$ and $y \in E$,
$$S(t_n)x^{\star\star} = S(t_n-t_0)y \to S(t-t_0)y = 	S(t)x^{\star\star}$$
as $n\to \infty$. This shows that for any $t_1 \in (0,T)$, $t \mapsto S(t)x^{\star\star}$ is in
$C([t_1,T]:E)$ completing the proof of part 1.

Now consider part 2.  By the proof of Theorem \ref{thm:doubledualextend} and the fact that $S(t)$ is a compact semigroup, $|S(t)x^i - S(t)x|_E \to 0$ for any $t>0$. If we fix $0<t_1<T$, then
\[\sup_{t \in [t_1,T]} |S(t)(x^i-x)|_E \leq \sup_{t \in [t_1,T]} \|S(t-t_1)\|_{\mathscr{L}(E)}|S(t_1)(x^i-x)|_E \to 0,\]
proving the convergence in $C([t_1,T]:E)$.
For the $L^p([0,T]:E)$ convergence, observe that $|S(\cdot)x^i - S(\cdot)x|_{L^p([0,T]:E)}^p = \int_0^T |S(t)x^i - S(t)x|^p dt.$ This converges to zero by dominated convergence theorem because $|S(t)x^i - S(t)x|_E$ converges to zero for each $t>0$ and \\$\sup_{t \in [0,T]}\sup_{i \in \mathcal{I}} |S(t)(x^i-x)|<\infty.$

As for part 3, let $\{x_n\}_{n \in \NN} \subset E^{\star\star}$ be such that for some $C<\infty$, $|x_n|_{E^{\star\star}} \leq C$ for every $n$.
Since $S(1)$ is a compact operator, we can find a subsequence of $x_n$ such that $S(1)x_n$ converges in $E$. Similarly, we can find a subsequence of that subsequence  such that $S(1/2)x_n$ converges. Using the diagonalization method, we can find a subsequence such that $S(1/N)x_n$ converges for any $N \in \mathbb{N}$.
Since the semigroup is bounded, we now have
 uniform convergence of $S(t)x_n$ in $E$ over $[1/N,T]$ each all $N\in \NN$.
Indeed, letting $y_N = \lim_{n \to \infty} S(1/N)x_n$, we have,
  \begin{equation}\label{eq:eq810}
	\sup_{\frac{1}{N} \le t\le T} |S(t)x_n - S(t-{1}/{N}) y_N|_E \leq  \sup_{0\le t \le T} \|S(t)\|_{\mathscr{L}(E)}|S(1/N)x_n - y_N|_E,
\end{equation}
proving the convergence in $C([t_1,T]:E)$ for any $t_1\in (0,T]$.
The bounded convergence theorem now implies that the subsequence converges in $L^p([0,T]:E)$ for any $p \in [1,\infty)$.

 Finally we show \eqref{eq:eq748}. By Theorem \ref{thm:alaoglu}, there exists $x^{\star\star} \in E^{\star\star}$ that is an accumulation point of the sequence $x_n$ in the weak-$\star$ topology. There exists a subnet $\{x^i\}_{i \in \cli}$ of $\{x_n\}$ (although not necessarily a subsequence) such that $x^i \to x^{\star\star}$ in the weak-$\star$ topology.
 By part 2, the subnet $S(\cdot)x^i$ converges to $S(\cdot)x^{\star\star}$ in $C([t_1,T]:E)$ and $L^p([0,T]:E)$ for any $0<t_1<T$ and $p \in [1,\infty)$, proving \eqref{eq:eq748}.
\end{proof}

\bibliographystyle{plain}
\bibliography{LDP}

\vspace{\baselineskip}

\textsc{\noindent M. Salins\newline Department of Mathematics and Statistics\newline
Boston University\newline  Boston, MA 02215, USA\newline email:
msalins@bu.edu \vspace{\baselineskip}}

\textsc{\noindent A. Budhiraja \newline Department of Statistics and
Operations Research\newline University of North Carolina\newline Chapel Hill,
NC 27599, USA\newline email: budhiraj@email.unc.edu \vspace{\baselineskip} }

\textsc{\noindent P. Dupuis\newline Division of Applied Mathematics\newline
Brown University\newline Providence, RI 02912, USA\newline email:
Paul\_Dupuis@brown.edu \vspace{\baselineskip} }

\end{document}